\documentclass[reqno]{amsart}

\usepackage[arrow, matrix, curve]{xy}

\usepackage{amssymb}

\usepackage{tikz}

\usepackage{hyperref}
 
\addtolength{\textwidth}{2em}
\addtolength{\oddsidemargin}{-1em}
\addtolength{\evensidemargin}{-1em}

\numberwithin{equation}{section}
\numberwithin{figure}{section}

\newcommand{\B}{\mathcal B}

\newcommand{\D}{\mathcal D}

\newcommand{\calL}{\mathcal L}

\newcommand{\calS}{\mathcal S}

\newcommand{\bC}{\mathbb C}
\newcommand{\bN}{\mathbb N}
\newcommand{\bQ}{\mathbb Q}
\newcommand{\bR}{\mathbb R}
\newcommand{\bZ}{\mathbb Z}

\newcommand{\ba}{\overline{a}}
\newcommand{\bb}{\overline{b}}

\newcommand{\bx}{\overline{x}}
\newcommand{\by}{\overline{y}}

\newcommand{\an}{\mathrm {an}}
\newcommand{\acl}{\mathrm {acl}}

\newcommand{\eq}{\mathrm {eq}}

\newcommand{\Th}{\mathrm {Th}}
\newcommand{\tp}{\mathrm {tp}}
\newcommand{\rad}{\mathrm {rad}}
\newcommand{\dist}{\mathrm {dist}}
\newcommand{\divi}{\mathrm {div}}

\newcommand{\ph}{\varphi}

\newcommand\vc{\operatorname{vc}}
\newcommand\VC{\operatorname{VC}}
\newcommand\IND{\operatorname{IN}}

\newcommand\RCVF{\operatorname{RCVF}}
\newcommand\ACVF{\operatorname{ACVF}}
\newcommand\rings{\operatorname{rings}}
\newcommand\pCF{\textup{$p$CF}}

\newcommand\MRk{\operatorname{MR}}
\newcommand\URk{\operatorname{U}}

\newcommand\breadth{\operatorname{breadth}}
\newcommand\width{\operatorname{width}}
\newcommand\height{\operatorname{height}}

\newcommand\Sh{\operatorname{Sh}}

\newcommand\alt{\operatorname{alt}}
\newcommand\ind{{\operatorname{ind}}}

\newcommand\sign{\operatorname{sign}}

\newcommand\restrict{\!\upharpoonright\!}

\newcommand{\abs}[1]{\lvert#1\rvert}

\DeclareMathAlphabet{\mathbf}{OML}{cmm}{b}{it}



\theoremstyle{plain}
\newtheorem{theorem}{Theorem}[section]

\newtheorem{lemma}[theorem]{Lemma}
\newtheorem{proposition}[theorem]{Proposition}
\newtheorem{corollary}[theorem]{Corollary}
\theoremstyle{definition}

\newtheorem{definition}[theorem]{Definition}

\theoremstyle{remark}
\newtheorem*{remark*}{Remark}
\newtheorem*{remarks*}{Remarks}

\newtheorem*{question}{Question}

\newtheorem{remark}[theorem]{Remark}
\newtheorem*{problem*}{Problem}
\newtheorem*{example*}{Example}
\newtheorem*{examples*}{Examples}
\newtheorem*{claim*}{Claim}

\newtheorem{example}[theorem]{Example}

\newcommand{\claim}[2][\!\!]{\medskip\noindent {\it Claim #1.} {#2}}

\setcounter{tocdepth}{1}

\begin{document}

\title[VC Density in some NIP Theories, I]{Vapnik-Chervonenkis Density in some Theories without the Independence Property, I}

\dedicatory{For Lou van den Dries, on his 60th birthday.}

\author[Aschenbrenner]{Matthias Aschenbrenner}
\address{Department of Mathematics \\
University of California, Los Angeles \\ 
Box 951555 \\
Los Angeles, CA 90095-1555, U.S.A.}
\email{matthias@math.ucla.edu}

\author[Dolich]{Alf Dolich}
\email{}
\address{Department of Mathematics\\
East Stroudsburg University\\
Science \& Technology Center\\
Room 118\\
East Stroudsburg, PA 18301, U.S.A.
}

\author[Haskell]{Deirdre Haskell}
\address{Department of Mathematics and Statistics\\
	  McMaster University \\
	  1280 Main St W \\
         Hamilton ON L8S 4K1, Canada} 
\email{haskell@math.mcmaster.ca}

\author[Macpherson]{Dugald Macpherson}
\address{School of Mathematics\\
University of Leeds\\
Leeds LS2 9JT, U.K.}
\email{h.d.macpherson@leeds.ac.uk}

\author[Starchenko]{Sergei Starchenko}
\address{Department of Mathematics \\
University of Notre Dame\\
255 Hurley Building\\
Notre Dame, IN 46556-4618, U.S.A.}
\email{starchenko.1@nd.edu}

\date{September 2011}

\begin{abstract}
We recast the problem of calculating  Vapnik-Chervonenkis~(VC)~density into one of counting types, and thereby calculate bounds (often optimal) on the VC~density for some weakly o-minimal, weakly quasi-o-minimal, and $P$-minimal  theories.
\end{abstract}

\maketitle

\tableofcontents

\section{Introduction}

\noindent
The notion of VC dimension, which arose in probability theory in the work of Vapnik and Chervonenkis \cite{vc}, was first drawn to the attention of model-theorists by Laskowski \cite{l}, who observed that a complete first-order theory does not have the independence property (as introduced by Shelah \cite{S1}) if and only if, in each model, each  definable family of sets has finite VC dimension. 
With this observation, Laskowski easily gave several examples of classes of sets with finite VC dimension, by
noting well-known examples of theories without the independence property. This line of thought was pursued by Karpinski and Macintyre \cite{km1},
who calculated explicit bounds on the VC dimension of definable families of sets in some o-minimal structures (with an eye towards applications to neural networks), which were polynomial in the number of parameter variables.
In a further paper \cite{km2}, they observe that their arguments also lead to a linear bound on the VC~\emph{density}
of definable families of sets in some o-minimal structures. They ask whether similar (linear) bounds hold for the $p$-adic numbers (whose theory also does not have the independence property). The bound in the o-minimal case in \cite{km2} was established independently, using a more combinatorial approach, 
by Wilkie (unpublished), and more recently, also by Johnson and Laskowski~\cite{JL}.

In this paper we give a sufficient criterion (Theorem~\ref{VCdensity, 2}) on a first-order theory for the VC~density of a definable family of sets to be bounded by a linear function in 
the number of parameter variables, and show that the criterion is satisfied by several theories of general interest, including  the theory of the $p$-adics and all weakly o-minimal theories. 
In a sequel to this paper \cite{ADHMS} we give  different arguments to get similar bounds in a variety of other examples where our criterion does not apply.
Before we state our main results, we introduce our setup and review some definitions and basic facts.
We hope that the present paper (unlike its sequel \cite{ADHMS}) can be read with only little technical knowledge of model theory beyond basic first order logic. The first few chapters of \cite{Hodges} or \cite{Marker} or similar texts should provide sufficient background for a prospective reader.

\subsection{VC dimension and VC density}
Let $X$ be an infinite set and $\calS$ be a non-empty collection of subsets of $X$. Given $A\subseteq X$, we say that a subset $B$ of $A$ is \emph{cut out  by $\calS$} if $B=S\cap A$ for some $S\in\calS$;
we let $\mathcal S\cap A:=\{S\cap A:S\in\mathcal S\}$ be the collection of subsets of $A$ cut out  by $\mathcal S$.
We say that \emph{$A$ is shattered by $\calS$} if every subset of $A$ is 
cut out  by some element of $\calS$. The collection $\calS$ is said to be a \emph{VC class} if there is a non-negative integer $n$ such 
that no subset of $X$ of size $n$ can be shattered by $\calS$. In this case, the \emph{VC dimension of $\calS$} is the largest $d\geq 0$ such that some set of size $d$ is shattered by
 $\calS$. We denote by $\pi_{\calS}(n)$
the maximum, as $A$ varies over subsets of $X$ of size $n$, of the numbers 
of subsets of $A$ that can be cut out  by $\calS$; that is,
$$\pi_{\calS}(n):=\max\left\{\abs{\mathcal S\cap A}:A\in{X\choose n}\right\}.$$
(Here and below, $X\choose n$ denotes the set of $n$-element subsets of $X$.) The function $n\mapsto\pi_{\mathcal S}(n)$ is called the \emph{shatter function} of $\mathcal S$.
Clearly $0\leq \pi_{\calS}(n)\leq 2^n$ for every $n$, and
if $\calS$ is not a VC class, then $\pi_{\calS}(n) = 2^n$ for every $n$. However,
if $\calS$ is a VC class, of 
VC dimension $d$ say,  then by a fundamental observation of Sauer~\cite{sauer} (independently made in \cite{S2} and, implicitly, in \cite{vc}), the function $n\mapsto \pi_{\calS}(n)$ is bounded above by a polynomial in $n$ of degree~$d$. (In fact, for $d,n\geq 1$ one has $\pi_{\calS}(n)\leq (en/d)^d$, where $e$ is the base of the natural logarithm.) Hence it makes sense to define the \emph{VC~density} of a VC~class $\calS$ as the infimum of all reals $r\geq 0$ such that $\pi_{\calS}(n)/n^r$ is bounded for all positive $n$.
It turns out that in many case, the VC~density (rather than the VC~dimension) is the decisive measure for the combinatorial complexity of a family of sets. For example, the VC~density of $\mathcal S$ governs the size of packings in $\mathcal S$ with respect to the Hamming metric (\cite{Haussler}, see also \cite[Lemma~2.1]{Matousek-discrepancy}), and is intimately related to the notions of entropic dimension \cite{Assouad} and discrepancy \cite{MWW}. We refer to the surveys \cite{Matousek-geometric set systems, FP} for uses of VC~density in combinatorics.

\subsection{VC dimension and VC density of formulas}
Let $\calL$ be a first-order language.
In an $\calL$-structure $\mathbf M$, a natural way to generate a collection of subsets of $M^m$ is to take the family of sets defined by a formula, as the 
parameters vary. 
Given a tuple $x=(x_1,\dots,x_m)$ of pairwise distinct variables we denote by $\abs{x}:=m$ the \emph{length} of~$x$. 
We often need to deal with $\mathcal L$-formulas whose free variables have been separated into \emph{object} and \emph{parameter} variables. We use the notation $\varphi(x;y)$ to indicate that the free variables of the $\mathcal L$-formula $\varphi$ are contained among the components of the tuples $x=(x_1,\dots,x_m)$ and $y=(y_1,\dots,y_n)$ of pairwise distinct variables (which we also assume to be disjoint). Here the $x_i$ are thought of as the object variables and the $y_j$ as the parameter variables. We refer to $\varphi(x;y)$ as a \emph{partitioned $\mathcal L$-formula.} 

In the rest of this introduction we let $\mathbf M$ be an infinite $\mathcal L$-structure.
Let $\ph(x;y)$ be a partitioned $\calL$-formula, $m=\abs{x}$, $n=\abs{y}$, and denote by 
$$\calS_\ph = \big\{ \ph^{\mathbf M}(M^{m};b) : b\in M^{n}\big\}$$ 
the family of subsets of $M^m$ defined by $\ph$ in $\mathbf M$ using parameters 
ranging over $M^n$. We call $\calS_\ph$ a definable family of sets (in $\mathbf M$).
We say that \emph{$\ph$ defines a VC class in $\mathbf M$} if $\calS_\ph$ is a VC class; in this case the \emph{VC dimension of $\ph$ in $\mathbf M$} is the VC dimension of the collection $\calS_\ph$ of subsets of $M^m$, and similarly one defines the \emph{VC density of $\ph$ in $\mathbf M$.} Since the shatter function $\pi_\ph=\pi_{\mathcal S_\ph}$ of $\mathcal S_\ph$ only depends on the elementary theory of~$\mathbf M$ (see Lemma~\ref{lem:shatter function and elementary equivalence} below), given a complete $\mathcal L$-theory $T$ with no finite models, we may also speak of the 
\emph{shatter function of $\ph$ in $T$} as well as
\emph{VC dimension of $\ph$ in $T$} and the \emph{VC density of $\ph$ in $T$.}

\subsection{NIP theories}
A partitioned $\calL$-formula $\ph(x;y)$ as above is said to have the \emph{independence property} for $\mathbf M$  if for every $t\in\bN$ there are $b_1,\ldots,b_t\in M^n$ such that for every $S \subseteq \{1,\ldots,t\}$ there is $a_S\in M^m$ such that for all $i\in \{1,\ldots,t\}$, $\mathbf M\models \ph(a_S;b_i) \Longleftrightarrow i\in S$. The structure $\mathbf M$ is said to have the independence property if some $\calL$-formula has the independence property for $\mathbf M$, and not to have the independence property (or to be  \emph{NIP} or \emph{dependent}) otherwise. 
By a classical result of Shelah \cite{S1} (with other proofs in \cite{Ku2, l,Poizat}), for $\mathbf M$ to be NIP it is actually sufficient that no formula $\varphi(x; y)$ with $\abs{x}=1$ has the independence property for $\mathbf M$. NIP is implied by (but not equivalent to) another prominent tameness condition on first-order structures called \emph{stability}: An $\calL$-formula $\ph(x;y)$ is said to be \emph{unstable} for $\mathbf M$ if for every $t\in\bN$ there are $a_1,\dots,a_t\in M^m$ and $b_1,\dots,b_t\in M^n$ such that $\mathbf M\models \ph(a_i;b_j) \Longleftrightarrow i\leq j$, for all $i,j\in\{1,\dots,t\}$.
The $\mathcal L$-structure $\mathbf M$ is called \emph{unstable} if some $\calL$-formula $\ph$ is unstable for $\mathbf M$; and ``stable'' (for formulas and structures) is synonymous with ``not unstable.''

Laskowski's observation \cite{l} is that an $\calL$-formula defines a VC class in $\mathbf M$ if and only if it does not have the independence property for $\mathbf M$. 
In fact, given a collection $\mathcal S$ of subsets of a set $X$,
 define the \emph{dual shatter function}\/  of $\mathcal S$ as the function $n\mapsto \pi^*_{\mathcal S}(n)$ whose value at $n$ is the maximum number of equivalence classes defined by an $n$-element subfamily $\mathcal T$ of $\mathcal S$, where two elements of $X$ are said to be equivalent with respect to $\mathcal T$ if they belong to the same sets of $\mathcal T$. Then a given partitioned $\mathcal L$-formula $\varphi(x;y)$ has the independence property precisely if $\pi^*_{\mathcal S_\varphi}(n)=2^n$ for every $n$. The dual shatter function of $\mathcal S_\varphi$ is really a shatter function in disguise: it agrees with the shatter function of $\mathcal S_{\varphi^*}$ where $\varphi^*(y;x):=\varphi(x;y)$ is the \emph{dual} of the partitioned formula $\varphi$. (See Section~\ref{sec:model-theoretic context}.)

A complete $\calL$-theory $T$ is said to have the independence property if some model of it does, and is said not to have the independence property (or to be NIP) otherwise.
Thus a complete $\calL$-theory  $T$ is NIP if and only if  every $\calL$-formula defines a VC class in every model of $T$. Many theories arising in mathematical practice turn out to be NIP:
By \cite{S1}, all stable theories (i.e., complete theories all of whose models are stable) are NIP; so, for example, 
algebraically closed (more generally, separably closed) fields,  differentially closed fields, modules, or free groups furnish examples of NIP structures. Furthermore, o-minimal (or more generally, weakly o-minimal) theories are NIP \cite{l,MMS}.
By \cite{gur} any ordered abelian group has NIP theory.
Certain important theories of henselian valued fields are NIP, for example, the completions of the theory of algebraically closed valued fields  and the theory of the field of $p$-adic numbers (and also their  rigid analytic  and $p$-adic subanalytic expansions, respectively). In fact, in the language of rings with a predicate for the valuation ring,  an unramified henselian valued field of characteristic $(0,p)$ is NIP if and only if its residue field is NIP \cite{Belair}. 
Similarly, henselian valued fields of characteristic $(0,0)$ and algebraically maximal Kaplansky fields of characteristic $(p,p$) are NIP iff their residue fields are NIP~\cite{Belair-Bousquet, Belair}.

On the other hand, each pseudofinite field (infinite model of the theory of all finite fields) is not NIP \cite{Duret}, since it defines the (Rado) random graph.

\subsection{Uniform bounds on VC density}
This paper is motivated by the following question: {\it Given a NIP theory $T$, can one find an upper bound, in terms of $n$ only, on the VC densities \textup{(}in $T$\textup{)} of all $\calL$-formulas $\varphi(x;y)$ with $\abs{y}=n$?} 
The intuition behind this question is, of course, that the complexity of a family $\mathcal S_\varphi$ of sets defined by a first-order formula $\varphi(x;y)$ in a NIP structure should be governed by the number $n$ of freely choosable parameters. 
Note that the minimum possible bound is $\abs{y}=n$: for if $\varphi(x;y)$, where $x$ is a single variable, is the formula $x=y_1\vee\cdots\vee x=y_n$, then the subsets of $M$ cut out  by $\mathcal S_\varphi$ are exactly the non-empty subsets of $M$ of cardinality at most~$n$, so $\varphi(x;y)$ has VC density $n$ (in any complete theory).  
We  note here in passing that the VC density of a formula $\varphi$ in a NIP theory may take fractional values, 
and that the shatter function of $\mathcal S_\varphi$, though not growing faster than polynomially, is not {\it asymptotic} to a real power function in general.
See Section~\ref{sec:calculations} below, where we explicitly compute the VC~density of certain incidence structures (related to the Sz\'emeredi-Trotter~Theorem) and of the edge relation in Spencer-Shelah random graphs, and investigate the asymptotics of a  
shatter function in the infinitary hypercube.

\medskip

In this paper we
employ VC~duality to translate the  problem of bounding the VC~density of a formula $\varphi$ into the task of counting $\varphi^*$-types over finite parameter sets, which then can be treated by model-theoretic machinery.
Viewing  VC~density as a bound on a number of types also illuminates the connection with a strengthening of the  NIP concept, which is that of \emph{dp-minimality.} (See Section~\ref{sec:relationship} below for a definition.) Dolich, Goodrick and Lippel \cite{dl} have observed that, if, in a theory, the dual VC~density of any $\mathcal L$-formula in a single object variable is less than $2$, then the theory in question is dp-minimal.  (No counterexample to the converse of this implication seems to be known.) 

\medskip

We now  state our main results.
First, an optimal bound on density is obtained for weakly o-minimal theories (see Theorem~\ref{weaklyomin} below). Recall that a complete theory $T$ in a language containing a binary relation symbol ``$<$'' which expands the theory of linearly ordered sets is called \emph{weakly o-minimal} if in every model of $T$, each partitioned $\mathcal L$-formula $\ph(x;y)$ with $\abs{x}=1$ defines a finite union of convex sets.
(See \cite{MMS} for more on this notion, which generalizes the probably more familiar concept of an o-minimal theory, cf.~\cite{vdDries-Tame}.)

\begin{theorem}\label{thm:weakly o-min}
Suppose $\mathcal L$ contains a binary relation symbol ``$<$'', interpreted in $\mathbf M$ as a linear ordering. If $T=\Th(\mathbf M)$ is weakly o-minimal, then every $\calL$-formula $\varphi(x;y)$ has VC density at most $n=\abs{y}$ in $T$ \textup{(}in fact, $\pi_{\varphi}(t)=O(t^n)$\textup{)}.
\end{theorem}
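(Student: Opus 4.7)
The plan is to translate $\pi_\ph(t)$ into a type-count via VC~duality and then induct on $n=\abs{y}$. Fix a sufficiently saturated $\mathbf M\models T$. For parameters $a_1,\dots,a_t\in M^m$, the quantity $\abs{\mc S_\ph\cap\{a_1,\dots,a_t\}}$ equals the number of sign patterns $(\eps_1,\dots,\eps_t)\in\{0,1\}^t$ for which $\bigcap_{i=1}^t\ph^{\mathbf M}(a_i;M^n)^{\eps_i}\ne\emptyset$, or equivalently the number of non-empty atoms in the Boolean algebra on $M^n$ generated by $D_i:=\ph^{\mathbf M}(a_i;M^n)$. The goal is to bound this count by $O(t^n)$ uniformly in the choice of the $a_i$.

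For the base case $\abs{y}=1$, weak o-minimality applied to the partitioned formula $\ph(x;y)$, together with compactness (the statement ``$\ph^{\mathbf M}(a;M)$ has at most $C$ convex components for every $a$'' is first-order in $a$), yields a constant $C=C(\ph)$ such that every instance $\ph^{\mathbf M}(a;M)$ is a union of at most $C$ convex subsets of $M$. The $t$ sets $D_1,\dots,D_t$ then contribute at most $2Ct$ endpoints in the Dedekind completion of $(M,<)$, dividing $M$ into at most $2Ct+1$ non-empty cells, each contained in a single type. Hence $\pi_\ph(t)\le 2Ct+1=O(t)$.

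For the inductive step $\abs{y}=n\ge 2$, I would split $y=(y',y_n)$ with $\abs{y'}=n-1$ and fiber over the last coordinate. For each fixed $c\in M$, the formula $\psi_c(x;y'):=\ph(x;y',c)$ has $\abs{y'}=n-1$ parameter variables, so the induction hypothesis gives a fiberwise bound of $O(t^{n-1})$ on the number of atoms of the fibers $D_i^c:=\{b'\in M^{n-1}:(b',c)\in D_i\}$. The strategy is then to partition $M$ into $O(t)$ subsets on which the combinatorial type of these fiber-patterns is constant, and multiply to obtain $\pi_\ph(t)=O(t^{n-1})\cdot O(t)=O(t^n)$.

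The main obstacle is precisely this vertical--horizontal fusion: showing that, as $c$ ranges over $M$, the combinatorial pattern of the fiber decomposition changes at only $O(t)$ critical values of $c$, in the absence of the full cell decomposition enjoyed by genuinely o-minimal theories. I would handle this using the cell-decomposition machinery for weakly o-minimal theories developed in \cite{MMS}, which supplies a notion of definable cells (built inductively from convex fibrations, possibly with endpoints in the Dedekind completion) allowing one to partition $M^n$ compatibly with any finite family of definable sets, and to control the cell count as polynomial of degree $n$ in the family size. Once that compatibility is in place, the two bounds combine multiplicatively to yield the claimed $O(t^n)$ estimate.
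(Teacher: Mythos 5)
Your approach is genuinely different from the paper's, and there is a gap precisely at the step you yourself flag as the main obstacle. You work on the primal side, bounding $\pi_\varphi(t)$ directly by fibering over the last parameter coordinate and inducting on $n=\abs{y}$ --- the Karpinski--Macintyre / Wilkie style of argument. The paper works on the dual side: it first shows (Theorem~\ref{weaklyomin}) that a weakly o-minimal $T$ has the $\VC 1$ property, meaning every finite set $\Delta(x;y)$ with a \emph{single} object variable admits a uniform definition of $\Delta(x;B)$-types over finite sets using one parameter. This is cheap because each instance of $\varphi\in\Delta$ is a Boolean combination of a bounded number of initial segments of $M$, and initial segments form a directed (breadth-$1$) family, so Lemma~\ref{lem:breadth and UDTFS} applies. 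Theorem~\ref{VCdensity, 2} then upgrades $\VC 1$ to a uniform definition of $\Delta(x;B)$-types with $\abs{x}$ parameters by induction on the number of \emph{object} variables, giving $\vc^*(\Delta)\le\abs{x}$, equivalently $\pi_\varphi(t)=O(t^{\abs{y}})$ via VC duality. The essential feature is that this route never needs to control a cell decomposition of higher-dimensional definable sets.

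Your inductive step requires exactly that control: a partition of $M$ (the range of the last parameter $c$) into $O(t)$ pieces on each of which the set of sign vectors realized by the fibers is constant. In the o-minimal case this is delivered by cell decomposition together with the Johnson--Laskowski counting argument, but \cite{MMS} does not supply a cell decomposition for weakly o-minimal theories with the uniformity you invoke: their cells live over the Dedekind completion, strong cell decomposition fails in general, and in any case there is no result there giving polynomial-in-$t$ control on the number of cells compatible with a family of $t$ definable sets. The introduction of the present paper says explicitly that the Karpinski--Macintyre cell-counting methods ``do not readily adapt'' to certain weakly o-minimal theories (citing \cite{AvdD, Kuhlmann}), which is precisely why the authors took the definable-types route. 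Your base case $\abs{y}=1$ is correct; to repair the induction you would either have to restrict to weakly o-minimal theories admitting a suitably uniform cell decomposition (e.g., $\RCVF$ as in \cite{hm3}), or switch to the UDTFS / $\VC 1$ argument of Section~\ref{sec:examples of VCm theories}.
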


This bound is the same as  that obtained by Karpinski-Macintyre \cite{km1} for o-minimal expansions of the reals, or by Wilkie  and by Johnson-Laskowski \cite{JL} for all o-minimal structures.  The motivating example of a theory which is weakly o-minimal but not o-minimal  is the theory of real closed valued fields, that is,  real closed fields equipped with a predicate for a proper convex valuation ring.
In fact, the methods of Karpinski and Macintyre can also be adapted to give the correct density bounds for this and certain other weakly o-minimal expansions of real closed fields \cite{hm3}. Some interesting weakly o-minimal theories to which these methods do not readily adapt may be found in \cite{AvdD, Kuhlmann}.
Our approach to Theorem~\ref{thm:weakly o-min},  via definable types, was partly inspired by the use of Puiseux series in  \cite{BPR,PR}. 

\medskip

Let $\ACVF$ denote the theory of (non-trivially) valued algebraically closed fields, in the ring language expanded by a predicate for the valuation divisibility. This has completions 
$\ACVF_{(0,0)}$
(for residue characteristic $0$), $\ACVF_{(0,p)}$ (field characteristic $0$, residue characteristic $p$), and $\ACVF_{(p,p)}$ (field characteristic $p$). Because $\ACVF_{(0,0)}$ is interpretable in $\RCVF$, our methods give (non-optimal) density bounds  for $\ACVF_{(0,0)}$ (Corollary~\ref{acvf}). However, they give no information on density in the theories $\ACVF_{(0,p)}$ and $\ACVF_{(p,p)}$. The problems arise essentially because a definable set in $1$-space in $\ACVF$ is a finite union of `Swiss cheeses' but 
we have no way of choosing a particular Swiss cheese. This means that the definable types technique in our main tool (Theorem~\ref{VCdensity, 2}) breaks down. On the other hand, our methods do yield:

\begin{theorem}\label{thm:p-adic}
Suppose $\mathbf M=\bQ_p$ is the field of $p$-adic numbers,  construed as a first-order structure in Macintyre's language $\mathcal L_p$. Then the VC density of every $\calL_p$-formula $\ph(x;y)$ is at most $2\abs{y}-1$. 
\end{theorem}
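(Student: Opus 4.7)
My plan is to follow the same type-counting philosophy underlying Theorem~\ref{VCdensity, 2}: bound the VC density of $\varphi(x;y)$ by controlling the growth rate of the number of complete $\varphi$-types in $y$ over finite parameter sets $A\subseteq \bQ_p^{\abs{x}}$. By Macintyre's quantifier elimination, I may assume $\varphi$ is a boolean combination of atomic predicates $P_m(f(x,y))$ and $f(x,y)=0$, so counting types becomes a question about the number of cells in the arrangement, as $y$ ranges over $\bQ_p^n$, of the level sets cut out by the finitely many atomic subformulas with $x$ specialized to elements of $A$. The key tool will be Denef's $p$-adic cell decomposition, which equips such an arrangement with a recursive, cylindrical structure whose layers are Swiss cheeses in a single variable.

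I will induct on $n=\abs{y}$, aiming for a bound $O(t^{2n-1})$ on the number of complete $\varphi$-types over $A$ with $\abs{A}=t$. For the base case $n=1$, I invoke $P$-minimality of $\bQ_p$ in $\calL_p$: uniformly in $a$, the fibre $\{y\in\bQ_p:\varphi(a;y)\}$ is a finite boolean combination of a bounded number of Swiss cheeses, so the arrangement of the $O(t)$ fibres (one per $a\in A$) partitions $\bQ_p$ into $O(t)$ pieces, yielding VC density at most $1=2\cdot 1-1$. For the inductive step, I split $y=(y',y_n)$ and reduce to two counts: the number of distinct ``$y'$-profiles'' of the relevant cells, to which the inductive bound applies after repackaging the cell shape data as a formula in the $n-1$ parameter variables, and the number of type-refinements coming from the position of $y_n$ within its cell over $y'$, analyzed by a parametric version of the base case.

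The factor of $2$ in the exponent, by comparison with the $\abs{y}$ bound in the weakly o-minimal case, reflects a genuine obstruction that the authors already foreshadow in their discussion of $\ACVF$: a $p$-adic definable set in one variable is a Swiss cheese from which one cannot definably select a canonical representative point, so the clean definable-type machinery which powers Theorem~\ref{VCdensity, 2} breaks down. In $\bQ_p$, however, the discrete value group and the finite residue field still furnish a \emph{bilateral} substitute --- roughly, the upper and lower valuation boundaries of each cell together with bounded residue-coset data --- and it is this bilateral replacement for a one-sided definable type that permits the induction to proceed, at the cost of an extra factor per step and the resulting exponent $2\abs{y}-1$ instead of $\abs{y}$.

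The principal obstacle will be executing the bilateral inductive step carefully: certifying that the ``cell shape'' data extracted at each level is itself definable by an $\calL_p$-formula whose parameter arity is small enough for the inductive hypothesis to apply, and combining the upper-boundary, lower-boundary, and residue counts so that the final tally is polynomial of the claimed degree in~$t$. If this bookkeeping can be carried out uniformly --- as Denef's cell decomposition in Macintyre's language strongly suggests it can --- then the theorem follows.
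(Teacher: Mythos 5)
Your overall strategy — translate to type-counting via VC duality, induct on the number of object variables of the type space, and use $p$-adic cell decomposition at each level — is exactly the paper's. The base case ($\abs{y}=1$, $P$-minimality plus the breadth-$1$ family of balls giving $O(t)$ types) is correct and matches the paper's use of $r=1$ in Theorem~\ref{VCdensity}. However, the inductive step as you describe it has a genuine gap, and it is precisely the gap that Section~\ref{sec:strong VCm} is designed to bridge.

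You propose to bound the number of $\varphi^*$-types of $y=(y',y_n)$ over $A$ by multiplying (a) the number of ``$y'$-profiles,'' bounded by induction at $O(t^{2(n-1)-1})$, by (b) the number of $y_n$-positions per profile, bounded by the $1$-variable case at $O(t)$. This would give $O(t^{2n-2})$, which is strictly \emph{better} than the claimed $2\abs{y}-1$ — a signal that something is being undercounted. The undercount occurs in (a): the arrangement of the Swiss cheeses over $(a,y')$, $a\in A$, in the $y_n$-line is governed by quantities like $v\big(c_i(a_1,y')-c_j(a_2,y')\big)$ for \emph{pairs} $(a_1,a_2)\in A^2$ of parameters. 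This is a binary relation on $A$, not recoverable from the unary type of $y'$ over $A$ with respect to any finite set of $\calL_p$-formulas $\psi(x;y')$; so the ``$y'$-profile'' you wish to count does not live in a type space to which the inductive hypothesis applies. Your attribution of the exponent $2$ to ``upper and lower valuation boundaries'' is also off the mark: $v(f_i(b))$ and $v(g_i(b))$ are both definable from the single parameter $b$, and would only contribute $d=1$; the genuine source of $d=2$ in the paper is the special ball $B_{v(c_i(b)-c_j(b'))}(c_i(b))$, which requires two parameter tuples $b,b'$.

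The paper's device that closes this gap is the \emph{strong $\VC{}2$ property}: any $\Delta(x;y)$ with $\abs{x}=1$ admits a \emph{coherent} uniform definition of $\Delta(x;B)$-types over finite sets with exactly $2$ parameters. In the inductive step of Theorem~\ref{VCdensity}, one fixes the $2$ parameters $(a_1,a_2)\in A^2$ \emph{first} (the $t^2$ factor), then counts $\Delta_i$-types of the remaining object variables over the size-$t$ set $\{(a,a_1,a_2):a\in A\}$ by induction, and the coherency condition guarantees each defining scheme corresponds to a realizable type so that a set of representatives can actually be assembled. The resulting recursion $a_n=a_{n-1}+2$, $a_1=1$ yields $2\abs{y}-1$. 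Establishing the strong $\VC{}2$ property (Theorem~\ref{Pmin}) is the technical heart of the argument: one analyzes the tree of $p$-adic balls, isolates ``special balls'' defined over at most two parameters (centers, boundary radii, and distances between centers), and shows via Lemma~\ref{2.3} and careful case analysis that every type is certified by a center, a degenerate cell, or a ball near a special ball. Your sketch never surfaces this coherency mechanism, and without it the induction does not close.
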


The same result holds for the subanalytic expansions of $\bQ_p$ considered by Denef and van den Dries~\cite{dd}. (Theorem~\ref{Pmin} and Remark~\ref{analytic}.) Key tools available here, but not in the case of $\ACVF$, are cell decomposition and  the existence of definable Skolem functions. We do not know whether the bound in Theorem~\ref{thm:p-adic} is optimal.

\medskip

The investigation of the fine structure of type spaces over finite parameter sets in NIP theories is only just beginning, and the present paper can be seen as a first step in studying one particular measure (VC~density) for their complexity.
Applications of the results in this paper to transversals of definable families in NIP theories will appear in a separate manuscript, under preparation by the first- and last-named authors.

As remarked above, all stable theories are NIP, so it also makes sense to investigate VC~density in stable theories. In a sequel of the present paper \cite{ADHMS} we obtain bounds on VC~density in certain finite $\URk$-rank theories (including all complete theories of finite Morley rank expansions of infinite groups).

\medskip

We close off this introduction by pointing out that besides being of intrinsic interest, uniform bounds on VC~density of first-order formulas (as obtained in this paper) often also help to explain why certain well-known effective bounds on the complexity of geometric arrangements, used in computational geometry,  are polynomial in the number of objects involved. 
For example, the bound on the number of semialgebraically connected components of realizable sign conditions on  polynomials over real closed fields from \cite{BPR,PR} breaks up into a topological  and a combinatorial part, where the polynomial nature of the latter may be seen as a consequence of Theorem~\ref{thm:weakly o-min}:

\begin{example*}
Let $R$ be a real closed field, $\mathcal P=(P_1,\dots,P_s)$ be a tuple of polynomials from $R[X]=R[X_1,\dots,X_k]$, each of degree at most $d$. 
A sign condition for $\mathcal P$ is an $s$-tuple $\sigma\in\{-1,0,+1\}^s$, and we say that $\sigma$ is realized in a subset $V$ of $R^k$ if 
$$\sigma_{V} := \big\{ a\in V: (\sign P_1(a),\dots,\sign P_s(a)) = \sigma \big\}$$
is non-empty. Theorem~\ref{thm:weakly o-min} in the  semialgebraic case  yields: {\it if $V$ is an algebraic set defined by polynomials of degree at most $d$, then
the number of sign conditions for $\mathcal P$ realized in $V$ is at most $Cs^m$, where $m=\dim(V)$ and the constant $C=C(d,k)$ only depends on $d$ and $k$.} 

To see this  recall that by cell decomposition, $V$ is a finite union of semialgebraic subsets of $R^k$ each of which is semialgebraically homeomorphic to some $R^n$; moreover, this decomposition (and the resulting homeomorphism) can be chosen uniformly in the parameters:
Every zero set of polynomials from $R[X]$ of degree at most $d$ is the zero set of $M$ such polynomials, where 
$M={k+d\choose d}$ is the dimension of the $R$-linear subspace of $R[X]$  consisting of the polynomials of degree at most~$d$; thus we may take a semialgebraic (in fact, algebraic) family $(V_b)_{b\in R^{N}}$,  where $N=M^2$, whose fibers $V_b$ are the algebraic subsets of $R^k$ defined by polynomials of degree at most~$d$. Then
there are finitely many semialgebraic families $(V_b^{(i)})_{b\in R^N}$ of subsets of $R^k$ and for each $i$ there is a semialgebraic family $(F^{(i)}_b)_{b\in R^N}$ of maps
such that for each $b\in R^N$ we have
$V_b=\bigcup_i V_b^{(i)}$, and $F^{(i)}_b$ is a homeomorphism $R^{m^{(i)}}\to V_b^{(i)}$, for some $m^{(i)}$.

Fix some $i$ and write $m=m^{(i)}$. Let $\nu=(\nu_1,\dots,\nu_k)$ range over $\bN^k$, with $\abs{\nu}=\nu_1+\cdots+\nu_k$, and suppose
$y=(y_\nu)_{\abs{\nu}\leq d}$, so $y$ has length $M$.
Let $P(X;y)$ be the general polynomial in the indeterminates $X$ of degree at most $d$ with coefficient sequence $y$; so every $P_j$ is of the form $P_j=P(X;b_j)$ with $b_j\in R^M$.
Suppose also $x=(x_1,\dots,x_m)$, and let $z$ be a tuple of new variables of length~$N$,  let $z'$ be a single new variable, and let $\varphi^{(i)}(x;y,z,z')$ be a formula in the language of ordered rings which expresses that 
$P(F^{(i)}_z(x);y)$ and $z'$ have the same sign. 
So, e.g., for $a\in R^m$, $b\in R^N$ we have $R\models\varphi^{(i)}(a;b_j,b,1)$ iff $P_j(F^{(i)}_b(a))>0$.
In this way we see that the number of sign conditions for $\mathcal P$ realized in $V_b^{(i)}$ is bounded by $\pi^*_{\varphi^{(i)}}(3s)$ and thus is $O(s^m)$ by Theorem~\ref{thm:weakly o-min}, where the implicit constant only depends on $\varphi^{(i)}$ and hence on $d$ and $k$. This yields the claim highlighted above.
(Of course we have been very nonchalant with the constants. Indeed,  \cite{BPR} shows the more precise result that the sum of the number of semialgebraically connected components of the sets $\sigma_V$, where $\sigma$ ranges over all sign conditions for $\mathcal P$ realized in $V$, is bounded by $(O(d))^k{s\choose m}$.)
\end{example*}

A simpler example is the number of non-empty sets definable by equalities and inequalities of a finite collection of polynomials over an algebraically closed field:

\begin{example*}
Here we let $\nu=(\nu_1,\dots,\nu_m)$ range over $\bN^m$,  and suppose
$y=(y_\nu)_{\abs{\nu}\leq d}$. 
Let $\varphi(x;y)$ be the partitioned formula
$$\sum_{\abs{\nu}\leq d} y_\nu x^\nu=0$$ 
in the language $\mathcal L$ of rings, and fix an algebraically closed field $K$. Then $\mathcal S_\varphi=\mathcal S_\varphi^K$ is the collection of all zero sets (in $K^m$) of polynomials in $m$ indeterminates with coefficients in $K$ having degree at most $d$. Hence $\pi^*_{\mathcal S_\varphi}(t)$ is the maximum number of non-empty Boolean combinations of $t$ such hypersurfaces.
In the sequel  of our paper (see \cite[Theorem~1.1]{ADHMS}) we will show that the shatter function of any partitioned $\mathcal L$-formula with $m$ parameter variables (such as $\varphi^*$) is  $O(t^m)$ in $\Th(K)$; hence $\pi^*_{\varphi}(t)=\pi_{\varphi^*}(t)=O(t^m)$. (In fact, \cite{JS} proves that
 $\pi^*_{\varphi}(t)\leq\sum_{k=0}^m {t\choose k}d^k$ for every $t$, and this bound is asymptotically optimal.)
\end{example*}

\subsection{Organization of the paper}
In the preliminary Section~\ref{sec:preliminaries} we set the scene by recalling the definitions and basic facts concerning VC~dimension and VC~density in a general combinatorial setting. In Section~\ref{sec:model-theoretic context}  we then move to the model-theoretic context; in particular we introduce the VC~density function of a complete theory without finite models, and the (dual) VC~density of a finite set of formulas.
In Section~\ref{sec:calculations} we give some interesting examples of formulas in NIP theories for which we can explicitly compute their VC~density or determine the asymptotic behavior of their shatter function.
In Section~\ref{sec:VCm property} we introduce the $\VC{}d$ property (a refinement of Guingona's notion of uniform definability of types over finite sets) and get our main tool for counting types (Theorem~\ref{VCdensity, 2}) in place, which is then employed, in Section~\ref{sec:examples of VCm theories}, to prove Theorem~\ref{thm:weakly o-min}  from above. 
A strengthening of the $\VC{}d$ property is defined and established for the $p$-adics in Section~\ref{sec:strong VCm}, thus proving Theorem~\ref{thm:p-adic}. We refer to the introductions of each section for a more detailed description of their contents.

\subsection{Notations and conventions}
In this paper, $d$, $k$, $m$, $n$ range over the set $\bN:=\{0,1,2,\dots\}$ of natural numbers. We set $[n]:=\{1,\dots,n\}$. Given a set $X$,
we write $2^X$ for the power set of $X$, and we let $X\choose n$ denote the set of $n$-element subsets of~$X$ and ${X\choose \leq n}:={X\choose 0}\cup{X\choose 1}\cup\cdots\cup{X\choose n}$  the collection of subsets of $X$ of cardinality at most $n$.

\subsection{Acknowledgments}
Part of the work on this paper was done while some of the authors were participating in the thematic program on O-minimal Structures and Real Analytic Geometry at the Fields Institute in Toronto (Spring 2009), and in the 
Durham Symposium on
New Directions in the Model Theory of Fields (July 2009), organized by the London Mathematical Society  and funded by EPSRC~grant EP/F068751/1. The support of these institutions is gratefully acknowledged.
Aschen\-bren\-ner was partly supported by NSF grant DMS-0556197. He would also express his gratitude to Gerhard W\"oginger for suggesting the example in Section~\ref{sec:hypercube}, and to Andreas Baudisch and Humboldt-Universit\"at Berlin for their hospitality during Fall~2010. Haskell's research was supported by  NSERC grant~238875. Macpherson acknowledges support by EPSRC grant EP/F009712/1. Starchenko was partly supported by NSF grant DMS-0701364.

\section{VC Density}\label{sec:preliminaries}

\noindent
In this section we introduce various numerical parameters associated to abstract families of sets: VC~dimension, VC~density, and independence dimension,  and we recall the well-known phenomenon of ``VC~duality'' hinted at already in the introduction (which, in particular, allows us to relate VC~dimension with independence dimension). An important role in later sections is played by a new parameter associated to a set system defined here, which we call breadth, and which is the focus of the last part of this section.

\subsection{VC dimension and VC density}
A \emph{set system} 
is a pair $(X,\mathcal S)$ consisting of a set~$X$ and a collection $\mathcal S$ of subsets of $X$. We call $X$ the \emph{base set} of the set system~$(X,\mathcal S)$, and we sometimes also speak of a \emph{set system $\mathcal S$ on $X$.} 
Given a set system $(X,\mathcal S)$ and a
set $A\subseteq X$, we let $\mathcal S\cap A:=\{S\cap A:S\in\mathcal S\}$ and 
call $(A,\mathcal S\cap A)$ \emph{the set system on $A$ induced by $\mathcal S$.}
Let now $\mathcal S$ be a set system on an infinite set $X$.
The function $\pi_{\mathcal S}\colon\bN\to\bN$ given by
$$\pi_{\mathcal S}(n):=\max\left\{\abs{\mathcal S\cap A}:A\in{X\choose n}\right\}$$
is called the \emph{shatter function of $\mathcal S$.} 
We have $0\leq\pi_{\mathcal S}(n)\leq 2^n$ and $\pi_{\mathcal S}(n)\leq \pi_{\mathcal S}(n+1)$ for all $n$.  
Note that if $Y\supseteq X$ then $\pi_{\mathcal S}$ does not change if $\mathcal S$ is considered as a set system on~$Y$. (This justifies our choice of notation for the shatter function, suppressing the base set $X$ of our set system.)

One says that $A\subseteq X$ is \emph{shattered by $\mathcal S$} if $\mathcal S\cap A=2^A$. If $\mathcal S$ is non-empty, then we define the \emph{VC dimension  of $\mathcal S$,} denoted by $\VC(\mathcal S)$, as the supremum (in $\mathbb N\cup\{\infty\}$) of the sizes of all finite subsets of $X$ shattered by $\mathcal S$; so $\VC(\mathcal S)=\infty$ means that arbitrarily large finite subsets of $X$ can be shattered by $\mathcal S$. Equivalently, 
\begin{equation}\label{eq:VC}
\VC(\mathcal S)=\sup\big\{n:\pi_{\mathcal S}(n)=2^n\big\}.
\end{equation}
One says that $\mathcal S$ is a \emph{VC class} if $\VC(\mathcal S)<\infty$.
Note that some sources (e.g., \cite{l}) alternatively define the VC~dimension of $\mathcal S$ to be the minimum $n$ such that no set of size $n$ is shattered by~$\mathcal S$ (i.e., $\VC(\mathcal S)+1$, with $\VC(\mathcal S)$ as given by \eqref{eq:VC}).

\medskip

We have the following fundamental fact about set systems:

\begin{lemma}[Sauer-Shelah] \label{lem:SS}
If $\mathcal S$ has finite VC dimension $d$ \textup{(}so $\pi_{\mathcal S}(n)<2^n$ for $n> d$\textup{)}, then
$$\pi_{\mathcal S}(n)\leq {n\choose\leq d}:={n\choose 0}+\cdots+{n\choose d}\qquad\text{for every $n$.}$$
\end{lemma}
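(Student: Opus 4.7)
The plan is to deduce the lemma from the sharper Pajor--Shelah inequality: for every finite set system $(A,\mathcal T)$ one has $\abs{\mathcal T} \leq \abs{\Sh(\mathcal T)}$, where $\Sh(\mathcal T) := \{B\subseteq A : B \text{ is shattered by } \mathcal T\}$. Granting this, the lemma is immediate: choose $A\in{X\choose n}$ realizing $\pi_{\mathcal S}(n)=\abs{\mathcal S\cap A}$, set $\mathcal T:=\mathcal S\cap A$, and note $\VC(\mathcal T)\leq\VC(\mathcal S)=d$, so every member of $\Sh(\mathcal T)$ has at most $d$ elements, whence $\pi_{\mathcal S}(n)=\abs{\mathcal T}\leq\abs{\Sh(\mathcal T)}\leq{n\choose\leq d}$.

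I would prove the Pajor--Shelah inequality by induction on $\abs{A}$, the case $\abs{A}=0$ being trivial. For the inductive step, fix any $x\in A$, set $A':=A\setminus\{x\}$, and introduce two set systems on $A'$: $\mathcal T_1:=\{S\cap A' : S\in\mathcal T\}$ and $\mathcal T_2:=\{T\subseteq A' : T\in\mathcal T \text{ and } T\cup\{x\}\in\mathcal T\}$. A direct bookkeeping argument on the fibers of the restriction map $\mathcal T\to\mathcal T_1$, $S\mapsto S\cap A'$ (each fiber has size $1$ or $2$, with size $2$ precisely above $\mathcal T_2$) yields $\abs{\mathcal T}=\abs{\mathcal T_1}+\abs{\mathcal T_2}$.

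The heart of the argument is to exhibit an injection $\Sh(\mathcal T_1)\sqcup\Sh(\mathcal T_2)\hookrightarrow\Sh(\mathcal T)$ given by the identity on the first summand and by $B\mapsto B\cup\{x\}$ on the second. The two images are disjoint because every set in $\Sh(\mathcal T_1)$ lies in $A'$ (hence omits $x$), while every set in the second image contains $x$. The inclusion $\Sh(\mathcal T_1)\subseteq\Sh(\mathcal T)$ is immediate from the definition of $\mathcal T_1$ as a restriction of $\mathcal T$. The only step requiring a moment's thought is that $B\cup\{x\}\in\Sh(\mathcal T)$ whenever $B\in\Sh(\mathcal T_2)$: given $C'\subseteq B\cup\{x\}$, set $C:=C'\cap B$ and pick $T\in\mathcal T_2$ with $T\cap B=C$; then $T\in\mathcal T$ realizes $C'$ on $B\cup\{x\}$ if $x\notin C'$, whereas $T\cup\{x\}\in\mathcal T$ (by the defining condition of $\mathcal T_2$) realizes $C'$ if $x\in C'$. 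Combining, the inductive hypothesis applied to $\mathcal T_1$ and $\mathcal T_2$ on the $(n-1)$-element set $A'$ gives $\abs{\mathcal T}=\abs{\mathcal T_1}+\abs{\mathcal T_2}\leq\abs{\Sh(\mathcal T_1)}+\abs{\Sh(\mathcal T_2)}\leq\abs{\Sh(\mathcal T)}$, completing the induction.

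There is no real obstacle here since the argument is purely combinatorial, but care is needed in two places: the bookkeeping that yields $\abs{\mathcal T}=\abs{\mathcal T_1}+\abs{\mathcal T_2}$, and the disjointness of the two shattered families injected into $\Sh(\mathcal T)$, both of which depend on the asymmetric role of the distinguished element $x$. The inductive hypothesis being applied to the strengthened Pajor--Shelah statement (rather than directly to the binomial bound) is what makes the induction close cleanly, so the main conceptual point is to strengthen the statement being proved in the first place.
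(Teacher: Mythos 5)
The paper states this lemma without proof, attributing it to Sauer, Shelah, and (implicitly) Vapnik--Chervonenkis and citing the original articles; so there is no in-paper argument to compare against. Your proof is correct. You take the standard route through the Pajor--Shelah strengthening $\abs{\mathcal T}\leq\abs{\{B\subseteq A: B\text{ shattered by }\mathcal T\}}$ for finite set systems $(A,\mathcal T)$, and the Sauer--Shelah bound then drops out immediately: restricting $\mathcal S$ to an $n$-element set $A$ gives $\VC(\mathcal S\cap A)\leq\VC(\mathcal S)=d$ (any $B\subseteq A$ shattered by $\mathcal S\cap A$ is shattered by $\mathcal S$), so every shattered subset of $A$ has size at most $d$ and the count is at most ${n\choose \leq d}$. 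The bookkeeping $\abs{\mathcal T}=\abs{\mathcal T_1}+\abs{\mathcal T_2}$, the inclusion of shattered families, and the disjointness of the two images all check out; the verification that $B\cup\{x\}$ is shattered by $\mathcal T$ when $B$ is shattered by $\mathcal T_2$ is the only nontrivial step and you handle it correctly by splitting on whether $x\in C'$. Compared with the other common proof (induction directly on the bound ${n\choose\leq d}$ using Pascal's identity, applied to the trace $\mathcal T_1$ with $\VC\leq d$ and the ``link'' $\mathcal T_2$ with $\VC\leq d-1$), your formulation has the advantage that the induction never needs to track VC dimension: the strengthened statement is uniform in $\mathcal T$, and the dimension hypothesis enters exactly once, at the very end. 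This is precisely the structural point you flag in your last paragraph, and it is the right one.
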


If $n\geq d$, then ${n\choose\leq d}$ is bounded above by $(en/d)^d$ (where $e$ is the base of the natural logarithm).
In particular, either $\pi_{\mathcal S}(n)=2^n$ for every $n$ (if $\mathcal S$ is not a VC class), or $\pi_{\mathcal S}(n)=O(n^{d})$. One may now define the \emph{VC density $\vc(\mathcal S)$ of $\mathcal S$} as the infimum of all
real numbers $r>0$ such that $\pi_{\mathcal S}(n)=O(n^r)$, if there is such an $r$, and $\vc(\mathcal S):=\infty$ otherwise. That is,
$$\vc(\mathcal S)=\limsup_{n\to\infty} \frac{\log \pi_{\mathcal S}(n)}{\log n}.$$
We also define $\VC(\emptyset):=\vc(\emptyset):=-1$. 
Then $\vc(\mathcal S)\leq\VC(\mathcal S)$ by Lemma~\ref{lem:SS}, and
$\vc(\mathcal S)<\infty$ iff $\VC(\mathcal S)<\infty$. The VC density of $\mathcal S$ is also known as the real density \cite{Assouad} or the VC exponent \cite{BG} of $\mathcal S$. It is related to the combinatorial dimension of $\mathcal S$ introduced by Blei \cite{Assouad-2} and to compression schemes for $\mathcal S$ \cite{JL}.

\begin{example*}
Suppose $\mathcal S={X\choose \leq d}$. Then the inequality in the statement of Lemma~\ref{lem:SS} is an equality, and
$\VC(\mathcal S)=\vc(\mathcal S)=d$.
\end{example*}

\begin{example*}
Suppose $X=\bR^d$, and $\mathcal S$ is the collection of all closed affine half-spaces  in $\bR^d$, i.e., sets of the form $\{x\in\bR^d: \langle x,a\rangle\geq\beta\}$ where $a\in\bR^d$, $\beta\in\bR$, and $\langle\ , \ \rangle$ denotes the usual inner product on $\bR^d$. Then $\VC(\mathcal S)=d+1$.
(The proof of this fact is based on Radon's Theorem on convex sets; see \cite[Corollaire~3.5]{Assouad}.)
Moreover, $\vc(\mathcal S)=d$; in fact, 
$\pi_{\mathcal S}(n)=2\sum_{i=0}^d (-1)^{d-i} {n \choose \leq i}$ for every~$n$; see \cite[Theorem~3.1]{Edelsbrunner}. 
\end{example*}

\begin{example*}
Suppose $X=\bR$, $k\geq 1$, and let $\mathcal S$ be the collection whose members are the unions of $k$ disjoint (open) intervals in $\bR$. Then $\VC(\mathcal S)=\vc(\mathcal S)=2k$, in fact, $\pi_{\mathcal S}(n)={n\choose \leq 2k}$ for each $n$. (See \cite[Exercise~11, Chapter~4]{Dudley-uniform}.)
\end{example*}

In all three examples, $\pi_{\mathcal S}$ is actually given by a polynomial of degree $d=\vc(\mathcal S)$. It is worth pointing out that for a VC~class $\mathcal S$, 
in general $\pi_{\mathcal S}$ is not even asymptotic to a real power function; see Section~\ref{sec:not growing like a power} below.

\medskip

Clearly, $\VC$ and $\vc$ are increasing: if $\mathcal S\subseteq\mathcal T\subseteq 2^X$, then $\pi_{\mathcal S}\leq\pi_{\mathcal T}$ and so
$\VC(\mathcal S)\leq\VC(\mathcal T)$ and $\vc(\mathcal S)\leq\vc(\mathcal T)$. 
If $X'$ is an infinite subset of $X$ then $\pi_{\mathcal S\cap X'}\leq \pi_{\mathcal S}$; more generally (see \cite[Proposition~2.2]{Assouad}):

\begin{lemma}\label{lem:inverse image}
Let $X'$ be an infinite set and $f\colon X'\to X$ be a map, and let $f^{-1}(\mathcal S):=\{f^{-1}(S):S\in\mathcal S\}$. Then $\pi_{f^{-1}(\mathcal S)}\leq\pi_{\mathcal S}$, with equality if $f$ is surjective. In particular, 
$\VC(f^{-1}(\mathcal S))\leq \VC(\mathcal S)$ and $\vc(f^{-1}(\mathcal S))\leq\vc(\mathcal S)$, with equality if $f$ is surjective.
\end{lemma}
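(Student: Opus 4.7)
The plan is to prove the inequality of shatter functions pointwise; the bounds on $\VC$ and $\vc$ then follow directly from their definitions (Equation~\eqref{eq:VC} and the formula for $\vc$ as a $\limsup$), since both are monotone functionals of $\pi$.

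For the inequality $\pi_{f^{-1}(\mathcal S)}\leq \pi_{\mathcal S}$, I would fix an $n$-element subset $A'\subseteq X'$ and set $A:=f(A')\subseteq X$, so $|A|\leq n$. The key observation is that for every $S\in\mathcal S$,
\[
f^{-1}(S)\cap A' \ =\ \{a'\in A': f(a')\in S\}\ =\ \{a'\in A': f(a')\in S\cap A\},
\]
so $f^{-1}(S)\cap A'$ depends on $S$ only through $S\cap A$. This yields a well-defined surjection $\mathcal S\cap A\twoheadrightarrow f^{-1}(\mathcal S)\cap A'$ sending $T\mapsto \{a'\in A':f(a')\in T\}$, giving
\[
|f^{-1}(\mathcal S)\cap A'|\ \leq\ |\mathcal S\cap A|\ \leq\ \pi_{\mathcal S}(|A|)\ \leq\ \pi_{\mathcal S}(n),
\]
where the last inequality uses that $\pi_{\mathcal S}$ is non-decreasing (so we may harmlessly pad $A$ with additional points of $X$ if $|A|<n$, which is possible because $X$ is infinite). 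Taking the maximum over $A'\in\binom{X'}{n}$ yields $\pi_{f^{-1}(\mathcal S)}(n)\leq \pi_{\mathcal S}(n)$.

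For the reverse inequality when $f$ is surjective, I would start from an arbitrary $A\in\binom{X}{n}$ and choose, by surjectivity, a section over $A$, i.e., a set $A'\subseteq X'$ of size $n$ such that $f\!\upharpoonright\! A'\colon A'\to A$ is a bijection. Then the map $\mathcal S\cap A\to f^{-1}(\mathcal S)\cap A'$ described above becomes \emph{injective} as well (a subset $T\subseteq A$ is recovered from its preimage in $A'$ under the bijection $f\!\upharpoonright\! A'$), hence bijective. Thus $|\mathcal S\cap A|=|f^{-1}(\mathcal S)\cap A'|\leq \pi_{f^{-1}(\mathcal S)}(n)$, and taking the maximum over $A$ gives $\pi_{\mathcal S}(n)\leq \pi_{f^{-1}(\mathcal S)}(n)$.

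There is no real obstacle: the only thing to keep an eye on is that in the non-surjective case $|A|=|f(A')|$ can be strictly smaller than $n$, which is why one needs monotonicity of $\pi_{\mathcal S}$ (and, implicitly, that $X$ itself is infinite so that $\pi_{\mathcal S}$ is defined on all of $\mathbb N$). Everything else is bookkeeping about pulling back along $f$, and the $\VC$/$\vc$ consequences are immediate from the pointwise bound on the shatter functions.
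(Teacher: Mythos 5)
Your argument is correct and complete. The paper does not give a proof of this lemma --- it cites Assouad [Proposition 2.2] --- but your argument is the standard one: the pullback map $T\mapsto\{a'\in A':f(a')\in T\}$ from $\mathcal S\cap f(A')$ to $f^{-1}(\mathcal S)\cap A'$ is a well-defined surjection, yielding the inequality pointwise (using monotonicity of $\pi_{\mathcal S}$ when $|f(A')|<|A'|$), and when $f$ is surjective one lifts any $A\in\binom{X}{n}$ to a transversal $A'$ on which $f$ is injective, turning the map into a bijection and giving the reverse inequality; the $\VC$ and $\vc$ conclusions then follow immediately since both are monotone functionals of the shatter function.
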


It is easy to verify that  $\VC(\mathcal S)=0$ if and only if $\abs{\mathcal S}=1$, and $\vc(\mathcal S)=0$ if $\mathcal S$ is finite; in fact, the converse of the latter implication also holds: if $\vc(\mathcal S)<1$, then $\mathcal S$ is finite \cite[Proposition~2.19]{Assouad} (and hence actually $\vc(\mathcal S)=0$). It is also easy to verify (cf.~\cite[Proposition~2.4]{Assouad}) that if $\mathcal S_1$, $\mathcal S_2$ are subsets of $\mathcal S$ with $\mathcal S=\mathcal S_1\cup\mathcal S_2$, then
$\vc(\mathcal S) = \max\{\vc(\mathcal S_1),\vc(\mathcal S_2)\}$.
In particular, $\vc(\mathcal S)$ does not change if we alter finitely many sets from $\mathcal S$. 

\subsection{Independence dimension}\label{sec:IND}
Let $X$ be a set. 
Given subsets $A_1,\dots,A_n$ of $X$, we denote by $S(A_1,\dots,A_n)$ the set of atoms of the Boolean algebra of subsets of $X$ generated by $A_1,\dots,A_n$ (the ``non-empty fields in the Venn diagram of $A_1,\dots,A_n$''); that is, $S(A_1,\dots,A_n)$ is precisely the set of non-empty subsets of $X$ of the form
$$\bigcap_{i\in I} A_i \cap \bigcap_{i\in [n]\setminus I} X\setminus A_i\qquad\text{where $I\subseteq [n]=\{1,\dots,n\}$.}$$ 
Note that $S(A_1,\dots,A_n)$ does not depend on the particular order of the $A_i$, so sometimes we abuse notation and, e.g., write $S(A_i:i=1,\dots,n)$ instead of $S(A_1,\dots,A_n)$.
We have $0\leq \abs{S(A_1,\dots,A_n)}\leq 2^n$, and we say that the sequence $A_1,\dots,A_n$ is \emph{independent} (in $X$) if $\abs{S(A_1,\dots,A_n)}=2^n$, and call $A_1,\dots,A_n$ \emph{dependent} (in $X$) otherwise. 

Suppose now that $\mathcal S$ is a collection of subsets of $X$.
We define $\pi^*_{\mathcal S}\colon\bN\to\bN$ by
$$\pi^*_{\mathcal S}(n):=\max\big\{\abs{S(A_1,\dots,A_n)}:A_1,\dots,A_n\in\mathcal S \big\}.$$
Note that $0\leq \pi^*_{\mathcal S}(n)\leq 2^n$ for each $n$. 
We say that $\mathcal S$ is \emph{independent} (in $X$) if $\pi^*_{\mathcal S}(n)=2^n$ for every $n$, that is, if for every $n$ there is an independent sequence of elements of $\mathcal S$ of length $n$. Otherwise, we say that $\mathcal S$ is \emph{dependent} (in $X$). If $\mathcal S$ is dependent, we define the \emph{independence dimension $\IND(\mathcal S)$ of $\mathcal S$} as the largest $n$ such that $\pi^*_{\mathcal S}(n)=2^n$, and if $\mathcal S$ is independent, we set $\IND(\mathcal S)=\infty$.
If $\mathcal S$ is finite, then clearly $\IND(\mathcal S)\leq \abs{\mathcal S}$. 



\begin{example}\label{ex:dual VCdim at most 1}
$\IND(\mathcal S)\leq 1$ iff for all $S,S'\in\mathcal S$ one of the following relations holds:
$S\cap S'=\emptyset$, $S\subseteq S'$, $S'\subseteq S$, or $S\cup S'=X$.
\end{example}

The function $\pi^*_{\mathcal S}$ is called the \emph{dual shatter function of $\mathcal S$}, since (for infinite $\mathcal S$) one has
$\pi^*_{\mathcal S}=\pi_{\mathcal S^*}$ for a certain set system $\mathcal S^*$ on $X^*=\mathcal S$, called the \emph{dual}\/ of $\mathcal S$ (cf.~\cite[2.7--2.11]{Assouad} or \cite[Section~10.3]{Matousek-Book}). For the same reason, the independence dimension of $\mathcal S$ is sometimes also called the \emph{dual VC dimension}\/ of $\mathcal S$, denoted by $\VC^*(\mathcal S)$. 
The correspondence between $\mathcal S$ and $\mathcal S^*$ is explained in the following subsection. 

\subsection{VC duality}\label{sec:VC duality}
Let $X$ and $Y$ be infinite sets, and let $\Phi\subseteq X\times Y$. For $y\in Y$ we put
$$\Phi_y   := \{ x\in X: (x,y) \in\Phi \},$$
and we set $$\mathcal S_\Phi:=\{\Phi_y:y\in Y\}\subseteq 2^X.$$ 
We also write 
$$\Phi^*\subseteq Y\times X:=\big\{(y,x)\in Y\times X:(x,y)\in\Phi\big\}$$ 
for the dual of the binary relation $\Phi$.
In this way we obtain two set systems $(X,\mathcal S_{\Phi})$ and $(Y,\mathcal S_{\Phi^*})$. 
To simplify notation, we denote the shatter function of $\mathcal S_\Phi$ by $\pi_\Phi$, and its dual shatter function by $\pi^*_\Phi$; similarly for $\Phi^*$ in place of $\Phi$.
One verifies easily that given a finite set $A\subseteq X$, the assignment
$$A' \mapsto \bigcap_{x\in A'} \Phi^*_x \cap \bigcap_{x\in A\setminus A'} Y\setminus \Phi^*_x$$
defines a bijection $$\mathcal S_{\Phi}\cap A \to S(\Phi^*_x:x\in A).$$
This implies:

\begin{lemma}\label{lem:dual shatter}
$\pi_{\Phi} = \pi^*_{\Phi^*}$.
\end{lemma}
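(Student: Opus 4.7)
The plan is to combine the bijection stated just above the lemma with some bookkeeping about the definitions of $\pi$ and $\pi^*$. First I would take cardinalities in the bijection $\mathcal S_\Phi \cap A \to S(\Phi^*_x : x \in A)$, valid for any finite $A \subseteq X$, obtaining $\abs{\mathcal S_\Phi \cap A} = \abs{S(\Phi^*_x : x \in A)}$. Then I would maximize both sides over $A \in {X\choose n}$: on the left this yields exactly $\pi_\Phi(n)$ by definition of the shatter function.

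The only remaining step is to identify the resulting right-hand maximum with $\pi^*_{\Phi^*}(n)$. By definition, $\pi^*_{\Phi^*}(n) = \max\{\abs{S(\Phi^*_{x_1}, \dots, \Phi^*_{x_n})} : x_1, \dots, x_n \in X\}$, since the elements of $\mathcal S_{\Phi^*}$ are precisely the sets $\Phi^*_x$ for $x \in X$. Duplicates among the $x_i$ leave the generated Boolean algebra (and hence $S(\cdot)$) unchanged, so we may replace any such tuple with the underlying set $A = \{x_1, \dots, x_n\} \subseteq X$; and if $\abs{A} < n$, we may enlarge $A$ to an $n$-element subset of $X$ (using that $X$ is infinite), which only refines the Venn diagram and therefore cannot decrease the number of non-empty atoms. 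Combining these observations yields $\pi^*_{\Phi^*}(n) = \max_{A \in {X\choose n}} \abs{S(\Phi^*_x : x \in A)}$, which by the bijection equals $\pi_\Phi(n)$. I expect no real obstacle here: the lemma is essentially a bookkeeping consequence of the bijection already written down in the text, with the only mild subtlety being the passage from tuples (allowed in the definition of $\pi^*$) to $n$-element subsets of $X$.
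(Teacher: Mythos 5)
Your proposal is correct and is exactly the argument the paper intends: the paper states the bijection $\mathcal S_{\Phi}\cap A \to S(\Phi^*_x:x\in A)$ and then simply writes ``This implies'' before the lemma, so your fleshing-out of the cardinality and maximization bookkeeping (including the passage from tuples to $n$-element subsets of the infinite set $X$) is precisely the routine verification the authors leave to the reader.
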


We set $\VC(\Phi):=\VC(\mathcal S_\Phi)$, and similarly with $\IND$ and $\vc$ in place of $\VC$.
By the previous lemma, $\VC(\Phi)=\IND(\Phi^*)$, hence
$\mathcal S_\Phi$ is a VC class iff $\mathcal S_{\Phi^*}$ is dependent.
Reversing the role of $\Phi$ and $\Phi^*$ also yields $ \pi_{\Phi^*}=\pi^*_{\Phi}$, hence $\VC(\Phi^*)=\IND(\Phi)$, and $\mathcal S_{\Phi^*}$ is a VC class iff $\mathcal S_{\Phi}$ is dependent. 
The following is also well-known (see, e.g., \cite[2.13~b)]{Assouad}):

\begin{lemma}\label{lem:dual shatter, 2}
$\VC(\Phi)<2^{1+\VC(\Phi^*)}$. \textup{(}In particular $\mathcal S_\Phi$ is a VC class iff $\mathcal S_{\Phi^*}$ is a VC class.\textup{)}
\end{lemma}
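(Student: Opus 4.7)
I would prove the lemma by contradiction: set $d^* := \VC(\Phi^*)$ and $n := 2^{1+d^*}$, and assume $\VC(\Phi) \geq n$. The goal is to exhibit a $(d^*+1)$-subset of $Y$ that is shattered by $\mathcal S_{\Phi^*}$, contradicting the definition of $d^*$.

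First I would unpack the hypothesis. Fix $A = \{x_1,\dots,x_n\}\subseteq X$ shattered by $\mathcal S_\Phi$. Using $y\in\Phi^*_x\Leftrightarrow x\in\Phi_y$, shattering means that for each $I\subseteq[n]$ we may choose $y_I\in Y$ with $y_I\in\Phi^*_{x_i}\Leftrightarrow i\in I$; equivalently, the $n$ sets $\Phi^*_{x_1},\dots,\Phi^*_{x_n}$ have a full Venn diagram in $Y$, and the $y_I$ are pairwise distinct because $y_I$ determines $I$.

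Now comes the combinatorial heart of the argument. Set $k := d^*+1$, so that $n = 2^k$. Fix a bijection $\sigma\colon[n]\to\{0,1\}^k$ and define $I_j := \{i\in[n]:\sigma(i)_j=1\}\subseteq[n]$ for each $j\in[k]$. I claim $B := \{y_{I_1},\dots,y_{I_k}\}$ is a $k$-element subset of $Y$ shattered by the subfamily $\{\Phi^*_{x_i}:i\in[n]\}$ of $\mathcal S_{\Phi^*}$. Indeed, given $J\subseteq[k]$ with indicator vector $w_J\in\{0,1\}^k$, setting $i:=\sigma^{-1}(w_J)\in[n]$ yields, for every $j\in[k]$, the equivalences $y_{I_j}\in\Phi^*_{x_i} \Leftrightarrow i\in I_j \Leftrightarrow \sigma(i)_j=1 \Leftrightarrow j\in J$, so $\Phi^*_{x_i}\cap B = \{y_{I_j}:j\in J\}$ as required. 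Since $k\geq 1$, the columns of the matrix representation of $\sigma$ are pairwise distinct, so the $I_j$ are distinct, and hence so are the $y_{I_j}$, giving $|B| = k = d^*+1$. This contradicts $\VC(\Phi^*)=d^*$ and proves the lemma.

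I don't anticipate any serious obstacle: the content is the elementary observation that a full Venn diagram on $n$ sets ``contains'' a shattering of a set of size $\log_2 n$, which boils down to the cardinality $|\{0,1\}^k|=2^k$ and a single clever reindexing. The choice $n=2^{1+d^*}$ with $k=d^*+1$ matches the counts exactly, which is also why the exponent in the bound is $1+\VC(\Phi^*)$ and cannot be improved by this argument.
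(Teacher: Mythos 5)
Your proof is correct, and it is the standard argument for this bound (the paper itself gives no proof, deferring to Assouad~[2.13~b)]): from a set $A$ of size $2^k$ shattered by $\mathcal S_\Phi$ one extracts, via the bijection $[2^k]\to\{0,1\}^k$, a $k$-element set of ``witnesses'' $y_I$ that is shattered by $\{\Phi^*_x:x\in A\}\subseteq\mathcal S_{\Phi^*}$. The two small points you handle are exactly the ones that need handling: the $y_I$ are pairwise distinct because each $y_I$ recovers $I$ via $I=\{i:y_I\in\Phi^*_{x_i}\}$, and the columns $I_1,\dots,I_k$ are pairwise distinct because any two distinct coordinates of $\{0,1\}^k$ are separated by some vector. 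Note that since $X$ and $Y$ are taken infinite in this section, $\VC(\Phi^*)\geq 0$, so $k=d^*+1\geq 1$ and the construction is non-vacuous; and in the degenerate case $\VC(\Phi^*)=\infty$ the inequality is read as saying nothing.
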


\begin{example}\label{ex:dual shatter, 2}
Suppose $\mathcal S_\Phi$ is finite (i.e., $\vc(\Phi)=0$). Then $\mathcal S_{\Phi^*}$ is also finite. (Take $y_1,\dots,y_N\in Y$, where $N=\abs{\mathcal S_\Phi}$, such that $\mathcal S_\Phi=\{\Phi_{y_1},\dots,\Phi_{y_N}\}$.
Let $X_i=\Phi_{y_i}$ and $Y_i=\{y\in Y: \Phi_y=\Phi_{y_i}\}$ for $i\in [N]$; thus $\Phi=X_1\times Y_1\cup\cdots\cup X_N\times Y_N$. Hence for each $x\in X$, $\Phi^*_x$ is a union of $Y_1,\dots,Y_N$, and so there are only finitely many choices for $\Phi^*_x$. Thus $\mathcal S_{\Phi^*}$ is also finite, of size at most $2^N$.)
\end{example}

Clearly every infinite set system $\mathcal S$ on $X$ is of the form $\mathcal S=\mathcal S_{\Phi}$ for some infinite set~$Y$ and some binary relation $\Phi \subseteq X\times Y$: just take $Y=\mathcal S$, $\Phi=\{(x,S):x\in S,\ S\in\mathcal S\}$. The resulting set system $\mathcal S_{\Phi^*}$ on $Y=\mathcal S$ is called the dual $\mathcal S^*$ of $\mathcal S$ in \cite[Section~10.3]{Matousek-Book}. By the above $\VC(\mathcal S^*)=\VC^*(\mathcal S)$.
If $\mathcal S$ is a dependent infinite set system on $X$, then by Lemmas~\ref{lem:SS} and \ref{lem:dual shatter},  there is a real number $r\geq 0$ such that $\pi^*_{\mathcal S}=O(n^r)$, and the infimum of all such $r$ is called the \emph{dual VC~density} of $\mathcal S$, denoted by $\vc^*(\mathcal S)$; note that $\vc(\mathcal S^*)=\vc^*(\mathcal S)$ and $\vc^*(\mathcal S)\leq\VC^*(\mathcal S)$.

\medskip

Given $\Phi\subseteq X\times Y$ we write $\neg\Phi$ for the relative complement $(X\times Y)\setminus \Phi$ of $\Phi$ in~$X\times Y$. We clearly have $\pi^*_{\neg\Phi}=\pi^*_{\Phi}$. It is also easy to show that given $\Phi,\Psi\subseteq X\times Y$ we have $\pi^*_{\Phi\cup\Psi} \leq \pi^*_{\Phi}\cdot\pi^*_{\Psi}$ and hence (using complementation) 
$\pi^*_{\Phi\cap\Psi} \leq \pi^*_{\Phi}\cdot\pi^*_{\Psi}$. By passing to duals and Lemma~\ref{lem:dual shatter},
this yields:

\begin{lemma} \label{lem:vc for boolean combinations}
Let $\Phi,\Psi\subseteq X\times Y$. Then
$$\vc(\neg\Phi)=\vc(\Phi), \quad
  \vc(\Phi\cup\Psi)\leq\vc(\Phi)+\vc(\Psi), \quad
  \vc(\Phi\cap\Psi)\leq\vc(\Phi)+\vc(\Psi).$$
\end{lemma}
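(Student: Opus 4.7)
The plan is to deduce the three $\vc$ claims from the multiplicative inequalities for the dual shatter functions (asserted in the paragraph preceding the lemma), and then to transfer them to $\pi_\Phi$ via Lemma~\ref{lem:dual shatter}.

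First I would verify the bound $\pi^*_{\Phi\cup\Psi}(n) \leq \pi^*_\Phi(n)\cdot\pi^*_\Psi(n)$. Fix $y_1,\dots,y_n \in Y$. Since $(\Phi\cup\Psi)_{y_i}=\Phi_{y_i}\cup\Psi_{y_i}$, each atom of the Boolean algebra generated by the sets $(\Phi\cup\Psi)_{y_i}$ is a union of atoms of the (finer) Boolean algebra $\mathcal B$ generated by $\Phi_{y_1},\dots,\Phi_{y_n},\Psi_{y_1},\dots,\Psi_{y_n}$. Each atom of $\mathcal B$, in turn, is the intersection of an atom of the algebra generated by the $\Phi_{y_i}$ with an atom of the algebra generated by the $\Psi_{y_i}$, giving the product bound. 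The equality $\pi^*_{\neg\Phi}=\pi^*_\Phi$ is immediate because replacing the generators of a finite Boolean algebra by their complements does not change the algebra (hence not the number of atoms); combining with $\Phi\cap\Psi=\neg(\neg\Phi\cup\neg\Psi)$ then yields $\pi^*_{\Phi\cap\Psi}(n)\leq\pi^*_\Phi(n)\cdot\pi^*_\Psi(n)$.

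Next I would transfer the inequalities across VC~duality. A direct check from the definition $\Phi^*=\{(y,x):(x,y)\in\Phi\}$ shows $(\neg\Phi)^*=\neg(\Phi^*)$, $(\Phi\cup\Psi)^*=\Phi^*\cup\Psi^*$, and $(\Phi\cap\Psi)^*=\Phi^*\cap\Psi^*$. Applying the dual-shatter inequalities just proved with $\Phi^*,\Psi^*$ in place of $\Phi,\Psi$, and then invoking Lemma~\ref{lem:dual shatter} to rewrite $\pi^*_{\Phi^*}=\pi_\Phi$ (and analogously for the other terms), yields $\pi_{\neg\Phi}=\pi_\Phi$ together with $\pi_{\Phi\cup\Psi}(n)\leq\pi_\Phi(n)\cdot\pi_\Psi(n)$ and $\pi_{\Phi\cap\Psi}(n)\leq\pi_\Phi(n)\cdot\pi_\Psi(n)$.

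Finally, the $\vc$ bounds fall out of the definition: if $\pi_\Phi(n)=O(n^r)$ and $\pi_\Psi(n)=O(n^s)$, then $\pi_\Phi(n)\cdot\pi_\Psi(n)=O(n^{r+s})$, so taking infima over $r>\vc(\Phi)$ and $s>\vc(\Psi)$ gives $\vc(\Phi\cup\Psi)\leq\vc(\Phi)+\vc(\Psi)$ and likewise for $\cap$; equality for negation is immediate from $\pi_{\neg\Phi}=\pi_\Phi$. There is no real obstacle here; the only point requiring care is the clean interaction between the $(\cdot)^*$ operation and the Boolean operations $\neg,\cup,\cap$, so that the dual-side inequalities pull back correctly to shatter functions on the object side.
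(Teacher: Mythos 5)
Your proposal is correct and follows the paper's own argument exactly: establish the multiplicative bounds for the dual shatter function via a refinement-of-atoms argument and De Morgan, then transfer them through $\pi_\Phi=\pi^*_{\Phi^*}$ (Lemma~\ref{lem:dual shatter}) using the compatibility of $(\cdot)^*$ with Boolean operations, and finally pass to $\vc$ by taking infima. The paper leaves these same steps implicit (``easy to show'' / ``by passing to duals''), and you have simply supplied the details.
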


VC dimension does not satisfy a similar subadditivity property for unions and intersections (cf.~\cite[Proposition~9.2.8]{Dudley}).
In this way, VC density is better behaved than VC dimension.


\medskip

An important class of relations $\Phi$ such that the associated set system $\mathcal S_\Phi$ is dependent are the stable ones. An \emph{$n$-ladder} for $\Phi$ is a $2n$-tuple $(a_1,\dots,a_n,b_1,\dots,b_n)$ where each $a_i\in X$ and each $b_j\in Y$, such that for all $i,j\in [n]$,
$$(a_i,b_j)\in \Phi \qquad\Longleftrightarrow\qquad i\leq j.$$
If there is an $n$ such that there is no $n$-ladder for $\Phi$, then $\Phi$ is called \emph{stable}, and $\Phi$ is said to be \emph{unstable} otherwise. 
If $\Phi$ is stable then the largest $n$ such that an $n$-ladder for $\Phi$ exists is called the \emph{ladder dimension} of $\Phi$; if $\Phi$ is unstable then we say that the ladder dimension of $\Phi$ is infinite. 
Clearly if $\Phi$ is stable then $\mathcal S_{\Phi}$ is a VC~class (with VC~dimension bounded by the ladder dimension).
It is well-known that $\Phi$ is stable iff $\Phi^*$ is stable (e.g, \cite[Exercise~II.2.8]{Shelah-book}), and that Boolean combinations of stable relations are stable. 

\subsection{Breadth}\label{sec:breadth}
In many cases of interest complicated set systems are generated by simpler collections of subsets, and then
the following lemma (essentially due to Dudley) can be used to show that the resulting set system is dependent. For this let $X$ be a set and~$\mathcal B$ be a collection of subsets of $X$.

\begin{lemma}
Let $N>0$ and suppose $\mathcal S$ is a set system on $X$ such that  each set in $\mathcal S$ is a Boolean combination of at most $N$ sets in $\mathcal B$. Then $\pi^*_{\mathcal S}(t)\leq \pi^*_{\mathcal B}(Nt)$ for each $t$. \textup{(}In particular, if $\mathcal B$ is dependent then so is $\mathcal S$.\textup{)}
\end{lemma}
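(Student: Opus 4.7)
The plan is to proceed directly from the definition of the dual shatter function. Fix $t \in \bN$ and pick $S_1,\dots,S_t \in \mathcal S$ realizing $\pi^*_{\mathcal S}(t) = \abs{S(S_1,\dots,S_t)}$. By hypothesis, for each $i \in [t]$ there exist sets $B_{i,1},\dots,B_{i,N_i} \in \mathcal B$ with $N_i \leq N$ such that $S_i$ is a Boolean combination of $B_{i,1},\dots,B_{i,N_i}$.

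The key observation is the general monotonicity fact: if $\mathcal A \subseteq \mathcal A'$ are two finite subcollections of $2^X$, then every atom of the Boolean algebra generated by $\mathcal A$ is a disjoint union of atoms of the Boolean algebra generated by $\mathcal A'$, so $\abs{S(\mathcal A)} \leq \abs{S(\mathcal A')}$. I would state this as a short separate observation (it is just the remark that a coarser partition has at most as many blocks as the finer one refining it).

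Now let $\mathcal B' := \{B_{i,j} : i \in [t],\ j \in [N_i]\} \subseteq \mathcal B$, a collection of at most $Nt$ sets. Each $S_i$ lies in the Boolean algebra generated by $\mathcal B'$, so the Boolean algebra generated by $\{S_1,\dots,S_t\}$ is contained in the Boolean algebra generated by $\mathcal B'$. Applying the monotonicity observation gives
\[
\pi^*_{\mathcal S}(t) \;=\; \abs{S(S_1,\dots,S_t)} \;\leq\; \abs{S(\mathcal B')} \;\leq\; \pi^*_{\mathcal B}(\abs{\mathcal B'}) \;\leq\; \pi^*_{\mathcal B}(Nt),
\]
where the last inequality uses that $\pi^*_{\mathcal B}$ is non-decreasing (take a superset of $\mathcal B'$ in $\mathcal B$ of size exactly $Nt$ if needed, which is possible provided $\mathcal B$ is large enough; if not, we just use the largest available, and the bound still holds with at most $Nt$).

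The parenthetical remark about dependence is immediate: if $\mathcal B$ is dependent then $\pi^*_{\mathcal B}(n) < 2^n$ for some $n$ and hence, by Lemma~\ref{lem:SS} applied to the dual set system (or directly to $\pi^*_{\mathcal B}$), $\pi^*_{\mathcal B}$ grows only polynomially; so $\pi^*_{\mathcal S}(t) \leq \pi^*_{\mathcal B}(Nt) = O(t^d)$ for some $d$, which is eventually less than $2^t$, so $\mathcal S$ is also dependent. There is no real obstacle here; the only thing to be careful about is the monotonicity step $\abs{S(\mathcal A)} \leq \abs{S(\mathcal A')}$ when $\mathcal A \subseteq \mathcal A'$, which should be stated explicitly to justify the chain of inequalities.
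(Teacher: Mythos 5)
Your proof is correct and follows the same route as the paper: express each $S_i$ as a Boolean combination of at most $N$ sets from $\mathcal B$, collect these into a family $\mathcal B'$ of at most $Nt$ sets, note that the Boolean algebra generated by the $S_i$ sits inside the one generated by $\mathcal B'$, and conclude that each atom of the former contains an atom of the latter. The only nitpick is that your "monotonicity observation" is phrased for $\mathcal A\subseteq\mathcal A'$ but you actually apply it under the weaker (and correct) hypothesis that the Boolean algebra generated by $\mathcal A$ is contained in the one generated by $\mathcal A'$; also, the last worry about whether $\mathcal B$ has $Nt$ elements is unneeded, since the dual shatter function is non-decreasing by definition (sequences may repeat elements).
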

\begin{proof}
Let $A_1,\dots,A_t\in\mathcal S$, and let each $A_i$ be a Boolean combination of the sets $B_{i1},\dots,B_{iN}\in\mathcal B$. Then the Boolean algebra of subsets of $X$ generated by the sets $A_i$ ($i\in [t]$) is contained in the Boolean algebra generated by the sets $B_{ij}$ ($i\in [t]$, $j\in [N]$), and every atom of the former Boolean algebra contains an atom of the latter.
\end{proof}

 Suppose there is a $d>0$ such that every non-empty intersection  $B_1\cap\cdots\cap B_{n}$ of $n>d$ sets from $\mathcal B$ equals an intersection of a subset consisting of $d$ of the $B_i$. We call the smallest such integer $d>0$ the \emph{breadth} of $\mathcal B$. 
This choice of terminology is motivated by lattice theory: Given a (meet-) semilattice $(L,{\wedge})$,
the smallest $d>0$ (if it exists) such that any meet $b_1\wedge\cdots\wedge b_n$ of $n>d$ elements of $L$ equals the meet of $d$ of the $b_i$ is called the \emph{breadth} of $L$; if there is no such $d$ we say that $L$ has infinite breadth. (See \cite[Section~II.5, Exercise~6, and Section~IV.10]{Birkhoff}.)
So if $\mathcal B$ is closed under (finite) intersection and only contains non-empty subsets of $X$, then the breadth of $\mathcal B$, viewed as a sub-semilattice of $(2^X,{\cap})$, agrees with the breadth of $\mathcal B$ as defined above.
Every set system of finite breadth is dependent:

\begin{lemma}\label{lem:breadth and IND}
$\breadth(\mathcal B)\geq \IND(\mathcal B)$. 
\end{lemma}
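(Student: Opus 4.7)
My plan is to prove the contrapositive in effect: if $\mathcal B$ admits an independent family of size $n$, then no integer $d<n$ can serve as a breadth, so $\breadth(\mathcal B)\ge n$. Letting $n=\IND(\mathcal B)$ (and disposing of the trivial cases $n\le 0$), I would pick an independent sequence $B_1,\dots,B_n\in\mathcal B$ and focus attention on the single atom $C:=B_1\cap\cdots\cap B_n$, which is non-empty precisely because the family is independent.

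The key observation I would then make is that $C$ cannot coincide with any intersection $\bigcap_{i\in J}B_i$ for a proper subset $J\subsetneq[n]$. Indeed, fix $j\in[n]\setminus J$; by independence the atom corresponding to $I=[n]\setminus\{j\}$ is non-empty, so there is some $x\in\bigcap_{i\ne j}B_i$ with $x\notin B_j$. Such an $x$ lies in $\bigcap_{i\in J}B_i$ but not in $C$, contradicting equality. Therefore the non-empty intersection $B_1\cap\cdots\cap B_n$ of $n$ sets from $\mathcal B$ is not equal to any intersection of fewer than $n$ of the $B_i$, so by the definition of breadth we conclude $\breadth(\mathcal B)\ge n$.

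Finally I would note that the case $\IND(\mathcal B)=\infty$ is handled by the same argument applied to independent families of arbitrary length $n$, yielding $\breadth(\mathcal B)=\infty$. There is essentially no obstacle here: the proof is a direct unwrapping of the two definitions, and the only thing to get right is the bookkeeping around the atom of the Venn diagram that witnesses the failure of each proper sub-intersection. I would keep the write-up to a few lines.
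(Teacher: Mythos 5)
Your proof is correct and is essentially the same argument as the paper's: both isolate an independent family $B_1,\dots,B_n$, pick an index $j$ outside a candidate reducing subset, and use non-emptiness of the atom $\bigcap_{i\neq j}B_i\setminus B_j$ to rule out the reduction. The only difference is cosmetic — you argue directly that no proper sub-intersection equals $C$, while the paper phrases the same step as a contradiction with $\IND(\mathcal B)=n$.
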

\begin{proof}
Suppose $d:=\breadth(\mathcal B)<n:=\IND(\mathcal B)$. Let $B_1,\dots,B_n\in\mathcal B$ such that $\abs{S(B_1,\dots,B_n)}=2^n$. Choose $I\subseteq [n]$ with $\abs{I}=d$ and $\bigcap_{i\in I} B_i=\bigcap_{i\in [n]} B_i$, and take $j\in [n]\setminus I$. Then $\bigcap_{i\in [n]\setminus\{ j \}} B_i=\bigcap_{i\in [n]} B_i$ and hence
$(X\setminus B_j)\cap\bigcap_{i\in [n]\setminus\{ j \}}  B_i = \emptyset$,
contradicting $\IND(\mathcal B)=n$.
\end{proof}

The previous two lemmas in combination with Lemma~\ref{lem:SS} immediately yield the following useful fact (cf.~\cite[Chapter~5, Lemma~2.6]{vdDries-Tame}):

\begin{corollary}\label{cor:vdDries}
Suppose $\mathcal B$ has breadth $d$, let $N>0$, and let $\mathcal S$ be a set system on $X$ with the property that each set in $\mathcal S$ is a Boolean combination of at most $N$ sets in $\mathcal B$. Then 
$$\pi^*_{\mathcal S}(t) \leq \sum_{i=0}^{d} {Nt\choose i}\qquad\text{for every $t$.}$$ 
In particular, $\pi^*_{\mathcal S}(t)=O(t^{d})$ and hence $\vc^*(\mathcal S)\leq d$. 
\end{corollary}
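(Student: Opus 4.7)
The plan is to chain together the three results that immediately precede the corollary: the Boolean-combination lemma, Lemma~\ref{lem:breadth and IND}, and the Sauer--Shelah lemma (Lemma~\ref{lem:SS}) applied via VC~duality.

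First I would apply the Boolean-combination lemma directly to the hypothesis on $\mathcal S$ to conclude $\pi^*_{\mathcal S}(t)\leq \pi^*_{\mathcal B}(Nt)$ for every $t$. Next, Lemma~\ref{lem:breadth and IND} gives $\IND(\mathcal B)\leq \breadth(\mathcal B)=d$. Interpreting the dual shatter function as an ordinary shatter function (as discussed in Section~\ref{sec:VC duality}: one may write $\mathcal B=\mathcal S_\Phi$ for a suitable relation $\Phi$, and then $\pi^*_{\mathcal B}=\pi_{\Phi^*}=\pi_{\mathcal S_{\Phi^*}}$ with $\VC(\mathcal S_{\Phi^*})=\IND(\mathcal B)\leq d$ by Lemma~\ref{lem:dual shatter}), the Sauer--Shelah lemma yields $\pi^*_{\mathcal B}(n)\leq\binom{n}{\leq d}$ for every $n$.

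Putting these together,
$$\pi^*_{\mathcal S}(t)\ \leq\ \pi^*_{\mathcal B}(Nt)\ \leq\ \binom{Nt}{\leq d}\ =\ \sum_{i=0}^{d}\binom{Nt}{i},$$
which is the desired inequality. For the final assertion, note that for fixed $N$ and $d$ the right-hand side is a polynomial in $t$ of degree $d$, so $\pi^*_{\mathcal S}(t)=O(t^{d})$, and consequently $\vc^*(\mathcal S)=\vc(\mathcal S^*)\leq d$ by the definition of (dual) VC~density. There is no real obstacle here: the statement is essentially the conjunction of the preceding three facts, and the only mildly nontrivial point is remembering to pass from $\pi^*_{\mathcal B}$ to the shatter function of the dual set system in order to legitimately invoke Sauer--Shelah.
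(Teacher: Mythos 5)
Your proof is correct and takes exactly the route the paper intends: combine the lemma bounding $\pi^*_{\mathcal S}$ by $\pi^*_{\mathcal B}(Nt)$, Lemma~\ref{lem:breadth and IND}, and Sauer--Shelah via VC~duality. You even spell out the duality step that the paper leaves implicit in the phrase ``in combination with Lemma~\ref{lem:SS},'' which is the only point requiring care.
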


\begin{example}\label{ex:convex}
Let $<$ be a linear ordering on $X$. We first recall some terminology: A subset $S$ of $X$ is said to be {\em convex}\/ (with respect to $<$) if for all $s,s'\in S$ and $x\in X$ the implication $s<x<s'\Rightarrow x\in S$ holds. So $\emptyset$ and singleton subsets are convex, as are intervals in $X$. Here and in the rest of the paper,
an {\it interval}\/ in $X$ is a subset of the form $$(a,b):=\{x\in X:a<x<b\}$$ where $a$, $b$ are elements of  $X\cup\{\pm\infty\}$ with $a<b$. 
Other examples of a convex subset of $X$ are its initial segments: a subset $S$ of $X$ is an {\em initial segment}\/ of $X$ if for all $s\in S$ and $x\in X$, the implication $x<s\Rightarrow x\in S$ holds. 
Now let $\mathcal S$ be the family of unions of at most $N$ convex subsets of $X$, for some given $N\in\bN$, and let 
$\mathcal B$ be the collection of all initial segments of $X$. Then $\mathcal B$ has breadth $1$, and every set in $\mathcal S$ is a Boolean combination of at most $2N$ sets in $\mathcal B$. Thus $\pi^*_{\mathcal S}(t)=O(t)$ by Corollary~\ref{cor:vdDries}.
\end{example}

\begin{example}\label{ex:balls}
Let $K$ be a field and $v\colon K\to \Gamma_\infty:=\Gamma\cup\{\infty\}$ be a valuation on $K$. By an \emph{open ball} in $K$ we mean any subset of $K$ of the form $\{x\in K:v(x-a)>\gamma\}$ where $a\in K$, $\gamma\in\Gamma_\infty$; similarly 
a set of the form $\{x\in K:v(x-a)\geq\gamma\}$ is called a \emph{closed ball} in $K$. A \emph{ball} in $K$ is an open or a closed ball in $K$.
Any two given balls in $K$ are either disjoint, or one contains the other.
Hence the collection $\mathcal B$ of balls in a given valued field has breadth $1$. Thus if $\mathcal S$ is the family of all Boolean combinations of at most $N$ balls in $K$, for some $N\in\bN$, then $\pi^*_{\mathcal S}(t)=O(t)$.
\end{example}

The preceding examples can be subsumed  under the following general example (inspired by~\cite{Adler-VCmin}):

\begin{example}\label{ex:VC-codim}
A family $\mathcal B$ of subsets of $X$ is said to be \emph{directed}\/ if $\mathcal B$ has breadth~$1$; i.e., for all $B,B'\in\mathcal B$ with $B\cap B'\neq\emptyset$ one has $B\subseteq B'$ or $B'\subseteq B$. If  $\mathcal B\subseteq 2^X$ is directed and $\mathcal S$ is the family of Boolean combinations of at most $N$ sets in $\mathcal B$, for some
$N\in\bN$, then $\pi^*_{\mathcal S}(t)=O(t)$.
\end{example}

We also note:

\begin{example}\label{ex:cosets}
Let $G$ be a group and let $\mathcal H$ be a collection of subgroups of $G$ with breadth~$d$.  Let $\mathcal B=\{gH:g\in G,H\in\mathcal H\}$ be the set of all (left) cosets of subgroups from $\mathcal H$. Then $\mathcal B$ also has breadth $d$. 
This follows from the general fact that if $H_1,\dots,H_n$ are subgroups of $G$, $g_1,\dots,g_n\in G$,  then  the intersection $\bigcap_{i\in [n]} g_iH_i$ is either empty or a coset of $\bigcap_{i\in [n]} H_i$. (So  if $\mathcal S$ is a family of Boolean combinations of at most $N$ elements of $\mathcal B$, for some $N\in\bN$, then $\pi^*_{\mathcal S}(t)=O(t^d)$.)
\end{example}

In connection with the previous example it is worth recording:

\begin{lemma}[Poizat]
Let $G$ be a group and let $\mathcal H$ be a collection of subgroups of $G$. Then $\breadth(\mathcal H)=\IND(\mathcal H)$.
\end{lemma}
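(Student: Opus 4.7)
The plan is to establish the reverse inequality to Lemma~\ref{lem:breadth and IND}, i.e., $\IND(\mathcal H) \geq \breadth(\mathcal H)$. Given a positive integer $d \leq \breadth(\mathcal H)$, I aim to produce an independent sequence of length $\geq d$ from $\mathcal H$, which will force $\IND(\mathcal H) \geq d$. The case $\breadth(\mathcal H) = \infty$ then follows by letting $d$ be arbitrary.

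First, by unfolding the definition of breadth, I would pick $H_1, \ldots, H_n \in \mathcal H$ with $n \geq d$ chosen minimal such that $\bigcap_{i=1}^{n} H_i$ does not equal any intersection of $d-1$ of the $H_i$; the minimality of $n$ forces that removing any single $H_j$ from this list strictly enlarges the intersection. For each $j \in [n]$, fix an element
$$g_j \in \bigcap_{i \neq j} H_i \setminus H_j,$$
and for each $I \subseteq [n]$ define
$$x_I := \prod_{j \in [n] \setminus I} g_j$$
in some fixed order (with $x_{[n]} := e$). The key verification is that $x_I \in H_i$ iff $i \in I$: if $i \in I$, every factor of the product satisfies $j \neq i$ and so lies in $H_i$, hence so does $x_I$; if $i \notin I$, then $g_i$ appears as a factor and every other factor lies in $H_i$, so the product has the form $a g_i b$ with $a, b \in H_i$, whence $x_I \in H_i$ would force $g_i \in H_i$, a contradiction. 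Consequently the elements $\{x_I : I \subseteq [n]\}$ witness $\abs{S(H_1, \ldots, H_n)} = 2^n$, yielding $\IND(\mathcal H) \geq n \geq d$.

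The main delicacy is the non-commutativity of $G$: in a non-abelian setting the product $\prod_{j \in [n]\setminus I} g_j$ depends on the chosen ordering, and one might worry that no single ordering works simultaneously for every $i \notin I$. This apparent obstacle dissolves via the elementary fact that, for any subgroup $H \leq G$, any $a, b \in H$, and any $g \in G$, one has $agb \in H \iff g \in H$ (as $H$ is closed under multiplication and inversion). Consequently, regardless of the chosen ordering and regardless of how many $H_i$-factors precede or follow $g_i$ inside $x_I$, one gets $x_I \in H_i \iff g_i \in H_i$ whenever $i \notin I$, so the argument carries through without any normality, abelianness, or finiteness hypothesis on the subgroups in $\mathcal H$.
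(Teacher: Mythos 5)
Your proof is correct and takes essentially the same approach as the paper: starting from an intersection of $n \geq d$ subgroups where removing any single one strictly enlarges the intersection, choosing $g_j \in \bigcap_{i\neq j} H_i \setminus H_j$, and forming products indexed by subsets of $[n]$ to realize all $2^n$ atoms, with the key observation that $agb \in H \Leftrightarrow g \in H$ for $a,b \in H$. The paper phrases the argument as a proof by contradiction and is a bit more terse about both obtaining the minimal witnessing family and about why noncommutativity causes no difficulty, but the underlying construction is identical.
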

\begin{proof}
By Lemma~\ref{lem:breadth and IND} we already know that $\breadth(\mathcal H)\geq\IND(\mathcal H)$. Suppose this inequality is strict. Then there are $H_1,\dots,H_{n+1}\in\mathcal H$, where $n=\IND(\mathcal H)$, such that $\bigcap_{i\in [n+1]}\neq\bigcap_{i\in [n+1]\setminus\{j\}} H_i$ for each $j\in [n+1]$. So for each $j\in [n+1]$ we may take $g_j\in \left(\bigcap_{i\in [n+1]\setminus\{j\}} H_i\right)\setminus H_j$. Then for every subset $I$ of $[n+1]$ the element $g_I:=\prod_{i\in I} g_i$ (with $g_\emptyset=1$) is in $\bigcap_{i\in [n+1]\setminus I} H_i\cap\bigcap_{i\in I} (G\setminus H_i)$. This contradicts $\IND(\mathcal H)=n$.
\end{proof}

\begin{example*}
Let $\mathcal S$ be the collection of all subgroups of $(\bZ,{+})$. Then $\mathcal S$ has infinite breadth, hence infinite independence dimension by the previous lemma, and thus is not a VC~class by Lemma~\ref{lem:dual shatter, 2}. In particular, the collection of arithmetic progressions $a+b\bZ$ ($a,b\in\bZ$) in $\bZ$ is also not a VC~class.
\end{example*}

If our family $\mathcal B$ has finite breadth $d$, then the  Helly number of $\mathcal B$ is at most $d$.
The \emph{Helly number}\/ of $\mathcal B$ is defined as the smallest $d>0$ such that
every finite subfamily $\{B_1,\dots,B_n\}$ of $\mathcal B$ with $n>d$ which is $d$-consistent, is consistent, that is to say: if for every $I\in {[n]\choose d}$ we have $\bigcap_{i\in I} B_i\neq\emptyset$, then $\bigcap_{i\in [n]} B_i\neq \emptyset$. Note however that conversely, the breadth may be infinite yet the Helly number finite, even in the case of cosets: the collection of arithmetic progressions in $\bZ$ is independent, but has Helly number~$2$. Also, not every VC~class has finite Helly number: the family whose members are the subsets of $\bR$ with two connected components, though a VC~class (of VC~dimension $4$), has infinite Helly number.
(For each $n$  the elements $[0,i)\cup (i+1,n]$, $i=0,\dots,n-1$ of this family form an $n-1$-consistent subfamily which is inconsistent.)

\medskip

The following example is a prototype for finite-breadth families when we have a dimension function at our disposal:

\begin{example}\label{ex:dimension}
Define the \emph{height} of $\mathcal B$ to be the largest $d$ (if it exists) such that there are $B_1,\dots,B_d\in\mathcal B$ with
$$B_1 \supsetneq B_1\cap B_2 \supsetneq \cdots \supsetneq B_1\cap\cdots\cap B_d\neq\emptyset.$$
So $\mathcal B$ has height $0$ iff $\mathcal B$ does not contain a non-empty set, and
$\mathcal B$ has height $1$ iff $\mathcal B$ does contain a non-empty set, but any two distinct elements of $\mathcal B$ are disjoint.
Clearly if $\mathcal B$ has height $d>0$, then the breadth of $\mathcal B$ is at most~$d$. If $\mathcal B$ has height $d>1$ and in addition $\mathcal B$ has a largest element (with respect to inclusion) then the breadth of $\mathcal B$ is smaller than~$d$: to see this
let $B_1,\dots,B_{d}\in\mathcal B$ with $\bigcap_{i\in[d]} B_i\neq\emptyset$ be given; if $B_1$ is the largest  element $B$ of $\mathcal B$ then clearly $\bigcap_{i\in[d]} B_i=\bigcap_{i\in[d]\setminus\{1\}} B_i$, and otherwise we have a chain
$$B \supsetneq B_1 \supseteq B_1\cap B_2 \supseteq \cdots \supseteq B_1\cap\cdots\cap B_d\neq\emptyset,$$
hence
$\bigcap_{i\in [j]} B_i=\bigcap_{i\in [j+1]} B_i$ and so $\bigcap_{i\in [d]} B_i = \bigcap_{i\in [d]\setminus\{j+1\}} B_i$, for some $j\in [d-1]$.

\end{example}

The following observation (the proof of which we leave to the reader) allows us to produce new finite-breadth set systems from old ones:

\begin{lemma}
Let $\mathcal B$, $\mathcal B'$ be set systems on $X$ and $X'$, respectively, and consider the set system
$$\mathcal B\boxtimes\mathcal B' := \{B\times B': B\in\mathcal B,\ B'\in\mathcal B'\}$$
on $X\times X'$. Then
$$\breadth(\mathcal B\boxtimes\mathcal B') \leq \breadth(\mathcal B) + \breadth(\mathcal B'),$$
and this inequality is an equality if both $\mathcal B$ and $\mathcal B'$ have breadth larger than~$1$ and contain a largest element  \textup{(}with respect to inclusion\textup{)}.
\end{lemma}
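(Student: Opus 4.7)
Write $d := \breadth(\mathcal B)$ and $d' := \breadth(\mathcal B')$. The plan is to prove the two inequalities separately. For the upper bound, exploit the coordinatewise behavior of rectangle intersections:
$$\bigcap_{i\in [n]} (B_i\times B_i') = \Bigl(\bigcap_{i\in [n]} B_i\Bigr)\times \Bigl(\bigcap_{i\in [n]} B_i'\Bigr).$$
Given $n > d + d'$ such rectangles with non-empty total intersection, both factor intersections are non-empty. Apply the breadth properties of $\mathcal B$ and $\mathcal B'$ separately to produce $I, I' \subseteq [n]$ with $|I|\le d$, $|I'|\le d'$, and $\bigcap_{i\in I} B_i = \bigcap_{i\in [n]} B_i$, $\bigcap_{i\in I'} B_i' = \bigcap_{i\in [n]} B_i'$. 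Taking $J := I \cup I'$ gives $|J|\le d+d'$; by monotonicity both factor intersections over $J$ collapse to the full intersections, yielding $\breadth(\mathcal B\boxtimes\mathcal B')\le d+d'$.

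For the reverse inequality, the key preparatory step is to extract \emph{tight} witnesses: families $B_1,\dots,B_d\in\mathcal B$ and $B_1',\dots,B_{d'}'\in\mathcal B'$ with non-empty intersections such that deleting any one member strictly enlarges the intersection. Their existence is forced by the minimality clause in the definition of breadth: because $d-1$ does not satisfy the defining property for $\mathcal B$, some family of at least $d$ sets has a non-empty intersection not realized by any $d-1$ of them, and the breadth-$d$ property then contracts that family down to exactly $d$ tight witnesses. Note that since $d > 1$, each $B_i$ must differ from the largest element $B^*$ (otherwise deleting $B_i$ would be vacuous), so in particular $B^* \supsetneq \bigcap_{i\in [d]} B_i$; the analogue holds for the primed side.

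Now form the $d + d'$ rectangles $R_i := B_i \times B^{*\prime}$ ($i\in [d]$) and $R'_j := B^* \times B_j'$ ($j\in [d']$) in $\mathcal B\boxtimes\mathcal B'$. Using $B^* \supseteq B_i$ and $B^{*\prime}\supseteq B_j'$, the total intersection is exactly $(\bigcap_i B_i)\times(\bigcap_j B_j')$, which is non-empty. Removing $R_k$ yields $(\bigcap_{i\ne k} B_i)\times (\bigcap_j B_j')$, strictly larger by tightness on the first factor; removing $R'_l$ is symmetric. Thus no $(d+d'-1)$-subfamily of these rectangles realizes the full intersection, so $\breadth(\mathcal B\boxtimes\mathcal B')\ge d+d'$. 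The main subtlety is extracting the tight witnesses from bare breadth equality; the hypotheses $d, d' > 1$ and existence of largest elements enter precisely here, since without largest elements the ``padding'' $R_i$, $R'_j$ cannot be formed, and without $d, d' \ge 2$ the rectangles coincide or the strict-enlargement argument collapses.
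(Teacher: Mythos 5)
Your proof is correct and complete. The paper explicitly leaves the proof to the reader (``the proof of which we leave to the reader''), so there is no official argument to compare against, but your approach is surely the intended one: the upper bound follows from the coordinate-wise factorization of rectangle intersections together with separate applications of the breadth hypothesis in each factor, and the lower bound from extracting a tight witness family in each factor and padding with the largest elements. The key technical point you correctly supplied is the extraction of tight witnesses from bare breadth equality: minimality of $d$ gives a family of at least $d$ sets whose non-empty intersection is not realized by any $(d-1)$-subfamily, and if that family has more than $d$ members the breadth-$d$ property contracts it to a $d$-set family that inherits tightness (since any $(d-1)$-subfamily of the contracted family is a $(d-1)$-subfamily of the original). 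You also correctly identified exactly where each hypothesis enters: the largest elements are needed to form the padded rectangles $B_i\times B^{*\prime}$ and $B^*\times B_j'$ without disturbing the product intersection, and $d,d'>1$ is needed to guarantee $B_i\neq B^*$ (else deleting $B_i\times B^{*\prime}$ would not strictly enlarge the intersection, collapsing the lower bound), so that the $d+d'$ rectangles are genuinely distinct and all $d+d'-1$ deletions are proper.
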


This lemma  immediately yields:

\begin{corollary}
Let $\mathcal B$, $\mathcal B'$ be set systems on $X$. Then the set system
$$\mathcal B\sqcap\mathcal B' := \{B\cap B': B\in\mathcal B,\ B'\in\mathcal B'\}$$
on $X$ has breadth at most $\breadth(\mathcal B) + \breadth(\mathcal B')$.
\end{corollary}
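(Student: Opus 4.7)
The plan is to reduce the claim to the previous lemma via the diagonal map. Let $\Delta\colon X\to X\times X$, $\Delta(x):=(x,x)$, and observe that $\Delta^{-1}(B\times B')=B\cap B'$ for any $B,B'\subseteq X$. Thus each element of $\mc B\sqcap\mc B'$ is the pullback under $\Delta$ of a corresponding element of $\mc B\boxtimes\mc B'$ (where $\mc B\boxtimes\mc B'$ is viewed as a set system on $X\times X$ by taking $X'=X$ in the preceding lemma).

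Set $d:=\breadth(\mc B)+\breadth(\mc B')$. Given $B_1,\dots,B_n\in\mc B$ and $B'_1,\dots,B'_n\in\mc B'$ with $n>d$ and $C:=\bigcap_{i\in[n]}(B_i\cap B'_i)\neq\emptyset$, any point $x\in C$ lies in both $\bigcap_i B_i$ and $\bigcap_i B'_i$, so
$$\bigcap_{i\in[n]}(B_i\times B'_i)=\Bigl(\bigcap_i B_i\Bigr)\times\Bigl(\bigcap_i B'_i\Bigr)\neq\emptyset$$
in $X\times X$. The previous lemma then supplies $I\subseteq[n]$ of size at most $d$ with $\bigcap_{i\in[n]}(B_i\times B'_i)=\bigcap_{i\in I}(B_i\times B'_i)$; applying $\Delta^{-1}$ to this identity yields $\bigcap_{i\in[n]}(B_i\cap B'_i)=\bigcap_{i\in I}(B_i\cap B'_i)$, which is what the breadth bound demands. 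If one insists on $|I|$ being exactly $d$ (as in the definition of breadth), one may enlarge $I$ by absorbing any index $j\in[n]\setminus I$: since $\bigcap_{i\in I}(B_i\times B'_i)\subseteq B_j\times B'_j$, adding $j$ to $I$ leaves the corresponding intersection unchanged.

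I anticipate no real obstacle: the preceding lemma carries out all the combinatorial work on the product $X\times X$, and the diagonal pullback transports products into intersections in a way that trivially preserves the breadth-bounding relation. In particular, the additional hypothesis (existence of a largest element under inclusion) needed for the sharper \emph{equality} statement in the previous lemma is irrelevant here, since the corollary only asserts an inequality.
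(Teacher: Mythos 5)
Your proof is correct and follows exactly the route the paper intends: the paper offers no written proof beyond the phrase ``This lemma immediately yields,'' and the diagonal pullback $\Delta^{-1}(B\times B')=B\cap B'$ is precisely the mechanism that makes the corollary an immediate consequence of the $\boxtimes$-lemma. Your care in noting that the definition of breadth calls for a subset of size \emph{exactly} $d$ (and that one can always pad $I$ up to size $d$ without changing the intersection) is a reasonable touch, though most readers would gloss over it.
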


\begin{example*}
Suppose $<$ is a linear ordering of $X$ and $\mathcal B$ is the collection of convex subsets of $X$. Every element of $\mathcal B$ can be expressed as an intersection of an initial segment of $(X,{<})$ with a \emph{final segment} of $(X,{<})$ (i.e., an initial segment of the linearly ordered set $(X,{>})$). Hence $\breadth(\mathcal B)=2$.
\end{example*}

If $\mathcal B$ is a sublattice of $(2^X,{\cap},{\cup})$ which does not contain $\emptyset$ and $X$, then $\mathcal B$ and the set system $\neg\mathcal B:=\{X\setminus B:B\in\mathcal B\}$ have the same breadth; this is an immediate consequence of the following lemma:

\begin{lemma}\label{lem:breadth duality}
Suppose $\mathcal B$ is closed under \textup{(}finite\textup{)} intersections and unions, and $\mathcal B$ does not contain the empty set.
Then for each $d$ the following are equivalent:
\begin{enumerate}
\item For all $B_1,\dots,B_{d+1}\in\mathcal B$ there is some $i\in [d+1]$ such that $\bigcap_{j\neq i} B_j\subseteq B_i$;
\item for all $B_1,\dots,B_{d+1}\in\mathcal B$ there is some $i\in [d+1]$ such that $B_i\subseteq \bigcup_{j\neq i} B_j$.
\end{enumerate}  
\end{lemma}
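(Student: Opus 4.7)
The plan is to prove the equivalence directly by a symmetric duality trick: replace the given family $B_1,\dots,B_{d+1}$ with its lattice-dual family (either the ``union-coatoms'' $C_i=\bigcup_{j\neq i}B_j$ or the ``intersection-atoms'' $D_i=\bigcap_{j\neq i}B_j$), apply the hypothesized condition to the new family, and then unwind the resulting set inclusion. This works precisely because $\mathcal{B}$ is closed under both $\cap$ and $\cup$, so all the auxiliary sets we construct stay in $\mathcal{B}$.

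For (1)$\Rightarrow$(2): Given $B_1,\dots,B_{d+1}\in\mathcal{B}$, set $C_i:=\bigcup_{j\neq i}B_j\in\mathcal{B}$. Apply (1) to the tuple $(C_1,\dots,C_{d+1})$ to get an index $i$ with $\bigcap_{k\neq i}C_k\subseteq C_i$. The key calculation is that for every $k\neq i$, the union $\bigcup_{l\neq k}B_l$ contains $B_i$ (because $i\neq k$), so
\[
B_i\;\subseteq\;\bigcap_{k\neq i}\bigcup_{l\neq k}B_l\;=\;\bigcap_{k\neq i}C_k\;\subseteq\;C_i\;=\;\bigcup_{j\neq i}B_j,
\]
which is exactly condition (2) for this~$i$.

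For (2)$\Rightarrow$(1): Given $B_1,\dots,B_{d+1}\in\mathcal{B}$, set $D_i:=\bigcap_{j\neq i}B_j$. If some $D_i$ is empty, then trivially $D_i\subseteq B_i$ and (1) holds, so assume every $D_i$ is nonempty; then $D_i\in\mathcal{B}$ since $\mathcal{B}$ is closed under intersections and omits $\emptyset$. Apply (2) to $(D_1,\dots,D_{d+1})$ to get $i$ with $D_i\subseteq\bigcup_{j\neq i}D_j$. The dual key calculation is that for every $j\neq i$ the intersection $\bigcap_{k\neq j}B_k$ is contained in $B_i$ (since $i\neq j$), hence
\[
\bigcap_{k\neq i}B_k\;=\;D_i\;\subseteq\;\bigcup_{j\neq i}D_j\;=\;\bigcup_{j\neq i}\bigcap_{k\neq j}B_k\;\subseteq\;B_i,
\]
giving condition (1) for this~$i$.

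There is essentially no obstacle beyond being careful with the two minor technicalities: (a) ensuring the constructed auxiliary sets lie in $\mathcal{B}$ (which uses closure under the other Boolean operation and, in the $\Rightarrow$ direction via $D_i$, the hypothesis $\emptyset\notin\mathcal{B}$ together with a trivial case-split when some $D_i$ is empty), and (b) correctly tracking indices in the nested $\cap$/$\cup$ expressions so as to recognize that ``$B_i$ appears in every union indexed by $k\neq i$'' and dually ``every intersection indexed by $j\neq i$ lies in $B_i$''. The entire argument is symmetric under swapping $\cap\leftrightarrow\cup$, which is the conceptual content of the lemma.
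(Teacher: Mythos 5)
Your proof is correct and follows exactly the paper's approach: apply (1) to the family $\bigl(\bigcup_{j\neq i}B_j\bigr)_i$ and (2) to the family $\bigl(\bigcap_{j\neq i}B_j\bigr)_i$, then unwind the resulting inclusions. The only slight redundancy is your case-split on $D_i=\emptyset$: since $\mathcal B$ is closed under finite intersections and omits $\emptyset$, every $D_i$ (being a nonempty intersection of members of $\mathcal B$, as $d\geq 1$) is automatically nonempty and in $\mathcal B$, so that case cannot arise.
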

\begin{proof}
To see (1)~$\Rightarrow$~(2)  apply (1) to $B_i'=\bigcup_{j\neq i} B_j$ ($i\in [d+1]$) in place of the $B_i$, and for the converse implication apply (2) to $B_i''=\bigcap_{j\neq i} B_j$  ($i\in [d+1]$).
\end{proof}

We finish our discussion of breadth by a surprising connection between breadth and stability. We will not use this observation later in the paper, but we include it here since it shows, under the assumption of stability, the ubiquity of set systems of \emph{infinite} breadth. The breadth of a relation between two sets is by definition the breadth of 
the associated set system, cf.~Section~\ref{sec:VC duality}.

\begin{proposition}\label{prop:BB}
Let  $X$, $Y$ be infinite sets and $\Phi\subseteq X\times Y$ be a relation. If $\vc(\Phi)>0$ 
then $\Phi$ is unstable, or at least one of $\Phi$ or $\neg\Phi$ has infinite breadth.
\end{proposition}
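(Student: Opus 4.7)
The plan is to prove the contrapositive: assume $\Phi$ is stable and both $\mathcal{S}_{\Phi}$ and $\neg\mathcal{S}_{\Phi}$ have finite breadth (say both $\leq d$); we show $\mathcal{S}_{\Phi}$ is finite, whence $\vc(\Phi)=0$ by the Assouad fact, recalled earlier in this section, that $\vc(\mathcal{S})<1$ forces $\mathcal{S}$ finite. Suppose for contradiction that $\mathcal{S}_{\Phi}$ is infinite, and pick a sequence of distinct $\Phi_{y_n}$. Stability forbids any infinite strictly increasing chain $\Phi_{y_{n_1}}\subsetneq\Phi_{y_{n_2}}\subsetneq\cdots$, since the choices $a_k\in\Phi_{y_{n_k}}\setminus\Phi_{y_{n_{k-1}}}$ would produce arbitrarily long ladders satisfying $a_k\in\Phi_{y_{n_\ell}}\Leftrightarrow k\leq\ell$. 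Applying Ramsey's theorem to pairs, with colours $\subsetneq,\supsetneq,\parallel$, extracts an infinite antichain $A_1,A_2,\ldots$ in $\mathcal{S}_{\Phi}$; a further Ramsey refinement forces all pairs to share one Boolean type $(\epsilon,\delta)\in\{0,1\}^2$, where $\epsilon=[A_i\cap A_j\neq\emptyset]$ and $\delta=[A_i\cup A_j\neq X]$. The case $\epsilon=\delta=0$ forces $A_j=X\setminus A_i$, which is incompatible with an infinite antichain, so three cases remain.

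If $\epsilon=0$ (pairwise disjoint), then for any finite $I\subseteq\bN$ and $j\notin I$ the non-empty set $A_j$ lies inside $\bigcap_{i\in I}\neg A_i$ by disjointness, whence $\bigcap_{i\in I\cup\{j\}}\neg A_i=(\bigcap_{i\in I}\neg A_i)\setminus A_j$ is a proper subset. (Note the right-hand side is non-empty: otherwise finitely many pairwise-disjoint non-empty sets would exhaust $X$, contradicting the existence of further distinct antichain members.) Taking $|I|=d$, the $d+1$ sets $\neg A_1,\ldots,\neg A_{d+1}$ witness $\breadth(\neg\mathcal{S}_{\Phi})>d$, contrary to hypothesis. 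The case $\delta=0$ is De~Morgan-dual and violates $\breadth(\mathcal{S}_{\Phi})\leq d$.

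The residual Boolean-independent case $(\epsilon,\delta)=(1,1)$ is the heart of the argument. Stability of $\Phi$ lifts, via preservation of stability under Boolean combinations, to the formulas $\Phi_d(x;\bar y):=\bigwedge_{i\leq d}\Phi(x;y_i)$ and $\Psi_d(x;\bar y):=\bigvee_{i\leq d}\Phi(x;y_i)$. By the breadth hypothesis on $\mathcal{S}_\Phi$, each non-empty $B_n=\bigcap_{i\leq n}A_i$ equals a $d$-sub-intersection and so lies in $\mathcal{S}_{\Phi_d}$; since a stable family admits no infinite strictly decreasing chain, the sequence $(B_n)$ eventually stabilises to some set $B$, and symmetrically $U_n=\bigcup_{i\leq n}A_i$ stabilises to some $U$. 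Replacing $X$ by $U\setminus B$, which preserves stability and both breadths since trace onto a subset does not increase breadth, and reapplying the pairwise-type Ramsey refinement, we reduce to the situation where $B=\emptyset$ and $U=X$ are reached at some finite stage $n_0$ while the $A_n$ remain an infinite Boolean-independent antichain (any fallback to case (a) or (b) already yields a contradiction). A final iterated Ramsey extraction then pigeonholes, for $m>n_0$, the Boolean type of $A_m$ against the finitely many Venn atoms of $A_1,\ldots,A_{n_0}$ and uses pair Boolean-independence to select witness points $a_k$ and parameters that assemble into an infinite ladder inside a bounded Boolean combination of $\Phi$-instances, contradicting the stability of that combination.

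The main obstacle is this last, Boolean-independent case. Cases (a) and (b) follow routinely from the breadth definition once the right Ramsey refinement is in place, but case (c) requires the delicate interplay of the two breadth-driven stabilisations of $(B_n)$ and $(U_n)$, the preservation of stability under Boolean combinations, and layered Ramsey extractions in order to construct a ladder-generating witness sequence within the rigid lattice of Venn atoms carved out by the finite breadths.
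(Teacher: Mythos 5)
Your proposal takes a genuinely different route from the paper: the paper deduces the proposition from the Balogh--Bollob\'as unavoidability theorem, applied to the dual set system $\mathcal S_{\Phi^*}$, using the observations that chains correspond to ladders (hence instability), costars to $\breadth(\Phi)$, and stars to $\breadth(\neg\Phi)$. You instead try to argue directly by Ramsey-theoretic extractions from an infinite antichain in $\mathcal S_\Phi$. Your cases (a) and (b) --- pairwise disjoint, resp.\ pairwise covering --- are handled correctly and correspond exactly to the star/costar branches of the paper's argument.

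However, your case $(\epsilon,\delta)=(1,1)$ has a genuine gap, and you essentially acknowledge it. First, after restricting to $U\setminus B$ and re-extracting by Ramsey, there is no argument that the new stabilised intersection and union become trivial; the subsequence you pass to may well stabilise to a non-empty $B'\subsetneq$ the new base set, and nothing in the writeup shows that this restrict-and-Ramsey loop terminates (one could attempt to argue via a strictly increasing chain of stabilised intersections in $\mathcal S_{\Phi_d}$ contradicting stability, but that argument is not made). Second, and more seriously, the final step --- ``a final iterated Ramsey extraction then pigeonholes\dots and uses pair Boolean-independence to select witness points\dots that assemble into an infinite ladder'' --- is a wish, not a construction: no concrete choice of the witness tuples, no identification of the specific stable Boolean combination that is violated, and no verification of the ladder relation is given. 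Pairwise Boolean-genericity together with bounded breadth of $\mathcal S_\Phi$ and $\neg\mathcal S_\Phi$ does not by itself hand you an increasing chain; this is precisely the content that the Balogh--Bollob\'as theorem supplies (their proof is a nontrivial counting argument, not iterated Ramsey). So the proposal is an interesting alternative framing but, as written, does not prove the residual case and hence does not prove the proposition.
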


At the root of Proposition~\ref{prop:BB} is a theorem of Balogh and Bollob\'as \cite{BB}, which we explain first.
For this we need some additional terminology:
Let $(X,\mathcal S)$ and $(X',\mathcal S')$ be set systems. We say that \emph{$(X,\mathcal S)$ contains $(X',\mathcal S')$} as a trace if there exists an injective map $f\colon X'\to X$ such that $f(\mathcal S')\subseteq \mathcal S\cap f(X')$.
For example, if $(X,\mathcal S)$ is a set system and $A\subseteq X$ then $(X,\mathcal S)$ trivially contains $(A,\mathcal S\cap A)$. Also, if $(X,\mathcal S)$ contains $(X',\mathcal S')$, and $(X',\mathcal S')$ contains $(X'',\mathcal S'')$, then $(X,\mathcal S)$ contains $(X'',\mathcal S'')$.

For $k\geq 2$ consider now the following set systems on $[k]$:
\begin{align*}
\mathcal C_k		&= \big\{ [i] : i\in [k] \big\}				&& \text{(the \emph{$k$-chain})} \\
\mathcal S_k		&= \big\{ \{i\} : i\in [k] \big\}				&& \text{(the \emph{$k$-star})} \\
\mathcal T_k		&= \big\{ [k]\setminus\{i\} : i\in [k] \big\}		&& \text{(the \emph{$k$-costar})}.
\end{align*}
Balogh and Bollob\'as \cite[Theorem~1]{BB} showed that these set systems are unavoidable among sufficiently large set systems. More precisely: {\it
for all integers $k,l,m\geq 2$ there is some $N=N(k,l,m)$ such that every set system $\mathcal S$ on a finite base set with $\abs{\mathcal S}\geq N$ contains the $k$-chain, the $l$-star, or the $m$-costar.} (Note that there is no condition on the size of the base set in this statement.)

\begin{proof}[Proof of Proposition~\ref{prop:BB}]
Let $\mathcal S=\mathcal S_\Phi$. We first observe, for $k\geq 2$:
\begin{enumerate}
\item $\mathcal S$ contains $\mathcal C_k$  iff there is a $k$-ladder for $\Phi$;
\item if $\breadth(\Phi^*)\geq k$ then $\mathcal S$ contains $\mathcal T_k$; and
\item if $\mathcal S$ contains $\mathcal T_{k+1}$ then $\breadth(\Phi^*)\geq k$.
\end{enumerate}
Part (1) is obvious. For (2) note that $\breadth(\Phi^*)\geq k$ iff there exist elements $x_1,\dots,x_{k}$ of $X$ such that $\bigcap_{j\in [k]}  \Phi^*_{x_j} \neq \emptyset$ and for each $i\in [k]$,
$$(Y\setminus\Phi^*_{x_i})\cap \bigcap_{j\in [k]\setminus \{i\}} \Phi^*_{x_j} \neq\emptyset,$$
and for such choice of $x_i$, setting $X'=\{x_1,\dots,x_k\}$ we have $X'\setminus\{x_i\} \in \mathcal S\cap X'$ for each~$i$. 
Similarly, for (3), if $X'=\{x_1,\dots,x_{k+1}\}\in {X\choose k+1}$ such that $X'\setminus\{x_i\}\in\mathcal S\cap X'$ for each $i\in [k+1]$, then for each such $i$ we have
$$(Y\setminus\Phi^*_{x_i})\cap \bigcap_{j\in [k+1]\setminus \{i\}} \Phi^*_{x_j} \neq\emptyset;$$
in particular, taking $i=k+1$ we see that $\bigcap_{j\in [k]}\Phi^*_{x_j} \neq\emptyset$, and
for each $i\in [k]$ we have $(Y\setminus\Phi^*_{x_i})\cap\bigcap_{j\in [k]\setminus\{i\}}\Phi^*_{x_j} \neq\emptyset$, hence $\breadth(\Phi^*)\geq k$.
Also note that (2) and (3) are true with $\mathcal T_k$, $\mathcal T_{k+1}$ and $\Phi^*$ replaced by $\mathcal S_k$, $\mathcal S_{k+1}$ and~$\neg\Phi^*$, respectively.

Suppose now that $\vc(\Phi)>0$, i.e., $\mathcal S$ is infinite. Then $\mathcal S^*=\mathcal S_{\Phi^*}$ is also infinite (see Example~\ref{ex:dual shatter, 2}). Then we have $\vc(\mathcal S^*)\geq 1$, hence there are arbitrarily large $n$ and $B\in {Y\choose n}$ such that $\abs{\mathcal S^*\cap B} \geq n^{1/2}$. In particular, for each $N$ there is a finite subset $B_N$ of $Y$ with $\abs{\mathcal S^*\cap B_N}\geq N$. Now suppose $\Phi$ is stable; then $\Phi^*$ is also stable. Let $k_0\geq 2$ be larger than the ladder dimension of $\Phi^*$. Then if $k\geq 2$ and $N\geq N(k_0,k,k)$ then $\mathcal S^*\cap B_N$ (and hence $\mathcal S^*$) contains the $k$-star or the $k$-costar. Thus by observation (3) above, at least one of $\Phi$ or $\neg\Phi$ has infinite breadth.
\end{proof}

Of course, the converse of the implication in this proposition also holds:
if $\vc(\Phi)=0$ then $\mathcal S_\Phi$ is finite, hence trivially $\Phi$ is stable, and both $\Phi$ and $\neg\Phi$  have finite breadth, since $\mathcal S_{\neg\Phi}$ is finite as well.

\begin{example*} 
Let $<$ be a linear ordering of $X$. Suppose $\mathcal B$ is the collection of initial segments of $(X,{<})$, as in Example~\ref{ex:convex}. Then $\neg\mathcal B=\{X\setminus B:B\in\mathcal B\}$ consists of final segments of $(X,{<})$. Hence $\mathcal B$ and $\neg\mathcal B$ both have finite breadth (indeed, breadth~$1$). Proposition~\ref{prop:BB} shows that phenomena such as these are confined to unstable contexts  (for infinite set systems).
\end{example*}

Using Lemma~\ref{lem:breadth duality}, Proposition~\ref{prop:BB} also implies:

\begin{corollary}\label{cor:BB}
Suppose $X$ and $Y$ are infinite sets and $\Phi\subseteq X\times Y$ such that $\mathcal S_\Phi$ is an infinite sublattice of $(2^X,{\cap},{\cup})$ of finite breadth, with $\emptyset, X \notin\mathcal S_\Phi$. Then $\Phi$ is unstable.
\end{corollary}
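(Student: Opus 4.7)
The plan is to reduce to Proposition~\ref{prop:BB}. Under the hypotheses, $\mathcal S_\Phi$ is infinite, so $\vc(\Phi)>0$ (recall $\vc=0$ forces finiteness of the set system by Example~\ref{ex:dual shatter, 2}). By Proposition~\ref{prop:BB}, it is enough to show that both $\Phi$ and $\neg\Phi$ have finite breadth. By hypothesis $\breadth(\Phi)=\breadth(\mathcal S_\Phi)=:d<\infty$; the work is to deduce from this that $\breadth(\neg\Phi)=\breadth(\mathcal S_{\neg\Phi})$ is also finite. This is where Lemma~\ref{lem:breadth duality} enters.

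First I would observe that because $\mathcal S_\Phi$ is closed under finite intersection and omits $\emptyset$, every finite intersection of members of $\mathcal S_\Phi$ is a non-empty element of $\mathcal S_\Phi$. Hence the breadth assumption $\breadth(\mathcal S_\Phi)\le d$ is precisely condition (1) of Lemma~\ref{lem:breadth duality} applied to $\mathcal B=\mathcal S_\Phi$: for every $B_1,\dots,B_{d+1}\in\mathcal S_\Phi$ there exists $i\in[d+1]$ with $\bigcap_{j\neq i} B_j\subseteq B_i$. Lemma~\ref{lem:breadth duality} is applicable since $\mathcal S_\Phi$ is a sublattice of $(2^X,{\cap},{\cup})$ omitting $\emptyset$, and it yields the dual statement (2): for all $B_1,\dots,B_{d+1}\in\mathcal S_\Phi$ there exists $i\in[d+1]$ with $B_i\subseteq\bigcup_{j\neq i} B_j$.

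Next I would translate (2) to a breadth bound for the complementary system. Given $C_1,\dots,C_{d+1}\in\mathcal S_{\neg\Phi}$, write $C_i=X\setminus B_i$ with $B_i\in\mathcal S_\Phi$. Applying (2) to the $B_i$ and taking complements on both sides yields $\bigcap_{j\neq i} C_j\subseteq C_i$ for some $i$, which means that the intersection of any $d{+}1$ members of $\mathcal S_{\neg\Phi}$ already equals the intersection of $d$ of them. Since $X\notin\mathcal S_\Phi$ ensures $\emptyset\notin\mathcal S_{\neg\Phi}$, this shows $\breadth(\mathcal S_{\neg\Phi})\le d<\infty$.

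Finally, applying Proposition~\ref{prop:BB} to $\Phi$ (which has $\vc(\Phi)>0$ and both $\breadth(\Phi)$ and $\breadth(\neg\Phi)$ finite) forces $\Phi$ to be unstable. The main obstacle is essentially bookkeeping: verifying that the lattice hypotheses deliver the non-emptiness needed to place the breadth assumption into the precise form of condition (1) of Lemma~\ref{lem:breadth duality}, so that its dual (2) can be complemented cleanly to control $\breadth(\mathcal S_{\neg\Phi})$.
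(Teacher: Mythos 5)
Your proof is correct and takes essentially the same route the paper intends: the paper leaves the proof of Corollary~\ref{cor:BB} implicit, stating only that it follows from Proposition~\ref{prop:BB} together with Lemma~\ref{lem:breadth duality}, and you have correctly supplied the missing details (translating finite breadth of $\mathcal S_\Phi$ into condition~(1), invoking the lemma to obtain condition~(2), complementing to bound $\breadth(\mathcal S_{\neg\Phi})$, noting $\vc(\Phi)>0$ from the infinitude of $\mathcal S_\Phi$, and then applying Proposition~\ref{prop:BB}).
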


\section{The Model-Theoretic Context}\label{sec:model-theoretic context}

\noindent
Throughout this section we  fix a first-order language $\mathcal L$, and we let $\varphi(x;y)$ be  a partitioned $\mathcal L$-formula (as defined in the introduction), with object variables $x=(x_1,\dots,x_m)$ and parameter variables $y=(y_1,\dots,y_n)$. The formula $\varphi$ gives rise, in a given $\mathcal L$-structure, to a set system. The associated parameters introduced in the previous section (shatter function, VC density etc.) are elementary invariants of the structure in question. In Section~\ref{sec:VC density of a theory} below we also introduce the VC~density function $\vc^T$ of a complete first-order $\mathcal L$-theory~$T$ with no finite models: if $\vc^T(n)$ is finite then  $\vc^T(n)$ is a uniform bound on the VC~density of all partitioned formulas in $T$ having $n$ parameter variables. In Section~\ref{sec:computing vc(1)}  we illustrate this concept by computing $\vc^T(1)$ for various~$T$. In Section~\ref{sec:sets of formulas} we then extend the definition of dual VC~density to finite sets of formulas; this is convenient for later sections, but, as we see in Section~\ref{sec:coding}, does not add much extra generality. There is some indication that computing VC~density is easier when only parameters coming from initial segments of indiscernible sequences are considered;
although we will not pursue these issues in the rest of the paper, we think that
the relationship between  quantities like $\VC$ or $\vc$ and their ``indiscernible'' counterparts deserves further investigation; we explore some connections in the last subsection.

\subsection{VC density of definable families}
Given an $\mathcal L$-structure $\mathbf M$ and a tuple $b\in M^n$, we denote the subset of $M^m$ defined by the $\mathcal L$-formula $\varphi(x;b)$ with parameters $b$ in $\mathbf M$ by
$$\varphi^{\mathbf M}(M^m;b) := \{ a\in M^m: \mathbf M\models\varphi(a;b) \}.$$
A subset of $M^m$ is called \emph{definable} (in $\mathbf M$) if it is of the form $\varphi^{\mathbf M}(M^m;b)$, for some $\ph$ and $b$.
We also denote by
$$\mathcal S^{\mathbf M}_{\varphi}:=\{ \varphi^{\mathbf M}(M^m;b) : b\in M^n \}$$
the family of subsets of $M^m$ defined by $\varphi$ in $\mathbf M$,
and we call $(M^m,\mathcal S^{\mathbf M}_{\varphi})$
the \emph{set system associated with $\varphi$ in $\mathbf M$}.
More generally, to a given collection $\Phi(x)=\{\varphi_i(x;y_i)\}_{i\in I}$ of partitioned $\mathcal L$-formulas in the tuple of object variables $x$ (and in various tuples of parameter variables $y_i$) we may associate the set system
$$\mathcal S_{\Phi}^{\mathbf M} := \{ \varphi_i^{\mathbf M}(M^m;b) : i\in I,\ b\in M^{\abs{y_i}} \}$$
on $M^m$ defined by the instances of the formulas $\varphi_i$.
If the $\mathcal L$-structure $\mathbf M$ is understood from the context, we drop the superscript $\mathbf M$ in our notation. 

Suppose now $\mathbf M$ is an infinite $\mathcal L$-structure. 
As usual, say that $\varphi$ is invariant under an extension $\mathbf M\subseteq \mathbf N$ of $\mathcal L$-structures if $\mathbf M\models\varphi(a;b)\Longleftrightarrow \mathbf N\models\varphi(a;b)$ for all $a\in M^m$ and $b\in M^n$.
The following is obvious:

\begin{lemma}\label{lem:invariant under extensions}
Suppose $\mathbf N$ is an $\mathcal L$-structure with
$\mathbf M\subseteq \mathbf N$ and $\varphi$ is invariant under $\mathbf M\subseteq \mathbf N$. Then $\mathcal S^{\mathbf M}_\varphi \subseteq M^m\cap \mathcal S^{\mathbf N}_{\varphi}$, hence
$\pi^{\mathbf M}_\varphi\leq \pi^{\mathbf N}_\varphi$
and therefore
$\VC(\mathcal S^{\mathbf M}_\varphi)\leq \VC(\mathcal S^{\mathbf N}_\varphi)$ and
$\vc(\mathcal S^{\mathbf M}_\varphi)\leq \vc(\mathcal S^{\mathbf N}_\varphi)$.
\end{lemma}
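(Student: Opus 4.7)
The plan is to unwind the definitions in three small steps, each of which follows mechanically from the previous. The only real content is the first step, where invariance is used; the other inequalities are formal consequences.

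First, I would verify the containment $\mathcal S^{\mathbf M}_\varphi \subseteq M^m\cap \mathcal S^{\mathbf N}_{\varphi}$. Take an arbitrary member $S\in\mathcal S^{\mathbf M}_\varphi$, so $S=\varphi^{\mathbf M}(M^m;b)$ for some $b\in M^n\subseteq N^n$. By the hypothesis that $\varphi$ is invariant under $\mathbf M\subseteq\mathbf N$, for every $a\in M^m$ we have $\mathbf M\models\varphi(a;b)$ iff $\mathbf N\models\varphi(a;b)$. Therefore $S = M^m\cap\varphi^{\mathbf N}(N^m;b)$, and since $\varphi^{\mathbf N}(N^m;b)\in\mathcal S^{\mathbf N}_\varphi$, this exhibits $S$ as an element of $M^m\cap\mathcal S^{\mathbf N}_\varphi$.

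Second, I would deduce the inequality of shatter functions. Fix $n\in\bN$ and let $A\in\binom{M^m}{n}$ witness the maximum defining $\pi^{\mathbf M}_\varphi(n)$, so $\pi^{\mathbf M}_\varphi(n)=\abs{\mathcal S^{\mathbf M}_\varphi\cap A}$. From Step~1, each set of the form $S\cap A$ with $S\in\mathcal S^{\mathbf M}_\varphi$ has the form $(M^m\cap S')\cap A = S'\cap A$ for some $S'\in\mathcal S^{\mathbf N}_\varphi$ (using $A\subseteq M^m$), which yields $\mathcal S^{\mathbf M}_\varphi\cap A\subseteq \mathcal S^{\mathbf N}_\varphi\cap A$. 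Since $A\in\binom{N^m}{n}$ as well, we obtain $\pi^{\mathbf M}_\varphi(n)\leq \abs{\mathcal S^{\mathbf N}_\varphi\cap A}\leq\pi^{\mathbf N}_\varphi(n)$.

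Finally, the inequalities for VC~dimension and VC~density are immediate from the shatter function inequality using the definitions recalled in Section~\ref{sec:preliminaries}: $\VC(\mathcal S^{\mathbf M}_\varphi)=\sup\{n:\pi^{\mathbf M}_\varphi(n)=2^n\}$ is monotone in the shatter function, and $\vc(\mathcal S^{\mathbf M}_\varphi)=\limsup_{n\to\infty}\log\pi^{\mathbf M}_\varphi(n)/\log n$ is too.

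There is no real obstacle here; the lemma is essentially a sanity check that the combinatorial invariants behave well under $\mathcal L$-extensions preserving $\varphi$. The only point requiring a moment's care is making sure that the maximum in the definition of $\pi^{\mathbf N}_\varphi(n)$ is taken over all $n$-element subsets of $N^m$ (not just of $M^m$), so that restricting the supremum to subsets of $M^m$ can only decrease it — which is exactly what lets Step~2 go through cleanly.
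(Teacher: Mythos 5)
Your proof is correct and is exactly the mechanical unwinding of definitions the paper has in mind; indeed the paper does not even write out a proof (it introduces the lemma with ``The following is obvious:''). Your careful note at the end — that $\pi^{\mathbf N}_\varphi(n)$ is a maximum over $n$-element subsets of $N^m$, so restricting attention to subsets of $M^m$ can only decrease the value — is precisely the one place where a reader could momentarily stumble, and you handle it correctly.
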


For each $s,t\in\bN$, consider the 
$\mathcal L$-sentence 
\begin{multline*}
\pi^{s,t}_\varphi :=
\forall x^{(1)}\cdots\forall x^{(t)} \forall y^{(1)} \dots \forall y^{(s+1)} \\
\left(
\bigvee_{1\leq i<j\leq t} \bigwedge_{k=1}^m x^{(i)}_k=x^{(j)}_k \vee \bigvee_{1\leq k < l\leq s+1}
\bigwedge_{1\leq i\leq t} \varphi(x^{(i)};y^{(k)}) \leftrightarrow \varphi(x^{(i)};y^{(l)})\right),
\end{multline*}
where $x^{(i)}=(x^{(i)}_1,\dots,x^{(i)}_m)$ and $y^{(j)}=(y^{(j)}_1,\dots,y^{(j)}_n)$ are tuples of new variables.
Then, with $\pi^{\mathbf M}_\varphi:=\pi_{\mathcal S^{\mathbf M}_{\varphi}}$ denoting the shatter function of $\mathcal S^{\mathbf M}_{\varphi}$, we obviously have:

\begin{lemma}\label{lem:shatter function and elementary equivalence}
For each $s,t\in\bN$,
$$\mathbf M\models\pi^{s,t}_\varphi \qquad\Longleftrightarrow\qquad 
\pi^{\mathbf M}_\varphi(t) \leq s.$$
In particular, if $\mathbf N$ is an $\mathcal L$-structure with $\mathbf M\equiv \mathbf N$, then $\pi^{\mathbf M}_\varphi = \pi^{\mathbf N}_\varphi$.
\end{lemma}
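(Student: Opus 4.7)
The plan is simply to unwind the meaning of the first-order sentence $\pi^{s,t}_\varphi$ and observe that it expresses exactly the shatter-function bound $\pi^{\mathbf M}_\varphi(t)\leq s$; the second assertion then follows from the preservation of first-order sentences under elementary equivalence.

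More concretely, I would first read the sentence $\pi^{s,t}_\varphi$ in $\mathbf M$ in contrapositive form. Negating it says: there exist tuples $a^{(1)},\dots,a^{(t)}\in M^m$ and $b^{(1)},\dots,b^{(s+1)}\in M^n$ such that (i) the $a^{(i)}$ are pairwise distinct (as tuples in $M^m$, using the inner conjunction $\bigwedge_k x^{(i)}_k=x^{(j)}_k$ as tuple equality), and (ii) for every pair $k<l$ in $[s+1]$ there is some $i\in [t]$ with $\mathbf M\models\varphi(a^{(i)};b^{(k)})\not\leftrightarrow\varphi(a^{(i)};b^{(l)})$.

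Next I would translate (i) and (ii) into the set-system language of Section~\ref{sec:preliminaries}. Condition (i) says the set $A:=\{a^{(1)},\dots,a^{(t)}\}\subseteq M^m$ has cardinality $t$. For each $k\in [s+1]$ let $B_k:=\varphi^{\mathbf M}(M^m;b^{(k)})\cap A\in\mathcal S^{\mathbf M}_\varphi\cap A$; then condition (ii) says that for every pair $k<l$ we have $B_k\neq B_l$, i.e., the sets $B_1,\dots,B_{s+1}\in\mathcal S^{\mathbf M}_\varphi\cap A$ are pairwise distinct, so $\abs{\mathcal S^{\mathbf M}_\varphi\cap A}\geq s+1$. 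Conversely, any $A\in\binom{M^m}{t}$ witnessing $\abs{\mathcal S^{\mathbf M}_\varphi\cap A}\geq s+1$, together with parameter tuples $b^{(1)},\dots,b^{(s+1)}$ producing $s+1$ distinct traces on $A$, yields a witness to the failure of $\pi^{s,t}_\varphi$. Hence $\mathbf M\models \pi^{s,t}_\varphi$ if and only if no $t$-element $A\subseteq M^m$ has more than $s$ traces, i.e., $\pi^{\mathbf M}_\varphi(t)\leq s$.

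For the ``in particular'' clause, observe that the value $\pi^{\mathbf M}_\varphi(t)$ is determined for each $t$ as $\min\{s\in\bN:\mathbf M\models\pi^{s,t}_\varphi\}$ (well-defined since $\pi^{\mathbf M}_\varphi(t)\leq 2^t$). Because the sentences $\pi^{s,t}_\varphi$ are pure first-order in the language $\mathcal L$, any $\mathbf N\equiv\mathbf M$ satisfies exactly the same such sentences, and therefore $\pi^{\mathbf N}_\varphi(t)=\pi^{\mathbf M}_\varphi(t)$ for every $t$. There is no real obstacle here; the only care required is to verify that the inner conjunction $\bigwedge_{k=1}^m x^{(i)}_k=x^{(j)}_k$ is indeed expressing tuple equality, and that the nested quantifiers and disjunctions match the combinatorial statement about $t$-element subsets.
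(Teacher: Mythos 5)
Your unwinding of $\pi^{s,t}_\varphi$ is correct, and it is essentially the only natural argument: the paper in fact gives no proof at all (it is asserted as ``obviously we have''), and the proof you write out is exactly the intended one—contrapositive, reading pairwise-distinctness of the $a^{(i)}$ as $\abs{A}=t$ and pairwise-distinctness of the traces as $\abs{\mathcal S^{\mathbf M}_\varphi\cap A}\geq s+1$, then deducing preservation under $\equiv$ from the characterization $\pi^{\mathbf M}_\varphi(t)=\min\{s:\mathbf M\models\pi^{s,t}_\varphi\}$.
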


From now until the end of this section we fix a complete $\mathcal L$-theory $T$ with only infinite models, and let $\mathbf M$ range over models of $T$. By the previous lemma we may set 
$$\pi_\varphi:=\pi^{\mathbf M}_\varphi,\quad 
\VC(\varphi):=\VC(\mathcal S^{\mathbf M}_{\varphi}), \quad
\vc(\varphi):=\vc(\mathcal S^{\mathbf M}_{\varphi}),$$
where $\mathbf M$ is an arbitrarily chosen model of $T$. 
We call $\pi_\varphi$ the \emph{shatter function of $\varphi$} (in~$T$), and we call $\VC(\varphi)$ and $\vc(\varphi)$ the \emph{VC dimension of $\varphi$} and \emph{VC density of~$\varphi$} (in~$T$), respectively.
If we want to stress the dependence of $\pi_\varphi$ on $T$ we write $\pi_\varphi^T$, and similarly for $\VC$ and $\vc$.

\medskip

Note that the definition of $\pi_\varphi$ only depends on the set system $\mathcal S_\varphi$ and not on the particular representing formula $\varphi$. In particular, $\pi_\varphi$ remains unchanged under $\emptyset$-definable reparameterizations:

\begin{lemma}\label{lem:reparametrization}
Let $\gamma(z;y)$ be an $\mathcal L$-formula, where $z=(z_1,\dots,z_l)$, which defines the graph of a map $g\colon M^l\to M^n$. Let $\sigma(x;z):=\exists y (\gamma(z;y)\wedge\varphi(x;y) )$, so 
$$\mathcal S_{\sigma}=\big\{\varphi^{\mathbf M}(M^m;g(c)):c\in M^l\big\}.$$ 
Then $\pi_\sigma\leq\pi_\varphi$, with equality if $g$ is surjective.
\end{lemma}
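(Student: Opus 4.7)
The plan is straightforward and essentially a direct computation using the definition of $\sigma$ combined with the monotonicity of the shatter function already recorded in Section~\ref{sec:preliminaries}.

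First I would unpack what $\sigma$ defines. Fix a parameter $c \in M^l$. Because $\gamma(z;y)$ defines the graph of the (total, single-valued) function $g\colon M^l \to M^n$, there is a unique $y \in M^n$ satisfying $\gamma(c;y)$, namely $y = g(c)$. Therefore for every $a \in M^m$,
\[
\mathbf M \models \sigma(a;c) \iff \mathbf M \models \varphi(a; g(c)),
\]
and hence $\sigma^{\mathbf M}(M^m;c) = \varphi^{\mathbf M}(M^m; g(c))$. This establishes the claimed identification $\mathcal S_\sigma = \{\varphi^{\mathbf M}(M^m;g(c)) : c \in M^l\}$.

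Next I would invoke monotonicity. Since every $g(c) \in M^n$, every member of $\mathcal S_\sigma$ is already a member of $\mathcal S_\varphi$, so $\mathcal S_\sigma \subseteq \mathcal S_\varphi$ as set systems on $M^m$. As noted right after the definition of the shatter function in Section~\ref{sec:preliminaries} (increasingness of $\pi$ under inclusion of set systems), this yields $\pi_\sigma(t) \leq \pi_\varphi(t)$ for every $t$, as required.

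Finally, for the equality clause: if $g$ is surjective, then $\{g(c): c \in M^l\} = M^n$, so the two set systems actually coincide, $\mathcal S_\sigma = \mathcal S_\varphi$, whence $\pi_\sigma = \pi_\varphi$. There is no real obstacle here; the only thing to get right is the observation that $\gamma$ being the graph of a function makes $\sigma(x;c)$ equivalent to $\varphi(x;g(c))$ without any existential indeterminacy, which is precisely what removes the potential for $\sigma$ to define a strictly larger family than the one parameterized by $g$.
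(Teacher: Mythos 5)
Your proof is correct and is exactly the intended argument: the paper gives no proof for this lemma, but the sentence immediately before it (``the definition of $\pi_\varphi$ only depends on the set system $\mathcal S_\varphi$ and not on the particular representing formula'') makes clear that the point is precisely what you spell out, namely that functionality of $\gamma$ gives $\sigma^{\mathbf M}(M^m;c)=\varphi^{\mathbf M}(M^m;g(c))$, hence $\mathcal S_\sigma\subseteq\mathcal S_\varphi$, and then the monotonicity of $\pi$ under inclusion of set systems (noted after Lemma~\ref{lem:SS}) finishes the job, with equality of set systems when $g$ is onto.
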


The \emph{dual} of the partitioned $\mathcal L$-formula $\varphi(x;y)$ is
$\varphi^*(y;x):=\varphi(x;y)$; that is, $\varphi^*(y;x)$ is syntactically the same $\mathcal L$-formula $\varphi$, only with the role of the object and parameter variables interchanged. We call $\VC^*(\varphi):=\VC(\varphi^*)$ and $\vc^*(\varphi):=\vc(\varphi^*)$ the \emph{dual VC dimension} and \emph{dual VC density} of $\varphi$, respectively.
By Lemma~\ref{lem:dual shatter} we have $\pi^*_{\varphi}=\pi_{\varphi^*}$ and hence $\VC^*(\varphi)=\IND(\varphi)$ and
$$\vc^*(\varphi) = \inf \big\{r\in \bR^{>0} : \pi^*_\varphi(n)=O(n^r) \big\}.$$
If any of the quantities $\VC(\varphi)$, $\vc(\varphi)$, $\VC^*(\varphi)$, $\vc^*(\varphi)$ is finite, then so are all the others, and in this case we say that $\varphi$ is \emph{dependent}\/ or that $\varphi$ \emph{defines a VC class.}
Note that for every partitioned $\mathcal L$-formula  $\varphi(x;y)$ we have $\vc(\varphi)\geq 0$, with equality if $\mathcal S_\varphi$ is finite. If~$\mathcal S_\varphi$ is infinite then $\vc(\varphi)\geq 1$.
(See the remarks following Lemma~\ref{lem:inverse image}.)

\medskip

Letting $\Phi := \varphi(M^m;M^n)$ and $X:=M^m$, $Y:=M^n$,  in the notation introduced in the previous subsection we have
$\mathcal S_\Phi =  \mathcal S_\varphi$ and $\mathcal S_{\Phi^*} = \mathcal S_{\varphi^*}$. Hence
Lemma~\ref{lem:vc for boolean combinations} yields:

\begin{corollary}
We have $\vc(\neg\varphi)=\vc(\varphi)$, and if
$\psi(x;z)$ is another partitioned $\mathcal L$-formula, then
$\vc(\varphi\wedge\psi), \vc(\varphi\vee\psi)\leq \vc(\varphi)+\vc(\psi)$.
\end{corollary}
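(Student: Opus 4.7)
The plan is to reduce everything to Lemma~\ref{lem:vc for boolean combinations} through the correspondence $\mathcal S_\varphi = \mathcal S_\Phi$, where $\Phi := \varphi(M^m; M^n) \subseteq M^m \times M^n$ is the relation defined by $\varphi$ in a fixed model $\mathbf M$ of $T$, exactly as highlighted in the paragraph preceding the corollary. No new ideas are needed; the only issue is that $\varphi$ and $\psi$ come with distinct parameter tuples $y$ and $z$, which must be reconciled before we can view $\varphi \wedge \psi$ and $\varphi \vee \psi$ as intersection and union of relations living in a common product.

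For the first claim, I would observe that $\mathcal S_{\neg\varphi} = \mathcal S_{\neg\Phi}$, where $\neg\Phi := (M^m \times M^n) \setminus \Phi$, so Lemma~\ref{lem:vc for boolean combinations} immediately yields $\vc(\neg\varphi) = \vc(\neg\Phi) = \vc(\Phi) = \vc(\varphi)$.

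For the second claim, suppose $|y| = n$ and $|z| = l$. I would regard $\varphi$ and $\psi$ as partitioned $\mathcal L$-formulas $\widetilde{\varphi}(x; y, z)$ and $\widetilde{\psi}(x; y, z)$ with the common parameter tuple $(y, z)$, where the extra variables are dummies. Setting
\[
\widetilde{\Phi} := \widetilde{\varphi}(M^m; M^{n+l}), \qquad \widetilde{\Psi} := \widetilde{\psi}(M^m; M^{n+l}) \ \subseteq\ M^m \times M^{n+l},
\]
the fibers $\widetilde{\Phi}_{(b,c)}$ depend only on $b$, so the set system $\mathcal S_{\widetilde{\Phi}}$ coincides with $\mathcal S_\varphi$ as a collection of subsets of $M^m$, and hence $\vc(\widetilde{\Phi}) = \vc(\varphi)$; likewise $\vc(\widetilde{\Psi}) = \vc(\psi)$. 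Since
\[
\mathcal S_{\varphi \wedge \psi} = \mathcal S_{\widetilde{\Phi} \cap \widetilde{\Psi}}, \qquad \mathcal S_{\varphi \vee \psi} = \mathcal S_{\widetilde{\Phi} \cup \widetilde{\Psi}},
\]
the desired inequalities $\vc(\varphi \wedge \psi), \vc(\varphi \vee \psi) \leq \vc(\varphi) + \vc(\psi)$ then follow directly from the two subadditivity statements in Lemma~\ref{lem:vc for boolean combinations}.

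There is no real obstacle here; the only minor wrinkle, easily dispatched, is the bookkeeping required to put the two formulas over a common parameter space so that the abstract lemma applies. Once that is done, the corollary is an immediate transcription of Lemma~\ref{lem:vc for boolean combinations} into the model-theoretic setting.
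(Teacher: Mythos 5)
Your proof is correct and follows the same route the paper takes: identify $\mathcal S_\varphi$ with $\mathcal S_\Phi$ for the relation $\Phi=\varphi(M^m;M^n)$ and then invoke Lemma~\ref{lem:vc for boolean combinations}. The one thing you spell out that the paper leaves implicit — padding $\varphi(x;y)$ and $\psi(x;z)$ with dummy parameter variables so that both relations live in the same $M^m\times M^{n+l}$ before the lemma can be applied — is a genuine and worthwhile clarification, but it is a routine piece of bookkeeping rather than a different argument.
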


From Lemma~\ref{lem:inverse image} one also obtains the invariance of $\vc$ under inverse images of surjective $\emptyset$-definable maps:

\begin{corollary}\label{cor:inverse image}
Let $\delta(v;x)$ be an $\mathcal L$-formula, where $v=(v_1,\dots,v_k)$, which defines the graph of
a map $f\colon M^k\to M^m$, and let $\rho(v;y):=\exists x (\delta\wedge\varphi)$, so $\mathcal S_{\rho}=f^{-1}(\mathcal S_{\varphi})$. Then $\pi_{\rho}\leq\pi_{\varphi}$, with equality if $f$ is surjective.
\end{corollary}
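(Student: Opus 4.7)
The plan is straightforward: Corollary~\ref{cor:inverse image} is simply the model-theoretic translation of the abstract set-theoretic Lemma~\ref{lem:inverse image}, applied to the underlying function $f\colon M^k\to M^m$. So essentially all the content of the proof is verifying that $\mathcal S_\rho$ coincides with the pullback $f^{-1}(\mathcal S_\varphi)$, after which one simply quotes the lemma.

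First I would unwind the definition $\rho(v;y):=\exists x\,(\delta(v;x)\wedge\varphi(x;y))$. Since $\delta$ defines the graph of a (total, single-valued) map $f\colon M^k\to M^m$, for each $v\in M^k$ there is a unique $x\in M^m$ with $\mathbf M\models\delta(v;x)$, namely $x=f(v)$. Hence for every parameter tuple $b\in M^n$,
\begin{equation*}
\rho^{\mathbf M}(M^k;b)\;=\;\bigl\{v\in M^k:\mathbf M\models\varphi(f(v);b)\bigr\}\;=\;f^{-1}\!\bigl(\varphi^{\mathbf M}(M^m;b)\bigr).
\end{equation*}
Letting $b$ range over $M^n$ gives $\mathcal S_\rho=\{f^{-1}(S):S\in\mathcal S_\varphi\}=f^{-1}(\mathcal S_\varphi)$, which is the identification asserted parenthetically in the statement.

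Then I would apply Lemma~\ref{lem:inverse image} with $X=M^m$, $X'=M^k$ (both infinite since $\mathbf M$ is infinite, so we may harmlessly assume $k\geq 1$, the case $k=0$ being trivial), and $\mathcal S=\mathcal S_\varphi$. The lemma immediately yields $\pi_\rho=\pi_{f^{-1}(\mathcal S_\varphi)}\leq\pi_{\mathcal S_\varphi}=\pi_\varphi$, with equality when $f$ is surjective, which is the desired conclusion. There is no real obstacle here; the only point requiring a moment's attention is the passage between ``$\delta$ defines the graph of a map'' and the honest function $f$, which is what makes the existential quantifier over $x$ collapse to substitution of $f(v)$ and thus makes the translation to the setting of Lemma~\ref{lem:inverse image} exact.
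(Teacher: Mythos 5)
Your proof is correct and follows exactly the route the paper intends: identify $\mathcal S_\rho$ with the pullback $f^{-1}(\mathcal S_\varphi)$ (which is asserted parenthetically in the statement) and then invoke Lemma~\ref{lem:inverse image}. The paper gives no separate argument for this corollary beyond the phrase ``From Lemma~\ref{lem:inverse image} one also obtains\dots,'' so your careful unwinding of the existential quantifier via the functionality of $\delta$ is a faithful elaboration of the intended proof.
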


The theory $T$ is NIP iff every partitioned $\mathcal L$-formula defines a VC class. 
The theorem of Shelah \cite{S1} already mentioned in the introduction shows that in order for every partitioned $\mathcal L$-formula $\varphi(x;y)$ to define a VC~class, it is enough that this holds for all such $\varphi(x;y)$ with a single parameter variable (i.e., $\abs{y}=1$).
Hence if for each partitioned $\mathcal L$-formula $\varphi(x;y)$ with $\abs{x}=1$ the set system $\mathcal S_\varphi$ has finite breadth then $T$ is NIP, by Lemma~\ref{lem:breadth and IND}.
The theory $T$ is said to be stable if for every partitioned $\mathcal L$-formula $\varphi(x;y)$ the associated relation $\Phi=\varphi(M^m;M^n)$ is stable (in the sense of Section~\ref{sec:VC duality}); if $T$ is stable then for each $\varphi(x;y)$ with $\mathcal S_\varphi$ infinite, at least one of $\mathcal S_\varphi$ or $\mathcal S_{\neg\varphi}$ has infinite breadth, by Proposition~\ref{prop:BB}.
(Corollary~\ref{cor:BB} of the same proposition also yields that if $T$ is stable then all finite-breadth sublattices $\mathcal S$ of the lattice of all subsets of $M^m$ which have the form $\mathcal S=\mathcal S_\varphi$ for some
$\mathcal L$-formula $\varphi(x;y)$ with $\abs{x}=m$ are finite.)

\subsection{VC density of a theory}\label{sec:VC density of a theory}
We define the \emph{VC density of $T$} to be the function 
$$\vc=\vc^T\colon\bN\to\bR^{\geq 0}\cup\{\infty\}$$ 
given by
$$\vc(n):=\sup\big\{ \vc(\varphi): \text{$\varphi(x;y)$ is an
$\mathcal L$-formula with $\abs{y}=n$} \big\}.$$
Note that we could have also defined $\vc^T$ as
$$\vc(m)=\sup\big\{ \vc^*(\varphi): \text{$\varphi(x;y)$ is an
$\mathcal L$-formula with $\abs{x}=m$} \big\}.$$
In the introduction we already observed that $\vc(m)\geq m$ for every $m$.
If $\mathcal L'$ is an expansion of $\mathcal L$ and $T'\supseteq T$ a complete $\mathcal L'$-theory, then $\vc^{T}\leq\vc^{T'}$, with equality if $T'$ is an expansion of $T$ by definitions. Moreover, $\vc$ does not change under expansions by constants:

\begin{lemma}\label{lem:expansions by constants}
Let $\mathcal L'=\mathcal L\cup\{c_i:i\in I\}$ where the $c_i$ are new constant symbols, and let $T'\supseteq T$ be a complete $\mathcal L'$-theory. Then $\vc^T=\vc^{T'}$.
\end{lemma}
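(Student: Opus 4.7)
The inequality $\vc^T \leq \vc^{T'}$ is a special case of the observation immediately preceding the lemma (expanding the language can only enlarge the supremum defining $\vc$), so the task reduces to proving $\vc^{T'}(n) \leq \vc^T(n)$ for each $n$. My plan is to pivot to the dual characterization of $\vc$ noted just after the definition,
$$\vc^T(n) = \sup\bigl\{\vc^*(\varphi) : \varphi(x;y)\ \text{is an $\mathcal L$-formula with}\ \abs{x}=n\bigr\},$$
and likewise for $T'$. The crucial feature of this reformulation is that it pins down the object-variable count at $n$ while allowing me to freely enlarge the parameter tuple.

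To exploit it, I would fix a partitioned $\mathcal L'$-formula $\varphi'(x;y)$ with $\abs{x}=n$ and write $\varphi'(x;y) = \varphi(x;y,c)$, where $\varphi(x;y,z)$ is an $\mathcal L$-formula and $c$ is the tuple of new constants occurring in $\varphi'$ (matched in length and order to $z$). Fix $\mathbf M' \models T'$, with $\mathcal L$-reduct $\mathbf M \models T$. For any $b_1,\dots,b_t \in M^n$, the sets $\varphi'^{\mathbf M'}(M^n; b_i) = \varphi^{\mathbf M}(M^n; b_i, c^{\mathbf M'})$ occur as a single specialization---take $d_i = c^{\mathbf M'}$ for each $i$---of the families $\varphi^{\mathbf M}(M^n; b_i, d_i)$ with the $d_i$ varying freely over $M^{\abs{z}}$. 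Counting atoms of the associated Boolean algebras immediately yields $\pi^*_{\varphi'}(t) \leq \pi^*_{\varphi}(t)$ for every $t$, and hence $\vc^*(\varphi') \leq \vc^*(\varphi)$.

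The final step is immediate: since $\varphi(x;y,z)$ has exactly $n$ object variables, the dual characterization gives $\vc^*(\varphi) \leq \vc^T(n)$, so $\vc^*(\varphi') \leq \vc^T(n)$, and taking the supremum over all such $\varphi'$ completes the proof. I do not anticipate a real obstacle; the one trap to avoid is the primal route, which would yield merely $\vc(\varphi') \leq \vc(\varphi) \leq \vc^T(n + \abs{c})$---too weak, since $\abs{c}$ is a priori unbounded across the choices of $\varphi'$. Switching to the dual formulation, where the object-variable count is the invariant being controlled, removes this difficulty entirely.
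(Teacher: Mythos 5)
Your proof is correct and follows the same route as the paper: fix the object-variable count via the dual characterization, write each $\mathcal L'$-formula $\varphi'(x;y)$ as an $\mathcal L$-formula $\varphi(x;y,z)$ with the extra parameter slot $z$ specialized to the interpreted constants $c$, and observe that this specialization gives $\pi^*_{\varphi'}(t)\leq\pi^*_{\varphi}(t)$, hence $\vc^*(\varphi')\leq\vc^*(\varphi)\leq\vc^T(\abs{x})$. Your explicit warning about the primal route (which would only yield $\vc^T(n+\abs{c})$) is a useful remark but the argument is otherwise identical to the paper's.
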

\begin{proof}
Let $\mathbf M'\models T'$ and $C:=\{c_i^{\mathbf M'}:i\in I\}\subseteq M'$. 
Let $\varphi(x;y,z)$ be an $\mathcal L$-formula with $\abs{x}=m$, and let $c\in C^{\abs{z}}$. Then $\pi^*_{\varphi(x;y,c)}(t)\leq\pi^*_{\varphi(x;y,z)}(t)$ for every $t$, hence $\vc^*(\varphi(x;y,c))\leq\vc^*(\varphi(x;y,z))\leq\vc^{T}(m)$ and thus $\vc^{T'}(m)\leq\vc^T(m)$.
\end{proof}

It is clear that $\vc(n)\leq\vc(n+1)$ for every $n$, by viewing a formula with $n$ parameter variables as one with $n+1$ parameters; perhaps less obviously:

\begin{lemma}\label{lem:lower estimate, 1}
$\vc(n)+1\leq\vc(n+1)$ for every $n$.
\end{lemma}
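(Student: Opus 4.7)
Given any partitioned $\mathcal L$-formula $\varphi(x;y)$ with $\abs x = m$ and $\abs y = n$, the plan is to build an $\mathcal L$-formula $\psi$ with one additional parameter variable and $\vc(\psi)\ge\vc(\varphi)+1$; taking the supremum over such $\varphi$ then yields the stated inequality. The case $\vc(\varphi)=0$ being trivial (the formula $x_1=y_{n+1}$ already has $\vc=1$), assume $\vc(\varphi)\ge 1$, introduce fresh object and parameter variables $x_{m+1}$ and $y_{n+1}$, and set
\[
\psi(x,x_{m+1};\,y,y_{n+1})\;:=\;\varphi(x;y)\;\vee\;(x_{m+1}=y_{n+1}).
\]
Its defined set in $M^{m+1}$ for parameters $(b,b')\in M^n\times M$ is $\varphi^{\mathbf M}(M^m;b)\times M\cup M^m\times\{b'\}$; the upper bound $\vc(\psi)\le\vc(\varphi)+1$ follows immediately from Lemma~\ref{lem:vc for boolean combinations} together with the observation that $\vc(x_{m+1}=y_{n+1})=1$. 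What needs proving is the matching lower bound.

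For the lower bound, fix $r<\vc(\varphi)$ and arbitrarily large $s$ with $\pi_\varphi(s)\ge s^r$. Choose $A_1\subseteq M^m$ of size $s$ realizing $\pi_\varphi(s)$, and for each $V\in\mathcal S_\varphi\cap A_1$ fix a representative $b_V\in M^n$ with $\varphi^{\mathbf M}(M^m;b_V)\cap A_1=V$. Passing to a sufficiently saturated elementary extension of $\mathbf M$ (which preserves $\pi_\varphi$ by Lemma~\ref{lem:invariant under extensions}), and swapping $\varphi$ for $\neg\varphi$ if necessary (same VC density by Lemma~\ref{lem:vc for boolean combinations}), realize the partial type $\{\neg\varphi(x;b_V):V\}$ by some $a^{\ast}\in M^m$. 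Pick $s+1$ distinct elements $c_0,c_1,\dots,c_s\in M$ and set
\[
A\;:=\;(A_1\times\{c_0\})\;\sqcup\;(\{a^{\ast}\}\times\{c_1,\dots,c_s\})\;\subseteq\;M^{m+1},\qquad\abs A=2s.
\]
For each pair $(V,j)\in(\mathcal S_\varphi\cap A_1)\times[s]$, the parameter choice $(b,b')=(b_V,c_j)$ yields the trace $(V\times\{c_0\})\cup\{(a^{\ast},c_j)\}$ of $\psi$ on $A$; disjointness of the two components of $A$ forces distinct pairs $(V,j)$ to produce distinct traces, giving $\pi_\varphi(s)\cdot s\ge s^{r+1}$ distinct traces in all. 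Hence $\pi_\psi(2s)\ge s^{r+1}$, so $\vc(\psi)\ge r+1$, and letting $r\to\vc(\varphi)$ finishes.

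The main obstacle is the saturation step: the partial type $\{\neg\varphi(x;b_V):V\}$ over $M$ is finitely satisfiable precisely when $\bigcup_V\varphi^{\mathbf M}(M^m;b_V)\neq M^m$. If $\varphi$ fails this, one applies the analogous argument with $\neg\varphi$ in place of $\varphi$, which requires the dual condition $\bigcap_V\varphi^{\mathbf M}(M^m;b_V)\neq\emptyset$ (equivalent to $\bigcup_V\neg\varphi^{\mathbf M}(M^m;b_V)\neq M^m$); at least one of the two conditions can be arranged by a suitable choice of representatives $b_V$, so the argument can be completed in a saturated model.
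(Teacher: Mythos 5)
Your proposal follows the same basic strategy as the paper — pad $\varphi$ with an extra object variable and an extra parameter variable, build a test set out of two blocks of size $t$, and show that the new formula cuts out $t\cdot\pi_\varphi(t)$ distinct traces on it — but it drops a small conjunct that the paper inserts precisely to sidestep the obstacle you flag at the end. After passing (via Lemma~\ref{lem:expansions by constants}) to a language with a constant $0$, the paper's formula is
\[
\psi := \bigl(x_{m+1}=0 \wedge \varphi(x;y)\bigr) \vee \bigl(x_{m+1}=y_{n+1}\bigr),
\]
and the second block of the test set is $\{(a',a_1),\dots,(a',a_t)\}$ with $a_1,\dots,a_t\in M\setminus\{0\}$. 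The conjunct $x_{m+1}=0$ makes the first disjunct automatically false at every point $(a',a_i)$ (since $a_i\neq 0$), so it never matters whether $a'$ lies in $\varphi^{\mathbf M}(M^m;b)$. Your $\psi$, lacking that conjunct, genuinely needs an auxiliary point $a^{\ast}$ with $a^{\ast}\notin\bigcup_V\varphi^{\mathbf M}(M^m;b_V)$, or, for the $\neg\varphi$-version, $a^{\ast}\in\bigcap_V\varphi^{\mathbf M}(M^m;b_V)$.

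The claim that one of these can "be arranged by a suitable choice of representatives $b_V$" is not true in general. Take $T$ to be the theory of dense linear orders, $m=1$, and
\[
\varphi(x;y_1,y_2) := \bigl(y_1<y_2 \wedge x<y_1\bigr) \vee \bigl(\neg(y_1<y_2) \wedge y_1<x\bigr),
\]
so that $\varphi^{\mathbf M}(M;b_1,b_2)$ is the down-ray $(-\infty,b_1)$ when $b_1<b_2$ and the up-ray $(b_1,\infty)$ otherwise; here $\vc(\varphi)=1$ and $\pi_\varphi(s)=2s$ is realized by \emph{every} $s$-element set $A_1=\{a_1<\dots<a_s\}$. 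Suppose $s\geq 4$ and consider the traces $V=\{a_1,\dots,a_{s-1}\}$ and $V'=\{a_2,\dots,a_s\}$. The first is a proper prefix of $A_1$ but not a suffix, so every admissible $b_V$ forces $\varphi(M;b_V)=(-\infty,c)$ with $c>a_{s-1}$; dually every $b_{V'}$ forces an up-ray $(c',\infty)$ with $c'<a_2$. Since $c>a_{s-1}\geq a_3>a_2>c'$, these two rays already cover $M$, so $\bigcup_V\varphi(M;b_V)=M$ for every choice of the $b_V$'s. By the symmetric argument with $V=\{a_1\}$ (forcing $(-\infty,c)$, $c\leq a_2$) and $V=\{a_s\}$ (forcing $(c',\infty)$, $c'\geq a_{s-1}$), every choice also gives $\bigcap_V\varphi(M;b_V)=\emptyset$. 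So the $a^{\ast}$ you need does not exist for either $\varphi$ or $\neg\varphi$, and since $\{\neg\varphi(x;b_V):V\}$ is a \emph{finite} set of formulas, passing to a saturated elementary extension cannot create one. Restoring the paper's $x_{m+1}=0$ conjunct removes the need for $a^{\ast}$ altogether and makes the rest of your counting argument go through unchanged.
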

\begin{proof}
By the preceding lemma  we may assume that $\mathcal L$ contains a constant symbol~$0$.
Let $\varphi(x;y)$ be a partitioned $\mathcal L$-formula with $\abs{x}=m$, $\abs{y}=n$. We construct a formula $\psi(x,x_{m+1};y,y_{n+1})$ with $\pi_\varphi(t)\cdot t\leq\pi_\psi(2t)$ for every $t$ (hence $\vc(\varphi)+1\leq\vc(\psi)$), which then shows the lemma. We set
$$\psi:=(x_{m+1}=0\wedge\varphi(x;y)) \vee (x_{m+1}=y_{n+1}).$$
Then for $b\in M^n$, $c\in M$ we have
$$  \psi(M^{m+1};b,c) = (\varphi(M^m;b)\times \{0\})\cup (M^m\times\{c\}).$$
Let $A\subseteq M^m$ with $\abs{A}=t$ and $\pi_\varphi(t)=\abs{A\cap\mathcal S_\varphi}$. Choose 
pairwise distinct elements $a_1,\dots,a_t\in M\setminus\{0\}$ and an arbitrary element $a'\in M^m$, and set
$$A':=\big(A\times\{0\}\big)\cup \big\{(a',a_1),\dots,(a',a_t)\big\}.$$
Then $\abs{A'}=2t$, and for $b\in M^n$ and $j=1,\dots,t$ we have
$$A'\cap\psi(M^{m+1};b,a_{j}) = \big((A\cap\varphi(M^m;b))\times\{0\}\big) \cup \{(a',a_j)\}.$$
Take $b_1,\dots,b_k\in M^n$, $k=\pi_\varphi(t)$, such that the sets $A\cap\varphi(M^{m};b_i)$, $i=1,\dots,k$, are pairwise distinct. Then the sets
$A'\cap\psi(M^{m+1};b_i,a_j)$ (where $i=1,\dots,k$, $j=1,\dots,t$)
are also pairwise distinct. Hence $\pi_\psi(2t)\geq\abs{A'\cap\mathcal S_\psi}\geq k\cdot t=\pi_\varphi(t)\cdot t$ as claimed.
\end{proof}

In this paper we prove, for many (unstable) NIP theories $T$ of interest, that $\vc^T(m)<\infty$ for every $m$, and in fact, in these cases we establish that $\vc^T(m)$ is bounded by a linear function of $m$.
Note, however, that $T$ NIP does not imply that $\vc^T(m)<\infty$ for all $m$: it is easy to see that for every $T$ (whether NIP or not) we have $\vc^{T^\eq}(1)=\infty$, whereas $T$ is NIP iff $T^\eq$ is NIP. (We thank Martin Ziegler for pointing this out.)

\medskip

By Laskowski's proof \cite{l} of Shelah's theorem \cite{S1}, the VC~dimension $\VC(\varphi)$ of an $\mathcal L$-formula $\varphi(x;y)$ is bounded in terms of the VC~dimensions of certain $\mathcal L$-formulas with a single parameter variable (which, however, are astronomical, involving iterated Ramsey numbers).
This together with the examples below raises the following question, the answer to which we don't know:

\begin{question}\mbox{}
If $\vc^T(1)<\infty$, is $\vc^T(m)<\infty$ for every $m$? 
\end{question}

Provided the answer to this question is positive, one may then also ask how $\vc(m)$ depends on $m$ and $\vc(1)$; e.g.: {\it is there a function $\beta\colon \bN\times\bR^{\geq 0}\to\bR^{\geq 0}$, independent of $T$, with the property that if $\vc^T(1)<\infty$, then $\vc^T(m)\leq \beta\big(m,\vc^T(1)\big)$ for every~$m$?}
(In all examples which we considered where $\vc^T$ is known to be real-valued, $\vc^T$ grows at worst linearly.) 

\subsection{Computing $\vc^T(1)$}\label{sec:computing vc(1)}
In concrete cases it is often easy to see that $\vc^T(1)=1$:

\begin{example}\label{ex:strongly minimal}
Suppose that $\mathbf M$ is strongly minimal. The collection $\mathcal B={M\choose 1}$ of one-element subsets of $M$ has breadth $1$; so $\vc^T(1)=1$. (Corollary~\ref{cor:vdDries}.)
\end{example}

\begin{example}\label{ex:weakly o-minimal}
Suppose that $\mathcal L$ contains a binary relation symbol ``$<$'', 
$\mathbf M=(M,{<},\dots)$ is an expansion of a linearly ordered set $(M,{<})$, and $T=\Th(\mathbf M)$ is weakly o-minimal. Then for every partitioned $\mathcal L$-formula $\varphi(x;y)$ with $\abs{x}=1$ there exists an integer $N\geq 0$ such that for every $b\in M^m$, the set $\varphi^{\mathbf M}(M;b)$ is a finite union of at most $N$ convex subsets of $M$. Hence
$\vc^T(1)=1$ by Example~\ref{ex:convex}.
\end{example}

\begin{example}\label{ex:C-minimal}
Suppose that $\mathcal L_{\divi}$ is the expansion of the language $\{0,1,{+},{-},{\times}\}$ of rings by a binary relation symbol ``$|$''. In a field  $K$ equipped with a valuation
$v\colon K \to \Gamma \cup \{\infty\}$, we interpret $|$ by putting $a|b :\Longleftrightarrow v(a)\leq v(b)$, for all $a,b\in K$. 
Suppose $T$ is a complete theory of valued fields in an expansion of $\mathcal L_{\divi}$, and $T$ is $C$-minimal, i.e., 
for every $\mathbf K\models T$, every definable subset of $K$ is a finite Boolean combination of balls in $K$. Then
for every partitioned $\mathcal L$-formula $\varphi(x;y)$ with $\abs{x}=1$ there exists an integer $N\geq 0$ such that for every $b\in K^m$, the set $\varphi^{\mathbf K}(K;b)$ is a Boolean combination of at most $N$ balls in $K$.
Thus $\vc^T(1)=1$ by Example~\ref{ex:balls}.
\end{example}

The definition of $C$-minimality used in the previous example agrees (for expansions of valued fields) with the one in \cite{HK}; this definition is slightly more restrictive than the original one, introduced in \cite{hm2,MS}.
Every completion of the $\mathcal L_{\divi}$-theory ACVF of non-trivially valued algebraically closed fields is $C$-minimal (essentially by A.~Robinson's quantifier elimination in ACVF; see \cite{Holly}). Conversely, every valued field with $C$-minimal elementary theory is algebraically closed \cite{hm2}.  
Moreover, the rigid analytic expansions of ACVF introduced by Lipshitz \cite{Lipshitz} are $C$-minimal \cite{lr}.

\begin{example}\label{ex:modules, 1}
Let $R$ be a ring and suppose $\mathcal L=\mathcal L_R$ is the language of $R$-modules. (In this paper, ``$R$-module'' always means ``left $R$-module.'') Suppose $M$ is an $R$-module, construed as an $\mathcal L_R$-structure in the natural way. By the Baur-Monk~Theorem, every $\mathcal L_R$-formula is equivalent in $T=\Th(M)$ to a Boolean combination of positive primitive (p.p.) $\mathcal L_R$-formulas; given a p.p.~$\mathcal L_R$-formula $\varphi(x;y)$ and $b\in M^{\abs{y}}$, the set $\varphi(M^{\abs{x}};b)$ is a coset of $\varphi(M^{\abs{x}};0)$. 
Suppose $M$ is \emph{p.p.-uniserial,} i.e., the subgroups of $M$ definable by p.p.~$\mathcal L_R$-formulas form a chain. 
By Example~\ref{ex:cosets}, if $M$ is infinite, then we have $\vc^T(1)=1$.
(In \cite{ADHMS} 
this will be extended to $\vc^T(m)=m$ for every $m$.)
Examples for p.p.-uniserial abelian groups (viewed as $\bZ$-modules) include $\bQ^{(\alpha)}$, $\bZ_{(p)}^{(\alpha)}$, $\bZ(p^n)^{(\alpha)}$ and $\bZ(p^\infty)^{(\alpha)}$, where $p$ is a prime and $\alpha$ is a cardinal, possibly infinite. 
Here
$$\bZ_{(p)}=\big\{a/b:a,b\in\bZ,\ b\neq 0,\ p\nmid b\big\},$$ viewed as a subgroup of the additive group of $\bQ$,
$\bZ(p^n)$ denotes the cyclic group $\bZ/p^n\bZ$ of order $p^n$, and $\bZ(p^\infty)$ denotes the Pr\"ufer $p$-group (the group of $p^n$th roots of unity, for varying $n$, written additively). Given an $R$-module $M$ and an index set $I$, $M^{(I)}$ denotes, as usual, the $R$-submodule of the direct product $M^I$ consisting of all sequences with cofinitely many zero entries.
\end{example}

Examples~\ref{ex:strongly minimal}--\ref{ex:modules, 1} may be generalized as follows:

\begin{example}\label{ex:VC-minimal}
A family $\Phi(x)=\{\varphi_i(x;y_i)\}_{i\in I}$ of $\mathcal L$-formulas in the object variables $x$ (and in various tuples of parameter variables $y_i$) is said to have \emph{dual VC~dimension $d$}\/ if the set system $\mathcal S=\mathcal S_\Phi$ defined by the instances of the formulas $\varphi_i$ has dual VC~dimension~$d$. 
If $\Phi$ has dual VC~dimension at most~$1$, then we say that $\Phi$ is \emph{VC-minimal}; cf.~Example~\ref{ex:dual VCdim at most 1}. We also say that $\Phi$ is directed if $\mathcal S$ is directed in the sense of Example~\ref{ex:VC-codim}.

The $\mathcal L$-theory $T$ is  \emph{VC-minimal} if there is a VC-minimal family of $\mathcal L$-formulas~$\Phi(x)$ with $\abs{x}=1$ such that in every $\mathbf M\models T$ every definable (possibly with parameters) subset of $M$ is a Boolean combination of finitely many sets in~$\mathcal S_\Phi$. (This definition was introduced in \cite{Adler-VCmin}.) If $T$ is a VC-minimal $\mathcal L$-theory, then for every $\mathcal L$-formula $\varphi(x;y)$ with $\abs{x}=1$ there exists some $N\in\bN$ such that in every $\mathbf M\models T$ every instance $\varphi(x;b)$ ($b\in M^{\abs{y}}$) of $\varphi$ defines a subset of $M$ which is a Boolean combination of at most $N$ sets in $\mathcal S_\Phi$, by compactness.

One says that the VC-minimal theory $T$ is \emph{directed} if one can additionally choose $\Phi(x)$ to be directed; in that case we have $\vc^T(1)=1$ by Example~\ref{ex:VC-codim}.
By \cite[Proposition~6]{Adler-VCmin}, if $\Phi(x)$ is VC-minimal and $\mathcal S_\Phi$ contains some $\emptyset$-definable set other than $\emptyset$ or $M^{\abs{x}}$, then there is a directed set $\Psi(x)$ of $\mathcal L$-formulas such that $\mathcal S_\Phi=\mathcal S_\Psi$ and $\mathcal S_{\neg\Phi}=\mathcal S_{\neg\Psi}$.
By Lemma~\ref{lem:expansions by constants} this yields in fact
$\vc^T(1)=1$ for every complete VC-minimal~$T$ (directed or not) without finite models.
\end{example}

Example~\ref{ex:modules, 1} can also be generalized in a different direction:

\begin{example}\label{ex:one-based}
Suppose $\mathcal L$ is a language expanding the language $\{1,{\cdot}\}$ of groups, and  $T$ is a complete $\mathcal L$-theory containing the theory of infinite groups. 
Suppose for every $\mathbf G\models T$, every definable subset of $G$ is a Boolean combination of cosets of $\acl^\eq(\emptyset)$-definable subgroups of $G$.
(This condition holds, in particular, if $T$ satisfies the model-theoretic condition known as $1$-basedness, cf.~\cite{hp}.) By Example~\ref{ex:cosets}, if the collection of  $\acl^\eq(\emptyset)$-definable subgroups of $G$ has breadth at most $d$ (in particular, by Example~\ref{ex:dimension}, if it has height at most $d$), then   we have $\vc^T(1)\leq d$. 
\end{example}

Here is a particular instantiation of the previous example:

\begin{example}\label{ex:modules, 2}
Let $R$ be a ring, $M$ an $R$-module, and $T=\Th(M)$ in the language~$\mathcal L_R$, as in Example~\ref{ex:modules, 1}. 
We have $M^{\aleph_0}\equiv M^{(\aleph_0)}$ (see, e.g., \cite[Lemma~A.1.6]{Hodges} or \cite[Corollary~2.24]{Prest}). Set $T^{\aleph_0}:=\Th(M^{\aleph_0})=\Th(M^{(\aleph_0)})$. It is well-known that $T=T^{\aleph_0}$ iff
the class of models of $T$ is closed under direct products, iff
for all p.p.~$\mathcal L_R$-formulas $\varphi(x)$, $\psi(x)$, either $\varphi(M^{\abs{x}})\subseteq\psi(M^{\abs{x}})$ or the index 
$$\operatorname{Inv}(M,\varphi,\psi):=\big[\varphi(M^{\abs{x}}):(\varphi\wedge\psi)(M^{\abs{x}})\big]$$ is infinite.
(See, e.g., \cite[Lemma~A.1.7]{Hodges}.)
So if $T=T^{\aleph_0}$ and the Morley rank $\MRk(T)$ of $T$ is finite then
the length $n$ of every sequence
$$M\supsetneq \varphi_1(M)\supsetneq \varphi_1(M)\cap \varphi_2(M)\supsetneq\cdots \supsetneq \varphi_1(M)\cap\cdots\cap\varphi_n(M),$$
where each $\varphi_i(x)$ is a p.p.~$\mathcal L_R$-formula with $\abs{x}=1$, 
is bounded by $d=\MRk(T)$;
so by Examples~\ref{ex:cosets} and \ref{ex:dimension} we see that $\vc^T(1)\leq d$. (Note that this bound is far from optimal: e.g., for $R=\bZ$, $M=\bZ(p^d)^{(\aleph_0)}$ we have $\MRk(T)=d$, yet $\vc^T(1)=1$ by Example~\ref{ex:modules, 1}.)  
In \cite{ADHMS} we will extend this to $\vc^T(m)\leq md$ for every $m$.
\end{example}

\subsection{Dual VC density of sets of formulas}\label{sec:sets of formulas}
It is convenient to extend the definition of dual VC density to finite sets of formulas.
Let $\Delta=\Delta(x;y)$ be a finite set of partitioned $\mathcal L$-formulas $\varphi=\varphi(x;y)$ with the object variables $x$ and parameter variables~$y$.  We set 
$\neg\Delta:=\{\neg\varphi:\varphi\in\Delta\}$, and for $B\subseteq M^{\abs{y}}$ we let
$$\Delta(x;B) := \big\{ \varphi(x;b) : \varphi\in\Delta,\ b\in B \big\}.$$
Given a finite set $B\subseteq M^{\abs{y}}$, 
we call a consistent subset of $\Delta(x;B)\cup\neg\Delta(x;B)$ a $\Delta(x;B)$-type. Note that  our parameter sets are subsets of $M^{\abs{y}}$, and not of $M$, as is more common in model theory. (This is simply a matter of convenience, in order to be compatible with VC~duality.)
Given a  $\Delta(x;B)$-type $p$ we denote by $p^{\mathbf M}\subseteq M^{\abs{x}}$ its set of realizations in~$\mathbf M$. 
Since we are only dealing with finite sets $\Delta$ and finite parameter sets $B\subseteq M^{\abs{y}}$, all $\Delta(x;B)$-types  have realizations in $\mathbf M$ itself (rather than in an elementary extension).
Given another finite set $\Delta'(x;y')$ of partitioned $\mathcal L$-formulas and a finite $B'\subseteq M^{\abs{y'}}$,
we say that a $\Delta(x;B)$-type 
$p$ is equivalent to a $\Delta'(x;B')$-type $q$ if $p^{\mathbf M}=q^{\mathbf M}$.

Let now $B\subseteq M^{\abs{y}}$ be finite.
Given $a\in M^{\abs{x}}$   we denote the $\Delta(x;B)$-type of $a$ by
\begin{align*}
\operatorname{tp}^\Delta(a/B) := & \{\ \ \varphi(x;b): b\in B,\ \varphi\in\Delta,\ \mathbf M\models\varphi(a;b) \} \cup {} \\
& \{ \neg\varphi(x;b): b\in B,\ \varphi\in\Delta,\ \mathbf M\not\models\varphi(a;b) \}.
\end{align*}
We write $S^\Delta(B)$ for the set of complete $\Delta(x;B)$-types (in~$\mathbf M$), that is, the set of (in~$\mathbf M$) maximally consistent subsets of $\Delta(x;B)\cup\neg\Delta(x;B)$; equivalently,
$$S^\Delta(B) = \big\{ \operatorname{tp}^\Delta(a/B) : a\in M^{\abs{x}}\big\}.$$
If $\Delta=\{\varphi\}$ is a singleton, we also write $S^\varphi(B)$ instead of $S^\Delta(B)$. 
The elements of $S^\Delta(B)$ are syntactical objects (sets of formulas), but associating to a type $p\in S^\Delta(B)$ its set $p^{\mathbf M}$ of realizations in $\mathbf M$ gives a bijection from
$S^\Delta(B)$ onto the set 
$$S\big(\varphi^{\mathbf M}(M^{\abs{x}};b):b\in B,\varphi\in\Delta\big)$$ 
of atoms of the Boolean algebra generated by the subsets $\varphi^{\mathbf M}(M^{\abs{x}};b)$ of $M^{\abs{x}}$. (See Section~\ref{sec:IND}.)
Hence for every partitioned $\mathcal L$-formula $\varphi(x;y)$ we have
$$\pi^*_\varphi(t) = \max\big\{ \abs{S^\varphi(B)}: B\subseteq M^{\abs{y}},\ \abs{B}=t\big\}.$$
In the general case, for every $t\in\bN$ we also set
$$\pi^*_\Delta(t) :=\max\big\{ \abs{S^\Delta(B)}:B\subseteq M^{\abs{y}},\ \abs{B}=t\big\},$$
so $0\leq \pi^*_\Delta(t) \leq 2^{\abs{\Delta}t}$.
Similarly as in Lemma~\ref{lem:shatter function and elementary equivalence} one shows that if we pass from $\mathbf M$ to an elementarily equivalent $\mathcal L$-structure then $\pi^*_\Delta$ does not change (justifying our notation, which suppresses $\mathbf M$).

Let $\Delta_0(x;y)$  be a finite set of partitioned $\mathcal L$-formulas with $\Delta_0\subseteq\Delta$, and  $B\subseteq M^{\abs{y}}$ be finite. Then there is a natural restriction map $S^{\Delta}(B)\to S^{\Delta_0}(B)$, written as $p\mapsto p\restrict\Delta_0$. This map is onto: given $p\in S^{\Delta_0}(B)$ let $a\in p^{\mathbf M}$ be arbitrary; then $q:=\operatorname{tp}^\Delta(a/B)\in S^{\Delta}(B)$ satisfies $q\restrict\Delta_0=p$. In particular, $\abs{S^{\Delta_0}(B)}\leq \abs{S^{\Delta}(B)}$.
Note also that if $\Delta\neq\emptyset$, then the restriction maps $p\mapsto p\restrict\varphi$, where $\varphi\in\Delta$, combine to an injective map $S^\Delta(B)\to \prod_{\varphi\in\Delta} S^\varphi(B)$; in particular, $\abs{S^\Delta(B)}\leq \prod_{\varphi\in\Delta} \abs{S^\varphi(B)}$. This shows:

\begin{lemma}
If all $\varphi\in\Delta$ are dependent, then there exists a real number~$r$ with $0\leq r\leq \sum_{\varphi\in\Delta} \vc^*(\varphi)$ and
\begin{equation}\label{eq:density of Delta}
\abs{S^\Delta(B)} = O(\abs{B}^r) \qquad\text{for all finite $B\subseteq M^{\abs{y}}$.}
\end{equation}
\end{lemma}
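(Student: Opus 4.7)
The plan is to build directly on the observation made in the paragraph immediately preceding the statement, namely that the restriction maps $p\mapsto p\restrict\varphi$ combine into an injection $S^\Delta(B)\hookrightarrow\prod_{\varphi\in\Delta} S^\varphi(B)$. From this injection,
\[
\abs{S^\Delta(B)} \;\leq\; \prod_{\varphi\in\Delta} \abs{S^\varphi(B)} \;=\; \prod_{\varphi\in\Delta} \pi^*_\varphi(\abs{B}),
\]
so the task reduces to estimating each factor $\pi^*_\varphi(\abs{B})$ in terms of $\vc^*(\varphi)$.

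For each $\varphi\in\Delta$, dependence of $\varphi$ means $\vc^*(\varphi)<\infty$, and the defining formula $\vc^*(\varphi)=\limsup_{t\to\infty}\log\pi^*_\varphi(t)/\log t$ furnishes, for any $\varepsilon>0$, a constant $C_\varphi=C_{\varphi,\varepsilon}$ with $\pi^*_\varphi(t)\leq C_\varphi\, t^{\vc^*(\varphi)+\varepsilon}$ for all $t\geq 1$. Multiplying these bounds across $\varphi\in\Delta$ yields
\[
\pi^*_\Delta(t) \;\leq\; \Bigl(\prod_{\varphi\in\Delta} C_\varphi\Bigr)\, t^{\sum_{\varphi\in\Delta}\vc^*(\varphi)\,+\,\abs{\Delta}\varepsilon}.
\]
Since this holds for every $\varepsilon>0$, it follows that the limsup $r:=\limsup_{t\to\infty}\log\pi^*_\Delta(t)/\log t$ satisfies $0\leq r\leq \sum_{\varphi\in\Delta}\vc^*(\varphi)$, and with this $r$ (or, more conservatively, with $r$ replaced by any slightly larger value from the same interval, if one wishes to avoid the subtlety that a $\limsup$ need not be attained as a $O$-exponent) one obtains $\abs{S^\Delta(B)}=O(\abs{B}^r)$, as required.

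There is no serious obstacle here; the only delicate point is the standard technicality that $\pi^*_\varphi(t)$ need not be exactly $O(t^{\vc^*(\varphi)})$ even when $\vc^*(\varphi)$ is finite (e.g., a $t\log t$ growth gives $\vc^*=1$ without a linear bound). This is harmless for the statement as phrased, since one is allowed to choose any $r\leq\sum_{\varphi\in\Delta}\vc^*(\varphi)$ that works and the $\varepsilon$-room in the product estimate yields such an $r$. All remaining ingredients, in particular the equality $\abs{S^\varphi(B)}=\pi^*_\varphi(\abs{B})$ and the injection above, are already in place in the excerpt.
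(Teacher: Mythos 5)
Your argument is the paper's own: the injection $S^\Delta(B)\hookrightarrow\prod_{\varphi\in\Delta}S^\varphi(B)$ noted in the sentence just before the lemma is the entire content, and the paper gives no separate proof beyond ``This shows.''

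However, the technicality you flag and then wave away is not actually disposed of by the $\varepsilon$-argument. Your product estimate gives $\pi^*_\Delta(t)\leq C_\varepsilon\,t^{\sum_{\varphi}\vc^*(\varphi)+\abs{\Delta}\varepsilon}$ for every $\varepsilon>0$, which does prove $\vc^*(\Delta)\leq\sum_{\varphi}\vc^*(\varphi)$ for the infimum defined in the paragraph following the lemma, but it never exhibits a single $r\leq\sum_{\varphi}\vc^*(\varphi)$ at which \eqref{eq:density of Delta} actually holds: the exponents your estimate hands you all lie strictly above $\sum_{\varphi}\vc^*(\varphi)$, and ``a slightly larger value from the same interval'' is unavailable precisely in the edge case $\vc^*(\Delta)=\sum_{\varphi}\vc^*(\varphi)$ with the infimum not attained. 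That this edge case is genuinely possible, already for $\abs{\Delta}=1$, is shown by shatter functions with $\Theta(t^d\log t)$ growth; the paper itself exhibits one in Corollary~\ref{cor:PS}. So, read literally, neither your argument nor the paper's one-liner establishes the stated existence of such an $r$. This is really a looseness in the lemma's phrasing rather than a defect peculiar to your proof: everything the paper uses afterwards is $\vc^*(\Delta)\leq\sum_{\varphi}\vc^*(\varphi)$, which your proof correctly delivers, and the separate fact that $\{r:\pi^*_\Delta(t)=O(t^r)\}$ is nonempty (so that $\vc^*(\Delta)$ is a well-defined finite infimum) follows from Sauer--Shelah via the integer bound $\pi^*_\varphi(t)=O(t^{\VC^*(\varphi)})$.
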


We define the \emph{dual VC density of $\Delta$} as the infimum $\vc^*(\Delta)$ of all real numbers $r\geq 0$ such that \eqref{eq:density of Delta} holds; that is, 
$$\vc^*(\Delta)=\inf\big\{r\geq 0:\pi^*_\Delta(t)=O(t^r)\big\}.$$ 
We have
$$\max_{\varphi\in\Delta}\vc^*(\varphi)\leq\vc^*(\Delta)\leq \sum_{\varphi\in\Delta} \vc^*(\varphi).$$ 
Clearly
$\vc^*(\Delta)$ agrees with $\vc^*(\varphi)$ as defined previously if $\Delta=\{\varphi\}$ is a singleton.
Moreover, $\vc^*(\Delta)=0$ iff $\vc^*(\varphi)=0$ for every $\varphi\in\Delta$, and if $\vc^*(\Delta)<1$ then $\vc^*(\Delta)=0$. 
(See the remarks following Lemma~\ref{lem:inverse image}.)
Note that in computing $\vc^*(\Delta)$ there is no harm in assuming that $\Delta$ is \emph{closed under negation,} i.e., with every $\varphi\in\Delta$ the set $\Delta$ also contains a formula equivalent (in $\mathbf M$) to $\neg\varphi$. (Passing from $\Delta$ to $\Delta\cup \neg\Delta$ does not change $S^\Delta(B)$.)

\begin{example*}
Suppose $\Delta(x;y)=\{x_1=y,\dots,x_m=y\}$ where $\abs{x}=m$ and $\abs{y}=1$. Then for finite $B\subseteq M$ we have 
$\abs{S^\Delta(B)}=(\abs{B}+1)^m$, hence $\vc^*(\Delta)=\sum_{\varphi\in\Delta}\vc^*(\varphi)=m$.
\end{example*}

We finish this subsection with an easy result about interpretations (related to Lem\-ma~\ref{lem:reparametrization} and Corollary~\ref{cor:inverse image}).

\begin{lemma} \label{interp}
Let $\mathbf M'$ be an infinite structure in a language $\mathcal L'$ and
$\pi\colon X\to M'$ an interpretation of $\mathbf M'$ in $\mathbf M$ without parameters, where $X\subseteq M^r$ is $\emptyset$-definable.
Then for any finite set $\Delta'(x;y)$ of $\mathcal L'$-formulas there exists a finite set $\Delta(\bx;\by)$ of $\mathcal L$-formulas such that $\abs{\Delta}=\abs{\Delta'}$, $\abs{\bx}=r\abs{x}$, and $\pi^*_{\Delta'}\leq\pi^*_{\Delta}$.
\end{lemma}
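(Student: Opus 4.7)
The plan is to translate each formula in $\Delta'$ into an $\mathcal L$-formula via the interpretation $\pi$, assembling these as $\Delta$, and then, for any finite parameter set $B' \subseteq (M')^{\abs{y}}$, to lift $B'$ to a set $B \subseteq M^{r\abs{y}}$ of the same cardinality in such a way that $\abs{S^{\Delta'}(B')} \leq \abs{S^\Delta(B)}$; this directly yields $\pi^*_{\Delta'}(t) \leq \pi^*_\Delta(t)$ for every $t$.

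First, I would use the hypothesis that $\pi\colon X \to M'$ is a $\emptyset$-definable interpretation to associate to each $\varphi'(x;y) \in \Delta'$ an $\mathcal L$-formula $\varphi(\bx;\by)$ with $\abs{\bx} = r\abs{x}$ and $\abs{\by} = r\abs{y}$ such that, for all $\ba \in X^{\abs{x}}$ and $\bb \in X^{\abs{y}}$,
$$\mathbf M \models \varphi(\ba;\bb) \quad \Longleftrightarrow \quad \mathbf M' \models \varphi'(\pi(\ba);\pi(\bb)).$$
This is the standard translation afforded by an interpretation: atomic $\mathcal L'$-formulas pull back along $\pi$ to $\emptyset$-definable relations on $X$, while boolean connectives and quantifiers lift by induction (each $\mathcal L'$-quantifier becoming a bounded $\mathcal L$-quantifier ranging over $X \subseteq M^r$, possibly modulo the equivalence relation $\ker\pi$). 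Setting $\Delta := \{\varphi : \varphi' \in \Delta'\}$ gives the desired set of formulas, and indeed $\abs{\Delta} = \abs{\Delta'}$.

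Second, I would bound $\pi^*_{\Delta'}(t) \leq \pi^*_\Delta(t)$. Fix a finite $B' \subseteq (M')^{\abs{y}}$ of cardinality $t$, and, using surjectivity of $\pi$ (extended componentwise), choose for each $b' \in B'$ some lift $\bb \in X^{\abs{y}}$ with $\pi(\bb) = b'$. Let $B$ consist of these lifts; so $\abs{B} = t$ and $\pi$ restricts to a bijection $B \to B'$. I then define a map $\Psi\colon S^{\Delta'}(B') \to S^\Delta(B)$ sending $p' \in S^{\Delta'}(B')$ to $\operatorname{tp}^\Delta(\ba/B)$, where $a' \in (p')^{\mathbf M'}$ is any realization of $p'$ and $\ba \in X^{\abs{x}}$ is any lift of $a'$. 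The translation property guarantees that $\Psi$ is well-defined (the $\Delta$-type of $\ba$ over $B$ depends only on the $\Delta'$-type of $\pi(\ba) = a'$ over $B'$, hence only on $p'$) and injective (any instance $\varphi'(x;b') \in \Delta'(x;B')$ witnessing $p'_1 \neq p'_2$ pulls back to an instance $\varphi(\bx;\bb) \in \Delta(\bx;B)$ witnessing $\Psi(p'_1) \neq \Psi(p'_2)$). Consequently $\abs{S^{\Delta'}(B')} \leq \abs{S^\Delta(B)} \leq \pi^*_\Delta(t)$, and taking the supremum over $B'$ of size $t$ gives $\pi^*_{\Delta'}(t) \leq \pi^*_\Delta(t)$.

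I do not anticipate any serious obstacle: the translation in the first step is entirely routine for $\emptyset$-definable interpretations, and the type-counting step is a formal consequence of the fact that $\Delta$-types of lifts are determined by $\Delta'$-types of their projections. The only mildly delicate point is the handling of interpretations given as a quotient of $X$ by a $\emptyset$-definable equivalence relation rather than a literal surjection $X \to M'$; this affects only bookkeeping in the translation and does not change the structure of the argument.
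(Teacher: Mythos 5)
Your proof is correct and follows essentially the same route as the paper's: translate each $\mathcal L'$-formula to an $\mathcal L$-formula via the interpretation, lift the parameter set $B'$ to $B\subseteq X^{\abs{y}}$, and compare type counts. The only cosmetic difference is that the paper exhibits a surjection $S^{\Delta}(B)\to S^{\Delta'}(B')$ induced by $\pi$, while you build the inverse-direction injection $S^{\Delta'}(B')\to S^{\Delta}(B)$; these are two packagings of the same counting argument.
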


\begin{proof}
Let $m:=\abs{x}$ and $n:=\abs{y}$. Let
$B'\subseteq (M')^n$ be finite. Choose $B\subseteq X^n$ with $\abs{B}=\abs{B'}$ such that each
$b=(b_1,\ldots,b_n)\in B'$ has the form $(\pi(\bb_1),\ldots,\pi(\bb_n))$ for some
$(\bb_1,\ldots,\bb_n)\in B$.
For each $\mathcal L'$-formula
$\ph(x;y)$ choose an $\mathcal L$-formula $\psi_\ph(\bx;\by)$, where $\bx=(\bx_1,\dots,\bx_m)$, $\by=(\by_1,\dots,\by_n)$ and
$\abs{\bx_1}=\dots=\abs{\bx_m}=\abs{\by_1}=\dots=\abs{\by_n}=r$, such that 
$\psi_\ph(M^{(m+n)r})\subseteq X^{m+n}$ and
for any
$\ba_1,\ldots,\ba_m,\bb_1,\ldots\,\bb_n\in X$, 
$$\mathbf M\models \psi_\ph(\ba_1,\ldots\ba_m;\bb_1,\ldots,\bb_n)\ \Longleftrightarrow\ \mathbf M'\models \ph\big(\pi(\ba_1),\ldots,\pi(\ba_m);\pi(\bb_1),\ldots,\pi(\bb_n)\big).$$
Let a finite set $\Delta'(x;y)$ of $\mathcal L'$-formulas be given. Set $\Delta:=\{\psi_\ph:\ph \in \Delta'\}$. 
Then $S^\Delta(B)\subseteq X^m$, and
$(\ba_1,\ldots\ba_m)\mapsto (\pi(\ba_1),\ldots,\pi(\ba_m))$ yields a surjective map $S^\Delta(B)\to S^{\Delta'}(B')$, hence 
$\abs{S^{\Delta'}(B')}\leq \abs{S^\Delta(B)}$ as required.
\end{proof}

By Lemmas~\ref{lem:expansions by constants} and \ref{interp}:

\begin{corollary}\label{cor:interp}
Let $\mathbf M'$ be an infinite structure in a language $\mathcal L'$, interpretable in $\mathbf M$ \textup{(}possibly with parameters\textup{)} on a definable subset of $M^r$. Then, writing $T=\Th(\mathbf M)$ and $T'=\Th(\mathbf M')$, we have $\vc^{T'}(m)\leq \vc^T(rm)$ for every $m$.
\end{corollary}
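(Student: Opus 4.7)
The plan is to absorb the parameters used in the interpretation into the language, and then apply Lemma~\ref{interp} essentially verbatim. Concretely, let $X\subseteq M^r$ and the equivalence relation on $X$ together with the preimages of the basic $\mathcal L'$-relations be defined in $\mathbf M$ using a parameter tuple $c=(c_1,\dots,c_k)\in M^k$. Expand $\mathcal L$ to $\mathcal L^*:=\mathcal L\cup\{c_1,\dots,c_k\}$ (naming these parameters by fresh constant symbols) and let $\mathbf M^*$ be the corresponding $\mathcal L^*$-expansion of $\mathbf M$, with $T^*:=\Th(\mathbf M^*)$. Then $\mathbf M'$ is interpretable in $\mathbf M^*$ \emph{without} parameters on the same set $X$, and Lemma~\ref{lem:expansions by constants} gives $\vc^{T^*}=\vc^T$.

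Now fix $m\in\bN$. Using the dual characterization recorded in Section~\ref{sec:VC density of a theory},
$$\vc^{T'}(m)=\sup\bigl\{\vc^*(\varphi') : \varphi'(x;y)\ \text{is an $\mathcal L'$-formula with $\abs{x}=m$}\bigr\},$$
so it suffices to bound $\vc^*(\varphi')$ by $\vc^T(rm)$ for each such $\varphi'$. Given $\varphi'$, I apply Lemma~\ref{interp} to the singleton set $\Delta':=\{\varphi'\}$ and to the (now parameter-free) interpretation of $\mathbf M'$ in $\mathbf M^*$; the lemma returns a partitioned $\mathcal L^*$-formula $\psi(\bx;\by)$ with $\abs{\bx}=r\abs{x}=rm$ and $\pi^*_{\varphi'}\leq\pi^*_{\psi}$. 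Taking $\limsup_{t\to\infty}\log(\cdot)/\log t$ of both sides yields $\vc^*(\varphi')\leq\vc^*(\psi)\leq\vc^{T^*}(rm)=\vc^T(rm)$. Taking the supremum over all $\varphi'$ produces $\vc^{T'}(m)\leq\vc^T(rm)$, as required.

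There is no serious obstacle: the corollary is a direct combination of the two quoted lemmas. The only minor points to attend to are (i) first passing to $T^*$ so that Lemma~\ref{interp} (formulated for interpretations without parameters) can be invoked, and (ii) translating the inequality of dual shatter functions supplied by Lemma~\ref{interp} into the corresponding inequality of dual VC densities, which is immediate from the definition of $\vc^*$.
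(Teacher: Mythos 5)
Your proof is correct and takes essentially the same route as the paper, which simply cites Lemmas~\ref{lem:expansions by constants} and \ref{interp}. You have merely unpacked that citation: passing to the constant-expansion $T^*$ to reduce to a parameter-free interpretation (Lemma~\ref{lem:expansions by constants}), then applying Lemma~\ref{interp} to singletons $\Delta'=\{\varphi'\}$ and using the dual characterization of $\vc^T$ to conclude.
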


So for example if $G$ is a group (considered as a structure in the usual first-order language of group theory) and $H$ is a definable normal subgroup of $G$, then $\vc^{\Th(G/H)}\leq\vc^{\Th(G)}$ if $H$ has infinite index in $G$, and $\vc^{\Th(H)}\leq\vc^{\Th(G)}$ if $H$ is infinite. 

\subsection{Coding finite sets of formulas}\label{sec:coding}
We let $\mathcal L$, $\mathbf M$ and $\Delta$ be as in the previous subsection, and $T=\Th(\mathbf M)$.
The following useful lemma, essentially due to Shelah \cite[Lemma~II.2.1]{Shelah-book}, shows that counting $\Delta(x;B)$-types where $\abs{\Delta}>1$ is not really more general than counting $\Delta(x;B)$-types where $\Delta$ is a singleton:

\begin{lemma}\label{lem:encoding finite sets of formulas}
Let $d=\abs{\Delta}$ and  $y'=(y_1,\dots,y_{2d},z,z_1,\dots,z_{2d})$ with $\abs{y}=\abs{y_i}=\abs{z_i}=\abs{z}$ for every $i=1,\dots,2d$.
There is an $\mathcal L$-formula $\psi_\Delta(x;y')$
with the following properties:
\begin{enumerate}
\item 
for every finite $B\subseteq M^{\abs{y}}$ with $\abs{B}\geq 2$ there is some $B'\subseteq M^{\abs{y'}}$ with 
$\abs{B'}=2d\abs{B}$ such that
every $p\in S^\Delta(B)$ is equivalent to some $q\in S^{\psi_\Delta}(B')$;
\item for every finite $B'\subseteq M^{\abs{y'}}$ there is some $B\subseteq M^{\abs{y}}$ with $\abs{B}\leq 2d\abs{B'}$ 
such that every $q\in S^{\psi_\Delta}(B')$ is equivalent to some \textup{(}possibly incomplete\textup{)} $\Delta(x;B)$-type~$p_0$.
\end{enumerate}
In particular, we have $\pi^*_\Delta(t)\leq \pi^*_{\psi_\Delta}(2dt)$ for $t>1$ and $\pi^*_{\psi_\Delta}(t)\leq\pi^*_\Delta(2dt)$ for $t\geq 0$. Thus $\vc^*(\Delta)=\vc^*(\psi_\Delta)\leq \vc^T(m)$ where $m=\abs{x}$.
\end{lemma}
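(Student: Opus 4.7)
The plan is to define $\psi_\Delta$ as a case-distinction formula, with the $z$-variables serving as ``tags'' selecting which formula $\varphi\in\Delta$ (or its negation) is being applied. Enumerating $\Delta=\{\varphi_1,\dots,\varphi_d\}$ and setting $\chi_{2i-1}:=\varphi_i$, $\chi_{2i}:=\neg\varphi_i$ for $i=1,\dots,d$, I take
\begin{equation*}
\psi_\Delta(x;y_1,\dots,y_{2d},z,z_1,\dots,z_{2d}) \ := \ \bigvee_{i=1}^{2d}\left(z=z_i \wedge \bigwedge_{j\in [2d]\setminus\{i\}} z\neq z_j \wedge \chi_i(x;y_i)\right).
\end{equation*}
The design is engineered so that for any parameter tuple $b'=(b_1,\dots,b_{2d},c,c_1,\dots,c_{2d})\in M^{\abs{y'}}$, the instance $\psi_\Delta(x;b')$ is equivalent in $\mathbf M$ to $\chi_i(x;b_i)$ when $c$ equals $c_i$ for exactly one $i\in [2d]$, and to a formula defining the empty set otherwise. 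This reduces each $\psi_\Delta$-instance to a single formula from $\Delta(x;B)\cup\neg\Delta(x;B)\cup\{\bot\}$ (where $B$ collects the $y$-components of the parameter), and this uniqueness is what permits the clean statement in (2) about incomplete $\Delta$-types.

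For (1), I would fix two distinct elements $c,e\in M^{\abs{z}}$ (available since $\mathbf M$ is infinite) and, for each $(b,i)\in B\times [2d]$, form the tuple $b'_{b,i}\in M^{\abs{y'}}$ whose $y_j$-components are all $b$, whose $z$-component and $z_i$-component equal $c$, and whose remaining $z_j$-components ($j\neq i$) equal $e$. These tuples are pairwise distinct, so $B':=\{b'_{b,i}:b\in B,\ i\in [2d]\}$ has cardinality $2d\abs{B}$. By construction each $\psi_\Delta(x;b'_{b,i})$ is equivalent in $\mathbf M$ to $\chi_i(x;b)$, so the Boolean algebras generated by $\{\psi_\Delta(x;b'):b'\in B'\}$ and by $\Delta(x;B)\cup\neg\Delta(x;B)$ coincide; hence $a\mapsto \operatorname{tp}^\Delta(a/B)$ and $a\mapsto \operatorname{tp}^{\psi_\Delta}(a/B')$ induce the same partition of $M^{\abs{x}}$, which gives a realization-preserving bijection $S^\Delta(B)\to S^{\psi_\Delta}(B')$.

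For (2), given $B'\subseteq M^{\abs{y'}}$, I take $B$ to be the union of the $y_j$-components of tuples in $B'$, so $\abs{B}\leq 2d\abs{B'}$. By the observation above, each $\psi_\Delta(x;b')$ with $b'\in B'$ is equivalent to an element of $\Delta(x;B)\cup\neg\Delta(x;B)\cup\{\bot\}$; therefore for any $q\in S^{\psi_\Delta}(B')$, the realization set $q^{\mathbf M}$ is a finite intersection of realization sets of such formulas, which coincides with $p_0^{\mathbf M}$ for some (possibly incomplete) $\Delta(x;B)$-type $p_0$. The quantitative bounds $\pi^*_\Delta(t)\leq\pi^*_{\psi_\Delta}(2dt)$ for $t>1$ and $\pi^*_{\psi_\Delta}(t)\leq\pi^*_\Delta(2dt)$ for $t\geq 0$ follow on taking $B$, respectively $B'$, of appropriate size, and imply $\vc^*(\Delta)=\vc^*(\psi_\Delta)$ after passing to $\limsup_{t\to\infty}\log(\cdot)/\log t$. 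The bound $\vc^*(\psi_\Delta)\leq \vc^T(m)$ is then just the definition of $\vc^T(m)$ evaluated at an $\mathcal L$-formula with $\abs{x}=m$ object variables. The main design step is the formula $\psi_\Delta$ itself: the ``uniqueness'' conjuncts $\bigwedge_{j\neq i} z\neq z_j$ are what force each instance to collapse to a single formula rather than to a nontrivial disjunction, and hence make (2) hold in the strong form given.
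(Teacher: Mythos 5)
Your proof is correct and uses the same core idea as the paper: Shelah's coding trick of introducing ``tag'' parameter variables $z,z_1,\dots,z_{2d}$ so that each instance of $\psi_\Delta$ collapses to a single instance of some $\chi\in\Delta\cup\neg\Delta$ (or to $\emptyset$). The paper's $\psi_\Delta$ is the conjunctive form $\bigwedge_k(z=z_k\to\chi_k(x;y_k))\wedge(\text{exactly one }z=z_k)$, while yours is the dual disjunctive form with explicit uniqueness conjuncts; the two are interchangeable. A few details of your execution differ, in ways that tidy the argument: in part~(1) you take two arbitrary distinct tags $c,e\in M^{\abs z}$ (using that $M$ is infinite, a standing assumption in this section), rather than two distinct elements $b_0,b_1\in B$ as the paper does, which is why the paper needs $\abs{B}\ge 2$ and you do not; and your observation that the Boolean subalgebras of $2^{M^{\abs x}}$ generated by $\Delta(x;B)\cup\neg\Delta(x;B)$ and by $\{\psi_\Delta(x;b'):b'\in B'\}$ coincide gives the realization-preserving bijection more cleanly than the paper's explicit injection. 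In part~(2) your $p_0$ correctly harvests contributions from \emph{both} positive and negative $\psi_\Delta$-instances in $q$; the paper's displayed definition of $p_0$ reads off only the positive instances, which as written would only give $p_0^{\mathbf M}\supseteq q^{\mathbf M}$ rather than equality (a small slip, easily repaired exactly the way you do it). The final passage from the shatter-function inequalities to $\vc^*(\Delta)=\vc^*(\psi_\Delta)\le\vc^T(m)$ is as in the paper and unproblematic, though when deriving $\pi^*_{\psi_\Delta}(t)\le\pi^*_\Delta(2dt)$ from (2) you should make explicit that the assignment $q\mapsto p_0\mapsto p$ (extending $p_0$ to a complete type via a realization of $q$) is injective because distinct $q$'s have disjoint realization sets; this is implicit in your sketch.
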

\begin{proof}
Write  $\Delta=\{\varphi_1,\dots,\varphi_d\}$ and define $\psi_\Delta$ as follows:
\begin{multline*}
\psi_\Delta= \bigwedge_{k=1}^d \big(z=z_k \to \varphi_k(x;y_k)\big) \wedge
\bigwedge_{k=d+1}^{2d} \big(z=z_k \to \neg\varphi_{k-d}(x;y_k)\big) \wedge \\
\bigvee_{k=1}^{2d} z=z_k \wedge \bigwedge_{1\leq k<l\leq 2d} \neg (z=z_k\wedge z=z_l).
\end{multline*}
For (1), suppose $B\subseteq M^{\abs{y}}$ is finite, and $b_0\neq b_1$ are distinct elements of $B$. 
For $b\in B$ and $k\in [d]$ set
$$\begin{array}{rclllllllllr}
b_0^{(k)} := 	\big(b_0, 	& b_0, 	&\dots, 	&  b,		& \dots,	&   b_0,	 & b_1, 	& b_0, 	& \dots,  & b_1,  & \dots,  & b_0\big) \\
			\ y_1 							& y_2 							& \dots		&  y_{d+k}	& \dots		& y_{2d} & z 		& z_1 & \dots & z_{d+k} & \dots & z_{2d}\ \
\end{array} $$
and
$$\begin{array}{rclllllllllr}
b_1^{(k)} := 	\big(b_0, 	& b_0, 	&\dots, 	&  b,		& \dots,	&   b_0,	 & b_1, 	& b_0, 	& \dots,  & b_1,  & \dots,  & b_0\big) \\
			\ y_1 							& y_2 							& \dots		&  y_k	& \dots		& y_{2d} & z 		& z_1 & \dots & z_k & \dots & z_{2d}\ \
\end{array}, $$
and put 
$$B':=\big\{b_0^{(k)}, b_1^{(k)} :b\in B,\ k\in [d]\big\}\subseteq (M^{\abs{y}})^{4d+1}.$$
Then $\abs{B'}=2d\abs{B}$, and for every $b\in B$, $k\in [d]$ we have
$$\psi_\Delta(M^{\abs{x}};b_0^{(k)}) = \neg\varphi_k(M^{\abs{x}};b), \qquad
\psi_\Delta(M^{\abs{x}};b_1^{(k)}) = \varphi_k(M^{\abs{x}};b).$$
Given $p\in S^\Delta(B)$ we set
\begin{align*}
q&:=\{ \neg\psi_\Delta(x;b_0^{(k)}), \ \ \psi_\Delta(x;b_1^{(k)}) : \varphi_k(x;b)\in p \}\ \cup 
\\ & \quad\ \ 
\{ \ \ \psi_\Delta(x;b_0^{(k)}), \neg\psi_\Delta(x;b_1^{(k)}) : \varphi_k(x;b)\notin p \}.
\end{align*}
Then clearly $q\in S^{\psi_\Delta}(B')$, and $q$ is equivalent to $p$. The map $p\mapsto q\colon S^{\Delta}(B)\to S^{\psi_\Delta}(B')$ is injective, hence $\abs{S^\Delta(B)}\leq \abs{S^{\psi_\Delta}(B')}\leq  \pi^*_{\psi_\Delta}(2d\abs{B})$.

For (2) note that if $b_1,\dots,b_{2d},c,c_1,\dots,c_{2d}\in M^{\abs{y}}$ then the formula $$\psi_\Delta(x;b_1,\dots,b_{2d},c,c_1,\dots,c_{2d})$$ defines $\varphi_k(M^{\abs{x}};b_k)$, $\neg\varphi_k(M^{\abs{x}};b_{k+d})$, or $\emptyset$ (since the $c_i$'s are not necessarily distinct). Let $B'\subseteq M^{\abs{y'}}$ be finite, and $q\in S^{\Delta}(B')$.
Set 
$$B:=\big\{b\in M^{\abs{y}}: \text{$b=b_i$ for some $(b_1,\dots,b_{2d},c,c_1,\dots,c_{2d})\in B'$} \big\}$$ 
and let $p_0$ be the set of formulas which have the form
$\varphi_k(x;b)$ where
$k\in[d]$, $b=b_k$ for some $\psi_\Delta(x;b_1,\dots,b_{2d},c,c_1,\dots,c_{2d})\in q$ with $c=c_k$, or the form $\neg\varphi_k(x;b)$ with $k\in[d]$, $b=b_{d+k}$ for some $\psi_\Delta(x;b_1,\dots,b_{2d},c,c_1,\dots,c_{2d})\in q$ with $c=c_{k+d}$. Then $\abs{B}\leq 2d\abs{B'}$, and $p_0$ is a $\Delta(x;B)$-type equivalent to $q$.
For each $q$ choose an extension $p$ of $p_0$ to a complete $\Delta(x;B)$-type. Then the map $q\mapsto p\colon S^{\psi_\Delta}(B')\to S^\Delta(B)$ is injective, so  $\abs{S^{\psi_\Delta}(B')}\leq \abs{S^{\Delta}(B)}\leq  \pi^*_{\Delta}(2d\abs{B'})$.
\end{proof}

In the rest of this subsection we give some applications of this lemma. We first note:

\begin{corollary}\label{cor:encoding finite sets of formulas}
Let $\Phi$ be a set of $\mathcal L$-formulas with the tuple of object variables $x$ and varying parameter variables such that every $\mathcal L$-formula $\varphi(x;y)$ is equivalent in $T$ to a Boolean combination of formulas in $\Phi$. Then 
$$\vc^T(m) = \sup\big\{ \vc^*(\Delta):  \text{$\Delta\subseteq\Phi$ finite }\big\}\qquad\text{where $m=\abs{x}$.}$$
\end{corollary}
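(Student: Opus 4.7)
The plan is to establish the two inequalities separately. The direction $\geq$ is immediate from Lemma~\ref{lem:encoding finite sets of formulas}: every finite $\Delta\subseteq\Phi$ consists of formulas with object variable tuple $x$ of length $m$, so the lemma yields $\vc^*(\Delta)\leq \vc^T(m)$, and taking the supremum gives the desired bound.

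For the $\leq$ direction, I would take an arbitrary partitioned $\mathcal L$-formula $\varphi(x;y)$ with $\abs{x}=m$ and produce a finite $\Delta\subseteq \Phi$ with $\vc^*(\varphi)\leq \vc^*(\Delta)$. By the hypothesis on $\Phi$ together with compactness, there exist $\psi_1(x;z_1),\ldots,\psi_N(x;z_N)\in\Phi$, tuples $\bar t_1,\ldots,\bar t_N$ of $\mathcal L$-terms in $y$ with $\abs{\bar t_i}=\abs{z_i}$, and a propositional combination $F$ such that
$$T\vdash \forall x\,\forall y\,\bigl[\varphi(x;y) \leftrightarrow F\bigl(\psi_1(x;\bar t_1),\ldots,\psi_N(x;\bar t_N)\bigr)\bigr].$$
Set $\Delta:=\{\psi_1,\ldots,\psi_N\}$. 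For $\mathbf M\models T$ and finite $B\subseteq M^{\abs{y}}$, I would form $\tilde B\subseteq M^{\abs{z_1}+\cdots+\abs{z_N}}$ consisting of the concatenated tuples $(\bar t_1^{\mathbf M}(b),\ldots,\bar t_N^{\mathbf M}(b))$ for $b\in B$; clearly $\abs{\tilde B}\leq \abs{B}$. Each set $\varphi^{\mathbf M}(M^m;b)$ is the Boolean combination $F$ applied to the sets $\psi_i^{\mathbf M}(M^m;\bar t_i^{\mathbf M}(b))$, so the Boolean algebra of subsets of $M^m$ generated by $\{\varphi^{\mathbf M}(M^m;b):b\in B\}$ is contained in the one generated by $\{\psi_i^{\mathbf M}(M^m;\bar t_i^{\mathbf M}(b)):i\in[N],\,b\in B\}$. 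Counting atoms yields $\abs{S^\varphi(B)}\leq \abs{S^\Delta(\tilde B)}$, hence $\pi^*_\varphi(t)\leq\pi^*_\Delta(t)$ for every $t$, whence $\vc^*(\varphi)\leq \vc^*(\Delta)$; the supremum bound follows.

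The main obstacle is bookkeeping rather than substance: one must interpret $\vc^*(\Delta)$ when the elements of $\Delta$ carry different parameter variable tuples. This is resolved by the padding convention tacit in Section~\ref{sec:sets of formulas}, namely passing to the partitioned formulas $\tilde\psi_i(x;z):=\psi_i(x;z_i)$ with common parameter tuple $z=(z_1,\ldots,z_N)$; then $\vc^*(\Delta)$ is by definition $\vc^*(\tilde\Delta)$, which is what the atom-counting argument above actually controls. A minor secondary point is that the compactness step may require working with $\mathcal L$-terms in $y$ rather than just variables in the parameter slots of the $\psi_i$; this is harmless for the cardinality bound $\abs{\tilde B}\leq \abs{B}$, which is all that is needed.
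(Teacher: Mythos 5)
Your proof is correct and matches the paper's approach: the $\geq$ direction is Lemma~\ref{lem:encoding finite sets of formulas}, and the $\leq$ direction writes each $\varphi(x;b)$ as a fixed Boolean combination of instances of finitely many $\psi_i\in\Phi$ and compares atoms of the two generated Boolean algebras to get $\abs{S^\varphi(B)}\leq\abs{S^\Delta(\tilde B)}$ with $\abs{\tilde B}\leq\abs{B}$. The only difference is that you spell out the padding of parameter-variable tuples and the substitution bookkeeping that the paper leaves implicit (and your allowance of $\mathcal L$-terms in the parameter slots, while slightly beyond the intended reading of ``Boolean combination of formulas in $\Phi$'', is harmless for the cardinality count).
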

\begin{proof}
The inequality ``$\leq$'' is a consequence of the hypothesis: for each $\mathcal L$-formula $\varphi(x;y)$ 
there is a finite subset $\Delta=\Delta(x;y)$ of $\Phi$ such that $\abs{S^\varphi(B)}\leq\abs{S^\Delta(B)}$
for each finite $B\subseteq M^{\abs{y}}$.
The reverse inequality follows from the previous lemma. 
\end{proof}

Let  $\mathbf M^*\succcurlyeq \mathbf M$ be a monster model of $T$. Consider the expansion $\mathcal L^{\Sh}$ of $\mathcal L$ by a new predicate symbol $R_{\psi,c}(x)$ for every $\mathcal L$-formula $\psi(x;z)$ and every $c\in (M^*)^{\abs{z}}$.
The \emph{Shelah expansion} of $\mathbf M$ is the expansion of $\mathbf M$ to an $\mathcal L^{\Sh}$-structure $\mathbf M^{\Sh}$ where each predicate symbol $R_{\psi,c}(x)$ as before is interpreted by $M^{\abs{x}}\cap \psi^{\mathbf M^*}((M^*)^{\abs{x}};c)$. Shelah showed \cite{S-dependent} (with another proof given in \cite{CS}) that if $T$ is NIP then $T^{\Sh}=\Th(\mathbf M^{\Sh})$ admits quantifier elimination and is also NIP. This provides an interesting way of constructing new NIP theories from old ones.
The previous lemma and its Corollary~\ref{cor:encoding finite sets of formulas} allows us to prove that $T$ and $T^{\Sh}$ share the same VC~density function:

\begin{corollary}\label{cor:Shelah expansion} $\vc^{T^{\Sh}}=\vc^T$.
\end{corollary}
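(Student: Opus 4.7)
The plan is to prove the two inequalities $\vc^T\leq\vc^{T^{\Sh}}$ and $\vc^{T^{\Sh}}\leq\vc^T$ separately. The first is immediate, since every $\mathcal L$-formula is already an $\mathcal L^{\Sh}$-formula, and on such formulas with parameters from $M$ the set system (hence the shatter function) is the same whether computed in $\mathbf M$ or in $\mathbf M^{\Sh}$. For the reverse inequality, I would fix $m$, fix an $\mathcal L^{\Sh}$-formula $\varphi(x;y)$ with $\abs{x}=m$, and aim to show that its dual VC density in $T^{\Sh}$ is at most $\vc^T(m)$.

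The key step is a syntactic reduction based on Shelah's quantifier elimination for $T^{\Sh}$: by that theorem, $\varphi$ is equivalent in $T^{\Sh}$ to a quantifier-free $\mathcal L^{\Sh}$-formula, which mentions only finitely many of the new predicates $R_{\psi_1,c_1},\dots,R_{\psi_s,c_s}$ together with ordinary $\mathcal L$-atomic formulas. I would collect all external parameters $c_i$ into a single tuple $c\in(M^*)^{\abs{z}}$ and re-assemble the Boolean combination as one $\mathcal L$-formula $\theta(x;y;z)$, obtaining the identity
$$\mathbf M^{\Sh}\models\varphi(a;b)\ \Longleftrightarrow\ \mathbf M^*\models\theta(a;b;c)\qquad\text{for all $a\in M^m$ and $b\in M^{\abs{y}}$.}$$
Crucially, $\theta$ still has $\abs{x}=m$ object variables, and the tuple $c$ is fixed, independent of $b$.

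Granted this, the type count is short. For a finite $B\subseteq M^{\abs{y}}$ of size $t$, I would set $B':=\{(b,c):b\in B\}\subseteq (M^*)^{\abs{y}+\abs{z}}$, so that $\abs{B'}=t$ as well. Any two elements of $M^m$ realizing the same complete $\theta(x;y,z)$-type over $B'$ in $\mathbf M^*$ necessarily realize the same complete $\varphi(x;y)$-type over $B$ in $\mathbf M^{\Sh}$, by the displayed equivalence. This yields a surjection from a subset of $S^\theta(B')$ (computed in $\mathbf M^*$) onto $S^\varphi(B)$ (computed in $\mathbf M^{\Sh}$), so the dual shatter function of $\varphi$ in $T^{\Sh}$ is bounded pointwise by that of $\theta$ in $T$. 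Since $\theta$ has $m$ object variables, the desired bound $\vc^{T^{\Sh}}(m)\leq\vc^T(m)$ follows upon taking the supremum over $\varphi$.

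The main obstacle I anticipate is the syntactic reduction in the second paragraph: although the new predicates $R_{\psi,c}$ carry hidden parameters $c\in M^*$, these are determined once and for all by $\varphi$ itself and do not depend on the parameter $b$ being substituted. Absorbing them into a fixed external tuple ensures that $\abs{B'}=\abs{B}$, which is precisely what makes the VC density bound transfer. This is the combinatorial mechanism by which passing to the Shelah expansion preserves VC density.
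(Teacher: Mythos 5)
Your proof is correct and follows the same structural outline as the paper's: both invoke Shelah's quantifier elimination for $T^{\Sh}$, and both exploit the fact that the external parameters $c\in(M^*)^{\abs{z}}$ hidden in the new predicates are fixed once and for all by $\varphi$, so that pairing each $b\in B$ with $c$ leaves the parameter set the same size. The difference is in the bookkeeping. The paper first reduces, via Corollary~\ref{cor:encoding finite sets of formulas}, to finite sets $\Delta(x;y)$ of \emph{atomic} $\mathcal L^{\Sh}$-formulas, realizes such a $\Delta$ as a finite set $\Psi(x;y,z)$ of $\mathcal L$-formulas with an extra parameter block $z$, compares $S^\Delta(B)$ with $S^\Psi(B\times\{c\})$, and finally appeals to Lemma~\ref{lem:encoding finite sets of formulas} to bound $\vc^*(\Psi)$ by $\vc^T(m)$. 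You instead fold the entire quantifier-free Boolean combination into a \emph{single} $\mathcal L$-formula $\theta(x;y,z)$ with $\abs{x}=m$, compare $S^\varphi(B)$ directly with $S^\theta(B')$ where $\abs{B'}=\abs{B}$, and conclude $\vc^*(\varphi)\leq\vc^*(\theta)\leq\vc^T(m)$ straight from the definition of $\vc^T$ — no coding lemma needed. Both routes are valid; yours is somewhat more self-contained, while the paper's re-uses machinery it develops anyway (the same device reappears, for instance, in Lemma~\ref{lem:product}). The one step worth making explicit in your write-up is the substitution itself: in the quantifier-free form, each atom $R_{\psi_i,c_i}(t_1,\dots,t_k)$ (where the $t_j$ are $\mathcal L$-terms in $(x,y)$) is replaced by $\psi_i(t_1,\dots,t_k;z_i)$, ordinary $\mathcal L$-atoms are kept as is, and since $R_{\psi_i,c_i}$ is interpreted in $\mathbf M^{\Sh}$ precisely as the trace on powers of $M$ of $\psi_i^{\mathbf M^*}(\,\cdot\,;c_i)$, the displayed equivalence holds whenever $a$ and $b$ range over $M$ — which is all you need, since $\mathcal S_\varphi$ is computed over parameters from $M$.
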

\begin{proof}
Fix some $m$ and assume $\abs{x}=m$. The inequality $\vc^{T^{\Sh}}(m)\geq\vc^T(m)$ being obvious, we only need to show that $\vc^{T^{\Sh}}(m)\leq\vc^T(m)$. 
Let $\Delta=\Delta(x;y)$ be a finite set of atomic $\mathcal L^{\Sh}$-formulas; by Corollary~\ref{cor:encoding finite sets of formulas} and Shelah's theorem mentioned above, it suffices to show that $\vc^*(\Delta)\leq\vc^T(m)$.
Take a finite set $\Psi=\Psi(x;y,z)$ of partitioned $\mathcal L$-formulas and some $c\in M^{\abs{z}}$ such that $\Delta=\{R_{\psi,c}(x;y):\psi\in\Psi\}$. Let $B\subseteq M^{\abs{y}}$ be finite, $B^*:=B\times\{c\}$, and let $p\in S^\Delta(B)$.
Let $a$ be an arbitrary realization of $p$ (in $\mathbf M^{\Sh}$), and define $p^*:=\tp^\Psi(a/B^*)$ (in $\mathbf M^*$). Then for $\psi\in\Psi$ and $b\in B$ we have
\begin{align*}
\psi(x;b,c)\in p^*	&\quad\Longleftrightarrow\quad \mathbf M^*\models\psi(a;b,c) \\
					&\quad\Longleftrightarrow\quad \mathbf M^{\Sh}\models R_{\psi,c}(a;b) \\
					&\quad\Longleftrightarrow\quad R_{\psi,c}(x;b)\in p.
\end{align*}
In particular, the map $p\mapsto p^*\colon S^\Delta(B)\to S^\Psi(B^*)$ is injective, so
$\vc^*(\Delta)\leq\vc^*(\Psi)\leq\vc^T(m)$ by Lemma~\ref{lem:encoding finite sets of formulas}.
\end{proof}

It is well-known (see, e.g., \cite[Theorem~4.7]{Wierzejewski}) that the direct product of two NIP structures is again NIP.
As a consequence of the last lemma we can also now estimate the VC~density of a direct product in terms of the VC~densities of its factors. We refer to \cite[Section~9.1]{Hodges} for the definition of the product of two $\mathcal L$-structures, and to \cite[Corollary~9.6.4]{Hodges} for the Feferman-Vaught Theorem used in the proof below.

\begin{lemma}\label{lem:product}
Let $\mathbf M'$ be another infinite $\mathcal L$-structure, $T'=\Th(\mathbf M')$, and let $T^\times=\Th(\mathbf M\times\mathbf M')$ be the $\mathcal L$-theory of the direct product of $\mathbf M$ and $\mathbf M'$. Then 
$$\vc^{T^\times}\leq\vc^T+\vc^{T'}.$$
\end{lemma}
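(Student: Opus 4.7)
The plan is to bound $\vc^{T^\times}(n)$ for each $n$ by working with dual VC density. By the dual expression for the VC density function, it suffices to show that every partitioned $\mathcal{L}$-formula $\varphi(x;y)$ with $\abs{x}=n$ satisfies $\vc^*(\varphi) \leq \vc^T(n) + \vc^{T'}(n)$ in $T^\times$. Fix such a $\varphi$ and set $m:=\abs{y}$.

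The central ingredient is the Feferman--Vaught Theorem \cite[Corollary~9.6.4]{Hodges}, which I would apply to $\varphi$: it produces partitioned $\mathcal{L}$-formulas $\psi_1(x;y),\dots,\psi_k(x;y)$ and $\psi_1'(x;y),\dots,\psi_k'(x;y)$ (sharing the variable partition of $\varphi$, so each has $\abs{x}=n$ and $\abs{y}=m$), together with a Boolean function $B$, such that for all $a \in M^n$, $a' \in (M')^n$, $b \in M^m$, $b' \in (M')^m$,
\[
\mathbf{M}\times\mathbf{M}' \models \varphi\bigl((a,a');(b,b')\bigr) \iff B\bigl(\mathbf{M}\models\psi_i(a;b),\ \mathbf{M}'\models\psi_i'(a';b') : i\in[k]\bigr).
\]

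The next step is to bound $\pi^*_\varphi(t)$ by counting types over coordinate projections of the parameter set. Given a finite $C\subseteq (M\times M')^m$ with $\abs{C}=t$, let $C_1\subseteq M^m$ and $C_2\subseteq (M')^m$ be its two coordinate projections, each of size at most $t$. Set $\Psi:=\{\psi_1,\ldots,\psi_k\}$ and $\Psi':=\{\psi_1',\ldots,\psi_k'\}$. The decomposition above shows that the complete $\varphi$-type of $(a,a')$ over $C$ (in $\mathbf{M}\times\mathbf{M}'$) is determined by the pair $\bigl(\tp^\Psi(a/C_1),\tp^{\Psi'}(a'/C_2)\bigr)$ of types in $\mathbf{M}$ and $\mathbf{M}'$, respectively; hence
\[
\abs{S^\varphi(C)} \ \leq\ \abs{S^\Psi(C_1)}\cdot \abs{S^{\Psi'}(C_2)} \ \leq\ \pi^*_\Psi(t)\cdot \pi^*_{\Psi'}(t).
\]

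Finally, since $\Psi$ and $\Psi'$ are finite sets of partitioned $\mathcal{L}$-formulas each having $\abs{x}=n$, Lemma~\ref{lem:encoding finite sets of formulas} yields $\vc^*(\Psi)\leq \vc^T(n)$ and $\vc^*(\Psi')\leq \vc^{T'}(n)$. Multiplying the resulting asymptotic bounds then gives $\pi^*_\varphi(t)=O(t^{\vc^T(n)+\vc^{T'}(n)+\eps})$ for every $\eps>0$, whence $\vc^*(\varphi) \leq \vc^T(n)+\vc^{T'}(n)$, and taking the supremum over $\varphi$ completes the argument. The only substantive step is the invocation of Feferman--Vaught; once the decomposition is in hand, the manipulation of type counts and the appeal to Lemma~\ref{lem:encoding finite sets of formulas} are routine.
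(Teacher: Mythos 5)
Your proposal is correct and follows essentially the same route as the paper's proof: invoke Feferman--Vaught to decompose $\varphi$ into component formulas on the two factors, project a finite parameter set in the product to finite parameter sets in each factor, observe that a $\varphi$-type over the product parameter set is determined by the pair of $\Psi$- and $\Psi'$-types over the projections, and close with Lemma~\ref{lem:encoding finite sets of formulas}. The only cosmetic difference is that the paper states the Feferman--Vaught conclusion in the specific disjunction-of-conjunctions form rather than via an abstract Boolean function, and writes $a\times a'$ for the interleaved tuple in $(M\times M')^n$ where you write $(a,a')$; neither affects the argument.
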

\begin{proof}
Given $n$-tuples $a=(a_1,\dots,a_n)\in M^n$ and $a'=(a_1',\dots,a_n')\in (M')^n$ we denote by $a\times a'$ the $n$-tuple $((a_1,a_1'),\dots,(a_n,a_n'))$ of elements of $M\times M'$; every element of $(M\times M')^n$ has the form $a\times a'$ for some $a\in M^n$, $a'\in (M')^n$.

Let $\varphi(x;y)$ be an $\mathcal L$-formula. By the Feferman-Vaught Theorem there exist finitely many pairs of $\mathcal L$-formulas $(\theta_i(x;y),\theta_i'(x;y))$, $i\in [n]=\{1,\dots,n\}$, such that for all $a\in M^{\abs{x}}$, $a'\in (M')^{\abs{x}}$ and $b\in M^{\abs{y}}$, $b'\in (M')^{\abs{y}}$,
$$\mathbf M\times\mathbf M'\models \varphi(a\times a';b\times b')\quad\Longleftrightarrow\quad 
\text{for some $i\in [n]$, $\mathbf M\models\theta_i(a;b)$ and $\mathbf M'\models\theta_i'(a';b')$.}
$$
Set $\Theta:=\{\theta_1,\dots,\theta_n\}$, $\Theta':=\{\theta_1',\dots,\theta_n'\}$.
Let $C$ be a finite set of tuples from $(M\times M')^{\abs{y}}$. 
Take $B\subseteq M^{\abs{y}}$, $B'\subseteq (M')^{\abs{y}}$ with $\abs{B},\abs{B'}\leq \abs{C}$ such that each $c\in C$ is of the form $c=b\times b'$ for a unique pair $(b,b')\in B\times B'$.
For every $p\in S^\varphi(C)$ choose a realization $a_p\times a_p'\in (M\times M')^{\abs{x}}$ of $p$ in $\mathbf M\times \mathbf M'$, and put
$$q := \tp^{\Theta}(a_p/B),\qquad q' := \tp^{\Theta'}(a_p'/B').$$
Then for all $(b,b')\in B\times B'$ we have
\begin{align*}
\varphi(x;b\times b')\in p	& \quad\Longleftrightarrow \quad \mathbf M\times\mathbf M'\models \varphi(a_p\times a_p';b\times b') \\
							& \quad\Longleftrightarrow \quad \text{$\mathbf M\models\theta_i(a_p;b)$ and $\mathbf M'\models\theta_i'(a_p';b')$, for some $i\in [n]$} \\
							& \quad\Longleftrightarrow \quad \text{$\theta_i(x;b)\in q$ and $\theta'_i(x;b')\in q'$, for some $i\in [n]$.}
\end{align*}
Hence the map $p\mapsto (q,q')$ is an injection
$S^\varphi(C)\to S^\Theta(B)\times S^{\Theta'}(B')$. In particular we obtain
$\pi^*_\varphi(t) \leq \pi^*_\Theta(t)\cdot \pi^*_{\Theta'}(t)$ for every $t$ and hence
$\vc^*(\varphi)\leq \vc^*(\Theta)+\vc^*(\Theta')$; here $\pi^*_\varphi$ is computed in $\mathbf M\times \mathbf M'$ and
$\pi^*_\Theta$, $\pi^*_{\Theta'}$ in $\mathbf M$ and $\mathbf M'$, respectively, and similarly for $\vc^*$. By Lemma~\ref{lem:encoding finite sets of formulas} therefore $\vc^{T^\times}(m)\leq \vc^T(m)+\vc^{T'}(m)$ where $m=\abs{x}$.
\end{proof}

\begin{remark*}
In a similar way one shows that if $\mathbf M'$ is a finite $\mathcal L$-structure and $T^\times=\Th(\mathbf M\times\mathbf M')$, then $\vc^{T^\times}\leq \vc^T$.
\end{remark*}

We finish this subsection by noting a further restriction on the growth of $\vc$ (cf.~also Lemma~\ref{lem:lower estimate, 1}):

\begin{lemma}\label{lem:lower estimate, 2}
$d\vc(m)\leq \vc(dm)$ for all $d,m>0$.
\end{lemma}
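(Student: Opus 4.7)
My plan is to argue on the dual side, using the identity $\vc^T(m) = \sup\{\vc^*(\varphi) : \varphi(x;y) \text{ with } |x|=m\}$ recorded at the start of Section~\ref{sec:VC density of a theory}. Given any partitioned $\mathcal L$-formula $\varphi(x;y)$ with $\abs{x}=m$, I will manufacture a partitioned $\mathcal L$-formula $\psi(u;v)$ with $\abs{u}=dm$ whose dual shatter function satisfies
$$\pi^*_\psi(dt) \;\geq\; \pi^*_\varphi(t)^{\,d} \qquad \text{for all } t\geq 1.$$
Once this is in place, taking logarithms and dividing by $\log(dt)$ and passing to $\limsup$ gives $\vc^*(\psi)\geq d\,\vc^*(\varphi)$; since $\vc^*(\psi)\leq\vc^T(dm)$, taking the sup over $\varphi$ with $\abs{x}=m$ then yields $\vc^T(dm)\geq d\,\vc^T(m)$, which is exactly the claim.

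For the construction, I would let $u=(x^{(1)},\dots,x^{(d)})$ with each $\abs{x^{(k)}}=m$, introduce fresh single parameter variables $z_1,\dots,z_d,z$ to play the role of \emph{selectors}, and set
$$\psi(u;\,y,z_1,\dots,z_d,z) \;:=\; \bigvee_{k=1}^{d}\Big(\, z=z_k \;\wedge\; \bigwedge_{j\neq k} z\neq z_j \;\wedge\; \varphi(x^{(k)};y)\,\Big).$$
(Only the symbol ``$=$'' is required, which is in every language.) Choose pairwise distinct $e_1,\dots,e_d\in M$ (available since $T$ has no finite models); the selector clause ensures that at a parameter of the form $(b,e_1,\dots,e_d,e_k)$ exactly the $k$-th disjunct activates, making the instance of $\psi$ equivalent to the cylinder $M^{(k-1)m}\times\varphi(M^m;b)\times M^{(d-k)m}$ in $M^{dm}$.

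For the counting step, I would pick, for each $k\in[d]$, a finite $B^{(k)}\subseteq M^{\abs{y}}$ of size $t$ realizing $\abs{S^\varphi(B^{(k)})}=\pi^*_\varphi(t)$, and then form
$$B \;:=\; \bigcup_{k=1}^d\big\{(b,e_1,\dots,e_d,e_k) : b\in B^{(k)}\big\},$$
which has size $dt$. Because of the selector clause, the $\psi$-type of a tuple $(a^{(1)},\dots,a^{(d)})$ over $B$ encodes, and is determined by, the $d$-tuple $\bigl(\tp^\varphi(a^{(k)}/B^{(k)})\bigr)_{k=1}^d$; hence
$$\bigl|S^\psi(B)\bigr| \;=\; \prod_{k=1}^d \bigl|S^\varphi(B^{(k)})\bigr| \;=\; \pi^*_\varphi(t)^{\,d},$$
giving the asserted inequality for dual shatter functions. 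The only subtle point (and really the only thing to check carefully) is that the $\psi$-type over $B$ genuinely \emph{decomposes} as this product, which is where the clause $\bigwedge_{j\neq k} z\neq z_j$ earns its keep: it guarantees that at each parameter of $B$ only one disjunct of $\psi$ is active, so the truth of $\psi$ at $(b,e_1,\dots,e_d,e_k)$ is independent of the other $a^{(j)}$'s and reduces to $\varphi(a^{(k)};b)$. Everything else is a routine $\limsup$ manipulation.
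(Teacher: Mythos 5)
Your argument is correct, and it reaches the same conclusion by a recognizably parallel but technically different route. The paper works with \emph{finite sets} of formulas: starting from $\Delta(x;y)$ with $\abs{x}=m$, it forms $\Delta'(x_1,\dots,x_d;y):=\{\varphi(x_i;y):\varphi\in\Delta,\ i\in[d]\}$, keeps the \emph{same} parameter set $B$ of size $t$, and observes that the map $\mathbf i\mapsto(a_{i_1},\dots,a_{i_d})$ injects $[r]^d$ into $S^{\Delta'}(B)$, so $\pi^*_\Delta(t)^d\leq\pi^*_{\Delta'}(t)$ with no change in the argument of the shatter function; the passage from $\vc^*(\Delta')$ to $\vc^T(dm)$ is then handed off to the coding lemma (Lemma~\ref{lem:encoding finite sets of formulas}). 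You instead stay with a single formula and do the coding \emph{by hand}: your selector clause $z=z_k\wedge\bigwedge_{j\neq k}z\neq z_j$ is essentially an inlined instance of Shelah's coding trick, turning the $d$ formulas $\varphi(x^{(k)};y)$ into one formula $\psi$ at the cost of growing the parameter set from $t$ to $dt$ (and adding $d+1$ parameter coordinates, which is harmless since the lemma only constrains $\abs{u}=dm$). Because your $B$ has size $dt$ rather than $t$, you need the extra observation that $\log(dt)\sim\log t$ to recover $\vc^*(\psi)\geq d\,\vc^*(\varphi)$, which you correctly flag as a routine $\limsup$ manipulation. The product decomposition $\abs{S^\psi(B)}=\prod_k\abs{S^\varphi(B^{(k)})}$ is justified exactly as you say: at each $b'\in B$ precisely one disjunct is active, so the $\psi$-type of $(a^{(1)},\dots,a^{(d)})$ over $B$ is determined by, and determines, the $d$-tuple of $\varphi$-types, and any such tuple is realizable by choosing the $a^{(k)}$ independently. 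What the paper's route buys is brevity (it reuses Lemma~\ref{lem:encoding finite sets of formulas} and avoids the $t\mapsto dt$ rescaling); what yours buys is self-containedness, since you never invoke that coding lemma.
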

\begin{proof}
Let $\Delta(x;y)$ be a finite set of $\mathcal L$-formulas with $\abs{x}=m$. Let $x_1,\dots,x_d$ be new $m$-tuples of variables and set
$$\Delta'(x_1,\dots,x_d;y) := \big\{ \varphi(x_i;y): \varphi(x;y)\in\Delta,\ i=1,\dots,d\big\}.$$
Let $B\subseteq M^{\abs{y}}$, $\abs{B}=t\in\bN$, such that $r:=\pi^*_\Delta(t)=\abs{S^\Delta(B)}$. Let $a_1,\dots,a_r\in M^{m}$ be realizations of the types in $S^\Delta(B)$. For each $\mathbf i=(i_1,\dots,i_d)\in [r]^d$ let $a_{\mathbf i}:=(a_{i_1},\dots,a_{i_d})\in (M^m)^d=M^{dm}$. Then the $a_{\mathbf i}$ realize pairwise distinct $\Delta'(x_1,\dots,x_d;B)$-types. This yields $(\pi^*_\Delta(t))^d=\abs{S^\Delta(B)}^d \leq \abs{S^{\Delta'}(B)}\leq\pi^*_{\Delta'}(t)$.
Since $t$ was arbitrary, we obtain $d\vc^*(\Delta)\leq\vc^*(\Delta')$. Hence $d\vc(m)\leq \vc(dm)$ by Lemma~\ref{lem:encoding finite sets of formulas}.
\end{proof}

\subsection{VC density and indiscernible sequences}
In this subsection we assume that $\mathbf M$ is sufficiently saturated. Recall that $\pi_\varphi(t)$ is the maximum  size of $\mathcal S_\varphi\cap A$ as $A$ ranges over $t$-element subsets of $M^{m}$, and $\pi^*_\varphi(t)$ is the maximum size of $S^\varphi(B)$ as $B$ ranges over all $t$-element subsets of $M^{n}$; here, as above $m=\abs{x}$, $n=\abs{y}$.
These definitions may naturally be relativized to parameters coming from indiscernible sequences. More precisely:

\begin{definition}
For every $t$ let $\pi_{\varphi,\ind}(t)$  be the maximum of $\abs{\mathcal S_\varphi\cap A}$ as $A$ ranges  over all sets of the form $A=\{a_0,\dots,a_{t-1}\}$ for some indiscernible sequence $(a_i)_{i\in\bN}$ in $M^{m}$, and let $\pi^*_{\varphi,\ind}(t)$  be the maximum of $\abs{S^\varphi(B)}$ where $B=\{b_0,\dots,b_{t-1}\}$ for some indiscernible sequence $(b_i)_{i\in\bN}$ in $M^{n}$. We call $\pi_{\varphi,\ind}$ the \emph{indiscernible shatter function} of $\varphi$ and $\pi^*_{\varphi,\ind}$ the \emph{dual indiscernible shatter function} of $\varphi$.
\end{definition}

The indiscernible shatter functions give rise to corresponding notions of \emph{indiscernible VC dimension $\VC_\ind(\varphi)$} and \emph{indiscernible VC density $\vc_\ind(\varphi)$} of $\varphi$ (and their duals $\VC_\ind^*(\varphi)$ and $\VC_\ind^*(\varphi)$) in a natural way; for example, $\vc^*_\ind(\varphi)$ is the infimum of all $r>0$ having the property that there is some $C>0$ such that for all $t$ and indiscernible sequences $(b_i)_{i\in\bN}$ we have $\abs{S^\varphi(B)}\leq Ct^r$, where $B=\{b_0,\dots,b_{t-1}\}$; if there is no such~$r$ then $\vc^*_\ind(\varphi)=\infty$.

As in the classical case (cf.~Lemma~\ref{lem:dual shatter}) we see that $\pi^*_{\varphi,\ind}=\pi_{\varphi^*,\ind}$ and hence $\VC_\ind(\varphi^*)=\VC^*_\ind(\varphi)$ and $\vc_\ind(\varphi^*)=\vc^*_\ind(\varphi)$.
Directly from the definition we have $\pi_{\varphi,\ind}\leq\pi_\varphi$ and hence
$\VC_\ind(\varphi)\leq\VC(\varphi)$ and $\vc_\ind(\varphi)\leq\vc(\varphi)$.
In particular $\VC_\ind(\varphi)$ and $\vc_\ind(\varphi)$ are finite if $\varphi$ defines a VC~class.
Conversely,  if $\VC_\ind(\varphi)$ is finite, then so is $\VC(\varphi)$. (This follows by saturation of $\mathbf M$ and extraction of an indiscernible sequence; see proof of Proposition~4 in \cite{Adler}.)
Hence if one of the quantities $\VC(\varphi)$, $\vc(\varphi)$, $\VC_\ind(\varphi)$, or $\vc_\ind(\varphi)$ is finite, then so are all the others.

\medskip

Another numerical parameter associated to $\varphi$ and defined via indiscernible sequences is the \emph{alternation number $\alt(\varphi)$} of $\varphi$ (in $\mathbf M$). This is the largest $d$ (if it  exists) such that for some indiscernible sequence $(a_i)_{i\in\bN}$ in $M^{m}$ and some $b\in M^{n}$ we have 
$$a_i\in \varphi(M^{m};b)\quad\Longleftrightarrow\quad a_{i+1}\notin \varphi(M^{m};b)\qquad\text{for all $i<d-1$.}$$ 
If there is no such $d$ we set $\alt(\varphi)=\infty$.
It is well-known (and essentially due to Poizat) that $\alt(\varphi)\leq 2\VC_\ind(\varphi)+1$ (see, e.g., \cite[Proposition~3]{Adler}) and that if $\alt(\varphi)$ is finite then $\varphi$ defines a VC~class \cite[Proposition~4]{Adler}.
Moreover:

\begin{lemma}\label{lem:alt}
$\vc_\ind(\varphi)\leq\alt(\varphi)-1$.
\end{lemma}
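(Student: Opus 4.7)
Write $d := \alt(\varphi)$. If $d = \infty$ there is nothing to prove, so assume $d < \infty$. The plan is to show directly that
$$\pi_{\varphi,\ind}(t) \;\leq\; 2\sum_{j=0}^{d-1}\binom{t-1}{j} \;=\; O(t^{d-1})\qquad\text{for all }t\geq 1,$$
which yields $\vc_\ind(\varphi)\leq d-1$.

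To this end, fix $t$ and an indiscernible sequence $(a_i)_{i\in\bN}$ in $M^{\abs{x}}$, and let $A=\{a_0,\dots,a_{t-1}\}$. For each $b\in M^{\abs{y}}$ associate the binary string
$$\eps(b) := (\eps_0(b),\dots,\eps_{t-1}(b))\in\{0,1\}^t,\qquad \eps_i(b)=1\ \Longleftrightarrow\ \mathbf M\models \varphi(a_i;b).$$
Clearly $b\mapsto \eps(b)$ induces an injection from $\mathcal S_\varphi\cap A$ into the set of binary strings of length $t$, so it suffices to bound the image. The key claim is:

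\medskip\noindent\emph{Claim.} For every $b\in M^{\abs{y}}$ the string $\eps(b)$ has at most $d-1$ sign changes, i.e., $\#\{i<t-1:\eps_i(b)\neq\eps_{i+1}(b)\}\leq d-1$.

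\medskip\noindent To prove the claim, suppose towards contradiction that $\eps(b)$ has at least $d$ sign changes. Then we can partition the index set $\{0,\dots,t-1\}$ into at least $d+1$ maximal constant blocks with respect to $\eps(b)$; picking one index from each of the first $d+1$ such blocks produces a subsequence $a_{i_0},a_{i_1},\dots,a_{i_d}$ (with $i_0<i_1<\cdots<i_d$) along which the truth value of $\varphi(\cdot;b)$ strictly alternates. Since any subsequence of an indiscernible sequence (over $\emptyset$) is itself indiscernible, this witnesses an alternation of length $d+1$, contradicting $\alt(\varphi)=d$.

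Given the claim, the number of admissible strings $\eps(b)$ is at most the number of binary strings of length $t$ with at most $d-1$ sign changes, which equals $2\sum_{j=0}^{d-1}\binom{t-1}{j}$ (two choices for the initial value, and at most $d-1$ sign-change positions chosen from the $t-1$ gaps). Hence $\abs{\mathcal S_\varphi\cap A}\leq 2\sum_{j=0}^{d-1}\binom{t-1}{j}=O(t^{d-1})$. Since the indiscernible sequence $(a_i)$ and $t$ were arbitrary, $\pi_{\varphi,\ind}(t)=O(t^{d-1})$, and therefore $\vc_\ind(\varphi)\leq d-1=\alt(\varphi)-1$.

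The only genuinely nontrivial step is the Claim, and the main point there is the stability of indiscernibility under passing to subsequences; everything else is a Sauer-type counting of binary strings with a bounded number of sign changes.
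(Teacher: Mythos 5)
Your proof is correct and follows essentially the same route as the paper: both arguments observe that for each $b$ the truth-pattern of $\varphi(a_i;b)$ along the indiscernible sequence has fewer than $\alt(\varphi)$ sign changes, and then count binary strings with that many sign changes to get the $O(t^{d-1})$ bound. You simply spell out the subsequence-extraction argument behind the sign-change bound (which the paper asserts without proof) and use the slightly tighter $\binom{t-1}{j}$ in place of the paper's $\binom{t}{j}$; neither difference affects the conclusion.
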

\begin{proof}
Since this is trivial if $\varphi$ has infinite alternation number, we assume that $d:=\alt(\varphi)<\infty$.
Let $(a_i)_{i\in\bN}$ be an indiscernible sequence in $M^{m}$ and $A=\{a_0,\dots,a_{t-1}\}$. Then for each $b\in M^{n}$, there are less than $d$ indices $i<t-1$ such that $\varphi(a_i;b)$ and $\varphi(a_{i+1};b)$ have different truth value in $\mathbf M$,
and the set $A\cap\varphi(M^{m};b)$ is uniquely determined by knowledge of these indices $i$.
Thus $\abs{A\cap\mathcal S_\varphi}\leq 2\sum_{i=0}^{d-1}{t\choose i}=O(t^{d-1})$ and hence $\vc_\ind(\varphi)\leq d-1$ as required.
\end{proof}

\begin{example*}
Suppose $\mathcal S_\varphi \subseteq {M^{m}\choose d}$ where $d>0$. Then $\alt(\varphi) \leq 2d+1$ and $\vc_\ind(\varphi)\leq\vc(\varphi)\leq d$, and all these inequalities are equalities if  $\mathcal S_\varphi = {M^{m}\choose d}$.
\end{example*}

The previous example shows that the inequality in Lemma~\ref{lem:alt},  in general, is strict.
The inequality $\VC_\ind(\varphi)\leq\VC(\varphi)$ may be strict if there are no non-trivial indiscernible sequences:

\begin{example*}
Suppose $\mathcal L=\{A,S,P\}$ where $A$ and $S$ are unary relation symbols and $P$ is a binary relation symbol, and suppose $\mathbf M$ is an $\mathcal L$-structure, with the interpretations of $A$,  $S$ and $P$ in $\mathbf M$ denoted by the same symbols, such that
\begin{enumerate}
\item $\abs{A}=d$ and $\abs{S}=2^d$;
\item for $s\in S$, $P(x,s)$ defines a subset of $A$ so that when $s$ runs through $S$ we obtain all subsets of $A$;
\item for $s\notin S$, $P(x,s)$ defines the empty set.
\end{enumerate}
Then $\VC(P)=d$ and $\VC_\ind(P)=1$ (as well as $\vc(P)=\vc_\ind(P)=0$).
\end{example*}

The inequality $\vc_\ind(\varphi)\leq\vc(\varphi)$ may also be strict, as Lemma~\ref{lem:alt for SzTr} in the next section shows. 
We do not know the answer to the following question:

\begin{question}
Is $\vc_\ind(\varphi)$ always integral-valued? 
\end{question}

(After a first version of this manuscript had been completed, Guingona and Hill~\cite{GH} showed that this question indeed has a positive answer.)

\medskip



We finish this section with a connection between $\vc^*_\ind$ and the Helly number.
We already remarked (see Section~\ref{sec:breadth}) that if $\mathbf M=(M,{<})$ is a dense linearly ordered set and $\varphi(x;y_1,z_1,y_2,z_2)=(y_1<x<z_1 \vee y_2<x<z_2)$ then the set system $\mathcal S_\varphi$ has infinite Helly number: that is, for each $d$ there is a finite subfamily of $\mathcal S_\varphi$ which is $d$-consistent yet inconsistent. In contrast to this, we have:

\begin{lemma}\label{lem:Helly indisc}
Put $d=\lfloor \vc^*_\ind(\varphi)\rfloor+1$. Then for every indiscernible sequence $(b_i)_{i\in\bN}$ in $M^{\abs{y}}$ the set system $\mathcal S=\{\varphi(M^{m};b_i):i\in\bN\}$ has Helly number at most~$d$.
\end{lemma}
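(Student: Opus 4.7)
The plan is to argue by contradiction: a failure of the Helly property at level $d$ will force the dual indiscernible shatter function to grow at least like $t^{n-1}$ for some $n>d$, which contradicts the definition of $d$. Suppose $\mathcal S$ has Helly number strictly greater than $d$, and pick a \emph{minimal} inconsistent subfamily of $\mathcal S$, of some size $n$. Since the family is $d$-consistent we have $n > d$; after relabelling we may write it as $\{\varphi(M^m;b_{i_k}) : k\in [n]\}$ with $i_1 < \cdots < i_n$. Minimality amounts exactly to the two conditions
\[
\bigcap_{k\in [n]}\varphi(M^m;b_{i_k})=\emptyset \qquad\text{and}\qquad \bigcap_{k\in [n]\setminus\{j\}}\varphi(M^m;b_{i_k})\neq\emptyset \ \text{ for every } j\in[n].
\]
Both conditions are first-order in the parameter tuple $(b_{i_1},\dots,b_{i_n})$, so by indiscernibility of $(b_i)_{i\in\bN}$ they hold for \emph{every} strictly increasing $n$-tuple drawn from the sequence.

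Next I would exploit this uniformity to manufacture many $\varphi(x;B)$-types. Fix $t\geq n$ and put $B := \{b_0,\dots,b_{t-1}\}$. For each $(n-1)$-element $J\subseteq\{0,\dots,t-1\}$, the second condition (applied to any reordering of $J$ inside an increasing $n$-tuple) gives some $a_J\in\bigcap_{j\in J}\varphi(M^m;b_j)$. For any $i\in\{0,\dots,t-1\}\setminus J$, applying the emptiness condition to the increasing reordering of $J\cup\{i\}$ forces $a_J\notin\varphi(M^m;b_i)$. Hence $a_J$ realises the complete $\varphi(x;B)$-type that picks out exactly $\{b_j : j\in J\}$, and as $J$ varies these $\binom{t}{n-1}$ types are pairwise distinct. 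Therefore
\[
\pi^*_{\varphi,\ind}(t)\ \geq\ \binom{t}{n-1}\ =\ \Theta(t^{n-1}).
\]

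Since $n-1\geq d$ and $d > \vc^*_\ind(\varphi)$ by the definition of $d=\lfloor\vc^*_\ind(\varphi)\rfloor+1$, the lower bound above forces $\vc^*_\ind(\varphi)\geq n-1\geq d$, a contradiction, completing the argument. The only delicate point in the plan is the transfer step: one must be careful that the two first-order conditions coming from minimality are preserved under \emph{order}-indiscernibility (this is fine because they involve only the unordered set of parameters), and that for each subset $J\cup\{i\}$ appearing above one really is entitled to apply the emptiness condition after reordering. Everything else amounts to a direct counting of types.
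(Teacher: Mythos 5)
Your proof is correct and follows essentially the same strategy as the paper's: both arguments locate the threshold size at which subfamilies drawn from the indiscernible sequence pass from consistent to inconsistent, and then produce $\binom{t}{D}$ pairwise distinct $\varphi$-types over $\{b_0,\dots,b_{t-1}\}$ to force $\vc^*_\ind(\varphi)\geq D\geq d$, a contradiction. The only cosmetic difference is bookkeeping: the paper picks $D$ maximal so that $\{\varphi(M^m;b_i):i<D\}$ is consistent, while you take a minimal (by inclusion) inconsistent subfamily of size $n$ and work with $D=n-1$; by indiscernibility these are the same quantity.
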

\begin{proof}
Suppose for a contradiction that $(b_i)_{i\in\bN}$ is an indiscernible sequence such that $\mathcal S=\{\varphi(M^{m};b_i):i\in\bN\}$ has Helly number larger than $d$. Then
some finite  subfamily $\mathcal S_0$ of $\mathcal S$ is $d$-consistent but not consistent. By  indiscernibility of $(b_i)$, \emph{every} finite subfamily of $\mathcal S$ of size at least $\abs{\mathcal S_0}$ has this property.
In particular, we can take $D\in \mathbb N$ maximal such that the 
set $\{ \varphi(M^{m};b_i): i < D\}$ is consistent. Obviously $D\geq d$. Since $(b_i)$
is indiscernible, we obtain that for any $I_0\in {\bN\choose D}$ the set 
$\{ \varphi(M^{m};b_i) : i\in I_0\}$ is consistent, but for any $D'>D$ and any    
$I_1\in {\bN\choose D'}$ the set 
$\{ \varphi(M^{m};b_i) : i\in I_1\}$ is inconsistent.
Let $t>D$ be arbitrary, and set $B_t=\{ b_i : i<t \}$. 
For $I\in { t \choose D }$ let $q_I(x)$ be the unique  $\varphi$-type over $B_t$
with $\varphi(x;b_i)\in q_I$ for $i\in I$ and   $\neg \varphi(x;b_i)\in q_I$ for 
$i\not\in I$. Since $\abs{I}=D$ every $q_I$ is consistent.
Thus $\abs{S^\varphi(B_t)}\geq {t\choose D}=\Theta(t^D)$. Since $D\geq d$, this contradicts  
$\vc_\ind^*(\varphi)<d$.
\end{proof}

\begin{remark*}
Note that in the context of the previous lemma, we cannot achieve the stronger conclusion that $\mathcal S$ has breadth at most $d$: for the formula $\varphi(x;y)=x\neq y$ and any indiscernible sequence $(b_i)$, the set system $\mathcal S$ always has infinite breadth.
\end{remark*}

By Lemma~\ref{lem:Helly indisc} and extraction of an indiscernible sequence (using that $\mathbf M$ is assumed to be sufficiently saturated) we obtain a consequence which does not mention indiscernibles:

\begin{corollary}
Suppose the set system $\mathcal S_\varphi$ is $d$-consistent, where $d=\lfloor \vc^*(\varphi)\rfloor+1$. Then there is an infinite subset of $\mathcal S_\varphi$ which is consistent.
\end{corollary}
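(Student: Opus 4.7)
I would derive the corollary from Lemma~\ref{lem:Helly indisc} via a standard extraction of indiscernibles, followed by compactness. Assume $\mathcal S_\varphi$ is infinite (otherwise the conclusion needs no argument, there being no infinite subfamily to produce). Choose an infinite sequence $(b_i)_{i\in\bN}$ of parameter tuples in $M^{n}$ such that the sets $\varphi(M^{m};b_i)$ are pairwise distinct. Using the hypothesis that $\mathbf M$ is sufficiently saturated, invoke the usual Erd\H{o}s--Rado/Ramsey-plus-compactness extraction to produce an indiscernible sequence $(b_i')_{i\in\bN}$ in $M^{n}$ whose Ehrenfeucht--Mostowski type is finitely realized in $(b_i)$.

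The key technical point is that the pairwise distinctness of the sets survives the extraction. This is automatic because the statement $\varphi(M^m;b_i)\neq \varphi(M^m;b_j)$ is first-order expressible in the pair $(b_i,b_j)$, namely by
$$\theta(y_1,y_2)\ :=\ \exists x\bigl(\varphi(x;y_1)\wedge\neg\varphi(x;y_2)\bigr)\ \vee\ \exists x\bigl(\neg\varphi(x;y_1)\wedge\varphi(x;y_2)\bigr),$$
and $\theta(b_i,b_j)$ holds for every $i\neq j$ by our choice of the $b_i$. Indiscernibility then forces $\theta(b_i',b_j')$ for all $i\neq j$, so the sets $\varphi(M^{m};b_i')$ are pairwise distinct.

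With this preparation, set $\mathcal S:=\{\varphi(M^{m};b_i'):i\in\bN\}\subseteq \mathcal S_\varphi$. Since $\mathcal S_\varphi$ is $d$-consistent, so is $\mathcal S$. Lemma~\ref{lem:Helly indisc}, applied to the indiscernible sequence $(b_i')$, tells us that $\mathcal S$ has Helly number at most $d$. Hence every finite subfamily of $\mathcal S$, being $d$-consistent, is in fact consistent. Saturation of $\mathbf M$ (equivalently, compactness applied to the partial type $\{\varphi(x;b_i'):i\in\bN\}$) upgrades this to consistency of the whole family $\mathcal S$, yielding the desired infinite consistent subfamily of $\mathcal S_\varphi$.

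\textbf{Main obstacle.} The only genuinely subtle point is the preservation of pairwise distinctness of the defined sets under the extraction step: if this were lost one could wind up with an indiscernible sequence whose associated $\varphi(M^m;b_i')$ are all equal, giving only a one-element subfamily. Once distinctness is encoded by a first-order formula in pairs of parameters and thus transported to the indiscernible sequence, the rest is a routine combination of Lemma~\ref{lem:Helly indisc} with compactness.
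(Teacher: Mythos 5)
Your argument is correct and coincides with the route the paper sketches: extract an indiscernible sequence, preserve distinctness of the defined sets by transporting a first-order formula $\theta(y_1,y_2)$ in pairs of parameters, apply Lemma~\ref{lem:Helly indisc} (noting $\vc^*_\ind(\varphi)\leq\vc^*(\varphi)$, so the Helly bound there is $\leq d$), and finish by compactness. One small slip: your parenthetical dismissal of the finite case is logically off --- if $\mathcal S_\varphi$ were finite the conclusion would be \emph{false}, not vacuous, so the corollary tacitly presupposes $\mathcal S_\varphi$ infinite --- but this does not affect the substance of your argument.
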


This is a weak version of a theorem of Matou{\v{s}}ek \cite{Matousek-fractional}, according to which, if  $\mathcal S_\varphi$ is $d$-consistent, where $d>\vc^*(\varphi)$, then one may write $\mathcal S_\varphi=\mathcal S_1\cup\cdots\cup \mathcal S_N$ (for some $N\in\bN$) where each $\mathcal S_i$ is consistent.

\section{Some VC Density Calculations}\label{sec:calculations}

\noindent
In this section we give an example of a formula in the language of rings which, in every infinite field, defines a set system with fractional VC density, depending on the characteristic of the field.  The construction of this formula (which is inspired by an example by Assouad \cite{Assouad}, who in turn credits Frankl) proceeds in two steps: we first associate to a given partitioned formula $\varphi$ a bigraph (= bipartite graph with a fixed ordering of the bipartition of the vertex set), and then we realize the set of edges of this bigraph as a definable family $\mathcal S_{\widehat\varphi}$.
For our example we choose $\varphi$ so as to encode point-line incidences in the affine plane; the calculation of $\vc(\widehat\varphi)$ in characteristic zero uses an analogue of the Sz\'emeredi-Trotter Theorem due to T\'oth.
We also discuss the question whether VC~density in NIP theories can take irrational values, and give examples of formulas in NIP theories whose shatter function is not asymptotic to a real power function.

Throughout this section $\mathcal L$ is a first-order language and $\mathbf M$ is an $\mathcal L$-structure.

\subsection{Associating a bigraph to a partitioned formula}\label{sec:bigraphs}
We follow \cite{LS} and make a distinction between bipartite graphs and bigraphs. A \emph{bipartite graph} is a graph $(V,E)$ whose  set $V$ of vertices can be partitioned into two classes such that all edges connect vertices in different classes.
By a \emph{bigraph} we mean a triple $G=(X,Y,\Phi)$ where $X$ and $Y$ are (not necessarily disjoint) sets and $\Phi\subseteq X\times Y$. 
Thus a bipartite graph can be viewed as a bigraph if we fix a partition and specify which bipartition class is first and second. Conversely, if $G=(X,Y,\Phi)$ is a bigraph then we obtain a bipartite graph $(V(G),E(G))$ (the bipartite graph associated to $G$) by letting $V(G)$ be the disjoint union of the sets $X$ and $Y$, and $E(G)=\Phi$; by abuse of language we call
$V(G)$ the set of \emph{vertices} of $G$ and $E(G)$  the set of \emph{edges} of $G$. We also say that $G$ is a bigraph \emph{on $V=V(G)$.} (What we call a bigraph $G=(X,Y,\Phi)$ is sometimes called an incidence structure, and  $(V(G),E(G))$ is called its Levi graph or incidence graph.)
A  bigraph is said to be finite if its set of vertices is finite.
It is easy to see that a finite bigraph $G$ can have at most $\frac{1}{4}\abs{V(G)}^2$ edges.

A bigraph $G'=(X',Y',\Phi')$ is a \emph{sub-bigraph} of
$G=(X,Y,\Phi)$ if $X\subseteq X'$, $Y\subseteq Y'$, and $\Phi'\subseteq \Phi$. We say that a bigraph $G$ contains a given bigraph $G'$ (as a sub-bigraph) if $G'$ is isomorphic to a sub-bigraph of $G$.
Given a bigraph $G=(X,Y,\Phi)$ and a subset $V$ of its vertex set $V(G)$, we denote by 
$$G\restrict V:=\big(X\cap V,Y\cap V,\Phi\cap (V\times V)\big)$$ the sub-bigraph of $G$  \emph{induced on $V$.} 
The \emph{complement} of a  bigraph $G=(X,Y,\Phi)$ is the bigraph $\neg G:=(X,Y,\neg \Phi)$, and its \emph{dual} is $G^*:=(Y,X,\Phi^*)$
where $\neg \Phi$ and $\Phi^*$ are as in Section~\ref{sec:VC duality}. 

\medskip

Let $\varphi(x;y)$ be a partitioned $\mathcal L$-formula, where $\abs{x}=m$, $\abs{y}=n$. We may associate a bigraph $G_\varphi=(X,Y,\Phi)$ to $\varphi$ and $\mathbf M$, where 
$X=M^{m}$, $Y=M^{n}$, and
$$\Phi = \varphi(M^m;M^n)=\big\{(a,b)\in M^{m}\times M^{n} : \mathbf M\models\varphi(a;b) \big\}.
$$
Note that $G_{\neg\varphi}=\neg G_\varphi$ and $G_{\varphi^*}=(G_\varphi)^*$.
If we want to stress the dependence of $G_\varphi$ on $\mathbf M$, then we write $G^{\mathbf M}_\varphi$ instead of $G_\varphi$. If $\varphi$ is invariant under the extension  $\mathbf M\subseteq\mathbf N$ of $\mathcal L$-structures, then $G^{\mathbf N}_\varphi\restrict V=G^{\mathbf M}_\varphi$ where $V=V(G^{\mathbf M}_\varphi)$.

\medskip

From now on until the end of this subsection we assume that $M$ is infinite and $m=n$. 
The collection 
$$E(G_\varphi)=\big\{ (a,b) : (a,b)\in\varphi(M^m;M^m) \big\}\subseteq M^m\times M^m$$
of edges of $G_\varphi$ then maps naturally onto the definable family $$\mathcal S_{\widehat{\varphi}} = \big\{ \{a,b\} : (a,b)\in\varphi(M^m;M^m)\big\}\subseteq {M^m\choose \leq 2}$$
of subsets of $M^{m}$ by a map whose fibers have at most $2$ elements;
here $\widehat\varphi(v;x,y)$ is the partitioned $\mathcal L$-formula with object variables $v=(v_1,\dots,v_m)$ and parameter variables $(x,y)$  given by
$$\widehat{\varphi}(v; x,y) := 
\varphi(x;y) \wedge ( v=x \vee v=y ).$$
Note that $\VC(\widehat\varphi)\leq 2$. Also, $\mathcal S_{\widehat{\varphi^*}}=\mathcal S_{\widehat\varphi}$ and hence 
$\widehat{\varphi^*}$ and $\widehat\varphi$ have the same VC dimension and VC density.
A bound on the number of subsets of a given finite set which are cut out  by $\mathcal S_{\widehat{\varphi}}$ may be computed as follows:

\begin{lemma}\label{lem:upper bound}
Let $A\subseteq M^{m}$ be finite. 
Then
$$
\abs{A_0}+\textstyle\frac{1}{2}\abs{E(G_\varphi\restrict V)}\leq
\abs{A\cap \mathcal S_{\widehat{\varphi}}} \leq 1+\abs{A_0}+
\abs{E(G_\varphi\restrict V)}$$
where 
\begin{enumerate}
\item $A_0$ is the set of all $a\in A$ such that $\mathbf M\models\varphi(a;b)$ or $\mathbf M\models\varphi(b;a)$ for some $b\in M^m$, but there is no $b\in A$ with $\mathbf M\models\varphi(a;b)$ or $\mathbf M\models\varphi(b;a)$, and
\item
$V\subseteq V(G_\varphi)$ is the disjoint union of $A$ considered as a subset of $X$ and $A$ considered as a subset of $Y$.
\end{enumerate}
\end{lemma}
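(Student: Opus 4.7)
My plan is to stratify $A\cap\mathcal{S}_{\widehat{\varphi}}$ by cardinality. Since $\widehat{\varphi}(M^m;x_0,y_0)$ equals $\{x_0,y_0\}$ when $\varphi(x_0;y_0)$ holds and $\emptyset$ otherwise, every $T\in A\cap\mathcal{S}_{\widehat{\varphi}}$ satisfies $\abs{T}\le 2$. Writing $\mathcal{T}_i:=\{T\in A\cap\mathcal{S}_{\widehat{\varphi}}:\abs{T}=i\}$ for $i\in\{0,1,2\}$, we have $\abs{A\cap\mathcal{S}_{\widehat{\varphi}}}=\abs{\mathcal{T}_0}+\abs{\mathcal{T}_1}+\abs{\mathcal{T}_2}$. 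Trivially $\abs{\mathcal{T}_0}\le 1$, accounting for the ``$1+$'' term in the upper bound.

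Split $E:=E(G_\varphi\restrict V)$ into the set $E_{\mathrm{diag}}$ of diagonal (``loop'') edges of the form $(a,a)\in\varphi$ and the set $E_{\mathrm{off}}=E\setminus E_{\mathrm{diag}}$ of off-diagonal edges. For $\mathcal{T}_2$: an unordered pair $\{a,b\}\subseteq A$ with $a\ne b$ lies in $\mathcal{T}_2$ iff at least one of the ordered edges $(a,b),(b,a)$ belongs to $E_{\mathrm{off}}$. Each such unordered pair is witnessed by at most two ordered edges, so $\tfrac{1}{2}\abs{E_{\mathrm{off}}}\le\abs{\mathcal{T}_2}\le\abs{E_{\mathrm{off}}}$.

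For $\mathcal{T}_1$: the singleton $\{a\}$ lies in $\mathcal{T}_1$ precisely when either $(a,a)\in\varphi$ (a loop) or $a$ has an \emph{external} neighbor $b\in M^m\setminus A$ with $\varphi(a;b)\vee\varphi(b;a)$. The defining property of $A_0$ is that its elements have an external neighbor but no internal neighbor of any kind, so $A_0$ is disjoint from the vertex set of $E_{\mathrm{diag}}\cup E_{\mathrm{off}}$; in particular each $a\in A_0$ contributes a singleton to $\mathcal{T}_1$ and each loop $(a,a)\in E_{\mathrm{diag}}$ contributes a distinct singleton, giving $\abs{\mathcal{T}_1}\ge\abs{A_0}+\abs{E_{\mathrm{diag}}}$. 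Combining with the lower bound on $\abs{\mathcal{T}_2}$ yields
$$\abs{A\cap\mathcal{S}_{\widehat{\varphi}}}\ge\abs{A_0}+\abs{E_{\mathrm{diag}}}+\tfrac{1}{2}\abs{E_{\mathrm{off}}}\ge\abs{A_0}+\tfrac{1}{2}\abs{E},$$
which is the left inequality.

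For the upper bound I construct an injection $\mathcal{T}_1\sqcup\mathcal{T}_2\hookrightarrow A_0\sqcup E$: each pair $\{a,b\}\in\mathcal{T}_2$ is sent to a chosen witnessing edge in $E_{\mathrm{off}}$; each $\{a\}\in\mathcal{T}_1$ with $a\in A_0$ is sent to $a$; each $\{a\}\in\mathcal{T}_1$ with $(a,a)\in\varphi$ is sent to the loop; and any remaining singleton (where $a$ possesses both an external neighbor and a non-loop internal neighbor) is charged to an incident edge of $E_{\mathrm{off}}$ not yet claimed. Combined with $\abs{\mathcal{T}_0}\le 1$, this gives the right inequality $\abs{A\cap\mathcal{S}_{\widehat{\varphi}}}\le 1+\abs{A_0}+\abs{E}$. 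The main obstacle will be orchestrating this last routing injectively: one has to show that a vertex with both external and internal non-loop neighbors always has a ``free'' incident off-diagonal edge not consumed by $\mathcal{T}_2$, exploiting the fact that an unordered pair $\{a,b\}$ in $\mathcal{T}_2$ can absorb only one of the two possible ordered edges $(a,b),(b,a)\in\varphi$.
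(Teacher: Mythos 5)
Your stratification of $A\cap\mathcal S_{\widehat\varphi}$ by cardinality is the same route the paper takes, and your lower bound argument is correct. But the obstacle you flag in the upper bound is not a matter of careful bookkeeping: the injection $\mathcal T_1\sqcup\mathcal T_2\hookrightarrow A_0\sqcup E(G_\varphi\restrict V)$ you want simply does not exist in general, because the stated inequality is false. Take $A=\{a,c\}$ with $a\neq c$, two further elements $b,b'\in M^m\setminus A$, and suppose $\varphi(M^m;M^m)=\{(a,c),(a,b),(c,b')\}$. Then $A\cap\mathcal S_{\widehat\varphi}=\bigl\{\{a,c\},\{a\},\{c\}\bigr\}$ has size $3$, while $A_0=\emptyset$ (the edge $(a,c)$ gives both $a$ and $c$ an internal neighbor, so neither lies in $A_0$) and $E(G_\varphi\restrict V)=\{(a,c)\}$ has size $1$, so the claimed upper bound is $1+0+1=2<3$. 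In your proposed routing the pair $\{a,c\}$ consumes the unique off-diagonal edge, and then both leftover singletons $\{a\}$ and $\{c\}$ need a free incident edge; none exists, because the ``other'' ordered edge $(c,a)$ on which you hoped to fall back is not actually in $\Phi$.

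The paper's own proof overlooks exactly the same configuration: it asserts that every singleton $S=\{a\}\in A\cap\mathcal S_{\widehat\varphi}$ has $a\in A_0$, which fails whenever a vertex of $A$ has both an internal and an external $\varphi$-neighbor. The upper bound does hold if $\abs{A_0}$ is replaced by $\abs{A}$ on the right-hand side (each singleton is determined by its unique element of $A$, each pair by a witnessing edge of $E(G_\varphi\restrict V)$), and the resulting estimate $\abs{A\cap\mathcal S_{\widehat\varphi}}\leq 1+\abs{A}+\abs{E(G_\varphi\restrict V)}$ is all that is actually used for \eqref{eq:pi Pi inequality} and its consequences (Corollary~\ref{cor:KST}, Lemma~\ref{lem:lower bound}, Proposition~\ref{prop:fractional vc}). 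So rather than trying to rescue the charging argument, the right move is to weaken the bound; your suspicion that something was wrong here was exactly on target.
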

\begin{proof}
Each set $S\in A\cap \mathcal S_{\widehat{\varphi}}$ is of one of the following types: $S=\emptyset$; $S=\{a\}$ where $a\in A_0$; or $S=\{a,b\}$ where $a,b\in A$ with $\mathbf M\models\varphi(a;b)$ or $\mathbf M\models\varphi(b;a)$. Each set of the last two types actually occurs in $A\cap \mathcal S_{\widehat{\varphi}}$, whereas $S=\emptyset$ only occurs iff there is some edge $(a,b)$ of $G_\varphi$ with $a,b\notin A$.
\end{proof}

Hence if we set
$$\Pi_\varphi(t) := \max\big\{ \abs{E(G_{\varphi}\restrict V)} : V\subseteq V(G_\varphi),\ \abs{V}=t \big\}\in\bN,$$
then the lemma shows that
\begin{equation}\label{eq:pi Pi inequality}
\textstyle\frac{1}{2}\Pi_\varphi(t)\leq\pi_{\widehat\varphi}(t)\leq 1+t+\Pi_\varphi(2t)\qquad\text{for every $t$.}
\end{equation}
This observation opens up a road to computing (upper or lower) bounds on the VC density of the formula $\widehat\varphi$: {\it find a bound on the number of edges of the subgraph of $G_\varphi$ induced on finite subsets of its vertex set, in terms of the number of vertices.}\/ In the following we give some applications of this approach.

\medskip

For positive integers $r$ and $s$ we denote by $K_{r,s}:=\big([r],[s],[r]\times[s]\big)$ the complete bigraph with the vertex set $[r]\cup [s]$.
The following is a fundamental fact about finite bigraphs:

\begin{theorem}[K\H{o}v\'ari, S\'os and Tur\'an \cite{KST}]\label{thm:KST}
Let $r\leq s$ be positive integers.
There exists a real number $C=C(r,s)$ such that every finite bigraph $G$ which does not contain $K_{r,s}$ as a sub-bigraph 
has at most $C\,\abs{V(G)}^{2-1/r}$ edges.
\end{theorem}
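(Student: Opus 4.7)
The plan is to use a double-counting argument of Kővári--Sós--Turán. Write $G=(X,Y,\Phi)$, set $a:=|X|$, $b:=|Y|$, let $e:=|\Phi|$ denote the number of edges, and for each $y\in Y$ let $d(y):=|\{x\in X:(x,y)\in\Phi\}|$ denote its $X$-degree; then $\sum_{y\in Y} d(y)=e$.

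I would count the set of pairs $(S,y)$ with $S\in\binom{X}{r}$ and $y\in Y$ adjacent to every vertex of $S$. Counting first by $y$ gives $\sum_{y\in Y}\binom{d(y)}{r}$, while counting by $S$ and using the $K_{r,s}$-freeness hypothesis yields the upper bound $(s-1)\binom{a}{r}$, since for each $r$-subset $S$ of $X$ there are at most $s-1$ common $Y$-neighbors (otherwise we would embed $K_{r,s}$). Hence
\[
\sum_{y\in Y}\binom{d(y)}{r}\ \leq\ (s-1)\binom{a}{r}.
\]
Convexity of $t\mapsto\binom{t}{r}$ (extended to a convex function on $[0,\infty)$) together with Jensen's inequality applied to the left-hand side gives $b\binom{e/b}{r}\leq (s-1)\binom{a}{r}$. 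Expanding and using $\binom{e/b}{r}\geq \bigl(e/b-r+1\bigr)^r/r!$ (valid when $e/b\geq r-1$; the small case $e/b<r-1$ gives $e<(r-1)b$, which is dominated by the bound below) and $\binom{a}{r}\leq a^r/r!$, one obtains after a short calculation an inequality of the shape
\[
e\ \leq\ (s-1)^{1/r}\,a\,b^{\,1-1/r}\ +\ (r-1)b.
\]
Since $a,b\leq n:=|V(G)|$, both summands are $O(n^{2-1/r})$, so one may take $C=C(r,s)$ to absorb the constants.

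There is no real obstacle here; the one technical nuisance is handling the regime where some degrees are smaller than $r-1$ (so that the lower bound for $\binom{d(y)}{r}$ degenerates), which is why the additive $(r-1)b$ term appears. This is routine, and the final bound $e=O(n^{2-1/r})$ holds with a constant depending only on $r$ and $s$, as claimed.
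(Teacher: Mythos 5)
The paper cites Theorem~\ref{thm:KST} from \cite{KST} without giving a proof, so there is no in-paper argument to compare against; your double-counting of the pairs $(S,y)$ with $S\in\binom{X}{r}$ and $y$ a common neighbor, followed by Jensen's inequality on the convex function $t\mapsto\binom{t}{r}$, is exactly the classical K\H{o}v\'ari--S\'os--Tur\'an argument and is correct. Your handling of the degenerate regime $e/b<r-1$ via the additive $(r-1)b$ term, and the passage to $|V(G)|^{2-1/r}$ using $a,b\le n$, are both sound.
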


(In fact, a more precise bound is also available, in terms of the sizes of the vertex sets $X$ and $Y$, but we won't need this.)

\begin{corollary}\label{cor:KST}
Let $r\leq s$ be positive integers.
There is a real number $C_1=C_1(r,s)$ with the following property:
if $\varphi(x;y)$ is an $\mathcal L$-formula such that $G_\varphi$ does not contain $K_{r,s}$ as a subgraph, then
$\pi_{\widehat{\varphi}}(t)\leq C_1\,t^{2-1/r}$ for every $t$; in particular,
$\vc(\widehat{\varphi})\leq 2-\frac{1}{r}$.
\end{corollary}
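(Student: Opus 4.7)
The plan is simply to combine Theorem~\ref{thm:KST} with the inequality~\eqref{eq:pi Pi inequality}. Let $C=C(r,s)$ be the constant from Theorem~\ref{thm:KST}.

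First, I would observe that the hypothesis passes to induced sub-bigraphs: for any finite $V\subseteq V(G_\varphi)$, the induced sub-bigraph $G_\varphi\restrict V$ does not contain $K_{r,s}$ either (any such copy in $G_\varphi\restrict V$ would be a copy in $G_\varphi$). Applying Theorem~\ref{thm:KST} to $G_\varphi\restrict V$ yields
\[
\abs{E(G_\varphi\restrict V)} \leq C\,\abs{V}^{2-1/r},
\]
and hence $\Pi_\varphi(t)\leq C\,t^{2-1/r}$ for every $t$.

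Next, feeding this into the right-hand inequality of~\eqref{eq:pi Pi inequality} gives
\[
\pi_{\widehat\varphi}(t)\;\leq\;1+t+\Pi_\varphi(2t)\;\leq\;1+t+C\cdot 2^{2-1/r}\,t^{2-1/r}
\qquad\text{for every $t$.}
\]
Since $r\geq 1$ we have $2-1/r\geq 1$, so for $t\geq 1$ both $1$ and $t$ are bounded by $t^{2-1/r}$. Therefore
\[
\pi_{\widehat\varphi}(t)\;\leq\;\bigl(2+C\cdot 2^{2-1/r}\bigr)\,t^{2-1/r}\qquad\text{for $t\geq 1$,}
\]
and the value at $t=0$ can be absorbed by enlarging the constant. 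Setting $C_1=C_1(r,s):=2+C\cdot 2^{2-1/r}$ (or slightly larger) yields the bound $\pi_{\widehat\varphi}(t)\leq C_1\,t^{2-1/r}$ valid for all~$t$, and the inequality $\vc(\widehat\varphi)\leq 2-\tfrac{1}{r}$ is then immediate from the definition of VC~density. There is no real obstacle here; the corollary is purely a matter of invoking the Kővári--Sós--Turán bound on the bigraph side and translating it through the definable ``edge-set'' family $\mathcal S_{\widehat\varphi}$ via~\eqref{eq:pi Pi inequality}.
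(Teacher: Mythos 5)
Your proof is correct and follows exactly the same route as the paper: observe that the $K_{r,s}$-free hypothesis passes to induced sub-bigraphs, apply K\H{o}v\'ari--S\'os--Tur\'an to bound $\Pi_\varphi$, then feed this into the right-hand inequality of~\eqref{eq:pi Pi inequality}. Your constant $2+C\cdot 2^{2-1/r}$ even coincides with the paper's $2(1+2^{1-1/r}C)$ after expansion.
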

\begin{proof}
If $V\subseteq V(G_\varphi)$ is finite, and the  bigraph $G_\varphi\restrict V$ does not contain $K_{r,s}$,
then $\abs{E(G_\varphi\restrict V)} \leq C\,\abs{V}^{2-1/r}$  by Theorem~\ref{thm:KST}, where $C=C(r,s)>0$ is as in that theorem. Thus
$\pi_{\widehat\varphi}(t)\leq  1+t+\Pi_\varphi(2t)\leq 2(1+2^{1-1/r}C)\,t^{2-1/r}$ by \eqref{eq:pi Pi inequality}.
\end{proof}

Given integers $r,s\geq 1$, the bigraph $G_\varphi$ contains $K_{r,s}$ if and only if there are pairwise distinct $a_1,\dots,a_r\in M^m$ and pairwise distinct $b_1,\dots,b_s\in M^m$ such that $\mathbf M\models\varphi(a_i;b_j)$ for all $i\in [r]$, $j\in [s]$.
It is interesting to note that if $G_\varphi$ does not contain $K_{r,s}$ as a sub-bigraph, for some $r,s\geq 1$, then the  bigraph $G_{\neg\varphi}$ associated to $\neg\varphi$ does contain $K_{t,t}$, for every $t\geq 1$:
by an analogue of Ramsey's Theorem for bigraphs due to Erd\H{o}s and Rado \cite{Erdos-Rado}, for every $t$ there exists an $n$ such that for all bigraphs $G$ with $\abs{V(G)}\geq n$, one of $G$, $\neg G$ contains $K_{t,t}$ as a sub-bigraph.
Hence in this case the VC~density of the formula $\widehat{\neg\varphi}$ associated to $\neg\varphi$ equals $2$, by \eqref{eq:pi Pi inequality}.

\subsection{Point-line incidences}\label{sec:point-line}
Let $K$ be an infinite field, construed as a first-order structure in the language of rings as usual.
The partitioned formula
$$\varphi(x_1,x_2; y_1, y_2) :=  x_2=y_1x_1 + y_2
$$
gives rise to the bigraph
$G_\varphi=(X,Y,\Phi)$ where $X=Y=K^2$ and
$$\Phi = \big\{ ((\eta,\xi),(a,b))\in K^2\times K^2 : \eta=a\xi+b \big\}.$$
We may think of $V(G_\varphi)=X\cup Y$ as the disjoint union of the set $X$ of points $p=(\eta,\xi)\in K^2$ in the affine plane $\mathbb A^2(K)$ over $K$ and the set $Y$ of non-vertical lines $\ell$ in $\mathbb A^2(K)$; thus $E(G_\varphi)$ is the set of point-line incidences $(p,\ell)$ where $p\in\mathbb A^2(K)$ and $\ell\subseteq\mathbb A^2(K)$ is a non-vertical line containing $p$.
The bigraph $G_\varphi$ does not contain $K_{2,2}$ as a subgraph. (Two distinct points in $\mathbb A^2(K)$ lie on a unique line.)
Hence by Corollary~\ref{cor:KST}:

\begin{corollary} \label{cor:point-line incidences}
There is a real number $C_1>0$ \textup{(}independent of $K$\textup{)} such that
$\pi_{\widehat{\varphi}}(t)\leq C_1\,t^{3/2}$ for every $t$; in particular,
$\vc(\widehat{\varphi})\leq \frac{3}{2}$.
\end{corollary}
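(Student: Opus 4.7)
My plan is simply to apply Corollary~\ref{cor:KST} with $r=s=2$, once the $K_{2,2}$-freeness of the bigraph $G_\varphi$ has been verified. The verification is the elementary affine-geometry fact already noted in the paragraph preceding the statement: two distinct non-vertical lines in $\mathbb{A}^2(K)$ meet in at most one point, and dually, two distinct points of $\mathbb{A}^2(K)$ lie on a unique line. So there cannot exist two distinct points $p_1,p_2\in X$ and two distinct non-vertical lines $\ell_1,\ell_2\in Y$ with $p_i\in\ell_j$ for all $i,j\in\{1,2\}$; this is exactly the assertion that $G_\varphi$ does not contain $K_{2,2}$ as a sub-bigraph.

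More concretely, in algebraic terms: if $(\eta_1,\xi_1),(\eta_2,\xi_2)\in K^2$ are distinct and $(a_1,b_1),(a_2,b_2)\in K^2$ are distinct, then the system $\eta_i = a_j\xi_i + b_j$ for $i,j\in\{1,2\}$ forces $(a_1-a_2)\xi_i = b_2-b_1$ for $i=1,2$; since $(a_1,b_1)\neq (a_2,b_2)$ and the $\xi_i$ cannot both equal a common value (else $\xi_1=\xi_2$ and then $\eta_1=\eta_2$, contradicting distinctness of the points), we obtain a contradiction. Thus $G_\varphi$ omits $K_{2,2}$.

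Given this, Corollary~\ref{cor:KST} applied with $r=s=2$ yields a constant $C_1=C_1(2,2)>0$ (depending only on the integers $2,2$ and hence independent of $K$) such that $\pi_{\widehat{\varphi}}(t)\leq C_1 t^{2-1/2}=C_1 t^{3/2}$ for every $t$, and consequently $\vc(\widehat\varphi)\leq 3/2$. There is no real obstacle here: the only substantive input is the K\H{o}v\'ari--S\'os--Tur\'an bound packaged into Corollary~\ref{cor:KST}, and the $K_{2,2}$-freeness is essentially a restatement of the uniqueness of the line through two points. The independence of $C_1$ from $K$ is automatic because $C_1(2,2)$ in Theorem~\ref{thm:KST} depends only on the pair of integers.
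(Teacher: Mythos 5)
Your proof follows exactly the paper's argument: note that $G_\varphi$ omits $K_{2,2}$ (two distinct points determine a unique line, so no two distinct non-vertical lines share two distinct points), then invoke Corollary~\ref{cor:KST} with $r=s=2$. The only addition is the explicit algebraic verification of $K_{2,2}$-freeness, which the paper leaves as a one-line parenthetical; the verification is correct, though the clause ``the $\xi_i$ cannot both equal a common value'' could be stated more cleanly by splitting into the cases $a_1=a_2$ (forcing $b_1=b_2$) and $a_1\neq a_2$ (forcing $\xi_1=\xi_2$ and hence $\eta_1=\eta_2$).
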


Note that this bound is better than what we get from the general estimate $\vc\leq\VC$, since $\VC(\widehat\varphi)=2$.
Also, if $K=\bR$, then for our original formula $\varphi$ we have
$\pi_\varphi(t)=1+t+{t\choose 2}$ for every $t$.
In particular $\VC(\varphi)=\vc(\varphi)=2$.

\medskip

A lower bound on $\vc(\widehat\varphi)$ is given by:

\begin{lemma}\label{lem:lower bound}
Suppose $K$ has characteristic $0$. Then
$\vc(\widehat\varphi)\geq\frac{4}{3}$.
\end{lemma}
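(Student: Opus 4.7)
The plan is to use the classical Elekes point–line construction (available because $\operatorname{char} K = 0$, so $\mathbb Z \hookrightarrow K$) to exhibit, for each positive integer $k$, a finite $V_k \subseteq V(G_\varphi)$ of size $3k^3$ whose induced sub-bigraph $G_\varphi \restrict V_k$ has at least $k^4$ edges. Combined with the lower half of \eqref{eq:pi Pi inequality}, namely $\pi_{\widehat\varphi}(t) \geq \tfrac12\,\Pi_\varphi(t)$, this will yield $\pi_{\widehat\varphi}(3k^3) \geq \tfrac12 k^4$ along the sequence $t_k := 3k^3$, forcing
$$
\vc(\widehat\varphi) = \limsup_{t\to\infty}\frac{\log \pi_{\widehat\varphi}(t)}{\log t} \geq \lim_{k\to\infty} \frac{\log(\tfrac12 k^4)}{\log(3k^3)} = \frac{4}{3}.
$$

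Concretely, I would define
$$
P_k := \bigl\{(i,j)\in K^2 : 1\leq i\leq k,\ 1\leq j\leq 2k^2\bigr\}, \qquad
L_k := \bigl\{(a,b)\in K^2 : 1\leq a\leq k,\ 1\leq b\leq k^2\bigr\},
$$
viewing $P_k$ as a subset of the ``points'' copy $X=K^2$ and $L_k$ as a subset of the ``lines'' copy $Y=K^2$. Since $V(G_\varphi)$ is the disjoint union $X\sqcup Y$, the set $V_k := P_k\sqcup L_k$ has cardinality $|P_k|+|L_k| = 2k^3 + k^3 = 3k^3$ (distinctness within each of $P_k$ and $L_k$ uses that $1,2,\dots,2k^2$ are pairwise distinct in~$K$, which is where characteristic zero enters). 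For any $(a,b)\in L_k$ and $i\in\{1,\dots,k\}$ the integer $ai+b$ satisfies $1\leq ai+b\leq k\cdot k + k^2 = 2k^2$, so $\bigl((i,\,ai+b),(a,b)\bigr)\in\Phi$ is an edge of $G_\varphi\restrict V_k$. This produces $k\cdot|L_k| = k^4$ distinct edges, so $\Pi_\varphi(3k^3)\geq k^4$.

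Plugging into \eqref{eq:pi Pi inequality} gives $\pi_{\widehat\varphi}(3k^3) \geq \tfrac12 k^4 = \tfrac{1}{2\cdot 3^{4/3}} (3k^3)^{4/3}$, which establishes the bound. There is essentially no obstacle: the argument is a direct construction, and the only subtleties are the arithmetic verification $ai+b\leq 2k^2$ (which dictates the exact choice of parameter ranges) and the use of $\operatorname{char} K = 0$ to ensure the $3k^3$ chosen elements of $K^2$ are pairwise distinct. The construction is in fact asymptotically optimal over $\mathbb R$ (and, by T\'oth, over $\mathbb C$) by the Szemer\'edi–Trotter theorem, so one expects the matching upper bound $\vc(\widehat\varphi) \leq 4/3$ to hold as well, improving on Corollary~\ref{cor:point-line incidences}.
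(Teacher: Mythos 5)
Your proof is correct and follows essentially the same route as the paper's, namely the Elekes point–line construction realizing Erd\H{o}s's lower bound for point–line incidences, combined with the lower half of inequality~\eqref{eq:pi Pi inequality}. The only differences are cosmetic: you use $1$-indexed ranges and slightly tighter parameter bounds (giving $|V_k|=3k^3$, $|E|\geq k^4$ vs.\ the paper's $|V|=8k^3$, $|E|\geq 4k^4$), and you spell out more explicitly where $\operatorname{char} K=0$ is used (to embed $\mathbb Z$ and keep the chosen tuples distinct) and why a lower bound along the subsequence $t_k=3k^3$ suffices for the $\limsup$.
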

\begin{proof}
This is due to Erd\H{o}s, with the following simpler argument by Elekes \cite{Elekes}: let $k$ be a positive integer, $t=4k^3$, and consider the subsets
\begin{align*}
P 	&:= \big\{(\eta,\xi):\eta=0,1,\dots,k-1,\xi=0,1,\dots,4k^2-1\big\}\\
L	&:= \big\{(a,b)     :a=0,1,\dots,2k-1, b=0,1,\dots,2k^2-1\big\}
\end{align*}
of $\mathbb Z^2$, and set $V:=P\cup L\subseteq V(G_\varphi)$.
Then for each $i=0,1,\dots,k-1$,
each line $\eta=a\xi+b$ with $(a,b)\in L$ contains a point $(\eta,\xi)\in P$ with $\xi=i$, so 
$$\abs{E(G_\varphi\restrict V)}\geq k\cdot\abs{L}=4k^4=\textstyle\frac{1}{4^{1/3}}t^{4/3}=\frac{1}{4}\abs{V}^{4/3}$$
and hence $\vc(\widehat\varphi)\geq\frac{4}{3}$ by~\eqref{eq:pi Pi inequality}.
\end{proof}

The precise value of $\vc(\widehat\varphi)$ depends on the characteristic of $K$:

\begin{proposition}\label{prop:fractional vc}
\mbox{}
\begin{enumerate}
\item Suppose $K$ has characteristic $0$. Then $\vc(\widehat\varphi)=\frac{4}{3}$.
\item Suppose $K$ has positive characteristic. Then $\vc(\widehat\varphi)=\frac{3}{2}$.
\end{enumerate}
\end{proposition}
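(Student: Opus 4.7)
The proposition asserts four inequalities, two of which are already in hand. Lemma~\ref{lem:lower bound} gives $\vc(\widehat\varphi)\geq\tfrac{4}{3}$ in characteristic $0$, and Corollary~\ref{cor:point-line incidences} gives $\vc(\widehat\varphi)\leq\tfrac{3}{2}$ in every characteristic, since $G_\varphi$ is $K_{2,2}$-free: two distinct points determine at most one non-vertical line and two distinct non-vertical lines meet in at most one point. My plan is therefore to establish the two remaining inequalities.

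For the upper bound $\vc(\widehat\varphi)\leq\tfrac{4}{3}$ in characteristic $0$, the plan is to invoke T\'oth's complex Szemer\'edi--Trotter theorem, according to which any $n$ points and $m$ lines in $\mathbb{C}^{2}$ determine $O(n^{2/3}m^{2/3}+n+m)$ incidences. For a finite $V\subseteq V(G_\varphi)$ with $|V|=t$, the coordinates of the points of $V$ and the slope/intercept pairs of the lines of $V$ lie in a finitely generated subring $R\subseteq K$; since $\operatorname{char}(R)=0$ and $R$ is countable, it admits a ring embedding $R\hookrightarrow\mathbb{C}$, under which point-line incidences---polynomial identities---are preserved. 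T\'oth's bound applied to the image gives $|E(G_\varphi\restrict V)|=O(t^{4/3})$, and the right-hand estimate of \eqref{eq:pi Pi inequality} yields $\pi_{\widehat\varphi}(t)=O(t^{4/3})$, i.e.\ $\vc(\widehat\varphi)\leq\tfrac{4}{3}$.

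For the lower bound $\vc(\widehat\varphi)\geq\tfrac{3}{2}$ in characteristic $p>0$, the plan is the classical finite-field construction: given a finite subfield $\mathbb{F}_q\subseteq K$, take $P=L=\mathbb{F}_q^{2}$ and $V=P\sqcup L\subseteq V(G_\varphi)$, so $|V|=2q^{2}$; each of the $q^{2}$ lines $y=ax+b$ with $(a,b)\in\mathbb{F}_q^{2}$ contains exactly the $q$ points $(x,ax+b)$ for $x\in\mathbb{F}_q$, giving $q^{3}$ incidences altogether. Hence $|E(G_\varphi\restrict V)|=q^{3}=\tfrac{1}{2\sqrt{2}}|V|^{3/2}$, and the left-hand estimate of \eqref{eq:pi Pi inequality} yields $\pi_{\widehat\varphi}(2q^{2})\geq\tfrac{1}{2}q^{3}$; letting $q$ run through an unbounded sequence gives $\vc(\widehat\varphi)\geq\tfrac{3}{2}$.

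The main obstacle is the final step of the preceding paragraph: an arbitrary infinite field of characteristic $p$ (such as $K=\mathbb{F}_p(t)$) need not contain $\mathbb{F}_{p^n}$ for $n>1$, in which case only a single fixed configuration over $\mathbb{F}_p$ is available. Since $\pi_{\widehat\varphi}$ is an invariant of $\Th(K)$ by Lemma~\ref{lem:shatter function and elementary equivalence}, one might hope to pass to an elementary extension containing larger finite subfields, but $\Th(\mathbb{F}_p(t))$ already excludes elements of $\mathbb{F}_{p^{2}}\setminus\mathbb{F}_p$, so saturation alone does not suffice. Resolving this cleanly will likely require either an additional hypothesis on $K$ (e.g.\ that its algebraic part over $\mathbb{F}_p$ is infinite, as for algebraically closed and pseudofinite fields), or an alternative lower-bound construction producing $\Omega(t^{3/2})$ incidences using only the additive $\mathbb{F}_p$-vector-space structure of $K$.
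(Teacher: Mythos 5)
Your treatment of three of the four inequalities is correct. The lower bound $\vc(\widehat\varphi)\geq\tfrac{4}{3}$ in characteristic $0$ and the upper bound $\vc(\widehat\varphi)\leq\tfrac{3}{2}$ in positive characteristic are indeed exactly Lemma~\ref{lem:lower bound} and Corollary~\ref{cor:point-line incidences}, as you say. For the upper bound $\vc(\widehat\varphi)\leq\tfrac{4}{3}$ in characteristic $0$, your Lefschetz-style embedding of a finitely generated subring into $\mathbb{C}$ is sound; the paper instead uses that $\widehat\varphi$ is quantifier-free and hence invariant under the extension $K\subseteq K^{\operatorname{alg}}$, so that $\pi^{K}_{\widehat\varphi}\leq\pi^{K^{\operatorname{alg}}}_{\widehat\varphi}=\pi^{\mathbb{C}}_{\widehat\varphi}$ by Lemmas~\ref{lem:invariant under extensions} and \ref{lem:shatter function and elementary equivalence}, the equality holding because $K^{\operatorname{alg}}$ and $\mathbb{C}$ are both models of the complete theory of algebraically closed fields of characteristic $0$. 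The two routes are essentially equivalent reductions to T\'oth's theorem over $\mathbb{C}$.

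The obstacle you raise for the positive-characteristic lower bound is real, and the paper's own argument shares it: the paper takes a finite subfield $F\subseteq K$ with $|F|=q$, computes $|E(G^F_\varphi)|=q^3=\tfrac{1}{\sqrt{8}}\,|V(G^F_\varphi)|^{3/2}$, and lets $q$ grow. But an infinite field of characteristic $p$ need not contain any finite subfield beyond $\mathbb{F}_p$; $K=\mathbb{F}_p(t)$ is the basic example, and as you note, passing to an elementary extension does not help, since the sentence asserting that $x^{p^2}-x$ has exactly $p$ roots belongs to $\Th(\mathbb{F}_p(t))$. Your fallback idea of exploiting only the additive $\mathbb{F}_p$-vector-space structure also cannot yield $\Omega(t^{3/2})$: to keep $ax$ inside a fixed finite-dimensional $\mathbb{F}_p$-subspace one must confine the slope $a$ to $\mathbb{F}_p$, and with only $p$ available slopes the incidence count among $t$ objects is $O(t)$. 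So the argument establishes $\vc(\widehat\varphi)=\tfrac{3}{2}$ only under the additional hypothesis that $K$ contains arbitrarily large finite subfields (equivalently, that the relative algebraic closure of $\mathbb{F}_p$ in $K$ is infinite) --- true for algebraically closed $K$, false for $K=\mathbb{F}_p(t)$. You have correctly identified a gap that is also present, unacknowledged, in the paper's proof.
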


In the proof of this proposition  we use the following  generalization of a famous theorem of Sz\'emeredi and Trotter \cite{ST} (although a weaker version of this theorem from  \cite{SolTao}, with a somewhat simpler proof, would also suffice for our purposes):

\begin{theorem}[T\'oth \cite{Toth}]\label{thm:Toth}
There exists a real number $C$ such that for all $m,n>0$ there are at most
$C(m^{2/3}n^{2/3}+m+n)$ incidences among $m$ points and $n$ lines in the affine plane over~$\bC$.
\end{theorem}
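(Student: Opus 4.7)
\medskip\noindent
\emph{Proof plan for Proposition~\ref{prop:fractional vc}.}
Both parts share the same structure: one bound is already in place (either $\geq 4/3$ from Lemma~\ref{lem:lower bound}, or $\leq 3/2$ from Corollary~\ref{cor:point-line incidences}), and the task is to supply the matching bound by using the edge-counting translation \eqref{eq:pi Pi inequality}. That is, since $\vc(\widehat\varphi)$ is controlled, up to additive linear terms, by the maximum number $\Pi_\varphi(t)$ of edges in $G_\varphi\restrict V$ over $t$-element vertex subsets, it suffices to establish for each characteristic an asymptotic of the form $\Pi_\varphi(t) = \Theta(t^r)$ with $r = 4/3$ or $r = 3/2$ respectively.

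\medskip\noindent
\emph{Part (1), upper bound $\vc(\widehat\varphi)\leq 4/3$ in characteristic zero.} The plan is to reduce to T\'oth's Theorem~\ref{thm:Toth}. Given a finite $V\subseteq V(G_\varphi)$, write $V = P\sqcup L$ with $P\subseteq K^2$ a set of points and $L$ a set of lines $y=ax+b$. Only finitely many elements of $K$ appear as coordinates of points in $P$ or coefficients $a,b$ of lines in $L$; these generate a subfield $K_0\subseteq K$ finitely generated over $\mathbb{Q}$, hence of finite transcendence degree. Since $\mathbb{C}$ has uncountable transcendence degree over $\mathbb{Q}$, there is an embedding $\iota\colon K_0\hookrightarrow \mathbb{C}$, and $\iota$ preserves point-line incidences (incidence is a polynomial equation). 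Applying Theorem~\ref{thm:Toth} to $\iota(P)$ and $\iota(L)$ yields
$$|E(G_\varphi\restrict V)| \leq C\bigl(|P|^{2/3}|L|^{2/3}+|P|+|L|\bigr) = O\bigl(|V|^{4/3}\bigr),$$
so $\Pi_\varphi(t) = O(t^{4/3})$, and then \eqref{eq:pi Pi inequality} gives $\pi_{\widehat\varphi}(t)=O(t^{4/3})$. Combined with Lemma~\ref{lem:lower bound} this proves (1).

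\medskip\noindent
\emph{Part (2), lower bound $\vc(\widehat\varphi)\geq 3/2$ in positive characteristic.} The upper bound is Corollary~\ref{cor:point-line incidences}. For the lower bound, exploit that $K$ contains $\mathbb{F}_p$ where $p=\mathrm{char}(K)$, and in fact, since $K$ is infinite, one can arrange that $K$ contains $\mathbb{F}_q$ for arbitrarily large prime powers $q$ of $p$ (the algebraic closure of $\mathbb{F}_p$ inside $K$ is either finite and then $K$ has positive transcendence degree, or it already contains $\mathbb{F}_{p^k}$ for infinitely many $k$; one handles the transcendental case by passing to a suitable finite extension or by a direct construction). For such $q$, take $V = P\sqcup L$ with $P=\mathbb{F}_q^2\subseteq K^2$ and $L=\{\,y=ax+b:a,b\in\mathbb{F}_q\,\}$. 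Then $|V|=2q^2$ and each of the $q^2$ lines carries exactly $q$ of the points, giving $|E(G_\varphi\restrict V)| = q^3$. Hence $\Pi_\varphi(2q^2)\geq q^3$, and by \eqref{eq:pi Pi inequality} we get $\pi_{\widehat\varphi}(2q^2)\geq \tfrac12 q^3 = \Theta\bigl((2q^2)^{3/2}\bigr)$. Letting $q$ run through the available prime powers yields $\vc(\widehat\varphi)\geq 3/2$.

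\medskip\noindent
\emph{Main obstacle.} The smoothest step is the characteristic-zero upper bound, once T\'oth's theorem is invoked and the embedding $K_0\hookrightarrow \mathbb{C}$ is made explicit. The genuinely delicate point is making the characteristic-$p$ lower-bound configuration available in \emph{every} infinite field $K$ of characteristic $p$, rather than only in those for which $\mathbb{F}_{p^k}\subseteq K$ for arbitrarily large $k$; this is where the full algebraic structure of subfields of $K$ must be used, and this step is the one that most warrants care in writing up.
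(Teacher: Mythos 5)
There is a fundamental mismatch here: the statement you were asked to prove is Theorem~\ref{thm:Toth} itself, i.e.\ T\'oth's incidence bound $O(m^{2/3}n^{2/3}+m+n)$ for $m$ points and $n$ lines in the affine plane over $\bC$. Your write-up never proves this; it \emph{invokes} it as a black box in Part~(1) and instead proves the downstream Proposition~\ref{prop:fractional vc}. The actual content of the statement --- extending the Sz\'emeredi--Trotter bound from $\bR^2$ to $\bC^2$ --- is a substantial piece of incidence geometry: the real proof techniques (cuttings/cell decompositions, or the crossing-number argument) do not transfer verbatim because complex lines are real $2$-planes in $\bR^4$ and do not decompose the space into cells, which is precisely the difficulty T\'oth (and later Solymosi--Tao, by a different method) had to overcome. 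Nothing in your proposal engages with this, so as a proof of the stated theorem it is empty. (For what it is worth, the paper also does not prove this statement; it quotes it from the literature, so there is no internal proof to compare against --- but that does not make an argument that assumes the conclusion into a proof of it.)

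As a secondary remark: read as a proof plan for Proposition~\ref{prop:fractional vc}, your argument does essentially track the paper's. Your reduction of an arbitrary characteristic-zero field to $\bC$ via a finitely generated subfield and a transcendence-degree embedding is a legitimate alternative to the paper's route (pass to $K^{\operatorname{alg}}$ using invariance of the quantifier-free formula under extensions, then use completeness of $\ACF_0$ and Lemma~\ref{lem:shatter function and elementary equivalence}). You are also right to flag the characteristic-$p$ lower bound as delicate: the finite-subfield configuration only exists for arbitrarily large $q$ when $K$ contains arbitrarily large finite fields, which fails for, e.g., $\mathbb{F}_p(X)$; your parenthetical ``one handles the transcendental case by passing to a suitable finite extension or by a direct construction'' is not an argument (a finite or even algebraic extension of $\mathbb{F}_p(X)$ still has $\overline{\mathbb{F}_p}\cap K$ finite, and no direct construction is given), so that step remains open in your plan just as it is glossed over in the paper. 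But none of this bears on the theorem you were actually asked to establish.
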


\begin{proof}[Proof of Proposition~\ref{prop:fractional vc}]
The lower bound  $\vc(\widehat\varphi)\geq \frac{4}{3}$ in (1) was shown in the previous lemma. From  Theorem~\ref{thm:Toth} and Lemma~\ref{lem:upper bound}  we  obtain $\vc^{\mathbb C}(\widehat\varphi)\leq \frac{4}{3}$.
If $K$ is any field of characteristic $0$ with algebraic closure $K^{\operatorname{alg}}$, then $\pi_{\widehat\varphi}^{K}\leq
\pi_{\widehat\varphi}^{K^{\operatorname{alg}}}=\pi_{\widehat\varphi}^{\mathbb C}$ by Lemmas~\ref{lem:invariant under extensions} and \ref{lem:shatter function and elementary equivalence}, showing part (1) of Proposition~\ref{prop:fractional vc}.

The upper bound  $\vc(\widehat\varphi)\leq \frac{3}{2}$ in (2) 
is a consequence of Corollary~\ref{cor:point-line incidences}. For the lower bound we use  the following observation:
if $F$ is a finite subfield of $K$, say $\abs{F}=q$, then 
$\abs{V(G^F_{\varphi})}=2q^2$ and $\abs{E(G^F_{\varphi})}=q^3$, hence
$$\abs{E(G^K_\varphi\restrict V)}=\abs{E(G^F_\varphi)} = \frac{1}{\sqrt{8}} \abs{V}^{3/2}\qquad\text{where $V=V(G^F_\varphi)$.}$$
Together with \eqref{eq:pi Pi inequality} this yields the inequality $\vc(\widehat\varphi)\geq \frac{3}{2}$ in (2). 
\end{proof}

Proposition~\ref{prop:fractional vc} shows in particular that there is no hope for a ``\L{}os~Theorem'' for VC~density: if $\mathbf M$ is a non-principal ultraproduct of a family $(\mathbf M_i)_{i\in I}$ of infinite $\mathcal L$-structures, then one may have $\vc^{\mathbf M}(\varphi)\neq\vc^{\mathbf M_i}(\varphi)$ for all $i\in I$.

\medskip

It is interesting to contrast Proposition~\ref{prop:fractional vc} with the outcome of only considering parameters from an indiscernible sequence:

\begin{lemma}\label{lem:alt for SzTr}
The formula $\widehat\varphi$ has alternation number $2$,  hence
$\vc_\ind(\widehat\varphi)=1$.
\end{lemma}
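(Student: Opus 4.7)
The key is to unfold $\widehat\varphi(K^2;a,b)$ and observe it equals the two-element set $\{a,b\}$ when $\varphi(a;b)$ holds (i.e., the point $a$ lies on the line parametrized by $b$) and is empty otherwise. So at most two elements of any indiscernible sequence $(v_i)_{i\in\bN}$ in $K^2$ can satisfy $\widehat\varphi(\cdot;a,b)$. The entire argument reduces to the claim that, in fact, at most one term of a pairwise-distinct indiscernible sequence can lie in $\widehat\varphi(K^2;a,b)$; granted this, both the alternation bound and $\vc_\ind(\widehat\varphi)=1$ follow cleanly. (The case of a constant indiscernible sequence is trivial, giving no alternations.)

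To prove the key claim, suppose toward a contradiction that two terms $v_j,v_k$ with $j<k$ both satisfy $\widehat\varphi(\cdot;a,b)$. Then $\{a,b\}=\{v_j,v_k\}$, and either $\varphi(v_j;v_k)$ or $\varphi(v_k;v_j)$ holds. By indiscernibility of $(v_i)$, respectively $\varphi(v_i;v_{i'})$ holds for all $i<i'$, or $\varphi(v_{i'};v_i)$ holds for all $i<i'$. In the first case every $v_i$ with $i<i'$ lies on the line $L_{i'}$ of slope $v_{i',1}$ and intercept $v_{i',2}$; taking $i'=2$ and $i'=3$ one finds that the distinct points $v_0,v_1$ lie on both $L_2$ and $L_3$, forcing $L_2=L_3$ because two distinct points determine a unique line. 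But a line determines its slope and intercept, so $(v_{2,1},v_{2,2})=(v_{3,1},v_{3,2})$, contradicting pairwise distinctness. The second case is symmetric: $v_j\in L_0$ for all $j\geq 1$ and $v_j\in L_1$ for all $j\geq 2$, so $v_2,v_3\in L_0\cap L_1$, which forces $L_0=L_1$ (and thus $v_0=v_1$) or $v_2=v_3$ — again impossible.

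From the claim, the truth pattern $(\widehat\varphi(v_i;a,b))_{i\in\bN}$ on a pairwise-distinct indiscernible sequence has at most one ``true'' entry, whence the alternation number of $\widehat\varphi$ equals $2$ (the lower bound is realized by choosing $b$ so that exactly one $v_i$ lies on the corresponding line). For the VC~density conclusion, Lemma~\ref{lem:alt} gives $\vc_\ind(\widehat\varphi)\leq 1$ directly; alternatively, for $A=\{v_0,\dots,v_{t-1}\}$ extracted from an indiscernible sequence the sets $\widehat\varphi(K^2;a,b)\cap A$ are each either empty or a singleton $\{v_i\}$, so $\pi_{\widehat\varphi,\ind}(t)\leq t+1$, and the matching lower bound $\vc_\ind(\widehat\varphi)\geq 1$ comes from the fact that all $t$ singletons $\{v_i\}$ are achieved. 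The main technical step is the geometric/indiscernibility argument ruling out collinear incidences among distinct indiscernibles; once it is in place, the alternation count and VC density bookkeeping are routine.
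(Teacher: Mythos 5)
Your proof follows essentially the same route as the paper's: both reduce to the observation that on a pairwise-distinct indiscernible sequence $(v_i)$ in $K^2$, at most one term can lie in any set $\widehat\varphi(K^2;a,b)$, and both deduce this from indiscernibility together with the fact that through two distinct points there passes at most one non-vertical line. That geometric/indiscernibility step is correct and coincides with the paper's argument.

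Where both you and the paper slip is in passing from ``at most one true entry'' to $\alt(\widehat\varphi)=2$. Under the definition of alternation number used in this paper, $\alt(\widehat\varphi)\geq d$ requires only that the truth values strictly alternate along $v_0,\dots,v_{d-1}$ for some indiscernible sequence and some parameter. The pattern $F$-$T$-$F$, with the lone true index at $v_1$, is easy to realize (take any non-constant indiscernible $(v_i)$, set $p=v_1$ and $\ell$ any line through $v_1$ different from every $v_i$), and it witnesses $\alt(\widehat\varphi)\geq 3$; your key claim rules out $\geq 4$, so in fact $\alt(\widehat\varphi)=3$. This is exactly what the example following Lemma~\ref{lem:alt} predicts (for $\mathcal S_\varphi\subseteq{M^m\choose 1}$ one gets $\alt=2\cdot 1+1=3$). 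Consequently Lemma~\ref{lem:alt} only yields $\vc_\ind(\widehat\varphi)\leq 2$, which is not enough. What actually closes the argument is the computation you relegate to a parenthetical: since every trace $\widehat\varphi(K^2;a,b)\cap A$ on an initial segment $A=\{v_0,\dots,v_{t-1}\}$ is $\emptyset$ or a singleton, $\pi_{\widehat\varphi,\ind}(t)\leq t+1$, and the singletons are all achieved, so $\vc_\ind(\widehat\varphi)=1$. That direct count should be the primary argument; the claim $\alt(\widehat\varphi)=2$ should be dropped or corrected to $\alt(\widehat\varphi)=3$.
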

\begin{proof}
It suffices to show $\alt(\widehat\varphi)=2$, since then Lemma~\ref{lem:alt} yields $\vc_\ind(\widehat\varphi)=1$.
Suppose for a contradiction that $(a_i)_{i\in\bN}$ is an indiscernible sequence in $K^2$ and $b=(p,\ell)\in K^2\times K^2$ witnessing that $\alt(\widehat\varphi)\geq 3$. 
We think of the elements  of $K^2$ both as points in the affine space $\mathbb A^2(K)$ over $K$ and as non-vertical lines in $\mathbb A^2(K)$, and let $i$, $j$ range over $\{0,1,2,3\}$.
The $a_i$ are pairwise distinct, $p\in\ell$, and $a_i=p$, $a_j=\ell$ for some $i\neq j$; hence $a_i\in a_j$ for some $i\neq j$. If $a_i\in a_j$ where $i<j$, then $a_i\in a_j$ for \emph{all} $i<j$ (by indiscernibility) and hence $a_0,a_1\in a_2\cap a_3$, and this forces $a_0=a_1$ or $a_2=a_3$, in both cases a contradiction. 
Similarly the assumption that $a_i\in a_j$ with $i>j$ leads to a contradiction.
\end{proof}

Many other results in the combinatorial literature lead to non-trivial (upper and lower) bounds on $\vc(\widehat\varphi)$  if $\varphi$ encodes the incidence of points on various geometric objects; see \cite[Chapter~4]{Matousek-Book} or \cite{PS-Survey}. For example,
let $\mathbf R=(\mathbb R,0,1,{+},{-},{\times},{<})$ be  the ordered field of real numbers. 
Let 
$$\varphi(x_1,x_2;y_1,y_2) := (x_1-y_1)^2+(x_2-y_2)^2=1,$$ 
so $\mathcal S^{\mathbf R}_\varphi$ is the collection of circles with radius $1$ in the plane, and $E(G^{\mathbf R}_\varphi)$ is the set of incidences between points in $\bR^2$ and circles of radius $1$. Then an analogue of the Sz\'emeredi-Trotter Theorem \cite{SST} (or a more general result due to Pach and Sharir \cite{PS} on families of simple plane curves) and \eqref{eq:pi Pi inequality} yields $\vc^{\mathbf R}(\widehat\varphi)\leq\frac{4}{3}$. (However, it is unknown whether this bound is sharp, cf.~\cite[Section~2]{PS-Survey}.) 

\subsection{Irrational VC~density}
In \cite{Assouad-2} it is shown 
that for every real number $r\geq 1$ there exists a set system $\mathcal S\subseteq {\bN\choose \lceil r\rceil}$ with $\vc(\mathcal S)=r$.
We do not know the answer to the following question (though we suspect the answer to be negative):

\begin{question}
Is the  VC density of a formula in a NIP theory always rational?
\end{question}

Let $\mathcal L_{\operatorname{gr}}=\{E\}$ be the language with a single binary relation symbol $E$. The $\mathcal L_{\operatorname{gr}}$-structures are nothing but the (directed) graphs (with $E$ interpreted as the edge relation). Given a graph $G$ we denote by $V(G)$ its set of vertices and by $E(G)$ its set of edges.
Spencer and Shelah \cite{SS} established a $0$-$1$-law for $\mathcal L_{\operatorname{gr}}$-sentences about random (symmetric, loopless) graphs with $n$ vertices and edge probability $n^{-\alpha}$, where $\alpha$ is an irrational number between $0$ and $1$. We denote the resulting complete  $\mathcal L_{\operatorname{gr}}$-theory by $T_\alpha$.
It was shown by Baldwin and Shelah \cite{BS} that $T_\alpha$ is stable. (This can also be checked by simply verifying that $T_\alpha$ is superflat in the sense of \cite{PZ}; cf.~Section~\ref{sec:not growing like a power} below.) In particular, $T_\alpha$ is NIP, so it makes sense to investigate VC~density of formulas in $T_\alpha$. We consider the following $\mathcal L_{\operatorname{gr}}$-formula:
$$\varphi(x;y) = E(x,y) \vee x=y.$$
For any graph $G$ and vertex $v$ of $G$, the formula $\varphi(x;v)$ defines the (closed) neighborhood of $v$, i.e., the set consisting of $v$ together with all vertices adjacent to it. It is tempting to guess that $\vc(\varphi)=1/\alpha$. (This would give rise to a negative answer of the question posed above.) However, it turns out that $\vc(\varphi$) is an integer:

\begin{lemma}\label{lem:vc random graphs}
$\vc(\varphi)=\lfloor 1/\alpha \rfloor$.
\end{lemma}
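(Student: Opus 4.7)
Set $d:=\lfloor 1/\alpha\rfloor$; since $\alpha\in(0,1)$ we have $d\geq 1$, and the irrationality of $\alpha$ ensures the strict separation $\alpha d<1<\alpha(d+1)$. Because $E$ and equality are symmetric, $\varphi^*(y;x)$ is equivalent in $T_\alpha$ to $\varphi(y;x)$, so $\vc(\varphi)=\vc^*(\varphi)$; the plan is to show $\pi^*_\varphi(t)=\Theta(t^d)$, working in an $\aleph_0$-saturated $G\models T_\alpha$. The argument rests on two classical features of Shelah--Spencer graphs, established in \cite{SS} (see also \cite{BS}): \emph{(S)~Sparsity}---every finite subgraph $H\subseteq G$ satisfies $|V(H)|>\alpha|E(H)|$; and \emph{(E)~Extension}---for every finite $B\subseteq V(G)$ and every $S\subseteq B$ with $|S|\leq d$, there exists $v\in V(G)\setminus B$ with $N(v)\cap B=S$. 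The bound $|S|\leq d$ is exactly what makes every sub-extension of the one-vertex extension of $B$ joined to $S$ have positive Shelah--Spencer dimension (since a vertex joined to $k$ points has dimension $1-\alpha k$).

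For the upper bound, fix $B\subseteq V(G)$ with $|B|=t$ and choose a realizer $v_p\in V(G)$ of each $p\in S^\varphi(B)$; note $p$ is determined by $S_p:=\{b\in B:\varphi(x;b)\in p\}$. The types with $v_p\in B$ account for at most $t$ choices. For the remaining types, $v_p\notin B$ and $S_p=N(v_p)\cap B$, and I split according to $|S_p|$: those with $|S_p|\leq d$ contribute at most $\sum_{k=0}^{d}{t\choose k}=O(t^d)$. Let $M$ be the number of remaining types with $|S_p|>d$; the chosen $v_p$ are pairwise distinct (as their $S_p$'s are), so the induced subgraph on $B\cup\{v_p:|S_p|>d\}$ has at most $t+M$ vertices and at least $(d+1)M$ edges, whence (S) gives $(d+1)M<(t+M)/\alpha$, i.e., $M\leq t/((d+1)\alpha-1)=O(t)$. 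Altogether $\pi^*_\varphi(t)=O(t^d)$, so $\vc(\varphi)\leq d$.

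For the lower bound, choose any $B\subseteq V(G)$ with $|B|=t$. For each $d$-element subset $S\subseteq B$, axiom (E) supplies some $v_S\in V(G)\setminus B$ with $N(v_S)\cap B=S$; distinct $S$ give distinct $\varphi$-types, so $\pi^*_\varphi(t)\geq{t\choose d}=\Omega(t^d)$, which together with the upper bound gives $\vc(\varphi)=d$. The only non-routine step is packaging (S) and (E) as uniform combinatorial statements about the theory $T_\alpha$; the crucial role of the irrationality of $\alpha$ is that the strict separation $\alpha d<1<\alpha(d+1)$ both validates the sparsity inequality $(d+1)\alpha-1>0$ used in the upper bound and guarantees that the dimension condition $1-\alpha d>0$ holds for every $d$-subset in the lower bound.
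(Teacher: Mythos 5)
Your argument is correct. The lower bound is the paper's argument: both realize each $d$-element subset $S\subseteq B$ as $N(v)\cap B$ via the safe-extension axiom, giving ${t\choose d}$ types. The upper bound, however, takes a genuinely different route. The paper associates to $\mathcal S=\{N(b)\cap A:b\notin A\}$ a \emph{rooted} bipartite graph $(R_{\mathcal S},H_{\mathcal S})$ and concludes from its weak embeddability into $G$ that every set in $\mathcal S$ has size at most $d$, hence $|\mathcal S|\leq{t\choose\leq d}$. You instead apply the \emph{unrooted} sparsity of $G$ (no finite subgraph $H$ with $\alpha|E(H)|\geq|V(H)|$) directly: choosing one realizer $v_p\notin B$ per type with more than $d$ neighbors in $B$, say $M$ of them, the induced subgraph on $B$ together with these $v_p$ has at most $t+M$ vertices and at least $(d+1)M$ edges, so $M<t/(\alpha(d+1)-1)=O(t)$. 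This is a weaker conclusion---$O(t)$ heavy types rather than none---but suffices since $d\geq 1$. Your route is more elementary and more robust: it needs only the non-existence of dense finite subgraphs plus the safe-extension axiom, whereas the paper's step rests on the stronger assertion that unsafe \emph{rooted} graphs admit no weak embedding into $G$ whatsoever. That assertion, read literally, would forbid any vertex of $G$ from having $d+1$ neighbors inside a prescribed finite root set, something models of $T_\alpha$ certainly permit (a vertex of degree $d+1$ together with $d+1$ of its neighbors already furnishes a weak embedding of the unsafe rooted star $K_{1,d+1}$). Your counting step sidesteps this delicate point and, in my view, gives the cleaner justification of the upper bound.
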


Before we give the proof, we recall some basic facts about the theory $T_\alpha$; our main reference is \cite{Spencer}. We let $G$ be a model of $T_\alpha$.

A \emph{rooted graph} is a pair $(R,H)$ where $H$ is a finite graph and $R$ a proper subset of its set of vertices; the elements of $R$ will be called \emph{roots.} We consider each finite non-empty graph $H$ as a rooted graph by identifying it with $(\emptyset, H)$.
Given a rooted graph $(R,H)$, a rooted graph $(R,H')$, where $H'$ is subgraph of $H$ whose vertex set properly contains~$R$, is called a rooted subgraph of $(R,H)$.

A \emph{weak embedding} of a rooted graph $(R,H)$ into $G$ is an injective map $\iota\colon V(H)\to V(G)$ such that for all  roots $v$ and non-roots $w$ of $(R,H)$, $v$ and $w$ are adjacent in $H$ iff $\iota(v)$ and $\iota(w)$ are adjacent in $G$; such a weak embedding is called an \emph{embedding} if also 
 any two non-roots $v$ and $w$ of $(R,H)$ are adjacent in $H$ iff $\iota(v)$ and $\iota(w)$ are adjacent in $G$.
Note that there is no requirement about edges between roots. (This terminology does not appear in \cite{Spencer} which talks about ``$(R,H)$-extensions'' instead.)

Let $(R,H)$ be a rooted graph. The \emph{average degree} of $(R,H)$ is $\operatorname{adeg}(R,H):=2e/v$ where  $v=v(R,H)>0$ is the number of vertices of $H$ which are not roots and $e=e(R,H)$ is the number of edges of $H$ which do not have both ends in $R$. The \emph{maximum average degree $\operatorname{mdeg}(R,H)$} of $(R,H)$ is defined as the maximum of $\operatorname{adeg}(R,H')$ where $(R,H')$ is a rooted subgraph of $(R,H)$. If $\operatorname{adeg}(R,H)>2/\alpha$ then $(R,H)$ is called \emph{dense,} and \emph{sparse} otherwise (i.e., if $\operatorname{adeg}(R,H)<2/\alpha$). If
$\operatorname{mdeg}(R,H)<2/\alpha$ then $(R,H)$ is called \emph{safe,} and \emph{unsafe} otherwise.

Now if $H$ is dense then $G$ does not contain a copy of $H$, whereas
if $H$ is safe then $G$ contains a copy (indeed, an induced copy) of $H$. 
More generally, if $(R,H)$ is unsafe then there is no weak embedding of $(R,H)$ into $G$ \cite[p.~69]{Spencer}, and if
$(R,H)$ is safe then every injective map $R\to V(G)$ extends to an embedding of $(R,H)$ into $G$ \cite[Theorem~5.2.1]{Spencer}.

Let now $\mathcal S$ be a non-empty set system on $[t]=\{1,\dots,t\}$, where $t>0$. We associate a rooted graph $(R,H)=(R_{\mathcal S},H_{\mathcal S})$ to $\mathcal S$ as follows: the set of vertices of $H$ is the disjoint union of $[t]$ and $\mathcal S$,   the set of roots is $R=[t]$, there are no edges between two roots and no edges between two non-roots, and a root $i\in [t]$ and a non-root $S\in\mathcal S$ are related by an edge iff $i\in S$. (Cf.~Figure~\ref{fig:rooted graph}.) Note that this rooted graph has average degree $\operatorname{adeg}(R,H)=\frac{2}{\abs{\mathcal S}}\sum_{S\in\mathcal S}\abs{S}$ and maximum average degree
$$\operatorname{mdeg}(R,H)=\max_{\emptyset\neq\mathcal S'\subseteq\mathcal S} \frac{2}{\abs{\mathcal S'}}\sum_{S\in\mathcal S'}\abs{S}.$$
So if $\mathcal S\subseteq { [t] \choose k }$ where $k\in \{0,\dots,t\}$ then
$\operatorname{adeg}(R,H)=\operatorname{mdeg}(R,H)=2k$; hence if in addition $k<1/\alpha$ then $(R,H)$ is safe (so there exists an embedding of $(R,H)$ into $G$) whereas if $k>1/\alpha$ then $(R,H)$ is dense (and so there is no weak embedding of $(R,H)$ into~$G$).

\begin{figure}
\vskip1em

\begin{tikzpicture}[scale=1.3, back line/.style={solid}, cross line/.style={preaction={draw=white, -, line width=0.3em}}]

\path (7,1) node {$R=[t]$};

\path (7.5,2.7) node {$\mathcal S=\{S,S',\dots\}$};

\path (1.5, 2.8) node (bX) {$S$};
\path (3.5, 2.8) node (bX) {$S'$};
\path (5.5, 2.7) node (bX) {$\dots$};


\draw (1,1) edge [back line] (3.5,2.5);
\draw (3,1) edge [back line] (3.5,2.5);
\draw (5,1) edge [back line] (3.5,2.5);

\draw (1,1) edge (1.5, 2.5);
\draw (3,1) edge [cross line] (1.5, 2.5);
\draw (4,1) edge [cross line] (1.5, 2.5);

\fill [black] (1,1) circle (2pt);
\fill [black] (2,1) circle (2pt);
\fill [black] (3,1) circle (2pt);
\fill [black] (4,1) circle (2pt);
\fill [black] (5,1) circle (2pt);

\fill [black] (1.5,2.5) circle (2pt);
\fill [black] (3.5,2.5) circle (2pt);

\end{tikzpicture}
\caption{The rooted graph associated to a set system}
 \label{fig:rooted graph}

\end{figure}
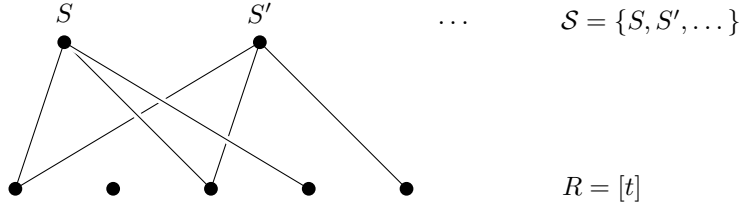

\begin{proof}[Proof of Lemma~\ref{lem:vc random graphs}]
Applying the remarks above to $\mathcal S={ [t] \choose \lfloor 1/\alpha \rfloor }$, where $t>0$, we see that, as $ \lfloor 1/\alpha \rfloor<1/\alpha$,   there are pairwise distinct vertices $a_1,\dots,a_t$ and $b_S$ ($S\in\mathcal S$) of $G$ such that 
$a_i$ and $b_S$ are adjacent iff $i\in S$, i.e., writing $A=\{a_1,\dots,a_t\}$ we have
$A\cap\varphi(G;b_S)=\{a_i:i\in S\}$ and hence $\abs{A\cap\mathcal S_\varphi}\geq\abs{\mathcal S}$. 
So $\pi_\varphi(t)\geq {t\choose \lfloor 1/\alpha \rfloor }$ for each~$t$, 
therefore $\vc(\varphi)\geq \lfloor 1/\alpha \rfloor$.

To show the reverse inequality suppose for a contradiction that $\vc(\varphi) > \lfloor 1/\alpha \rfloor$. Let $\varrho$ be a real number with $\vc(\varphi)>\varrho>\lfloor 1/\alpha \rfloor$.
Note that for every set $A$ of vertices of $G$ the set system $A\cap \mathcal S_\varphi$ is the union of the set system
\begin{equation}\label{eq:system 1}
\big\{ \{b\} \cup \{a\in A:(a,b)\in E(G)\} : b\in A \big\}
\end{equation}
consisting of at most $\abs{A}$ sets, and
\begin{equation}\label{eq:system 2}
\big\{ \{a\in A:(a,b)\in E(G)\} : b \in V(G)\setminus A \big\}.
\end{equation}
Since $\vc(\varphi)>\varrho$,  for every $C\geq 1$ there are arbitrarily large $t>0$ and $A\subseteq V(G)$ with $\abs{A}=t$ such that $\abs{A\cap\mathcal S_\varphi}\geq 2Ct^\varrho$.  The set system \eqref{eq:system 1} has at most $t$ elements; hence  \eqref{eq:system 2} contains at least $2Ct^\varrho-t$ sets and so, since $\varrho>\lfloor 1/\alpha \rfloor\geq 1$, contains at least $Ct^\varrho$ sets.
Identifying $A$ with $[t]$, the set system \eqref{eq:system 2} on $A$ thus gives rise to a set system $\mathcal S$ on $[t]$ with $\abs{\mathcal S}\geq Ct^\varrho$ whose associated rooted graph $(R_{\mathcal S},H_{\mathcal S})$ weakly embeds into $G$.

On the other hand,
let $\mathcal S$ be any set system on $[t]$ such that $(R_{\mathcal S},H_{\mathcal S})$ weakly embeds into $G$, and
for $k=0,\dots,t$ consider the set system $\mathcal S_k:=\mathcal S\cap {[t]\choose k}$ on $[t]$. If $\mathcal S_k\neq\emptyset$ then $(R_{\mathcal S_k},H_{\mathcal S_k})$ is a rooted subgraph of $(R_{\mathcal S},H_{\mathcal S})$ and hence $$2k=\operatorname{adeg}(R_{\mathcal S_k},H_{\mathcal S_k})\leq\operatorname{mdeg}(R_{\mathcal S},H_{\mathcal S})<2/\alpha.$$
Therefore $\mathcal S$ does not contain a $k$-element subset of $[t]$ with $k>1/\alpha$, i.e.,
$\mathcal S\subseteq {[t]\choose  \leq \lfloor 1/\alpha \rfloor}$. Hence $\abs{\mathcal S}\leq Ct^{\lfloor 1/\alpha \rfloor}$ where $C=C_\alpha$ is a constant only depending on $\alpha$. This contradicts the previous paragraph.
\end{proof}

We remark that a similar analysis shows that the simpler formula $E(x,y)$ also has VC~density $\lfloor 1/\alpha \rfloor$ in $T_\alpha$. We chose $\varphi$ as above because it allows us to compare Lemma~\ref{lem:vc random graphs} with the main result of \cite{ABC}, where the precise value of the VC~dimension of $\varphi$ (as it depends on $\alpha$) is computed.
In particular, \cite[Corollary~8]{ABC} shows that if $0<\alpha<\frac{93}{650}$~then 
$$\lfloor 1/\alpha \rfloor+3 \leq \VC(\varphi) \leq \lfloor 1/\alpha +3 (\alpha+1)\rfloor.$$

\subsection{Shatter functions not growing like a power}\label{sec:not growing like a power}
We finish this section with two examples of VC~classes definable in NIP theories
whose shatter function is not asymptotic to a real power function. 

\subsubsection{The hypercube}\label{sec:hypercube}

Let $Q$ be the ``infinitary hypercube'', i.e., the (symmetric, loopless) graph whose vertex set is the set of all sequences $s=(s_n)$ in $\{0,1\}^{\mathbb N}$ with finite support, with two sequences related by an edge iff they differ in only one component. (Alternatively, $Q$ can be represented as the set of all finite sets of natural numbers, with an edge between them iff their symmetric difference is a singleton.) Note that $Q$ is the increasing union $Q=\bigcup_{d>0} Q_d$ of its induced subgraphs $Q_d$ with vertex set 
$$V(Q_d)=\big\{s=(s_n)\in Q: \text{$s_n=0$ for $n\geq d$}\big\},$$
which we may identify with $\{0,1\}^d$ in the natural way.
(So $Q_d$ is the $d$-dimensional hypercube.) We construe $Q$ as an $\mathcal L_{\operatorname{gr}}$-structure.  
In \cite{PZ} a  condition sufficient for a (symmetric) graph to be stable is introduced, called superflatness: a graph $G$ is superflat if for every $m$ there is some $n$ such that no subdivision of the complete graph $K_n$ on $n$ vertices, obtained by placing at most $m$ additional vertices on each edge, embeds into~$G$.
Note that the graph $Q$ is not superflat: in fact, for every $d$ there is an embedding of a subdivision of $K_{d+1}$, obtained by placing at most one additional vertex on each edge, into $Q_d$, cf.~\cite{Hartman}.
However, we do have:

\begin{proposition}\label{prop:Q}
$Q$ is $\omega$-stable.
\end{proposition}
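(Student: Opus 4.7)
The plan is to show that $(V(Q),E)$ is interpretable without parameters in the pure infinite set $(\bN,=)$. Since $\Th(\bN,=)$ is strongly minimal and therefore $\omega$-stable, and since interpretability preserves $\omega$-stability (via passage to $\eq$ followed by restriction to definable sorts and reducts of the language), this will immediately yield that $\Th(Q)$ is $\omega$-stable.

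The first step is to identify $V(Q)$ with the set $\mathcal P_{\mathrm{fin}}(\bN)$ of finite subsets of $\bN$ via $s=(s_n)\mapsto\{n\in\bN:s_n=1\}$, under which the edge relation becomes $E(S,T)\iff|S\mathrel{\triangle}T|=1$. The second, key step is to exhibit the interpretation. Working in $(\bN,=)^{\eq}$, for each $n\in\bN$ I take the imaginary sort $D_n$ of equivalence classes $[x_1,\dots,x_n]$ of $n$-tuples $(x_1,\dots,x_n)\in\bN^n$ with pairwise distinct entries, modulo the $S_n$-action permuting coordinates; then $D_n$ is in natural bijection with the set of $n$-element subsets of $\bN$, so that $\bigsqcup_{n\in\bN} D_n$ (a disjoint union of $\emptyset$-definable sorts in $(\bN,=)^{\eq}$) is in bijection with $V(Q)$. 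Membership $y\in [x_1,\dots,x_n]$ is $\emptyset$-definable via $\bigvee_{i=1}^n y=x_i$; from this, containment $S\subseteq T$ is $\emptyset$-definable, and hence so is the edge condition, namely ``$S\subsetneq T$ with $|T\setminus S|=1$, or the symmetric version''. This realizes $(V(Q),E)$ as a structure $\emptyset$-interpretable in $(\bN,=)^{\eq}$.

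Having exhibited the interpretation, the conclusion is immediate: $\Th(\bN,=)$ is strongly minimal and $\omega$-stable, hence so is $\Th(\bN,=)^{\eq}$, and $\omega$-stability transfers to every structure $\emptyset$-interpretable in it; therefore $\Th(Q)$ is $\omega$-stable. The only step requiring any verification is the definability of the edge relation in the imaginary coding, which is routine once the definability of membership has been noted; no subtlety beyond the standard facts about interpretations and the preservation of $\omega$-stability under $\eq$ and reducts enters.
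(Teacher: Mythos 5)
Your proposed interpretation is not a legitimate first-order interpretation, and the argument fails at exactly that point. An interpretation of a structure $N$ in $M$ requires the domain of $N$ to be realized as a definable subset of some fixed power $M^r$ (or, passing to $M^{\eq}$, as a single imaginary sort $M^r/E$); the relations of $N$ then pull back to formulas of $M$, and this pullback is what propagates $\omega$-stability. But $\bigsqcup_{n\in\bN}D_n$ is an infinite union of distinct imaginary sorts, not one sort, so there is no fixed $r$ and no single $\emptyset$-definable equivalence relation coding all finite subsets of $\bN$. Consequently an arbitrary graph-language formula does not translate to a single $\calL(\bN,=)$-formula: for instance $\exists T\,(\neg E(S,T)\wedge\psi(T))$ translates to an infinite disjunction over the sorts $D_m$, and already the type ``$d(x,v)>n$ for all $n$'' is realizable in an elementary extension of $Q$ but corresponds to nothing in any $D_m$. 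There is also a concrete obstruction showing no such interpretation can exist: $\Th(\bN,=)$ is $\aleph_0$-categorical, and interpretability preserves $\aleph_0$-categoricity (types of the interpreted structure embed into types of the ambient one), whereas $\Th(Q)$ is \emph{not} $\aleph_0$-categorical --- the graph distance $d(S,T)=|S\mathrel{\triangle}T|$ is preserved by every graph automorphism and takes infinitely many values, giving infinitely many $2$-types over $\emptyset$.

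The paper's proof avoids this precisely because it codes $\mathcal P_{\mathrm{fin}}(\bN)$ not in the pure set but as the \emph{elements of a single sort}, namely an infinite $\mathbb F_2$-vector space $M=\bigoplus_n\mathbb F_2 a_n$ with a distinguished basis $A=\{a_n\}$. Under the identification of $V(Q)$ with $M$, the edge relation becomes $s-t\in A$, which is definable in the expansion $(M,A)$. The genuinely nontrivial step --- which your proposal has no analogue of --- is then to show $(M,A)$ is $\omega$-stable; that requires Pillay's bounded-quantifier result for strongly minimal structures and the Casanovas--Ziegler preservation theorem, since adjoining a unary predicate to a stable structure does not automatically preserve stability. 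In short, the key missing idea is to pass to a richer but still single-sorted ambient structure $(M,A)$ rather than attempt an infinitary coding in $(\bN,=)^{\eq}$.
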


Towards a proof of this proposition, we first introduce some notation and terminology:
Given a language $\mathcal L$ let $\mathcal L(P)=\mathcal L\cup\{P\}$ where $P$ is a new predicate symbol, and given an $\mathcal L$-structure $\mathbf M$ and a subset $A$ of its domain let $(\mathbf M,A)$ be the expansion of $\mathbf M$ to an $\mathcal L(P)$-structure obtained by interpreting $P$ by $A$. The \emph{induced} structure $A_{\operatorname{ind}}$ on $A$ is the structure whose language consists of an $m$-ary relation symbol $R_\varphi$ for every $\mathcal L$-formula $\varphi(x)$, where $m=\abs{x}$, interpreted in $A_{\operatorname{ind}}$ by $\varphi^{\mathbf M}(M^m)\cap A^m$.
An $\mathcal L(P)$-formula $\varphi(x)$, where $x=(x_1,\dots,x_m)$, is said to be \emph{bounded} if it has the following form (slightly abusing syntax):
$$\varphi(x) = \Diamond_1 y_1 \in P \cdots \Diamond_n y_n \in P\, \psi(x_1,\dots,x_m,y_1,\dots,y_n),$$
where $\Diamond_i\in \{\forall,\exists\}$ and $\psi$ is an $\mathcal L$-formula.
Casanovas and Ziegler have shown:

\begin{theorem}
Let $\mathbf M$ be a structure in a language $\mathcal L$ and let $A\subseteq M$.
\begin{enumerate}
\item Suppose $\mathbf M$ is strongly minimal. Then  in $(\mathbf M,A)$, every $\mathcal L(P)$-formula is equivalent to a bounded formula.
\item Suppose that in $(\mathbf M,A)$, every $\mathcal L(P)$-formula is equivalent to a bounded formula, and let $\lambda\geq\abs{\mathcal L}$ be a cardinal. If both $\mathbf M$ and $A_{\operatorname{ind}}$ are $\lambda$-stable then $(\mathbf M,A)$ is $\lambda$-stable.
\end{enumerate}
\end{theorem}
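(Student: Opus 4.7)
The plan for Part (1) is induction on $\mathcal L(P)$-formula complexity. The class of bounded formulas contains all atomic $\mathcal L$-formulas and is closed under Boolean connectives (by pulling $P$-bounded quantifiers into prenex form) and bounded quantifiers, so it suffices to replace an unrestricted $\exists y\,\varphi(x,y)$, with $\varphi$ already bounded, by a bounded formula. Split as $\exists y\in P\,\varphi \vee \exists y\notin P\,\varphi$; the first disjunct is already bounded. For the second, invoke the strong minimality of $\mathbf M$ through the uniform finiteness dichotomy: for each $\mathcal L$-formula $\psi(y;\bar w)$ there is $N_\psi\in\bN$ such that for every $\bar c$ the set $\psi(\mathbf M;\bar c)\subseteq M$ is of size $\le N_\psi$ or has complement of size $\le N_\psi$. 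Applied to the ``slice in $y$'' of a bounded formula---an $\mathcal L$-formula in $y$ with parameters in $x$ augmented by elements of $A$ introduced through the nested $P$-quantifiers---this dichotomy lets one express $\exists y\notin P$ boundedly, either by enumerating the finitely many $\acl_{\mathbf M}$-witnesses and checking $P$-membership in the finite case, or by reducing to a $P$-cardinality condition against a small $\mathcal L$-definable set in the cofinite case.

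For Part (2), let $B\subseteq M$ with $\abs{B}\le\lambda$; the aim is $\abs{S^{(\mathbf M,A)}(B)}\le\lambda$. The bounded QE hypothesis tells us that $\tp^{(\mathbf M,A)}(a/B)$ is determined by the traces $T_{\psi,\bar b}(a):=\{\bar c\in A^{|\bar y|}:\mathbf M\models\psi(a,\bar c,\bar b)\}$ for $\psi(x,\bar y,\bar z)\in\mathcal L$ and $\bar b\in B^{|\bar z|}$, equivalently by $\tp^{\mathbf M}(a/B\cup A)$. The $\lambda$-stability of $\mathbf M$ yields $\abs{S^{\mathbf M}(B)}\le\lambda$; fixing $p\in S^{\mathbf M}(B)$, I must bound by $\lambda$ the realised $\mathbf M$-extensions of $p$ to $B\cup A$. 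To do this I pass to the expansion $\mathbf M^+:=(\mathbf M,c_b)_{b\in B}$---still $\lambda$-stable since $\abs{B}\le\lambda$---and consider the induced structure $\mathbf A^+$ on $A$ from $\mathbf M^+$, whose basic predicates are the $A^n$-traces of $B$-definable subsets of $M^n$ and whose language has cardinality $\le\lambda$. Once $\mathbf A^+$ is shown to be $\lambda$-stable, each realised extension $\widetilde p\supseteq p$ over $B\cup A$ is encoded, via a compactness/indiscernible-extraction reduction against the external parameter $a$, by an $\mathbf A^+$-type over a parameter set of size $\le\lambda$, giving at most $\lambda$ extensions and hence $\abs{S^{(\mathbf M,A)}(B)}\le\lambda\cdot\lambda=\lambda$.

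The main obstacle is the $\lambda$-stability of $\mathbf A^+$. The predicates of $\mathbf A^+$ beyond those of $A_\ind$ arise as traces of $\mathcal L$-definable sets with parameters in $B$, and since $B$ may contain elements outside $A$, they need not be $A_\ind$-definable even with parameters from $A$; so $\lambda$-stability of $A_\ind$ alone does not suffice. The resolution is that an $\mathbf A^+$-type of a tuple $\bar c\in A^n$ over $D\subseteq A$ with $\abs{D}\le\lambda$ is determined by $\tp^{\mathbf M^+}(\bar c/D)$: the $\lambda$-stability of $\mathbf M^+$ bounds the number of such $\mathbf M^+$-types by $\lambda$, while the $\lambda$-stability of $A_\ind$ ensures that restricting to tuples $\bar c\in A^n$ retains this bound rather than inflating it. Making this interleaving precise---using $\mathbf M$-stability to absorb the $B$-parameters and $A_\ind$-stability to control the $A$-valued tuples---is the delicate and essential step where both hypotheses are used in an irreducible way.
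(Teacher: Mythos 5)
First, a point of reference: the paper does not prove this statement at all. It is quoted as a known result of Casanovas and Ziegler (\cite[Corollary~5.4 and Proposition~3.1]{CZ}), with part (1) credited to Pillay \cite{Pillay}, so there is no in-paper proof to compare yours against; I can only assess your argument on its own terms, and it has genuine gaps in both parts.

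In part (1), the induction step fails where you invoke strong minimality. The uniform finite/cofinite dichotomy applies to $\mathcal L$-definable subsets of $M$, but the $y$-slice of a bounded formula is not one: for $\varphi(x,y)=\exists z\in P\,\psi(x,y,z)$ the slice is $\bigcup_{c\in A}\psi(a,M,c)$, an arbitrary union of finite sets over the arbitrary set $A$ (e.g.\ in a strongly minimal group, $\varphi(x,y)=\exists z\in P\,(y=x+z)$ has slice $a+A$, neither finite nor cofinite). Nor can you reduce to an $\mathcal L$-definable matrix by pulling the $P$-quantifiers outward past the unrestricted $\exists y$, since $\exists y\,\forall z\in P\,\psi$ is not equivalent to $\forall z\in P\,\exists y\,\psi$. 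This is precisely the obstruction that makes a quantifier-by-quantifier syntactic elimination unavailable; the published proofs instead run a back-and-forth between sufficiently saturated pairs (equivalently, count bounded types), with strong minimality entering through the exchange property of $\acl$ rather than through slicewise finiteness.

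In part (2), the proposal stops exactly where the proof must begin: you state that establishing $\lambda$-stability of $\mathbf A^+$ and the "encoding" of extensions of $p$ to $A\cup B$ by $\mathbf A^+$-types is "the delicate and essential step," but give no argument for it, and as written it is close to a restatement of the goal. Two concrete issues: (a) $\lambda$-stability requires counting \emph{all} complete $\mathcal L(P)$-types over $B$, i.e.\ types of elements of elementary extensions $(\mathbf N,A^{\mathbf N})\succcurlyeq(\mathbf M,A)$ in which $P$ is interpreted by a proper superset of $A$; the reduction of $\tp^{(\mathbf M,A)}(a/B)$ to $\tp^{\mathbf M}(a/A\cup B)$ must therefore be carried out in a monster model, not for realized types in $\mathbf M$. (b) The missing mechanism for absorbing the external parameter $a$ is definability of types in the stable theory $\Th(\mathbf M)$: one replaces the traces $\psi(a,A^{|\bar y|},\bar b)$ by instances of definition schemata over a small subset of $A\cup B$, and only then does $\lambda$-stability of $A_{\operatorname{ind}}$ (together with the fact that bounded formulas interrogate nothing but these traces and $\tp^{\mathbf M}(a/B)$) yield the count. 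An appeal to "compactness/indiscernible-extraction" does not substitute for this.
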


(See \cite[Corollary~5.4 and Proposition~3.1]{CZ}; part (1) had actually first been shown by Pillay~\cite{Pillay}.)

\medskip

Consider now, slightly more general than necessary, an arbitrary field $K$, and let $\mathcal L_K=\{ {0},{+},(\lambda\cdot\,)_{\lambda\in K}\}$ be the language of $K$-vector spaces. 
Let $M$ be an infinite-dimensional $K$-vector space. Then $M$, construed as an $\mathcal L_K$-structure, has quantifier elimination and is $\omega$-stable. Let $A$ be a set of linearly independent elements of $M$.
Then the induced structure on $A$ is trivial: every subset of $A^m$ definable in $A_{\operatorname{ind}}$ is definable in the empty language. Hence by the theorem above,
the $\mathcal L_K(P)$-structure $(M,A)$ is $\omega$-stable. 

\medskip

For the proof of Proposition~\ref{prop:Q}, it now suffices to note that the $\mathcal L_{\operatorname{gr}}$-structure $Q$ is definable in $(M,A)$, for suitable choice of $K$, $M$ and $A$: Take $K=\mathbb F_2$ and let $M=\bigoplus_n \mathbb F_2\, a_n$ be a countably infinite $\mathbb F_2$-vector space with distinguished basis $A=\{a_n:n\geq 0\}$. Then  $Q$ is definable in $(M,A)$: identifying $V(Q)$ with $M$ in the natural way, we have, for all vertices $s$, $t$ of $Q$: $(s,t)\in E(Q)$ iff $s-t\in A$. 
Since $(M,A)$ is $\omega$-stable, so is $Q$. \qed

\medskip

Now
consider the $\mathcal L_{\operatorname{gr}}$-formula $\varphi(x;y):=E(x,y)$. Then
$$\mathcal S_{\widehat\varphi} = \big\{ \{s,s'\}:s,s'\in Q,\ (s,s')\in E(Q)\big\}$$
is the collection of undirected edges of $Q$. In the following $A$ denotes a subset of $Q$ (unlike in the proof of Proposition~\ref{prop:Q}).
Note that if $A\subseteq Q_d$ then
$$A\cap \mathcal S_{\widehat\varphi} = \textstyle {A\choose 1} \cup E[A]\quad\text{where }
E[A]:= \big\{ \{a,a'\}:a,a'\in A,\ (a,a')\in E(Q)\big\}.$$
Since $Q_d$ has $2^d$ vertices and $\frac{1}{2}d2^d$ undirected edges, we thus see that
$\pi_{\widehat\varphi}(t)\geq t+\frac{1}{2}t\log t$ for infinitely many $t$; in fact:

\begin{proposition}
$\pi_{\widehat\varphi}(t)=\frac{1}{2}t\log t\,(1+o(1))$ as $t\to\infty$.
\end{proposition}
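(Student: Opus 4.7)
The proof proposal has two parts: a matching upper and lower bound on $|E[A]|$ where $A$ ranges over finite subsets of $V(Q)$ of size $t$. I would start from the identity $A \cap \mathcal{S}_{\widehat\varphi} = \binom{A}{1} \cup E[A]$ (displayed just before the proposition), which reduces the computation of $\pi_{\widehat\varphi}(t)$ to
\[
\pi_{\widehat\varphi}(t) = t + \max\bigl\{|E[A]| : A \in \tbinom{V(Q)}{t}\bigr\},
\]
so that everything rests on bounding, asymptotically, the maximum number of edges in a $t$-vertex induced subgraph of $Q$.

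For the lower bound I would take $A = V(Q_d)$ with $t = 2^d$. Since $Q_d$ is $d$-regular on $2^d$ vertices, it has exactly $\frac{d}{2}\cdot 2^d$ undirected edges, giving $\pi_{\widehat\varphi}(2^d) \geq 2^d + \frac{d}{2}\cdot 2^d = \frac{1}{2} t\log_2 t + t$. For arbitrary $t$, picking $d$ maximal with $2^d \leq t$ (so $d = \log_2 t - O(1)$) and using the obvious monotonicity $\pi_{\widehat\varphi}(t) \geq \pi_{\widehat\varphi}(2^d)$ yields $\pi_{\widehat\varphi}(t) \geq \frac{1}{2}t\log_2 t \,(1+o(1))$ as $t\to\infty$.

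For the upper bound I would invoke the classical edge-isoperimetric inequality for the Boolean cube (Harper 1964, Lindsey 1964, Bernstein 1967, Hart 1976): for every finite $A \subseteq V(Q)$ with $|A| = t$, the induced subgraph contains at most $\frac{1}{2} t \log_2 t$ edges, with equality attained when $t = 2^d$ and $A$ is (a translate of) $V(Q_d)$. Since any finite $A \subseteq V(Q)$ is contained in $V(Q_d)$ for $d$ at most the number of coordinates appearing in the supports of elements of $A$, this reduces to the usual statement inside a finite hypercube. Hence $\pi_{\widehat\varphi}(t) \leq t + \frac{1}{2} t\log_2 t$, and combining with the lower bound gives $\pi_{\widehat\varphi}(t) = \frac{1}{2} t \log t\,(1+o(1))$ (the base of the logarithm being $2$, which is forced since the multiplicative constant is meant to be sharp and $(1+o(1))$ does not absorb a change of base).

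The only substantive input is the edge-isoperimetric inequality; everything else is bookkeeping. That inequality is the main obstacle in the sense that it is the one non-elementary ingredient, but its standard proof proceeds by a compression argument analogous to Kruskal--Katona, and I would cite it rather than reproduce it. A minor technical point worth flagging is that the vertex set of $Q$ is infinite and vertices of $Q$ have infinite degree, so naive degree bounds do not suffice; one really does need to reduce (by restricting to the finitely many coordinates actually used) to a \emph{finite} hypercube and apply Harper's theorem there.
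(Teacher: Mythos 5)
Your reduction to counting induced edges and your upper bound are both fine and essentially match the paper's approach (the paper cites Abdel-Ghaffar's determination of the exact extremal function $g(t)$, which encodes the same edge-isoperimetric content you get from Harper/Lindsey/Bernstein/Hart). The gap is in your lower bound. Taking $A = V(Q_d)$ with $t = 2^d$ gives $\pi_{\widehat\varphi}(2^d) \geq 2^d + \tfrac{d}{2}2^d$, which is sharp for powers of $2$, but the monotonicity step $\pi_{\widehat\varphi}(t) \geq \pi_{\widehat\varphi}(2^{\lfloor\log_2 t\rfloor})$ loses up to a factor of $2$ asymptotically: when $t$ is just below $2^{d+1}$, one has $\pi_{\widehat\varphi}(2^d) \sim \tfrac{d}{2}2^d$ while the target $\tfrac12 t\log_2 t \sim (d+1)2^d$, so the ratio tends to $\tfrac12$, not $1$. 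Thus your argument only yields $\pi_{\widehat\varphi}(t) \geq \tfrac14 t\log_2 t\,(1+o(1))$ for general $t$.

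To close the gap you need a $t$-vertex set with $\sim\tfrac12 t\log_2 t$ edges for \emph{every} $t$, not just powers of $2$. The standard way is to take $A$ to be the first $t$ elements of $V(Q)$ in the binary ordering (equivalently, a "layered" union of subcubes corresponding to the binary expansion of $t$), and verify from the explicit formula $g(t) = \sum_i 2^{a_i - 1}\bigl(a_i + 2(i-1)\bigr)$, where $t = \sum_i 2^{a_i}$ with $a_1 > a_2 > \cdots$, that $g(t) = \tfrac12 t\log_2 t\,(1+o(1))$. That is exactly what the paper imports by citing Abdel-Ghaffar: it gives the maximum $E_d(t)$ of $|E[A]|$ over $t$-element subsets of $Q_d$, shows it is independent of $d$ for $t\le 2^d$, and records the asymptotics. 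So your proposal needs to upgrade the lower bound from "subcubes" to "initial segments in the binary order" (or just cite the extremal result in both directions, as the paper does).
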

\begin{proof}
Set
$$E_d(t) := \max\big\{\abs{E[A]}:A\subseteq Q_d,\ \abs{A}=t\big\} \qquad\text{for $d>0$ and $t\leq 2^d$.}$$
Then $\pi_{\widehat\varphi}(t)=t+\max_{d\geq \lceil\log t\rceil} E_d(t)$.
It is known (see, e.g., \cite{A-G}) that there is some function $g$ with $g(t)=\frac{1}{2}t\log t\,(1+o(1))$ as $t\to\infty$ such that $E_d(t)=g(t)$ for all $d$ and $t\leq 2^d$. This yields the claim.
\end{proof}

\subsubsection{An example in $\mathbf R=(\mathbb R,0,1,{+},{-},{\times},{<})$}
For this we use another one of the rare examples (besides the Sz\'eme\-re\-di-Trotter Theorem) where tight bounds on the number of incidences are known:

\begin{theorem}[Pach and Sharir \cite{PS-angles}] \label{thm:PS}
Let $\alpha$ be a real number with $0<\alpha<\pi$. The maximum number of times that $\alpha$ occurs as an angle among the ordered triples of $t$ points in the plane is $O(t^2\log t)$. Furthermore, suppose $\tan(\alpha)\in\bQ\sqrt{d}$ where $d\in\bN$ is not a square. Then there exists a constant $C=C_\alpha>0$ and, for every $t>3$, a $t$-element set $S_t\subseteq\bR^2$ with the property that at least $Ct^2\log t$ ordered triples of points from $S_t$ determine the angle $\alpha$. 
\end{theorem}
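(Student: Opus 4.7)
The plan is to handle the upper and lower bounds separately, using incidence geometry for the first and an explicit lattice construction for the second.

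For the upper bound of $O(t^2\log t)$, I would invoke the inscribed angle theorem: for every ordered pair of distinct points $A,C$ in a $t$-element set $S\subseteq\bR^2$, the locus of points $B$ with $\angle ABC=\alpha$ is a union of two circular arcs $\Gamma^\pm_{A,C}$. Hence the number of ordered triples realizing angle $\alpha$ at the middle vertex equals the number of incidences between the $t$ points of $S$ and this family of $O(t^2)$ arcs. A Sz\'emeredi-Trotter-style bound for point-circle incidences gives $O(t^2)$ incidences in the generic case; the extra $\log t$ factor is needed to absorb the degeneracies in which many pairs $(A,C)\neq(A',C')$ produce the same arc (concyclic configurations), which I would control via a dyadic decomposition on the multiplicity $\mu$ with which a circle arises as some $\Gamma_{A,C}$: apply the incidence bound to each level $\mu\approx 2^j$ separately and sum over the $O(\log t)$ levels.

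An alternative route, which I would try first as more elementary: fix $B\in S$ and for each direction $\theta$ let $m_B(\theta)$ denote the number of points of $S$ on the ray from $B$ in direction $\theta$. Then the number of triples with middle vertex $B$ equals
\[\sum_\theta m_B(\theta)\bigl(m_B(\theta+\alpha)+m_B(\theta-\alpha)\bigr).\]
Bucketing the directions dyadically by $m_B(\theta)$ and applying Cauchy--Schwarz together with the Sz\'emeredi--Trotter theorem (for point-line incidences, using the lines through $B$) reduces the sum-over-$B$ to a familiar incidence estimate, producing the $O(t^2\log t)$ bound. For the lower bound, the hypothesis $\tan\alpha\in\bQ\sqrt{d}$ is exactly what makes the rotation $R_\alpha$, after clearing denominators, a rational linear endomorphism preserving the rank-$2$ lattice $\Lambda:=\bZ[\sqrt d\,]^2\subseteq\bR^2$. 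I would take $S_t$ to be the intersection of (a scaling of) $\Lambda$ with a square box containing $\Theta(t)$ lattice points, and count triples $(A,B,C)\in S_t^3$ satisfying $\vec{BC}=R_\alpha\vec{BA}$. Each such triple is encoded by a pair $(B,\vec u)$ with $B,\,B+\vec u,\,B+R_\alpha\vec u\in S_t$, so the count reduces to estimating the number of vectors $\vec u\in\Lambda$ inside the box together with the number of simultaneous translates of $\vec u$ and $R_\alpha\vec u$ that remain inside the box. Standard Gauss-type estimates for the number of lattice representations of integers of size $\leq t$ produce the desired $\Omega(t^2\log t)$, with the $\log t$ being the usual divisor-function contribution also seen in Erd\H{o}s' classical construction for repeated distances.

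The hard part will be the upper bound: the naive incidence estimate only yields $O(t^2)$, and to obtain exactly one extra factor of $\log t$ (neither more nor less) one must carefully balance the dyadic decomposition by arc-multiplicity against the best available Sz\'emeredi--Trotter-type bound, making sure that the contribution of concyclic configurations does not blow up by additional polylog factors. The lower bound is essentially number-theoretic, the main subtlety being the choice of lattice dictated by the arithmetic form of $\tan\alpha$; without that arithmetic condition, $R_\alpha$ would not act on any rank-$2$ lattice and only $\Omega(t^2)$ triples could be realized in a generic configuration.
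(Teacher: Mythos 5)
The paper does not prove this statement: it is quoted as a theorem of Pach and Sharir (reference \cite{PS-angles}) and is invoked as a black box in the proof of Corollary~\ref{cor:PS}. There is therefore no proof of the paper's own to compare your sketch against, and the review below is of your argument on its own terms.

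For the lower bound your strategy is the right one, but the lattice you name is wrong. $\bZ[\sqrt{d}\,]=\bZ+\bZ\sqrt{d}$ is already of rank~$2$ over $\bZ$ and is dense in $\bR$ (as $\sqrt d$ is irrational), so $\bZ[\sqrt d\,]^2=\bZ[\sqrt d\,]\times\bZ[\sqrt d\,]$ is a rank-$4$ $\bZ$-module that is dense in $\bR^2$; its intersection with any box is infinite, and the construction you describe does not get off the ground. What you want is the genuine rank-$2$ lattice $\Lambda$ with $\bZ$-basis $(1,0)$ and $(0,\sqrt d)$, which under the identification $\bR^2\cong\bC$ becomes the ring $\bZ[\sqrt{-d}\,]$, an order in the imaginary quadratic field $\bQ(\sqrt{-d})$. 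Writing $\tan\alpha=(p/q)\sqrt d$ with $p,q$ coprime integers, the element $w:=q+ip\sqrt d$ lies in $\Lambda$, and multiplication by $w$ equals $|w|\cdot R_\alpha$ and preserves $\Lambda$ because $\Lambda$ is a ring. The condition $\angle ABC=\alpha$ then reads $(C-B)\overline{(A-B)}\in\bZ_{>0}\,w$, and the triple count becomes a divisor-type sum in $\bZ[\sqrt{-d}\,]$; that is indeed where the $\log t$ comes from, but the arithmetic lives in the imaginary quadratic order, not in $\bZ[\sqrt d\,]^2$.

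For the upper bound, the point you yourself flag as ``the hard part'' is a genuine gap, in both of your variants. In the direction-counting version, the Cauchy--Schwarz step $\sum_\theta m_B(\theta)m_B(\theta+\alpha)\le\sum_\theta m_B(\theta)^2$ discards the angle constraint completely and reduces to bounding the aligned-triple count $\sum_B\sum_\theta m_B(\theta)^2$, which is $\Theta(t^3)$ when the points are collinear --- a configuration with zero angle-$\alpha$ triples --- so the estimate is hopeless as stated, and dyadic bucketing on the level sets of $m_B$ alone does not fix it: one must play the levels of $m_B(\theta)$ and $m_B(\theta+\alpha)$ against each other, using the geometric fact that two rich directions at $B$ separated by exactly the angle $\alpha$ pin down two rich lines through $B$ meeting at angle $\alpha$, and such coincidences are rare. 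In the arc version the obstruction is different: the arcs $\Gamma_{A,C}$ are pairwise \emph{distinct} for distinct ordered pairs $(A,C)$ (so there is no multiplicity to dyadically decompose over), but two such arcs lying on a common circle can overlap in a positive-length interval rather than in at most two points; the family is therefore not a family of pseudocircles, and the Szemer\'edi--Trotter-type incidence bound you want to cite does not apply off the shelf. Either route needs a concrete mechanism for exploiting the angle condition before it constitutes a proof.
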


Let $x=(x_1,x_2)$, $y=(y_1,y_2)$, $z=(z_1,z_2)$ and consider the formula
$$\varphi(x,y,z) := x\neq y \wedge x\neq z \wedge
2\langle y-x,z-x\rangle = ||y-x||\, ||z-x||$$
in the language of the ordered field of real numbers $\mathbf R$, where $\langle\ ,\ \rangle$ denotes the usual inner product on $\bR^2$ and $||\ ||$ the associated norm. 
Then for  $a,b,c\in\bR^2$ we have $\mathbf R\models\varphi(a,b,c)$ iff the vectors $b-a$ and $c-a$ are non-zero and the angle $\angle(b,a,c)$ between them is $\frac{\pi}{3}$.
Let
$\widehat\varphi(v;x,y,z)$ be the partitioned formula with object variables $v=(v_1,v_2)$ and parameter variables $(x,y,z)$  given by
$$\widehat{\varphi}(v; x,y,z) := 
\varphi(x,y,z) \wedge ( v=x\vee v=y\vee v=z ),$$
so
$\mathcal S_{\widehat\varphi}$ consists of all $\{a,b,c\}\in{\bR^2\choose 3}$ with $\mathbf R\models\varphi(a,b,c)$.
We now have:

\begin{corollary}\label{cor:PS}
There exist constants $C_1,C_2>0$ such that 
$$C_1t^2\log t<\pi_{\widehat\varphi}(t)<C_2t^2\log t\qquad\text{for every $t>0$.}$$
That is,
$\pi_{\widehat\varphi}(t)=\Theta( t^2\log t )$ as $t\to\infty$.
\end{corollary}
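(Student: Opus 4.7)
The strategy is to reduce $\pi_{\widehat\varphi}(t)$ to a count of ordered triples of points with a prescribed angle, and then invoke Theorem~\ref{thm:PS} with $\alpha=\pi/3$ (noting $\tan(\pi/3)=\sqrt{3}\in\bQ\sqrt{3}$).

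First I would unpack $\mathcal S_{\widehat\varphi}$ explicitly. For $a,b,c\in\bR^2$, the condition $\mathbf R\models\varphi(a,b,c)$ says $a\neq b$, $a\neq c$, and $2\langle b-a,c-a\rangle=\|b-a\|\,\|c-a\|$, i.e., $\cos\angle(b,a,c)=1/2$, so $\angle(b,a,c)=\pi/3$. This forces $b\neq c$ (otherwise the equation would read $2\|b-a\|^2=\|b-a\|^2$, impossible), so the set $\{a,b,c\}$ always has size $3$. Consequently $\mathcal S_{\widehat\varphi}$ is precisely the family of $3$-element subsets of $\bR^2$ that form a (non-degenerate) triangle having at least one interior angle equal to $\pi/3$.

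Second, I would relate $\pi_{\widehat\varphi}(t)$ to the quantity counted in Theorem~\ref{thm:PS}. Given a finite $A\subseteq\bR^2$, let
\[
N(A):=\#\big\{(a,b,c)\in A^3: \text{distinct},\ \angle(b,a,c)=\pi/3\big\},
\]
and $T(A):=\abs{A\cap\mathcal S_{\widehat\varphi}}$. Each triangle $\{a,b,c\}\in T(A)$ has at least one and at most three vertices at which the angle is $\pi/3$ (the latter occurring only for equilateral triangles); fixing such a vertex gives exactly two ordered triples with that vertex in the middle. Hence
\[
\tfrac{1}{6}N(A)\ \leq\ T(A)\ \leq\ \tfrac{1}{2}N(A).
\]

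Third, I would apply Theorem~\ref{thm:PS}. The upper half gives $N(A)=O(t^2\log t)$ uniformly over $t$-element $A$, so $\pi_{\widehat\varphi}(t)=\max_{\abs{A}=t}T(A)\leq \tfrac{1}{2}N(A)=O(t^2\log t)$, producing the constant $C_2$. For the lower bound, since $\tan(\pi/3)=\sqrt{3}\in\bQ\sqrt{3}$, the second half of Theorem~\ref{thm:PS} yields, for every $t>3$, a $t$-point set $S_t$ with $N(S_t)\geq C_\alpha t^2\log t$; then $\pi_{\widehat\varphi}(t)\geq T(S_t)\geq \tfrac{1}{6}N(S_t)\geq \tfrac{C_\alpha}{6}t^2\log t$, producing the constant $C_1$.

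There is essentially no obstacle beyond careful bookkeeping: the only subtlety is the factor between ordered triples (which Pach--Sharir count) and unordered triangles (which $\mathcal S_{\widehat\varphi}$ records), and the equilateral case makes this a range $[1/6,1/2]$ rather than a single constant; this however does not affect the $\Theta(t^2\log t)$ conclusion.
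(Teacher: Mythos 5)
Your overall strategy is the same as the paper's: identify $\mathcal S_{\widehat\varphi}$ with the family of unordered $\pi/3$-triangles, note that $\tan(\pi/3)=\sqrt 3$ puts $\alpha=\pi/3$ in the scope of both halves of Theorem~\ref{thm:PS}, and convert between ordered triples and unordered triangles by a bounded factor. The lower bound half of your argument is fine.

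However, there is a genuine slip in the upper bound. You write $T(A):=\abs{A\cap\mathcal S_{\widehat\varphi}}$ and then claim $T(A)\leq\tfrac12 N(A)$, treating $A\cap\mathcal S_{\widehat\varphi}$ as though it consisted only of $3$-element subsets of $A$ that form $\pi/3$-triangles. But by definition $A\cap\mathcal S_{\widehat\varphi}=\{S\cap A:S\in\mathcal S_{\widehat\varphi}\}$ is the set of \emph{traces}, and every subset of $A$ of size at most $2$ arises as such a trace (given $\{a,b\}\subseteq A$, pick a third point $c\notin A$ with $\angle(b,a,c)=\pi/3$; similarly for singletons and $\emptyset$, since $A$ is finite). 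So in fact
\[
A\cap\mathcal S_{\widehat\varphi}=\binom{A}{\leq 2}\ \cup\ \Big\{\{a,b,c\}\in\binom{A}{3}:\mathbf R\models\varphi(a,b,c)\Big\},
\]
which is exactly the identity the paper records, and your inequality $T(A)\leq\tfrac12 N(A)$ fails whenever $N(A)=0$ (or more generally whenever $N(A)$ is much smaller than $t^2$). The corrected upper bound is $T(A)\leq\binom{t}{\leq 2}+\tfrac12 N(A)=O(t^2)+O(t^2\log t)=O(t^2\log t)$, so the $\Theta(t^2\log t)$ conclusion survives; but as written the inequality chain is wrong, and the $\binom{A}{\leq 2}$ contribution needs to be accounted for explicitly.
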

\begin{proof}
Let $A\subseteq\bR^2$ be finite. Then
$$A\cap\mathcal S_{\widehat\varphi} = \textstyle {A\choose \leq 2}\cup 
\left\{ \{a,b,c\}\in {A\choose 3} : \mathbf R\models\varphi(a,b,c) \right\}.$$
Since $\tan(\pi/3)=\sqrt{3}$, the second part of Theorem~\ref{thm:PS} applies to $\alpha=\frac{\pi}{3}$.
The upper bound in Corollary~\ref{cor:PS} now follows from the first assertion in Theorem~\ref{thm:PS}, and the lower bound from the second assertion in the same theorem.
\end{proof}

\section{Theories with the $\VC{}d$ Property}\label{sec:VCm property}

\noindent
After defining the $\VC{}d$ property we prove that if a theory has this property then the dual VC density of any finite set of partitioned formulas in the tuple of object variables $x$ is at most $d\abs{x}$.
In Section~\ref{sec:examples of VCm theories} we will then show that various theories have the $\VC{}d$ property.
In the following $\mathbf M$ is a structure in a language $\mathcal L$, $\Delta(x;y)$ is a finite non-empty set of partitioned $\mathcal L$-formulas, and $m=\abs{x}$. 

\subsection{Uniform definability of types over finite sets}
Given a $\Delta(x;B)$-type $q\in S^\Delta(B)$, where $B\subseteq M^{\abs{y}}$, a family $\mathcal F=(\varphi_\#)_{\varphi\in\Delta}$ of $\mathcal L(M)$-formulas $\varphi_\#(y)$ is said to \emph{define $q$}\/ if for all $\varphi\in\Delta$ and $b\in B$ we have
$$\varphi(x;b)\in q \qquad\Longleftrightarrow\qquad \mathbf M\models \varphi_\#(b).$$
We also say that \emph{$\mathcal F$ is a definition of $q$.}\/
For a family $\mathcal F=\mathcal F(y;v)$ of partitioned $\mathcal L$-formulas $\psi(y;v)$, we denote by $\mathcal F(y;c)$ the family $(\psi(y;c))_{\psi\in\mathcal F}$ of $\mathcal L(M)$-formulas obtained by substituting a given tuple $c\in M^{\abs{v}}$ for the tuple of variables $v$.
The following generalizes a definition due to Guingona \cite{Guingona}:

\begin{definition}\label{def:UDTFS}
We say that $\Delta$ has \emph{uniform definability of types over finite sets}\/ (abbreviated as \emph{UDTFS}\/) in $\mathbf M$ if there are finitely many families 
$$\mathcal F_i=\big(\varphi_i(y;y_1,\ldots,y_d)\big)_{\ph\in\Delta}\qquad (i\in I)$$ 
of $\mathcal L$-formulas (where $\abs{y_j}=\abs{y}$ for every $j=1,\dots,d$) such that for every finite  set $B\subseteq M^{\abs{y}}$
and $q\in S^\Delta(B)$ there are $b_1,\ldots,b_d \in B$ and some $i\in I$ such that $\mathcal F_i(y;b_1,\dots,b_d)$
defines $q$. We call the family $\mathcal F=(\mathcal F_i)_{i\in I}$ a \emph{uniform definition of $\Delta(x;B)$-types over finite sets}\/ in $\mathbf M$ with $d$~parameters. If $\Delta=\{\varphi\}$ is a singleton, we also speak of $\varphi$ having UDTFS.
\end{definition}

The following observation shows in particular that every finite set $\Delta(x;y)$ of partitioned $\mathcal L$-formulas which is directed (see Example~\ref{ex:VC-minimal}) has UDTFS in $T$ with a single parameter:

\begin{lemma}\label{lem:breadth and UDTFS}
Let $\Delta(x;y)$ be a finite set of partitioned $\mathcal L$-formulas, and 
suppose the set system $\mathcal S_\Delta=\{\varphi(M^{\abs{x}};b):b\in M^{\abs{y}}\}$ has breadth~$d$.
Then $\Delta$ has UDTFS with $d$ parameters.
\end{lemma}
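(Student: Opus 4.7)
The plan is to extract a definition of each complete type $q \in S^\Delta(B)$ (for finite nonempty $B \subseteq M^{\abs{y}}$) from $d$ parameters in $B$, chosen so that the positive instances of $q$ pinned down by these parameters determine every other instance by inclusion. I would begin with the set
\[
\mathcal{C}_q := \{\varphi(M^{\abs{x}};b) : \varphi \in \Delta,\ b \in B,\ \varphi(x;b) \in q\} \subseteq \mathcal S_\Delta
\]
and invoke the fact (recalled in Section~\ref{sec:sets of formulas}) that $q$ has a realization $a \in M^{\abs{x}}$; then $a$ lies in every member of $\mathcal{C}_q$, so $\bigcap \mathcal{C}_q \neq \emptyset$ whenever $\mathcal{C}_q \neq \emptyset$.

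In this main case $\mathcal{C}_q \neq \emptyset$, I would apply the breadth-$d$ hypothesis (padding with repetitions if $\abs{\mathcal{C}_q} \leq d$) to obtain a tuple $((\psi_1,b_1),\dots,(\psi_d,b_d))$ in $(\Delta \times B)^d$ with each $\psi_i(x;b_i) \in q$ and
\[
C := \bigcap_{i=1}^d \psi_i(M^{\abs{x}}; b_i) \;=\; \bigcap_{(\varphi,b):\,\varphi(x;b)\in q}\!\!\varphi(M^{\abs{x}};b).
\]
The key observation will then be the equivalence
\[
\varphi(x;b) \in q \;\iff\; C \subseteq \varphi(M^{\abs{x}};b)
\]
for all $\varphi \in \Delta$ and $b \in B$: the ``$\Leftarrow$'' direction uses $a \in C$ together with completeness of $q$, and ``$\Rightarrow$'' uses that any such $\varphi(M^{\abs{x}};b)$ is a member of $\mathcal{C}_q$ and hence contains $C$. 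Since the right-hand side is a first-order condition on $(b,b_1,\ldots,b_d)$, this suggests taking, for each tuple $\vec\psi = (\psi_1,\ldots,\psi_d) \in \Delta^d$, the family $\mathcal F_{\vec\psi} := (\varphi_{\vec\psi}(y;y_1,\ldots,y_d))_{\varphi \in \Delta}$ with
\[
\varphi_{\vec\psi}(y;y_1,\ldots,y_d) \;:=\; \forall x\Bigl(\bigwedge_{i=1}^d \psi_i(x;y_i) \to \varphi(x;y)\Bigr).
\]
Evaluated at the chosen $(b_1,\ldots,b_d)$, the family $\mathcal F_{\vec\psi}$ then defines $q$.

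To cover the remaining degenerate case $\mathcal{C}_q = \emptyset$, where $q$ contains only negated instances, I would add one further family $\mathcal F_\perp$ in which every formula is a contradiction such as $y_1 \neq y_1$; evaluating at any $b_1 \in B$ (padded to length $d$) trivially defines such a $q$. The resulting finite collection $\{\mathcal F_{\vec\psi} : \vec\psi \in \Delta^d\} \cup \{\mathcal F_\perp\}$ will then be a uniform definition of $\Delta(x;B)$-types over finite sets with $d$ parameters.

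The main conceptual ingredient is the recognition that breadth $d$ is precisely what allows the realization set of the positive part of $q$ to be written as an intersection of $d$ specific $\mathcal S_\Delta$-sets, after which membership ``$\varphi(x;b)\in q$'' becomes a uniformly $\mathcal L$-definable containment. No serious technical obstacle is anticipated beyond the edge case $\mathcal{C}_q=\emptyset$ and the cosmetic padding needed when $\abs{\mathcal{C}_q} < d$.
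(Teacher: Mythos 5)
Your proof is correct and takes essentially the same route as the paper's: use breadth $d$ to write the intersection of all positively realized instances as an intersection of just $d$ of them, and define the type by the first-order containment condition $\forall x\bigl(\bigwedge_i\psi_i(x;y_i)\to\varphi(x;y)\bigr)$, with a separate family handling the all-negative case. The paper uses $\exists z\,(z\neq z)$ instead of $y_1\neq y_1$ for the degenerate family, but otherwise the two arguments coincide down to the choice of defining formula.
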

\begin{proof}
To see this set $\mathcal F_0(y):=(\exists z (z\neq z))_{\varphi\in\Delta}$ and,
for each $d$-tuple ${\mathbf\psi}=(\psi_1,\dots,\psi_d)\in\Delta^d$, define
$$\mathcal F_{\mathbf\psi}(y;y_1,\dots,y_d):=(\varphi_{\mathbf \psi}(y;y_1,\dots,y_d))_{\varphi\in\Delta}$$ where
$$\varphi_{\mathbf\psi}(y;y_1,\dots,y_d):=\forall x\big(\psi_1(x;y_1)\wedge\cdots\wedge\psi_d(x;y_d)\to\varphi(x;y)\big).$$
Then $\mathcal F=(\mathcal F_{\mathbf\psi})_{\mathbf\psi\in\{0\}\cup\Delta^d}$ is a uniform definition of $\Delta(x;B)$-types over finite sets. For suppose $q\in S^\Delta(B)$ where $B\subseteq M^{\abs{y}}$ is finite. If $\varphi(x;b)\notin q$ for all $\varphi\in\Delta$, $b\in B$, then $\mathcal F_0(y)$ defines $q$. Otherwise, by assumption 
we can pick $\psi_1(x;b_1),\dots,\psi_d(x;b_d)\in q$ such that 
$$\bigcap_{\varphi(x;b)\in q} \varphi(M^{\abs{x}};b) = \psi_1(M^{\abs{x}};b_1)\cap\cdots\cap \psi_d(M^{\abs{x}};b_d).$$
Then $\mathcal F_\psi(y;b_1,\dots,b_d)$, where ${\mathbf\psi}=(\psi_1,\dots,\psi_d)$, defines $q$.
\end{proof}

A uniform definition of $\Delta(x;B)$-types over finite sets in $\mathbf M$ remains a uniform definition of $\Delta(x;B)$-types over finite sets in any elementarily equivalent structure, and so it makes sense to speak of uniform definability of types over finite sets in a complete theory.
If we do not care about the number of parameters, we can always do with a single defining scheme (at least for non-trivial parameter sets):

\begin{lemma}
Let $\mathcal F=(\mathcal F_i)_{i\in I}$ be a uniform definition of $\Delta(x;B)$-types over finite sets with $d$ parameters as above, and let $n=\abs{I}$. Then there exists a family $$\mathcal F_\#=\big(\varphi_\#(y;y_1,\dots,y_d,v,w_1,\dots,w_n)\big)_{\varphi\in\Delta}$$ such that for every finite  set $B\subseteq M^{\abs{y}}$ with $\abs{B}\geq 2$
and every $q\in S^\Delta(B)$ there are $b_1,\ldots,b_d,c,c_1,\dots,c_n \in B$  such that $\mathcal F_\#(u;b_1,\dots,b_d,c,c_1,\dots,c_n)$ defines $q$. 
\end{lemma}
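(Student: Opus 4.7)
The plan is to combine the finitely many defining schemes $\mathcal F_1,\dots,\mathcal F_n$ into one by using a single extra element $v$ together with $n$ extra ``pointers'' $w_1,\dots,w_n$ that select the relevant scheme. Writing $I=\{1,\dots,n\}$, for each $\varphi\in\Delta$ set
\[
\varphi_\#(y;y_1,\dots,y_d,v,w_1,\dots,w_n) \ :=\
\bigvee_{i=1}^n v=w_i \ \wedge\
\bigwedge_{i=1}^n \bigl(v=w_i \to \varphi_i(y;y_1,\dots,y_d)\bigr),
\]
where $\varphi_i(y;y_1,\dots,y_d)$ is the member of $\mathcal F_i$ indexed by $\varphi$. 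Let $\mathcal F_\# := (\varphi_\#)_{\varphi\in\Delta}$.

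To verify the required property, let $B\subseteq M^{\abs{y}}$ be finite with $\abs{B}\geq 2$ and let $q\in S^\Delta(B)$. By hypothesis we can choose some $i_0\in I$ and $b_1,\dots,b_d\in B$ such that $\mathcal F_{i_0}(y;b_1,\dots,b_d)$ defines~$q$. Using $\abs{B}\geq 2$, pick two distinct elements $\alpha,\beta\in B$, and set $c:=\alpha$, $c_{i_0}:=\alpha$, and $c_j:=\beta$ for all $j\in I\setminus\{i_0\}$. Then $c=c_i$ iff $i=i_0$, so substitution into the outer conjunction kills every conjunct except the one for $i=i_0$ (for which the antecedent holds), and the disjunction $\bigvee_i v=w_i$ is satisfied. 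Hence for every $\varphi\in\Delta$ and $b\in B$,
\[
\mathbf M\models\varphi_\#(b;b_1,\dots,b_d,c,c_1,\dots,c_n)\ \Longleftrightarrow\
\mathbf M\models\varphi_{i_0}(b;b_1,\dots,b_d)\ \Longleftrightarrow\ \varphi(x;b)\in q,
\]
where the last equivalence is because $\mathcal F_{i_0}(y;b_1,\dots,b_d)$ defines $q$. Thus $\mathcal F_\#(y;b_1,\dots,b_d,c,c_1,\dots,c_n)$ defines $q$, as required.

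There is essentially no obstacle here; the only subtlety is justifying the hypothesis $\abs{B}\geq 2$ in the statement, which is precisely what is needed to find two distinct elements $\alpha,\beta\in B$ to serve as the binary ``selector'' for the desired scheme $i_0$. If $\abs{B}\leq 1$ one cannot encode the choice $i_0\in I$ using parameters drawn only from $B$, which is why the case $\abs{B}=1$ (trivially handled separately, since $S^\Delta(B)$ has size at most $2^{\abs{\Delta}}$) is excluded.
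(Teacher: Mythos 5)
Your proof is correct and uses the same Shelah-style coding trick as the paper: the paper's formula is simply $\varphi_\#=\bigwedge_{i=1}^n(v=w_i\to\varphi_i(y;y_1,\dots,y_d))$ without your extra guard $\bigvee_i v=w_i$, and then $c$ and $c_{i_0}$ are chosen equal to a fixed element of $B$ while the other $c_j$ are a distinct element, exactly as you do. The additional disjunct is harmless but unnecessary, since one only needs the particular substitution $c=c_{i_0}$, $c\neq c_j$ ($j\neq i_0$) to make the formula reduce to $\varphi_{i_0}(y;b_1,\dots,b_d)$.
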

\begin{proof}
This is a simple coding trick due to Shelah (proof of Theorem~II.2.12~(1) in \cite{Shelah-book}, cf.~also \cite[Lemma~2.5]{Guingona}). For every $\varphi\in\Delta$ define
$$\varphi_\#(y;y_1,\dots,y_d,v,w_1,\dots,w_n):=\bigwedge_{i=1}^n \big(v=w_i \rightarrow \varphi_i(y;y_1,\dots,y_d)\big)$$
and let $\mathcal F_\#=(\varphi_\#)_{\varphi\in\Delta}$.
Let $B\subseteq M^{\abs{y}}$, $\abs{B}\geq 2$, and $q\in S^\Delta(B)$. By hypothesis, there are $b_1,\dots,b_d\in B$ and $i\in I$ such that $\mathcal F_i(y;b_1,\dots,b_d)$ defines $q$. Pick $c,c'\in B$ with $c\neq c'$, and put $c_i:=c$, $c_j:=c'$ for $j\neq i$. Then $\mathcal F_\#(y;b_1,\dots,b_d,c,c_1,\dots,c_n)$ defines~$q$.
\end{proof}

Similarly, the proof of Lemma~\ref{lem:encoding finite sets of formulas},~(1) shows that if 
the $\mathcal L$-formula $\psi_\Delta$ which we associated there to the finite set of $\mathcal L$-formulas $\Delta$
admits a uniform definition of $\psi_\Delta(x;B')$-types over finite sets with $d$ parameters, then $\Delta$ itself admits a uniform definition of $\Delta(x;B)$-types over finite parameter sets $B$ (with at least $2$ elements) having  $d+2$ parameters.

\medskip

On the other hand, if we have tight control over the number $d$ of parameters in our defining schemes, then we can bound the sizes of the $\Delta(x;B)$-type spaces over finite sets by polynomial functions (in the size of the parameter set) of degree $d$:  more precisely, if $\Delta$ allows a  uniform definition $\mathcal F=(\mathcal F_i)_{i\in I}$ of $\Delta(x;B)$-types over finite sets in $\mathbf M$ with $d$~parameters, then 
$\abs{S^\Delta(B)}\leq \abs{I}\,\abs{B}^d$  for every finite $B\subseteq M^{\abs{y}}$, hence $\pi^*_\Delta(t)\leq \abs{I}\, t^d$ for each $t$, and so $\vc^*(\Delta)\leq d$.

\subsection{The $\VC{}d$ property} 
We say that $\mathbf M$ has the \emph{$\VC{}d$ property} if  any $\Delta(x;y)$ with $\abs{x}=1$ has a uniform definition of $\Delta(x;B)$-types over finite sets with $d$~parameters.
Clearly, if $\mathbf M$ has the $\VC{}d$~property, then so does every elementarily equivalent $\mathcal L$-structure.
We say that a theory $T$ has the \emph{$\VC{}d$~property} if every model of $T$ has the $\VC{}d$ property.

\medskip

We point out that the $\VC{}0$ property  only holds in a very special situation; recall that a structure is called rigid if it has no automorphisms besides the identity.

\begin{lemma}
The following are equivalent:
\begin{enumerate}
\item $\mathbf M$ has the $\VC{}0$ property;
\item every model of $\Th(\mathbf M)$ is rigid;
\item $\mathbf M$ is finite and rigid.
\end{enumerate}
\end{lemma}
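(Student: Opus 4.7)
The plan is to establish the cycle $(1) \Rightarrow (3) \Rightarrow (2) \Rightarrow (1)$, with the substantive work concentrated in $(1) \Rightarrow (3)$; the remaining implications are classical observations.

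For $(1) \Rightarrow (3)$, I would apply the $\VC{}0$~property to the singleton $\Delta = \{x = y\}$ with $\abs{x} = \abs{y} = 1$, obtaining a finite family $(\varphi_i(y))_{i \in I}$ of parameter-free $\mathcal L$-formulas defining all $\Delta(x;B)$-types over finite parameter sets $B \subseteq M$. For any finite $B \subsetneq M$ a direct computation gives $\abs{S^\Delta(B)} = \abs{B} + 1$ (one type per $b \in B$ asserting $x = b$, plus one type asserting $x \neq b$ for all $b \in B$, which is realized in $M \setminus B$), so the uniform bound $\abs{S^\Delta(B)} \leq \abs{I}$ forces $M$ to be finite. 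Rigidity follows similarly: if $\sigma \in \operatorname{Aut}(\mathbf M)$ moves some $a$ to $a' := \sigma(a) \neq a$, I take $B = \{a, a'\}$ and consider the type $p = \{x = a,\ \neg(x = a')\} \in S^\Delta(B)$; any defining formula $\varphi_i(y)$ would have to satisfy $\mathbf M \models \varphi_i(a)$ and $\mathbf M \not\models \varphi_i(a')$, contradicting the $\sigma$-invariance of the parameter-free $\mathcal L$-formula $\varphi_i$.

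For $(3) \Rightarrow (2)$, a finite $\mathbf M$ satisfies a cardinality sentence, so $\Th(\mathbf M)$ is categorical, all its models are isomorphic to $\mathbf M$, and rigidity transfers under isomorphism. For $(2) \Rightarrow (1)$, I would proceed via $(2) \Rightarrow (3) \Rightarrow (1)$. For $(2) \Rightarrow (3)$, if $\mathbf M$ were infinite, take a sufficiently saturated elementary extension $\mathbf N$ of cardinality exceeding the (set-theoretically bounded) number of complete $1$-types over $\emptyset$; then two distinct elements of $N$ must realize the same $1$-type and are conjugate by a non-trivial automorphism of $\mathbf N$ by homogeneity, contradicting $(2)$. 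For $(3) \Rightarrow (1)$, I would exploit the classical fact that in a finite rigid structure every element $a \in M$ is $\emptyset$-definable (since in a finite structure the complete $\emptyset$-types partition $M$ into $\operatorname{Aut}(\mathbf M)$-orbits, each definable by a first-order formula, and rigidity collapses each orbit to a singleton); picking a defining $\mathcal L$-formula $\theta_a(z)$ for each $a \in M$ and, for each $\Delta(x;y)$ with $\abs{x} = 1$, setting $\mathcal F_a(y) := (\exists z\,(\theta_a(z) \wedge \varphi(z;y)))_{\varphi \in \Delta}$, the finite family $(\mathcal F_a)_{a \in M}$ yields a uniform definition of $\Delta(x;B)$-types with $0$~parameters, since every $q \in S^\Delta(B)$ is realized in the finite structure $\mathbf M$ by some $a$. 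The main (albeit modest) obstacle is executing the $(1) \Rightarrow (3)$ step carefully, especially the rigidity argument via automorphism invariance of parameter-free formulas.
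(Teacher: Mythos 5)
Your proof is correct. The core rigidity argument in $(1) \Rightarrow (3)$ coincides with the paper's: the same formula $x=y$, the same two-element parameter set $\{a,\sigma(a)\}$, the same contradiction with $\sigma$-invariance of a parameter-free defining formula. Where you diverge is in extracting finiteness from $(1)$ and in closing the cycle. The paper proves $(1)\Rightarrow(2)$ and then calls $(2)\Rightarrow(3)$ obvious (implicitly invoking that an infinite structure has a non-rigid elementary extension), whereas you obtain finiteness directly from $(1)$ by a counting argument: a zero-parameter uniform defining scheme can define at most $\abs{I}$ types over any finite parameter set, yet $\abs{S^\Delta(B)}=\abs{B}+1$ for proper finite $B\subsetneq M$, so $M$ must be finite. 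This makes your version slightly more self-contained, since it avoids locating a non-rigid model of $\Th(\mathbf M)$ via saturated or special models. For $(3)\Rightarrow(1)$ you build the defining scheme from $\emptyset$-defining formulas $\theta_a(z)$ of each singleton $\{a\}$, setting $\mathcal F_a(y):=(\exists z(\theta_a(z)\wedge\varphi(z;y)))_{\varphi\in\Delta}$, whereas the paper instead defines $\varphi_p(y)$ by reference to $\operatorname{Aut}(\mathbf M)$-translates of $p$; both constructions are packagings of the same fact that in a finite rigid structure every $\operatorname{Aut}$-invariant set (in particular each singleton) is $\emptyset$-definable.
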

\begin{proof}
Suppose  $\mathbf M$ has the $\VC{}0$ property; to see (2), it suffices to show that $\mathbf M$ is rigid. 
Let $\varphi(x;y)$ be the $\mathcal L$-formula $x=y$, and let $\mathcal F=(\varphi_i(y))_{i\in I}$ be a uniform definition of $\varphi(x;B)$-types over finite sets. Suppose $\sigma\in\operatorname{Aut}(\mathbf M)$ and $b\in M$ satisfy $b\neq\sigma(b)$. Let $p=\tp^\varphi(b/B)$ where $B=\{b,\sigma(b)\}$, and choose $i\in I$ such that $\varphi_i$ defines $p$. Then
$$b=b \quad\Longleftrightarrow\quad \mathbf M\models\varphi_i(b) \quad\Longleftrightarrow\quad 
\mathbf M\models\varphi_i(\sigma(b)) \quad\Longleftrightarrow\quad b=\sigma(b),$$
a contradiction. This shows (1)~$\Rightarrow$~(2), and (2)~$\Rightarrow$~(3) is obvious. Suppose now that $M$ is finite, and let $\Delta(x;y)$, where $\abs{x}=1$, be a finite set of partitioned $\mathcal L$-formulas. It is easy to see that then for each $p\in S^\Delta(M^{\abs{y}})$ there is a family $\mathcal F_p=\{\varphi_p(y):\varphi\in\Delta\}$ of $\mathcal L$-formulas such that for all $b\in M^{\abs{y}}$ we have $\mathbf M\models\varphi_p(b)$ iff there is some automorphism $\sigma$ of $\mathbf M$ such that $\varphi(x;b)\in\sigma(p)$. Hence if in addition $\mathbf M$ is rigid then $\mathcal F=(\mathcal F_p)_{p\in S^\Delta(M^{\abs{y}})}$ is a uniform definition of $\Delta(x;B)$-types over finite sets in $\mathbf M$. This shows (3)~$\Rightarrow$~(1).
\end{proof}

The following result and its Corollary~\ref{cor:qe and UDTFS} are useful if we have some kind of quantifier elimination result at hand:

\begin{lemma}\label{lem:qe and UDTFS}
Suppose $\Delta=\Delta(x;y)$ and $\Psi=\Psi(x;y)$ are finite sets of partitioned $\mathcal L$-formulas such that every formula in $\Delta$ is equivalent in $T$ to a Boolean combination of formulas in $\Psi$, and $\Psi$ has UDTFS in $T$ with $d$ parameters. Then $\Delta$ has UDTFS  in $T$ with $d$ parameters.
\end{lemma}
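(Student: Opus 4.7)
\medskip

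My plan is to obtain the uniform definition for $\Delta$ by substituting the definitions that already exist for $\Psi$ into the Boolean combinations that relate $\Delta$ to $\Psi$. Concretely, fix, for each $\varphi\in\Delta$, a Boolean combination $\beta_\varphi(z_1,\dots,z_{k(\varphi)})$ of propositional variables and formulas $\psi_{\varphi,1},\dots,\psi_{\varphi,k(\varphi)}\in\Psi$ such that
$$T \proves \forall x\forall y\,\bigl(\varphi(x;y)\leftrightarrow \beta_\varphi(\psi_{\varphi,1}(x;y),\dots,\psi_{\varphi,k(\varphi)}(x;y))\bigr).$$
Let $\bigl(\mathcal G_i\bigr)_{i\in I}$, with $\mathcal G_i=\bigl(\psi_i(y;y_1,\dots,y_d)\bigr)_{\psi\in\Psi}$, be a uniform definition of $\Psi(x;B)$-types over finite sets in $T$ with $d$ parameters. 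For each $i\in I$ and each $\varphi\in\Delta$ set
$$\varphi_i(y;y_1,\dots,y_d) := \beta_\varphi\bigl(\psi_{\varphi,1,i}(y;y_1,\dots,y_d),\dots,\psi_{\varphi,k(\varphi),i}(y;y_1,\dots,y_d)\bigr),$$
where $\psi_{\varphi,j,i}$ denotes the formula in the family $\mathcal G_i$ corresponding to $\psi_{\varphi,j}\in\Psi$; and let $\mathcal F_i := (\varphi_i)_{\varphi\in\Delta}$. I claim that $(\mathcal F_i)_{i\in I}$ is a uniform definition of $\Delta(x;B)$-types over finite sets in $T$ with $d$ parameters.

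To verify the claim, fix $\mathbf M\models T$, a finite $B\subseteq M^{\abs{y}}$, and $p\in S^\Delta(B)$. Since $p$ is realized in $\mathbf M$, pick $a\in p^{\mathbf M}$ and form $q:=\tp^\Psi(a/B)\in S^\Psi(B)$. By the UDTFS hypothesis for $\Psi$ we can choose $i\in I$ and $b_1,\dots,b_d\in B$ such that $\mathcal G_i(y;b_1,\dots,b_d)$ defines $q$. Now for any $\varphi\in\Delta$ and $b\in B$ we compute
\begin{align*}
\varphi(x;b)\in p
&\iff \mathbf M\models\varphi(a;b) \\
&\iff \mathbf M\models\beta_\varphi\bigl(\psi_{\varphi,1}(a;b),\dots,\psi_{\varphi,k(\varphi)}(a;b)\bigr) \\
&\iff \beta_\varphi\bigl(\psi_{\varphi,1}(x;b)\in q,\ \dots,\ \psi_{\varphi,k(\varphi)}(x;b)\in q\bigr) \\
&\iff \beta_\varphi\bigl(\mathbf M\models\psi_{\varphi,1,i}(b;b_1,\dots,b_d),\ \dots,\ \mathbf M\models\psi_{\varphi,k(\varphi),i}(b;b_1,\dots,b_d)\bigr) \\
&\iff \mathbf M\models\varphi_i(b;b_1,\dots,b_d).
\end{align*}
Thus $\mathcal F_i(y;b_1,\dots,b_d)$ defines $p$, as required.

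There is no real obstacle here: the key point is simply that the \emph{same} choice of index $i$ and of parameters $b_1,\dots,b_d$ that defines the $\Psi$-type $q$ automatically defines the $\Delta$-type $p$, because each formula of $\Delta$ is a Boolean combination of formulas from $\Psi$ \emph{uniformly in $y$} (with a fixed Boolean combination depending only on the formula, not on $p$). Consequently the parameter count is preserved and the index set $I$ is unchanged (we could even shrink it if desired, but this is irrelevant for the definition of UDTFS, which only requires finiteness).
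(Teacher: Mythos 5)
Your proof is correct and takes essentially the same approach as the paper: you substitute the uniform $\Psi$-definitions into the Boolean combinations expressing the formulas of $\Delta$, then verify that the same index $i$ and parameters $b_1,\dots,b_d$ defining the $\Psi$-type of a realization also define the given $\Delta$-type. The paper simply writes out the Boolean combinations in conjunctive normal form rather than using an abstract $\beta_\varphi$, but the argument is the same.
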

\begin{proof}
We may assume that each $\varphi\in\Delta$ has the form
$$\varphi = \bigwedge_{r\in R_\varphi} \bigvee_{s\in S_{r,\varphi}} \Box_{r,s,\varphi}\,\psi_{r,s,\varphi}$$
where $R_\varphi$, $S_{r,\varphi}$ are finite index sets, $\Box_{r,s,\varphi}$ is $\neg$ or no condition, and $\psi_{r,s,\varphi}\in\Psi$. 
Suppose $\mathcal G=(\mathcal G_i)_{i\in I}$ is a uniform definition of $\Psi(x;B)$-types over finite sets in $T$, where
$$\mathcal G_i = \big( \psi_i(y;y_1,\dots,y_d) \big)_{\psi\in\Psi}\qquad\text{for each $i\in I$.}$$ 
Let $\mathcal F=(\mathcal F_i)_{i\in I}$ where 
$$\mathcal F_i = \big( \varphi_i(y;y_1,\dots,y_d) \big)_{\varphi\in\Delta}\qquad\text{where
$\varphi_i=\bigwedge_{r\in R_{\varphi}} \bigvee_{s\in S_{r,\varphi}} \Box_{r,s,\varphi}\,(\psi_{r,s,\varphi})_i$.}$$
Let $q\in S^\Delta(B)$ where $B\subseteq M^{\abs{y}}$ is finite. Let $a\in M^{\abs{x}}$ realize $q$, and put $p:=\tp^\Psi(a/B)$. Take $i\in I$ and $b_1,\dots,b_d\in B$ such that $\mathcal G_i(y;b_1,\dots,b_d)$ defines~$p$. One now easily verifies that $\mathcal F_i(y;b_1,\dots,b_d)$ defines $q$.
\end{proof}

\begin{corollary}\label{cor:qe and UDTFS}
Suppose $\Phi$ is a family of partitioned $\mathcal L$-formulas in the single object variable $x$ such that 
\begin{enumerate}
\item every partitioned $\mathcal L$-formula in the object variable $x$ is equivalent in $T$ to a Boolean combination of formulas from $\Phi$, and
\item every finite set of  $\mathcal L$-formulas from $\Phi$ has UDTFS in $T$ with $d$ parameters.
\end{enumerate}
Then $T$ has the $\VC{}d$~property.
\end{corollary}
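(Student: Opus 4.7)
The plan is to reduce to Lemma~\ref{lem:qe and UDTFS} by a reparametrization step. Fix a finite set $\Delta(x;y)$ of partitioned $\mathcal L$-formulas with $\abs{x}=1$. By hypothesis~(1), each $\varphi(x;y)\in\Delta$ is equivalent in $T$ to a Boolean combination of instances $\psi(x;t(y))$, where $\psi\in\Phi$ and $t(y)$ is a tuple of $\mathcal L$-terms in the parameter variables $y$ of the appropriate arity. Let $\Psi_0\subseteq\Phi$ be the finite set of underlying $\psi$'s that arise, and let $\Psi(x;y)$ be the finite set of partitioned $\mathcal L$-formulas $\psi(x;t(y))$ obtained from these instances. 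By construction $\Delta$ and $\Psi$ share the common parameter tuple $y$, and each formula in $\Delta$ is a Boolean combination of formulas in $\Psi$ over $T$.

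The heart of the argument is to transfer UDTFS from $\Psi_0$ to $\Psi$. By hypothesis~(2), $\Psi_0$ admits a uniform definition $(\mathcal G_i)_{i\in I}$ of $\Psi_0$-types over finite parameter sets with $d$ parameters; after amalgamating the parameter variables of formulas in $\Psi_0$ into a common tuple $z$ (padding where needed), we may write $\mathcal G_i=(\psi_i^{\#}(z;z_1,\ldots,z_d))_{\psi\in\Psi_0}$. For each instance $\psi(x;t(y))\in\Psi$ set
\[
\psi(x;t(y))_i^{\#}(y;y_1,\ldots,y_d) \ := \ \psi_i^{\#}\bigl(t(y);t(y_1),\ldots,t(y_d)\bigr),
\]
and let $\mathcal F_i:=(\psi(x;t(y))_i^{\#})_{\psi(x;t(y))\in\Psi}$. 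Given a finite $B\subseteq M^{\abs{y}}$ and a $\Psi(x;B)$-type $q$, pick a realization $a\in M$ of $q$ and form the finite parameter set $C:=\{t(b):b\in B,\ \psi(x;t(y))\in\Psi\}$, together with the $\Psi_0$-type $q_0:=\operatorname{tp}^{\Psi_0}(a/C)$. Applying the UDTFS for $\Psi_0$ to $q_0$ produces $i\in I$ and $c_1,\ldots,c_d\in C$ such that $\mathcal G_i(z;c_1,\ldots,c_d)$ defines $q_0$; writing $c_j=t(b_j)$ for suitable $b_j\in B$ and substituting, one checks directly that $\mathcal F_i(y;b_1,\ldots,b_d)$ defines $q$. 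Thus $(\mathcal F_i)_{i\in I}$ is a uniform definition of $\Psi(x;B)$-types over finite sets in $T$ with $d$ parameters.

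With UDTFS for $\Psi$ established, Lemma~\ref{lem:qe and UDTFS} applied to the pair $(\Delta,\Psi)$ yields UDTFS for $\Delta$ in $T$ with $d$ parameters. Since $\Delta$ was an arbitrary finite set of partitioned $\mathcal L$-formulas with $\abs{x}=1$, this proves that $T$ has the $\VC{}d$ property. The only delicate point is the reparametrization in the second paragraph: different formulas $\psi(x;t(y))\in\Psi$ may involve substitution-term tuples $t(y)$ of different arities, so one must organize all instantiations inside a single $M^{\abs{z}}$ to legitimately speak of a single $\Psi_0$-type over $C$; once this bookkeeping is carried out, the transfer is purely formal.
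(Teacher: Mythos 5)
Your overall plan is correct and is essentially the paper's intended argument: given a finite $\Delta(x;y)$ with $\abs{x}=1$, use hypothesis~(1) to produce a finite $\Psi_0\subseteq\Phi$ and a reparametrized set $\Psi(x;y)$ in the common parameter tuple $y$, transfer UDTFS with $d$~parameters from $\Psi_0$ to $\Psi$, and finish with Lemma~\ref{lem:qe and UDTFS}. The gap is in the transfer step, which you call ``purely formal'' after the bookkeeping; it is not.

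Concretely, you take $C:=\{t(b):b\in B,\ \psi(x;t(y))\in\Psi\}$, realize a $\Psi(x;B)$-type $q$ by $a$, set $q_0:=\tp^{\Psi_0}(a/C)$, and apply UDTFS for $\Psi_0$ to obtain $i\in I$ and $c_1,\dots,c_d\in C$. Since $C$ is the \emph{union} of the images of $B$ under the several term tuples occurring in $\Psi$, each $c_j$ may have the form $t'(b_j)$ for a term tuple $t'$ different from the $t$ attached to the $\psi(x;t(y))$ one is currently defining. Your defining scheme $\psi_i^\#\bigl(t(y);t(y_1),\dots,t(y_d)\bigr)$ substitutes the \emph{same} $t$ throughout; plugging in $y_j:=b_j$ produces $t(b_j)$ in the $j$th slot, whereas UDTFS for $\Psi_0$ requires $c_j=t'(b_j)$ there, and in general $t(b_j)\neq t'(b_j)$. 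So ``one checks directly'' does not in fact go through whenever $\Psi$ contains two or more formulas with distinct term tuples — the generic situation.

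Two standard fixes close the gap without increasing $d$. (a)~Enlarge the index set: writing $\Psi=\{\psi^{(1)}(x;t_1(y)),\dots,\psi^{(k)}(x;t_k(y))\}$, for each $\mathbf m=(m_1,\dots,m_d)\in[k]^d$ include the scheme $\psi^{(l)}_i\bigl(t_l(y);t_{m_1}(y_1),\dots,t_{m_d}(y_d)\bigr)$; the index set becomes $I\times[k]^d$, still finite, with $\mathbf m$ recording which term tuple produced each $c_j$. (b)~Cleaner: replace the union-of-images $C$ by the diagonal one. Let $w=(z_1,\dots,z_k)$ concatenate the parameter tuples of $\psi^{(1)},\dots,\psi^{(k)}$, put $T(y):=(t_1(y),\dots,t_k(y))$, and set $C:=T(B)\subseteq M^{\abs{w}}$. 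Since $\psi^{(l)}(x;T(b))$ depends only on the $z_l$-coordinates of $T(b)$, i.e.\ on $t_l(b)$, the type $q_0=\tp^{\Psi_0}(a/C)$ determines $q$, and now each $c_j\in C$ is $T(b_j)$ for a single $b_j\in B$, so the scheme $\psi^{(l)}_i\bigl(T(y);T(y_1),\dots,T(y_d)\bigr)$ works with the original index set $I$. With either fix, applying Lemma~\ref{lem:qe and UDTFS} to $(\Delta,\Psi)$ completes the proof exactly as you outline.
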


The following theorem is at the root of the proof of Theorem~\ref{thm:weakly o-min} from the introduction; it shows that having UDTFS with a constant number of parameters for all sets of formulas in a single object variable entails UDTFS with a linearly bounded number of parameters for sets of formulas in an arbitrary number of object variables:

\begin{theorem}\label{VCdensity, 2}
Suppose that $\mathbf M$ has the $\VC{}d$ property. Then every $\Delta(x;y)$ has a uniform definition of $\Delta(x;B)$-types over finite sets in $\mathbf M$ with  $d\abs{x}$ parameters. 
\end{theorem}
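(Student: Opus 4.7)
\medskip

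My plan is to proceed by induction on $m=\abs{x}$. The base case $m=1$ is precisely the $\VC{}d$ hypothesis. For the inductive step, the strategy is to decompose object variables into the last coordinate and the rest, applying the $\VC{}d$ hypothesis to peel off one variable at a time, then applying the inductive hypothesis to the resulting family of formulas in the remaining variables.

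More precisely, fix $m\geq 2$ and split $x=(x',x_m)$ with $\abs{x'}=m-1$. For each $\varphi\in\Delta$, regard $\varphi(x',x_m;y)$ as a partitioned formula $\widetilde\varphi(x_m;x',y)$ with single object variable $x_m$ and parameters in the tuple $(x',y)$; let $\widetilde\Delta$ be the resulting set. Applying the $\VC{}d$ property to $\widetilde\Delta$ yields a family $\mathcal G=(\mathcal G_i)_{i\in I}$, with each $\mathcal G_i=\bigl(\widetilde\varphi_i(x',y;x'_1,y_1,\dots,x'_d,y_d)\bigr)_{\varphi\in\Delta}$, that uniformly defines $\widetilde\Delta$-types over finite parameter sets using $d$ parameter tuples. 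Now given $q\in S^\Delta(B)$ realized by $a=(a',a_m)$, the type $q_m:=\tp^{\widetilde\Delta}(a_m/\{a'\}\times B)$ lives over parameters in $\{a'\}\times B\subseteq M^{\abs{x'}+\abs{y}}$, so the $d$ parameters we pick from this set to define $q_m$ necessarily have the form $(a',b_1),\dots,(a',b_d)$ with $b_1,\dots,b_d\in B$. The crucial observation is that the $x'$-coordinates of these parameters are all equal to the (as-yet-unknown) realizer $a'$.

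Exploiting this, I substitute the object variable $x'$ for each $x'_j$ in $\widetilde\varphi_i$ to obtain new formulas
\[
\varphi'_i(x';y,y_1,\dots,y_d):=\widetilde\varphi_i(x',y;x',y_1,\dots,x',y_d),
\]
and collect them in a set $\Delta'(x';Y)$ where $Y=(y,y_1,\dots,y_d)$ has length $(d+1)\abs{y}$. By the unwinding above, for all $\varphi\in\Delta$ and $b\in B$,
\[
\varphi(x;b)\in q \iff \mathbf M\models \varphi'_i(a';b,b_1,\dots,b_d),
\]
so $\varphi'_i(x';b,b_1,\dots,b_d)\in q':=\tp^{\Delta'}(a'/B')$, where $B':=\{(b,b_1,\dots,b_d):b\in B\}\subseteq M^{(d+1)\abs{y}}$. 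By the inductive hypothesis applied to $\Delta'$, there is a uniform definition $\mathcal H=(\mathcal H_j)_{j\in J}$ of $\Delta'$-types over finite sets with $d(m-1)$ parameters, so we may pick $j\in J$ and $c_1,\dots,c_{d(m-1)}\in B'$ defining $q'$. Each $c_k$ has the shape $(b'_k,b_1,\dots,b_d)$ for some $b'_k\in B$, reusing the \emph{same} tail $(b_1,\dots,b_d)$.

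The main subtlety, and the point at which one has to be careful, is exactly this bookkeeping: if one were to treat the parameters in steps 1 and 2 as independent, one would use $d+(d+1)\cdot d(m-1)$ parameter tuples from $B$, which is far too many. The linear bound $dm$ comes from crucially recognizing the sharing of the $(b_1,\dots,b_d)$ tail across all $c_k$ and across the outer $\varphi'_i$-instantiation. After this observation, one defines, for each $(\varphi,i,j)\in\Delta\times I\times J$,
\[
\varphi^\star_{i,j}(y;z_1,\dots,z_d,y_1,\dots,y_{d(m-1)}) := (\varphi'_i)^\#_j\bigl(y,z_1,\dots,z_d;\,(y_1,z_1,\dots,z_d),\dots,(y_{d(m-1)},z_1,\dots,z_d)\bigr),
\]
where $(\varphi'_i)^\#_j$ is the defining formula for $\varphi'_i$ from $\mathcal H_j$. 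Then $\mathcal F:=(\mathcal F_{i,j})_{(i,j)\in I\times J}$ with $\mathcal F_{i,j}=(\varphi^\star_{i,j})_{\varphi\in\Delta}$ uses exactly $d+d(m-1)=dm$ parameter tuples from $M^{\abs{y}}$, and the chain of equivalences above shows that $\mathcal F_{i,j}(y;b_1,\dots,b_d,b'_1,\dots,b'_{d(m-1)})$ defines $q$. (The minor edge case of $\abs{B}$ too small to supply $d$ distinct tuples is harmless, as UDTFS does not require parameter distinctness, and empty $B$ admits a trivial scheme.) This completes the inductive step.
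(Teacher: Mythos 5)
Your proof is correct and follows essentially the same route as the paper's: induction on $\abs{x}$, peeling off one coordinate using the $\VC{}d$ hypothesis, then applying the inductive hypothesis to the induced set of formulas in the remaining object variables, with the crucial parameter-reuse observation (that the $d$ tuples from the first step serve as a shared tail across all parameters picked in the second step) being exactly the device the paper uses via the set $B\bar b=\{(b,b_1,\dots,b_d):b\in B\}$. The only cosmetic differences are that the paper splits off the first rather than the last coordinate, and that the paper applies the inductive hypothesis to each set $\Delta_i$ separately rather than to their union $\Delta'$; both variants are equally valid.
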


Before we embark on the proof, we introduce some notation:
for a sequence $a\in M^m$ and a set $B\subseteq M^n$ we write $aB:=\{(a,b):b\in B\}\subseteq M^{m+n}$ and
$Ba:=\{(b,a):b\in B\}\subseteq M^{n+m}$.

\begin{proof}
We proceed by induction on $m=\abs{x}$. The base case $m=1$ holds by hypothesis. For the inductive step write $x=(x_0,x')$ where $x'=(x_1,\dots,x_m)$, and let $\Delta(x;y)$ be given. 
Let
$$
   \Delta_0(x_0;x',y) = \{\ph(x_0;x',y) : \ph(x;y)\in \Delta \}.
$$
By the $\VC{}d$ property applied to $\Delta_0$, we take finitely many families
$$\mathcal F_{i} = \big(\ph_i(x',y;y_1,\ldots,y_d)\big)_{\varphi\in\Delta}\qquad (i\in I)$$ of 
$\mathcal L$-formulas with the following property:
for any $a'\in M^m$, any finite set $B\subseteq M^{\abs{y}}$ and
any $q \in S^{\Delta_0}(a'B)$, there are $b_1,\ldots,b_d \in B$ and $i\in I$ 
such that $\mathcal F_{i}(a',y;b_1,\dots,b_d)$ defines $q$, i.e., for all $\ph\in\Delta$, $b\in B$:
\begin{equation}\label{eq:VCm, 1}
\varphi(x_0;a',b)\in q	\qquad\Longleftrightarrow\qquad \mathbf M\models \varphi_{i}(a',b;b_1,\dots,b_d).
\end{equation}
In the rest of this proof let $\varphi$ range over $\Delta$ and $i$ over $I$.
For each $i$, let 
$$
	\Delta_i(x';y,y_1,\ldots,y_d) =\big\{\ph_i(x';y,y_1,\ldots,y_d) : \ph(x;y) \in \Delta\big\}
$$
and apply the inductive hypothesis to each $\Delta_i$. Thus for each $i$ there are finite families 
$$\mathcal F_{ij} := \big(\ph_{ij}(y,y_1,\dots,y_d;v_1,\ldots,v_n)\big)_{\varphi\in\Delta}\qquad (j\in J_i)$$ of 
$\mathcal L$-formulas, where $n=md$, such that for all  finite subsets $B\subseteq M^{\abs{y}}$, all $\bb=(b_1,\dots,b_d)\in (M^{\abs{y}})^{d}$, and every  $p\in S^{\Delta_i}(B\bb)$, there exists some $j\in J_i$ and $c_1,\dots,c_n\in B$ such that for each $\varphi$ and each $b\in B$ we have
\begin{equation}\label{eq:VCm, 2}
\varphi_i(x';b,\bb)\in p \qquad\Longleftrightarrow\qquad \mathbf M\models \varphi_{ij}(b,\bb;c_1,\dots,c_n).
\end{equation}
Partition the variable tuple of the $\mathcal L$-formulas $\varphi_{ij}$ as $(y;y_1,\dots,y_d,v_1,\dots,v_n)$, and set
$\mathcal F := (\mathcal F_{ij})_{i\in I, j\in J_i}$.
We claim that $\mathcal F$ is a uniform definition of $\Delta(x;B)$-types over finite sets in $\mathbf M$; since $\mathcal F$ has $d+n=d(m+1)$ parameters, this will then finish the inductive step.
To see this, let a finite  $B\subseteq M^{\abs{y}}$ and some $a=(a_0,a')\in M^{1+m}$ be given. We let $b$ range over $B$.
We need to show that there are some $i\in I$, $j\in J_i$ and $\bb\in B^d$, $c_1,\dots,c_n\in B$ such that for all $\varphi$ and $b$ we have
\begin{equation}\label{eq:VCm, 3}
\mathbf M\models \varphi(a;b)\qquad\Longleftrightarrow\qquad \mathbf M\models \varphi_{ij}(b;\bb,c_1,\dots,c_n).
\end{equation}
Let  $q=\tp^{\Delta_0}(a_0/a'B)$ be the type in $S^{\Delta_0}(a'B)$ realized by $a_0$. Take  $i$ 
and $\bb=(b_1,\ldots,b_d)\in B^d$ such that \eqref{eq:VCm, 1} holds for all $\varphi$ and all $b$.
Set $p=\tp^{\Delta_i}(a'/B\bb)$. Then we may take $j\in J_i$ and $c_1,\dots,c_n\in B$ such that for each $\varphi$ and each $b$, the equivalence \eqref{eq:VCm, 2} holds. This yields \eqref{eq:VCm, 3}, for all $\varphi$ and $b$.
\end{proof}

In the next corollary we assume that $M$ is infinite (so we can meaningfully talk about VC~density). We already remarked that if $\Delta$ admits a uniform definition of $\Delta(x;B)$-types with $d$ parameters, then $\vc^*(\Delta)\leq d$; in particular, if $\mathbf M$ has the $\VC{}d$~property then this conclusion holds for all $\Delta(x;y)$ with $\abs{x}=1$. The previous theorem generalizes this observation to the upper bound $\vc^*(\Delta)\leq d\abs{x}$ for all $\Delta$. Hence:

\begin{corollary}\label{cor:VCm}
If $T=\Th(\mathbf M)$ has the $\VC{}d$~property then $m\leq \vc^T(m)\leq d\cdot m$ for every $m$. In particular, if $T$ has  the $\VC{}1$~property then $\vc^T(m) = m$ for every $m$.
\end{corollary}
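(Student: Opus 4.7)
The plan is to obtain both inequalities essentially for free from material already established in the paper, with the upper bound coming directly from Theorem~\ref{VCdensity, 2} and the lower bound already noted in the introduction.

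For the upper bound $\vc^T(m)\leq d\cdot m$, I would use the alternative characterization $\vc^T(m)=\sup\{\vc^*(\varphi):\varphi(x;y)\text{ is an }\mathcal L\text{-formula with }\abs{x}=m\}$ recorded in Section~\ref{sec:VC density of a theory}. Fix any partitioned $\mathcal L$-formula $\varphi(x;y)$ with $\abs{x}=m$ and apply Theorem~\ref{VCdensity, 2} to the singleton $\Delta=\{\varphi\}$. This produces a uniform definition $\mathcal F=(\mathcal F_i)_{i\in I}$ of $\varphi(x;B)$-types over finite sets with $dm$ parameters. Then, by the counting observation made immediately before Theorem~\ref{VCdensity, 2} (each $q\in S^\varphi(B)$ is defined by some $\mathcal F_i(y;b_1,\dots,b_{dm})$ with $b_j\in B$, so the type is determined by the choice of $i\in I$ together with the $dm$-tuple), we obtain $\pi^*_\varphi(t)\leq\abs{I}\,t^{dm}$ for every $t$, and hence $\vc^*(\varphi)\leq dm$. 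Taking the supremum over $\varphi$ gives $\vc^T(m)\leq dm$.

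For the lower bound $\vc^T(m)\geq m$, I would simply invoke the observation already made in the introduction: the partitioned formula $\varphi(x;y_1,\dots,y_m):=(x=y_1\vee\cdots\vee x=y_m)$ with $\abs{x}=1$ satisfies $\vc(\varphi)=m$ in every complete theory with no finite models, since $\mathcal S_\varphi$ consists exactly of the non-empty subsets of $M$ of cardinality at most $m$. Hence $\vc^T(m)\geq\vc(\varphi)=m$.

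The ``in particular'' clause is immediate by specializing to $d=1$: both inequalities then coincide to give $\vc^T(m)=m$. There is no real obstacle in this corollary; the work has already been done in Theorem~\ref{VCdensity, 2}, and the present statement merely packages that result in terms of the VC~density function of~$T$.
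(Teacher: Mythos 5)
Your proof is correct and matches the paper's intended argument: the upper bound is Theorem~\ref{VCdensity, 2} combined with the elementary counting observation (stated just before that theorem) that a uniform definition with $k$ parameters yields $\pi^*_\Delta(t)\leq\abs{I}\,t^k$, and the lower bound is the explicit disjunction-of-equalities example already observed in Section~\ref{sec:VC density of a theory}. The only cosmetic difference is that you restrict Theorem~\ref{VCdensity, 2} to singletons $\Delta=\{\varphi\}$, which is harmless in light of the alternative characterization of $\vc^T(m)$ via dual VC density.
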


In a multi-sorted setting, the $\VC{}d$ property can be naturally localized.
Suppose $\mathbf M$ is a multi-sorted structure, and let $S$ be one of the sorts of $\mathbf M$. We say that 
$\mathbf M$ has the $\VC{}d$ property in the sort $S$ if any $\Delta(x;y)$ with $x$ a single variable of sort $S$ has a uniform definition of $\Delta(x;B)$-types over finite sets with $d$ parameters.
With this definition, the following analogue of Theorem~\ref{VCdensity, 2} holds (with the same proof): 

\begin{corollary}\label{cor:VCdensity, localized}
If $\mathbf M$ has the $\VC{}d$ property in the sort $S$, then every $\Delta(x;y)$ with each variable $x_i$ of sort $S$ has a uniform definition of $\Delta(x;B)$-types over finite sets with $d\abs{x}$ parameters.
\end{corollary}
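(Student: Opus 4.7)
The plan is to mimic the proof of Theorem~\ref{VCdensity, 2} essentially verbatim, proceeding by induction on $m = \abs{x}$, while tracking that at every inductive step the relevant object variables remain of sort~$S$.

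For the base case $m=1$, the single variable $x$ is of sort $S$ by assumption, so the conclusion is exactly the hypothesis that $\mathbf M$ has the $\VC{}d$ property in sort $S$, applied to $\Delta(x;y)$.

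For the inductive step, I would write $x = (x_0, x')$ where $x_0$ is a single variable of sort $S$ and $x' = (x_1, \dots, x_m)$ has each $x_i$ of sort $S$. As in the original proof, form
$$\Delta_0(x_0; x', y) = \{\varphi(x_0; x', y) : \varphi(x;y) \in \Delta\}$$
and apply the $\VC{}d$ property in sort $S$ to $\Delta_0$. This application is legitimate because the only object variable is $x_0$, which is of sort $S$; the parameter tuple $(x', y)$ may freely involve variables from any sort, since the localized $\VC{}d$ property imposes no restriction on parameter variables. This yields finitely many families $\mathcal F_i = (\varphi_i(x', y; y_1, \dots, y_d))_{\varphi \in \Delta}$ with the same defining property as in the original proof. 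Next, for each $i$, form
$$\Delta_i(x'; y, y_1, \dots, y_d) = \{\varphi_i(x'; y, y_1, \dots, y_d) : \varphi(x;y) \in \Delta\};$$
all object variables of $\Delta_i$ are among $x_1, \dots, x_m$, hence of sort $S$, and $\abs{x'} = m$, so the inductive hypothesis furnishes a uniform definition of $\Delta_i(x'; C)$-types over finite sets with $dm$ parameters. Composing the two layers of defining schemes exactly as in the original proof produces a uniform definition of $\Delta(x;B)$-types with $d + dm = d(m+1) = d\abs{x}$ parameters.

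There is essentially no obstacle: the only content beyond Theorem~\ref{VCdensity, 2} is the bookkeeping check that the sort constraint propagates through the induction, namely that the variable $x_0$ we strip off at each step is of sort $S$ (so the localized $\VC{}d$ hypothesis is applicable to $\Delta_0$) and that the residual tuple $x'$ consists of variables of sort $S$ (so the inductive hypothesis is applicable to $\Delta_i$). Both are immediate from the standing assumption that every component of $x$ is of sort $S$, so the proof of Theorem~\ref{VCdensity, 2} transcribes without change.
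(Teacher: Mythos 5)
Your proposal is correct and takes exactly the approach the paper intends: the paper states that Corollary~\ref{cor:VCdensity, localized} holds ``with the same proof'' as Theorem~\ref{VCdensity, 2}, and your transcription of that induction with the sort-bookkeeping check is precisely the argument.
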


So for example, if $\mathbf M$ is an infinite single-sorted structure and some expansion of $\mathbf M^\eq$  has the $\VC{}d$ property in the home sort (where the uniform defining formulae are in the expanded language), then $\vc^T(m)\leq dm$ for each $m$.

\subsection{Relationship to other notions}\label{sec:relationship}
We now want to put the $\VC{}d$~property into perspective and compare it to two other strengthenings of the NIP concept, namely, {\it uniform definability of types over finite sets}\/ and {\it dp-minimality.}\/
The following notion was introduced and studied in \cite{Guingona}:

\begin{definition}
The structure $\mathbf M$ is said to have {\it uniform definability of types over finite sets}\/ ({\it UDTFS}\/) if every partitioned $\mathcal L$-formula has UDTFS in $\mathbf M$. Clearly UDTFS is an invariant of the elementary theory of $\mathbf M$, and so we say that an $\mathcal L$-theory $T$ has 
{\it uniform definability of types over finite sets}\/ ({\it UDTFS}\/) if every model of $T$ does.
\end{definition}

By Theorem~\ref{VCdensity, 2}, if $\mathbf M$ has the $\VC{}d$~property, for some $d$, then $\mathbf M$ has UDTFS.  More generally, \cite[Lemma~2.6]{Guingona} shows that if every partitioned $\mathcal L$-formula $\varphi(x;y)$ with $\abs{x}=1$ has UDTFS in $\mathbf M$, then $\mathbf M$ has UDTFS.
The proof of this lemma as given in \cite{Guingona} in fact shows that if every  partitioned $\mathcal L$-formula with a single object variable admits a uniform definition of $\varphi(x;B)$-types with $d$ parameters,
then every  partitioned $\mathcal L$-formula in the object variables $x$ admits a uniform definition of $\varphi(x;B)$-types with at most $(d+1)^{\abs{x}}-1$ parameters. (In contrast, our bound in Theorem~\ref{VCdensity, 2} is linear in $\abs{x}$.)

Every stable formula has UDTFS; in particular, every stable theory has UDTFS. In fact, Laskowski \cite{chrisdef} has shown that if $T$ is stable then every  partitioned $\mathcal L$-formula $\varphi(x;y)$  has UDTFS in $T$ with $R^m(x=x,\varphi,2)$ parameters.
(See \cite[Definition~II.1.1]{Shelah-book} for the definition of the rank $R^m(-,-,2)$. Stability of $T$ is equivalent to  $R^m(x=x,\varphi,2)<\omega$ for all $\varphi(x;y)$, cf.~\cite[Theorem~II.2.2]{Shelah-book}.)

\medskip

An \emph{ICT pattern}\/ in $\mathbf M$ consists of a pair $\alpha(x;y)$, $\beta(x;y)$ of partitioned $\mathcal L$-formulas, where $\abs{x}=1$,  and sequences $(a_i)_{i\in\bN}$, $(b_j)_{j\in\bN}$ in $M^{\abs{y}}$ such that for all $i$ and $j$ the set of $\mathcal L(M)$-formulas
$$\big\{\alpha(x;a_i),\beta(x;b_j)\big\}\cup\big\{ \neg\alpha(x;a_k) : k\neq i\big\} \cup \big\{ \neg\beta(x;b_l):l\neq j\big\}$$
is consistent (with $\mathbf M$). This notion and the following definition originate in \cite{S-strongly dependent}:

\begin{definition}
An $\mathcal L$-theory $T$ is said to be \emph{dp-minimal} if in no model of $T$ there is an ICT pattern, and  $\mathbf M$ is dp-mininmal if $\Th(\mathbf M)$ is dp-minimal \textup{(}equivalently, if there is no ICT pattern in an elementary extension of $\mathbf M$\textup{)}.
\end{definition}

The following proposition (which shows that in the definition of dp-minimality we could have restricted ourselves to ICT patterns given by identical formulas $\alpha$, $\beta$) and its Corollary~\ref{cor:dp-min} are due to Dolich, Goodrick and Lippel \cite{dl}; for convenience of the reader we indicate their proofs:

\begin{proposition}\label{prop:dp-min}
Suppose $\mathbf M$ is a monster model of the complete $\mathcal L$-theory $T$.
Then $T$ is dp-minimal iff there are no $\mathcal L$-formula $\varphi(x;y)$ with $\abs{x}=1$ and sequences $(c_i)_{i\in\bN}$, $(d_j)_{j\in\bN}$ in $M^{\abs{y}}$ such that for all $i$,~$j$,
$$\big\{\varphi(x;c_i),\varphi(x;d_j)\big\}\cup\big\{ \neg\varphi(x;c_k) : k\neq i\big\} \cup \big\{ \neg\varphi(x;d_l):l\neq j\big\}$$
is consistent.
\end{proposition}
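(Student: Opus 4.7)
The plan is to prove the two directions separately. The forward (easy) direction: if a single partitioned formula $\varphi(x;y)$ with $\abs{x}=1$ together with sequences $(c_i)_{i\in\bN}$ and $(d_j)_{j\in\bN}$ satisfying the stated consistency condition exists, then simply taking $\alpha = \beta = \varphi$ and $a_i := c_i$, $b_j := d_j$ produces an ICT pattern in $\mathbf M$, so $T$ is not dp-minimal. No further work is required for this implication.

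For the reverse direction I would argue the contrapositive: assuming an ICT pattern with partitioned formulas $\alpha(x;y)$, $\beta(x;y)$ (with $\abs{x}=1$ and a common parameter tuple $y$) and sequences $(a_i)_{i\in\bN}$, $(b_j)_{j\in\bN}$ in $M^{\abs{y}}$, the goal is to construct a single formula $\varphi$ realizing the pattern described in the proposition. The key trick is to combine $\alpha$ and $\beta$ into one formula by adding two auxiliary ``switch'' parameter variables $u,v$ and setting
$$\varphi(x;y,z,u,v)\ :=\ \bigl(u=v\wedge\alpha(x;y)\bigr)\ \vee\ \bigl(u\neq v\wedge\beta(x;z)\bigr),$$
which is again a partitioned formula with $\abs{x}=1$. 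Since $M$ is infinite, I pick distinct $e_0,e_1\in M$ and auxiliary tuples $a^*,b^*\in M^{\abs{y}}$, and then put
$$c_i\ :=\ (a_i,b^*,e_0,e_0),\qquad d_j\ :=\ (a^*,b_j,e_0,e_1).$$
By the defining cases of $\varphi$, the instance $\varphi(x;c_i)$ is equivalent in $\mathbf M$ to $\alpha(x;a_i)$ and $\varphi(x;d_j)$ to $\beta(x;b_j)$, so the required consistency of
$$\bigl\{\varphi(x;c_i),\varphi(x;d_j)\bigr\}\cup\bigl\{\neg\varphi(x;c_k):k\neq i\bigr\}\cup\bigl\{\neg\varphi(x;d_l):l\neq j\bigr\}$$
is literally the consistency condition defining the original ICT pattern on $\alpha,\beta$. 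Note also that the tuples $c_i$ are mutually distinct (since the $a_i$ are), similarly for $d_j$, and no $c_i$ equals any $d_j$ because they disagree in the last coordinate.

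There is no real obstacle here — the argument is a direct and explicit syntactic encoding, and (unlike many arguments involving ICT patterns) it requires neither extraction of mutually indiscernible sequences nor any appeal to compactness, which is somewhat pleasant. The only thing to double-check is the harmless cosmetic assumption that $\alpha$ and $\beta$ may be taken to share the same parameter-variable tuple $y$, which one can always arrange by padding either formula with dummy free variables it does not actually use.
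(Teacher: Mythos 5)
Your proof is correct. The forward direction is as trivial as you say, and the reverse direction is valid: with $\varphi(x;y,z,u,v)=(u=v\wedge\alpha(x;y))\vee(u\neq v\wedge\beta(x;z))$ and your choices of $c_i$ and $d_j$, the instance $\varphi(x;c_i)$ is semantically equal (in $\mathbf M$) to $\alpha(x;a_i)$ and $\varphi(x;d_j)$ to $\beta(x;b_j)$, so the consistency of the new sets is literally inherited from the ICT pattern with no further work. However, the paper takes a genuinely different route to the same combining step: it sets $\varphi(x;y,z):=\alpha(x;y)\vee\beta(x;z)$ — a plain disjunction, no switch variables, no dummy tuples, no case distinction — and then interleaves the original sequences by parity, taking $c_i:=(a_{2i},b_{2i})$ and $d_j:=(a_{2j+1},b_{2j+1})$. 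There $\varphi(x;c_k)$ is $\alpha(x;a_{2k})\vee\beta(x;b_{2k})$, so one must check that a realization of the ICT type for the index pair $(2i,2j+1)$ kills \emph{both} disjuncts whenever $k\neq i$ (resp.\ $l\neq j$); this works precisely because $2k$ is never equal to the odd index $2j+1$, and $2l+1$ never to the even index $2i$. So the paper's construction is leaner in parameter length but needs the even/odd observation to make the negations go through, whereas yours pays a small price in auxiliary coordinates to make each new instance reduce outright to a single old one, rendering the verification immediate. Both arguments are elementary and compactness-free; your note that the $c_i$ are pairwise distinct and distinct from the $d_j$ is true but unnecessary, since the proposition only requires consistency of the displayed sets, which follows directly from the pointwise semantic equivalences.
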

\begin{proof}
Suppose $\alpha(x;y)$, $\beta(x;y)$ (where $\abs{x}=1$)  and the sequences $(a_i)_{i\in\bN}$, $(b_j)_{j\in\bN}$ are an ICT pattern in $\mathbf M$. Let $\varphi(x;y,z):=\alpha(x;y)\vee\beta(x;z)$, and for every $i$, $j$ let $c_i:=(a_{2i},b_{2i})$ and $d_j:=(a_{2j+1},b_{2j+1})$. Let $a\in M$ realize the type
$$\big\{\alpha(x;a_{2i}),\beta(x;b_{2j+1})\big\}\cup\big\{ \neg\alpha(x;a_k) : k\neq 2i\big\} \cup \big\{ \neg\beta(x;b_l):l\neq 2j+1\big\}.$$
Then $a$ satisfies $\varphi(x;c_i)$ and 
$\varphi(x;d_j)$. If $k\neq i$ then $a$ satisfies $\neg\alpha(x;a_{2k})$ (since $2k\neq 2i$) and 
$\neg\beta(x;b_{2k})$ (since $2k\neq 2j+1$) and hence also $\neg\varphi(x;c_k)$. 
Similarly we see that $a$ satisfies $\neg\varphi(x;d_l)$ for $l\neq j$.
\end{proof}

Let us tentatively say that  a complete $\mathcal L$-theory $T$ is \emph{vc-minimal} if $\vc^*(\varphi)<2$ for every $\mathcal L$-formula $\varphi(x;y)$ with $\abs{x}=1$. So if $\vc^T(1)<2$ (in particular, if $T$ is VC-minimal), then $T$ is vc-minimal.
An example of a theory $T$ which is not VC-minimal yet satisfies $\vc^T(1)=1$ (and thus is vc-minimal) was given in \cite[Proposition~3.7]{dl}.

\begin{corollary}\label{cor:dp-min}
Every vc-minimal theory is dp-mini\-mal.
\end{corollary}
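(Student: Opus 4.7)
The plan is to use the contrapositive, invoking the characterization of dp-minimality in Proposition~\ref{prop:dp-min}. Suppose $T$ is not dp-minimal; I will exhibit a partitioned $\mathcal L$-formula $\varphi(x;y)$ with $\abs{x}=1$ whose dual shatter function satisfies $\pi^*_\varphi(2n) \geq n^2$ for every $n$, which forces $\vc^*(\varphi) \geq 2$ and contradicts vc-minimality.

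First I will apply Proposition~\ref{prop:dp-min} in a monster model $\mathbf M \models T$ to obtain a formula $\varphi(x;y)$ with $\abs{x}=1$ and sequences $(c_i)_{i\in\bN}$ and $(d_j)_{j\in\bN}$ in $M^{\abs{y}}$ such that for every pair $(i,j)$ the type
\[
\big\{\varphi(x;c_i),\varphi(x;d_j)\big\} \cup \big\{ \neg\varphi(x;c_k) : k\neq i\big\} \cup \big\{ \neg\varphi(x;d_l):l\neq j\big\}
\]
is consistent. Fix $n \geq 1$ and put $B_n := \{c_0,\ldots,c_{n-1},d_0,\ldots,d_{n-1}\} \subseteq M^{\abs{y}}$, so $\abs{B_n} \leq 2n$.

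Next, for each $(i,j) \in [n]\times[n]$, realize the indicated type by an element $a_{ij} \in M$ (restricting the formulas indexed by $k,l$ to range over $[n]$ only). Then $\tp^\varphi(a_{ij}/B_n)$ contains $\varphi(x;c_i)$, $\varphi(x;d_j)$, and the negations $\neg\varphi(x;c_k)$, $\neg\varphi(x;d_l)$ for all other $k,l \in [n]$. These $n^2$ complete $\varphi$-types over $B_n$ are pairwise distinct, so $\abs{S^\varphi(B_n)} \geq n^2$, giving $\pi^*_\varphi(2n) \geq n^2$.

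Consequently $\vc^*(\varphi) \geq 2$, contradicting the vc-minimality assumption that $\vc^*(\psi)<2$ for every $\mathcal L$-formula $\psi(x;y)$ with $\abs{x}=1$. There is really no technical obstacle here; the one thing to be careful about is that the witnessing elements $a_{ij}$ come from a sufficiently saturated (monster) model, which is exactly the setting of Proposition~\ref{prop:dp-min}, and that the counting of distinct types uses only the formulas parametrized by elements actually in $B_n$.
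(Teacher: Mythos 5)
Your proof is correct and is essentially the same argument the paper gives: both invoke Proposition~\ref{prop:dp-min} to extract the formula $\varphi$ and sequences $(c_i)$, $(d_j)$ in a monster model, form $B_n$ from the first $n$ terms of each, observe that the $n^2$ types are pairwise distinct, and conclude $\vc^*(\varphi)\geq 2$. The only (cosmetic) difference is that you spell out the saturation/restriction point explicitly, while the paper leaves it implicit.
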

\begin{proof}
Suppose $\mathbf M$ is a monster model of $T=\Th(\mathbf M)$, and $T$ is not dp-minimal. Take $\varphi$ and $(c_i)$, $(d_j)$ with the properties in the previous proposition. For every $n$ let $B_n:=\{ c_i : i<n \} \cup \{ d_j : j<n\}$.
Then $\abs{S^\varphi(B_n)}\geq n^2\geq \frac{1}{4}\abs{B_n}^2$, hence $\vc^*(\varphi)\geq 2$.
\end{proof}

In particular, each of the theories in Examples~\ref{ex:strongly minimal}--\ref{ex:C-minimal} (being VC-minimal) is dp-minimal.
Other proofs of the dp-minimality of weakly o-minimal theories can be found in \cite{Adler-VCmin, dl}.
See also \cite{KOU} for a generalization of Corollary~\ref{cor:dp-min} to a bound on ``dp-rank'' in terms of VC~density.

\medskip

A characterization of dp-minimal theories among stable theories was given in \cite{OU}.
The main result of \cite{Guingona} is that every dp-minimal theory $T$ has UDTFS. In particular, by Corollary~\ref{cor:dp-min}, every vc-minimal $T$ has UDTFS.  
(Actually, \cite[Theorem~3.14]{Guingona} gives a more precise result: if $\varphi(x;y)$ is an $\mathcal L$-formula such that $\pi_\varphi(t)\leq {t+1\choose 2}$ for some $t>0$, then $\varphi$ has UDTFS.)

\medskip

We summarize the implications between the properties of a theory $T$ discussed above in the following diagram:
$$\xymatrixcolsep{1.5pc}\xymatrixrowsep{1.7pc}\xymatrix{
\text{$\VC{}1$} \ar@2[d]_{!}	\ar@2[r] & \text{$\VC{d}$ for some $d>0$} \ar@2@/^1.1em/[drr] \\
	 \vc^T(1)=1 \ar@2[r] & \text{vc-minimal} \ar@2[r] & \text{dp-minimal}\ar@2[r]^{!} & \text{UDTFS}\ar@2[r] & \text{NIP} \\
\text{VC-minimal} \ar@2[u]^{!}
}
$$
Here the arrows marked with an exclamation mark are known not to be reversible. (For an example showing that $\vc(1)=1 \not\Rightarrow \VC{}1$ see \cite[Example~3.15]{ADHMS}.) We do not know which of the other arrows are reversible;
whether the converse of the implication $\text{UDTFS}\Rightarrow \text{NIP}$ holds was first asked by Laskowski \cite[Open~Question~4.1]{Guingona}.

\medskip

Recall from Corollary~\ref{cor:Shelah expansion} that the Shelah expansion $\mathbf M^{\Sh}$ of $\mathbf M$ has the same VC~density function as $\mathbf M$.
In \cite{OU} it is observed that the Shelah expansion of a dp-minimal structure is again dp-minimal. We finish this section by showing that the analogous statement also holds for the $\VC{}d$ property; in fact, we have more generally:

\begin{proposition}
Suppose every finite set of partitioned $\mathcal L$-formulas in $m$ object variables has UDTFS in $T$ with $d$ parameters. Then 
every finite set of partitioned $\mathcal L^{\Sh}$-formulas in $m$ object variables has UDTFS in $T^{\Sh}$ with $d$ parameters.
\end{proposition}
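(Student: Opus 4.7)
The plan is to first reduce to atomic $\mathcal{L}^{\Sh}$-formulas and then transport the UDTFS for $T$ over to $T^{\Sh}$ by absorbing the ``hidden'' parameters of the underlying $\mathcal{L}$-formulas into new Shelah-expansion predicates, thereby preserving the parameter count $d$.

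First I would invoke Shelah's theorem cited just before Corollary~\ref{cor:Shelah expansion}: since $T^{\Sh}$ has quantifier elimination, every $\mathcal{L}^{\Sh}$-formula is equivalent in $T^{\Sh}$ to a Boolean combination of atomic $\mathcal{L}^{\Sh}$-formulas. By Lemma~\ref{lem:qe and UDTFS}, which preserves the parameter count of a UDTFS under Boolean combinations, it therefore suffices to prove the proposition under the additional assumption that $\Delta^{\Sh}(x;y)$ consists of atomic $\mathcal{L}^{\Sh}$-formulas with $\abs{x}=m$.

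Next I would extract the underlying $\mathcal{L}$-data. Each $\delta\in\Delta^{\Sh}$ has the form $R_{\psi_\delta,c_\delta}(x,y)$ for some partitioned $\mathcal{L}$-formula $\psi_\delta(x,y;z_\delta)$ and some $c_\delta\in (M^*)^{\abs{z_\delta}}$. Pad all the $z_\delta$ into a common tuple $z=(z_\delta)_{\delta\in\Delta^{\Sh}}$, let $c=(c_\delta)_\delta\in (M^*)^{\abs{z}}$, and form the finite set $\Psi(x;y,z)=\{\psi_\delta(x;y,z):\delta\in\Delta^{\Sh}\}$ of partitioned $\mathcal{L}$-formulas in $m$ object variables. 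By hypothesis, $\Psi$ has UDTFS in $T$ with $d$ parameters via some families $\mathcal{G}_i=(\psi_{\delta,i}(y,z;y_1,z_1,\ldots,y_d,z_d))_{\delta\in\Delta^{\Sh}}$, $i\in I$, which, being a property of $T$, are equally a UDTFS for $\Psi$ in $\mathbf{M}^*\succcurlyeq\mathbf{M}$.

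Then I would package these as $\mathcal{L}^{\Sh}$-formulas. Put $\tilde c:=(c,c,\ldots,c)\in (M^*)^{(d+1)\abs{z}}$ (the tuple $c$ repeated $d+1$ times), and for each $i\in I$, $\delta\in\Delta^{\Sh}$ view $\psi_{\delta,i}$ as a partitioned $\mathcal{L}$-formula $\chi_{\delta,i}(y,y_1,\ldots,y_d;z,z_1,\ldots,z_d)$. Define $\delta^{\Sh}_i(y;y_1,\ldots,y_d):=R_{\chi_{\delta,i},\tilde c}(y,y_1,\ldots,y_d)$ and $\mathcal{F}^{\Sh}_i:=(\delta^{\Sh}_i)_{\delta\in\Delta^{\Sh}}$. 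By the definition of the Shelah expansion, $\mathbf{M}^{\Sh}\models\delta^{\Sh}_i(b;b_1,\ldots,b_d)$ iff $\mathbf{M}^*\models\psi_{\delta,i}(b,c;b_1,c,\ldots,b_d,c)$.

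Finally, to verify that $(\mathcal{F}^{\Sh}_i)_{i\in I}$ is a UDTFS for $\Delta^{\Sh}$ in $T^{\Sh}$ with $d$ parameters: given a finite $B\subseteq M^{\abs{y}}$ and $q^{\Sh}\in S^{\Delta^{\Sh}}(B)$ realized by some $a\in M^m$, set $C:=B\times\{c\}$ and $q:=\tp^\Psi(a/C)$ in $\mathbf{M}^*$. Apply the UDTFS for $\Psi$ to get some $i\in I$ and $d$ parameters from $C$ defining $q$; since every element of $C$ has second coordinate $c$, these must be of the form $(b_1,c),\ldots,(b_d,c)$ with $b_j\in B$. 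Chasing the equivalences $\delta(x;b)\in q^{\Sh}\Leftrightarrow \mathbf{M}^*\models\psi_\delta(a;b,c)\Leftrightarrow\psi_\delta(x;b,c)\in q\Leftrightarrow\mathbf{M}^*\models\psi_{\delta,i}(b,c;b_1,c,\ldots,b_d,c)\Leftrightarrow\mathbf{M}^{\Sh}\models\delta^{\Sh}_i(b;b_1,\ldots,b_d)$ shows that $\mathcal{F}^{\Sh}_i(y;b_1,\ldots,b_d)$ defines $q^{\Sh}$. The only delicate point is the bookkeeping that forces every ``hidden'' parameter slot $z_j$ to be instantiated by the same tuple $c$, so that a single Shelah predicate $R_{\chi_{\delta,i},\tilde c}$ encodes the uniform $\mathcal{L}$-definition without inflating the parameter count beyond $d$.
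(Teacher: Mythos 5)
Your proposal is correct and follows essentially the same route as the paper's own proof: reduce to atomic $\mathcal L^{\Sh}$-formulas via quantifier elimination and Lemma~\ref{lem:qe and UDTFS}, lift to a finite set $\Psi$ of $\mathcal L$-formulas with an extra hidden-parameter slot $z$, apply the UDTFS hypothesis to $\Psi$ over $B^*=B\times\{c\}$, and repackage the defining $\mathcal L$-formulas as Shelah predicates. The only cosmetic difference is in packaging the hidden constant: the paper substitutes a single copy of $z$ into each of the $d+1$ slots of $(\psi_\varphi)_i$ and uses the constant $c$, whereas you keep the slots $z,z_1,\ldots,z_d$ distinct and instantiate them with $\tilde c=(c,\ldots,c)$; these yield logically equivalent $\mathcal L^{\Sh}$-formulas and identical parameter counts.
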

\begin{proof}
As in the definition of the Shelah expansion (cf.~Section~\ref{sec:coding}) let $\mathbf M^*$ be a very saturated elementary extension of $\mathbf M$.
Let $\Delta=\Delta(x;y)$ be a finite set of partitioned $\mathcal L^{\Sh}$-formulas where $m=\abs{x}$; below $\varphi$ ranges over $\Delta$. We need to show that $\Delta$ has UDTFS in $T^{\Sh}$ with $d$ parameters. For this, by Lemma~\ref{lem:qe and UDTFS} and since $T^{\Sh}$ admits quantifier elimination, we may assume that each of the $\mathcal L^{\Sh}$-formulas in $\Delta$ is atomic; that is, there exist $\mathcal L$-formulas $\psi_\varphi(x,y;z)$, one for each $\varphi$, and a tuple $c\in (M^*)^{\abs{z}}$ such that each $\varphi$ has the form $\varphi(x;y)=R_{\psi_\varphi,c}(x;y)$. Let now $\Psi(x;y,z):=\{\psi_\varphi(x;y,z):\varphi\in\Delta\}$ and take a uniform definition  $\mathcal G=(\mathcal G)_{i\in I}$ of $\Psi(x;B^*)$-types over finite sets in $T^{\Sh}$, where
$$\mathcal G_i=\big((\psi_{\varphi})_i(y,z;(y_1,z_1),\dots,(y_d,z_d))\big)_{\varphi\in\Delta}\qquad\text{for each $i\in I$.}$$
For each $i\in I$ set
\begin{multline*}
\mathcal F_i:=\big(R_{\varphi_i,c}(y;y_1,\dots,y_d)\big)_{\varphi\in\Delta}\\ \text{where $\varphi_i(y,y_1,\dots,y_d;z):=(\psi_\varphi)_i(y,z,(y_1,z),\dots,(y_d,z))$.}
\end{multline*}
We claim that $\mathcal F=(\mathcal F_i)_{i\in I}$ is a uniform definition of $\Delta(x;B)$-types over finite sets in $T^{\Sh}$. To see this let $p\in S^\Delta(B)$ where $B\subseteq M^{\abs{y}}$ is finite, and let $a\in M^{m}$ be a realization of $p$ in $\mathbf M^{\Sh}$. Put $B^*:=B\times\{c\}\subseteq M^{\abs{y}}\times (M^*)^{\abs{z}}$ and let $p^*:=\tp^\Psi(a/B^*)$ (in $\mathbf M^*$).
Take $b_1,\dots,b_d\in B$ and $i\in I$ such that $\mathcal G_i(y,z;(b_1,c),\dots,(b_d,c))$ defines~$p^*$; then for every $\varphi$ and $b\in B$ we have
\begin{align*}
\mathbf M^{\Sh}\models \varphi(a;b)	&\quad\Longleftrightarrow\quad \mathbf M^*\models\psi_\varphi(a;b,c) \\
									&\quad\Longleftrightarrow\quad \mathbf M^*\models (\psi_{\varphi})_i(b,c;(b_1,c),\dots,(b_d,c)) \\
									&\quad\Longleftrightarrow\quad \mathbf M^{\Sh}\models R_{\varphi_i,c}(b;b_1,\dots,b_d).
\end{align*}
That is, $\mathcal F_i(y;b_1,\dots,b_d)$ defines $p$ (in $\mathbf M^{\Sh}$).
\end{proof}

\section{Examples of $\VC{}d$: Weakly O-minimal Theories and Variants}\label{sec:examples of VCm theories}

\noindent
In this section we apply Theorem~\ref{VCdensity, 2} from the preceding section to give a proof of Theorem~\ref{thm:weakly o-min} on VC~density in weakly o-minimal theories from the introduction. We also observe that a similar technique allows us to treat all (weakly) quasi-o-minimal theories. 

Throughout this section $\calL$ is a language containing a binary relation symbol ``$<$'' and $T$ is a theory extending the theory of infinite linear orderings. 

\subsection{Weakly o-minimal theories}
We begin by introducing some terminology concerning ordered sets. Let $(X,{<})$ be a linearly ordered set, and let $S$ be a subset of $X$ which is
a union of finitely many non-empty convex subsets of $X$. We refer to the convex sets in the unique  minimal such presentation of $S$ as its {\em \textup{(}convex\textup{)} components}. 
Suppose $S$ has $N$ convex components, where $N>0$.
These components are ordered by~$<$, so for $i=1,\dots,N$ we can refer to the $i$th component of $S$; for $i>N$ we declare the $i$th component of $S$ to be equal to the $N$th.


Recall that $T$ is called {\em weakly o-minimal} if for any $\mathbf M \models T$,  any definable subset of $M$ is a finite union of convex subsets of~$M$. 

\begin{theorem}\label{weaklyomin}
Assume that $T$ is weakly o-minimal. Then $T$ has the $\VC{}1$ property, and hence any finite set $\Delta(x;y)$ of $\mathcal L$-formulas has dual VC density at most $\abs{x}$.
\end{theorem}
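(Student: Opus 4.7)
My plan is to reduce the theorem to the $\VC{}1$ property and then establish the latter by hand. The ``in fact'' clause will follow immediately from $\VC{}1$ via Theorem~\ref{VCdensity, 2} together with the observation recorded after Definition~\ref{def:UDTFS} that UDTFS with $d$ parameters forces dual VC~density $\leq d$. So the remaining task is to show that every finite set $\Delta(x;y)$ of $\mathcal L$-formulas with $\abs{x}=1$ admits a uniform definition of $\Delta(x;B)$-types over finite parameter sets $B$ using a single parameter.

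First I would invoke weak o-minimality together with a standard compactness argument to obtain $N\in\bN$ such that for every $\varphi\in\Delta$, every $\mathbf M\models T$ and every $b\in M^{\abs{y}}$, the set $\varphi(M;b)$ has at most $N$ convex components, and such that the predicate ``$x$ lies in the $i$-th convex component of $\varphi(M;y)$'' is $\emptyset$-definable by an $\mathcal L$-formula $\sigma^i_\varphi(x;y)$. Fix a non-empty finite $B\subseteq M^{\abs{y}}$ and a type $q=\tp^\Delta(a/B)$ realized by $a\in M$, and consider the finite set
\[ \mathcal E := \{\text{endpoints of the convex components of $\varphi(M;b)$, for $\varphi\in\Delta$ and $b\in B$}\} \]
in the definable completion of $M$ (i.e., $M$ together with its Dedekind gaps and $\pm\infty$). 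Every $e\in\mathcal E$ is named by data $(\varphi^*,i^*,\epsilon,b^*)\in\Delta\times[N]\times\{L,R\}\times B$. Since the endpoints of each $\varphi(M;b)$ lie in $\mathcal E$, the cut of $a$ in $\mathcal E$ determines $q$, and this cut is pinned down by such data --- with $e$ either equal to $a$ or immediately above/below it --- together with a directional flag $\delta$ indicating which of these three alternatives holds. The peripheral case where $a$ lies above (or below) every element of $\mathcal E$ will use any $b^*\in B$ and a $y^*$-free defining scheme.

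For each fixed auxiliary datum $\alpha=(\varphi^*,i^*,\epsilon,\delta)$ --- finitely many possibilities depending only on $\Delta$ --- I would then construct a family $\mathcal F_\alpha = (\varphi_\alpha(y;y^*))_{\varphi\in\Delta}$ of $\mathcal L$-formulas such that $\mathcal F_\alpha(y;b^*)$ defines $q$ whenever $(\alpha,b^*)$ captures the cut. The decisive point is that, because the endpoints of $\varphi(M;b)$ lie in $\mathcal E$, the set $\varphi(M;b)$ is either wholly contained in or disjoint from the open $\mathcal E$-gap containing $a$, so $\varphi(a;b)$ is determined by the behaviour of $\varphi(\cdot;b)$ in a one-sided neighborhood of the named endpoint $e$. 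In the prototypical case where $\epsilon=R$ and $\delta$ says ``just below'', I would take
\[ \varphi_\alpha(y;y^*) := \exists z\bigl[\chi(z;y^*)\wedge\forall w(z<w\wedge\chi(w;y^*)\to\varphi(w;y))\bigr] \]
with the auxiliary formula $\chi(w;y^*):=\exists v(\sigma^{i^*}_{\varphi^*}(v;y^*)\wedge v>w)$, which expresses that $w$ is strictly below the right endpoint of the $i^*$-th component, so that $\varphi_\alpha(b;b^*)$ says ``a left-neighborhood of this endpoint is contained in $\varphi(M;b)$''. The remaining cases (``just above'' by a mirror formula, ``equals'' by naming $e$ as the appropriate extremum of $\sigma^{i^*}_{\varphi^*}$, and the ``above all'' case by $\exists z\forall w(w>z\to\varphi(w;y))$) are handled in the same spirit.

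The hard part will be handling the situation in which the named endpoint $e$ does not itself lie in $M$: $e$ may be a Dedekind gap in $M$ or $\pm\infty$, which prevents any direct quantification over $e$. The role of the auxiliary formula $\chi$ is precisely to finesse this obstacle --- it encodes ``$w<e$'' uniformly across all three possibilities ($e\in M$, a Dedekind gap, or $\pm\infty$) using only $\sigma^{i^*}_{\varphi^*}$ and $y^*$, without ever mentioning $e$ --- so each $\varphi_\alpha$ is a genuine $\mathcal L$-formula in $y$ and the single parameter variable $y^*$, completing the verification of $\VC{}1$.
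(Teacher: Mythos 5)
Your plan—establish $\VC{}1$ and then invoke Theorem~\ref{VCdensity, 2}—matches the paper's, but your construction of the uniform definitions is quite different and contains a genuine bug. The paper's proof writes each convex component $\varphi^i(M;b)$ as a set difference of two \emph{initial segments}, $\varphi^i_\leq(M;b)\setminus\varphi^i_<(M;b)$, where $\varphi^i_\leq(x;y):=\exists x_0(\varphi^i(x_0;y)\wedge x\leq x_0)$ and $\varphi^i_<(x;y):=\forall x_0(\varphi^i(x_0;y)\to x<x_0)$. The resulting family $\Psi$ of formulas generates a directed (breadth~$1$) set system, so Lemma~\ref{lem:breadth and UDTFS} yields UDTFS for $\Psi$ with a single parameter, and Lemma~\ref{lem:qe and UDTFS} transfers this back to $\Delta$. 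This abstract route avoids any explicit bookkeeping about endpoints, cuts, or directional flags.

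Your prototype defining formula
\[ \varphi_\alpha(y;y^*) := \exists z\bigl[\chi(z;y^*)\wedge\forall w\bigl(z<w\wedge\chi(w;y^*)\to\varphi(w;y)\bigr)\bigr] \]
does not correctly define the types, because its universal clause can be satisfied vacuously whenever the ``left-neighborhood'' $\{w\in M : z<w<e\}$ is empty—which happens exactly when the named endpoint $e$ lies in $M$ and has an immediate predecessor. The theorem's hypotheses as stated permit discretely ordered weakly o-minimal theories such as $\Th(\mathbb Z,{<})$, and there the scheme fails concretely: take $\Delta=\{x<y\}$, $B=\{0,-10\}$, $a=-5$; the named endpoint is $e=-1=\max\varphi(M;0)$, $\chi(w;0)$ amounts to $w\leq -2$, and $\varphi_\alpha(-10;0)$ holds vacuously (witness $z=-2$) even though $\varphi(a;-10)$ is false (since $-5\not<-10$), so the scheme misclassifies the type. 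The formula can be repaired by requiring the witness $z$ to itself satisfy $\varphi(z;y)$, but as written the argument is broken. Note also that the case you flag as ``the hard part''—$e$ a Dedekind gap—is actually the benign one (there the one-sided neighborhood is automatically nonempty); the trap is the opposite end, where $e$ is attained in $M$ and has a predecessor, and your $\chi$ device offers no protection there.
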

\begin{proof} Let $\mathbf M\models T$.
Fix a finite non-empty set of $\calL$-formulas $\Delta(x;y)$ with $\abs{x}=1$. We let $\ph$ range over $\Delta$ and $b$ over $M^{\abs{y}}$. By the weak o-minimality of $T$, there is an integer $N>0$ such that
for any $\ph$ and  any $b$, $\ph(M;b)$ has at most
$N$ components. For any $\ph$ and $i\in [N]$ there is an $\mathcal L$-formula $\varphi^i(x;y)$ such that for every $b$ with $\ph(M;b)\neq\emptyset$, the $i$th component of $\ph(M;b)$ equals $\ph^i(M;b)$, and such that for every $b$ with $\ph(M;b)=\emptyset$ we have $\ph^i(M;b)=\emptyset$. So 
\begin{equation}\label{eq:initial seg, 1}
\varphi(M;b)=\varphi^1(M;b)\cup\cdots\cup\varphi^N(M;b)\qquad\text{for every $b$.}
\end{equation}
Set
\begin{align*}
\ph_{\leq}^i(x;y)	&:= \exists x_0 (\ph^i(x_0;y) \wedge x\leq x_0), \\
\ph_{<}^i(x;y)		&:= \forall x_0 (\ph^i(x_0;y) \rightarrow x< x_0).
\end{align*}
Then clearly 
\begin{equation}\label{eq:initial seg, 2}
\ph^i(M;b) = \ph^i_{\leq}(M;b)\cap \big(M\setminus \ph^i_{<}(M;b)\big)\qquad\text{for all $\ph$, $b$ and $i\in [N]$.}
\end{equation}
Now set
$$\Psi(x;y) := \big\{ \ph_{\Box}^i(x;y):\ph\in\Delta,\ i\in [N],\ \Box\in\{{\leq},{<}\} \big\}.$$
For each $\psi\in\Psi$ and each $b$, the set $\psi(M;b)$ is an initial segment of $M$; hence $\mathcal S_\Psi$ is directed, so $\Psi$ has UDTFS with a single parameter, by Lemma~\ref{lem:breadth and UDTFS}.
Moreover, by \eqref{eq:initial seg, 1} and \eqref{eq:initial seg, 2}, every $\ph$ is equivalent to a Boolean combination of $2N$ formulas from $\Psi$. Hence $\Delta$ also has UDTFS with a single parameter, by Lemma~\ref{lem:qe and UDTFS}.
Thus $\mathbf M$ has the $\VC{}1$ property. By Corollary~\ref{cor:VCm} therefore $\vc^*(\Delta)\leq\abs{x}$ for every finite set $\Delta(x;y)$ of $\mathcal L$-formulas.
\end{proof}

\begin{remark*} 
In the previous theorem we assume that the {\it theory}\/ $T$ is weakly o-minimal (i.e., all models of $T$ are weakly o-minimal). 
Recall that a weakly o-minimal structure need not have weakly o-minimal theory. 
We do not know whether the conclusion of Theorem~\ref{weaklyomin} holds if $T$ is merely assumed to have {\it some}\/ weakly o-minimal model.
\end{remark*}

Put $\calL_{\divi,<}:=\calL_{\divi}\cup\{<\}$, where $\calL_{\divi}=\{0,1,{+},{-},{\times},{\,|\,}\}$ is the language of rings expanded by a divisibility predicate (see Example~\ref{ex:C-minimal} above) and
``$<$'' is a binary relation symbol.
Let $\RCVF$ denote the theory of real closed fields equipped with a proper convex valuation ring, parsed in the language $\calL_{\divi,<}$. The following corollary is now immediate, as by \cite{di}, $\RCVF$ is weakly o-minimal. 

\begin{corollary}\label{rcvf}
Let $K\models \RCVF$. Then any finite set $\Delta(x;y)$ of $\calL_{\divi,<}$-formulas has dual VC~density at most $\abs{x}$ in $K$.
\end{corollary}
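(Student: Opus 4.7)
The plan is to invoke Theorem~\ref{weaklyomin} directly, once we have verified that its hypotheses apply to $\RCVF$. First I would note that the work of Dickmann~\cite{di} (see also \cite{MMS}) establishes that the theory $\RCVF$ of real closed fields with a proper convex valuation ring, parsed in the language $\calL_{\divi,<}$, is weakly o-minimal: in any model $K \models \RCVF$, the valuation divisibility $|$ defines (together with the order) only convex subsets of $K$ in one variable, and by cell decomposition / quantifier analysis, every $\calL_{\divi,<}$-definable subset of $K$ is a finite union of convex sets.

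With $\RCVF$ identified as a weakly o-minimal $\calL_{\divi,<}$-theory, I would then apply Theorem~\ref{weaklyomin} with $T = \RCVF$ and $\mathcal L = \calL_{\divi,<}$. That theorem's conclusion gives both the $\VC{}1$ property for $\RCVF$ and, more directly relevant here, the bound $\vc^*(\Delta) \leq \abs{x}$ for every finite set $\Delta(x;y)$ of $\calL_{\divi,<}$-formulas. Since this is exactly the statement of the corollary for an arbitrary $K \models \RCVF$, and since the dual VC density is an elementary invariant (by Lemma~\ref{lem:shatter function and elementary equivalence}, applied via the passage from $\Delta$ to the single formula $\psi_\Delta$ of Lemma~\ref{lem:encoding finite sets of formulas}), the conclusion holds in $K$.

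There is essentially no obstacle beyond citing weak o-minimality of $\RCVF$; the entire content of the corollary lies in this citation together with Theorem~\ref{weaklyomin}. The only thing worth a sentence of justification is the standard fact that $\RCVF$ is complete, so that ``$K \models \RCVF$'' determines a unique complete theory to which Theorem~\ref{weaklyomin} applies uniformly. The proof can therefore be written in two or three lines.
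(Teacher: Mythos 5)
Your proposal is correct and matches the paper's own argument: cite Dickmann~\cite{di} for the weak o-minimality of $\RCVF$ and then apply Theorem~\ref{weaklyomin}. The paper's proof is exactly this one-line deduction, so nothing further is needed.
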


This result in turn yields a  VC density bound for algebraically closed valued fields of residue characteristic $0$ (which is non-optimal by Example~\ref{ex:C-minimal}):

\begin{corollary}\label{acvf}
Let $\ACVF_{(0,0)}$ be the theory of non-trivially valued algebraically closed fields of residue characteristic $0$, in the language $\calL_{\divi}$. Let $\Delta(x;y)$ be a finite set of 
$\calL_{\divi}$-formulas. Then $\Delta$ has dual VC~density at most $2\abs{x}$ in $\ACVF_{(0,0)}$.
\end{corollary}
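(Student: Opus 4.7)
The plan is to exploit the fact that every model $\mathbf K$ of $\ACVF_{(0,0)}$ is interpretable (on $R^{2}$) in a suitable model $\mathbf R$ of $\RCVF$, and then to pull Corollary~\ref{rcvf} through Corollary~\ref{cor:interp}. Concretely, given $\mathbf K \models \ACVF_{(0,0)}$, let $R \subseteq K$ be a maximal ordered subfield. Since $K$ has characteristic~$0$ and is algebraically closed, $R$ is real closed and $K = R[\sqrt{-1}]$. Let $\mathcal O \subseteq K$ be the valuation ring of $K$; since the residue field of $K$ has characteristic $0$, the valuation ring $\mathcal O \cap R$ is a proper convex subring of $R$. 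Thus $\mathbf R := (R,0,1,{+},{-},{\times}, \mathcal O\cap R, {<})$ is a model of $\RCVF$.

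Next I would check that the identification $a + b\sqrt{-1} \leftrightarrow (a,b)$ gives an interpretation of $\mathbf K$ in $\mathbf R$ (with parameters) on the definable set $R^2$. The field operations translate as usual; for the valuation divisibility it suffices to observe that for $a,b,c,d \in R$,
$$v(a + b\sqrt{-1}) \leq v(c + d\sqrt{-1})\qquad\Longleftrightarrow\qquad \min\{v(a),v(b)\} \leq \min\{v(c),v(d)\},$$
where the right-hand side is expressed using only the convex valuation on $R$. (The equality $v(a + b\sqrt{-1}) = \min\{v(a),v(b)\}$ holds because an equation of the form $v(a/b + \sqrt{-1}) > 0$ with $a/b \in R$ would force $\sqrt{-1}$ to have a residue in the (characteristic~$0$) residue field of $R$, which is impossible.) Hence $\mathbf K$ is interpretable (with parameters) in $\mathbf R$ on a definable subset of $R^{2}$.

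Now apply Corollary~\ref{cor:interp} with $r = 2$ to obtain
$$\vc^{\Th(\mathbf K)}(m) \leq \vc^{\Th(\mathbf R)}(2m)\qquad\text{for every $m$.}$$
Combining this with Corollary~\ref{rcvf}, which gives $\vc^{\RCVF}(n) \leq n$ for every $n$, we conclude that
$\vc^{\ACVF_{(0,0)}}(m) \leq 2m$ for every $m$. Finally, for the given finite set $\Delta(x;y)$ of $\mathcal L_{\divi}$-formulas, Lemma~\ref{lem:encoding finite sets of formulas} yields $\vc^*(\Delta) \leq \vc^{\ACVF_{(0,0)}}(\abs{x}) \leq 2\abs{x}$, as desired.

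The only genuinely non-formal step is the verification of the interpretation, in particular the definability of the valuation divisibility on $K$ from the convex valuation on $R$; once that is in place, Corollaries~\ref{cor:interp} and \ref{rcvf} do all the work.
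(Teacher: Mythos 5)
Your high-level plan — interpret $\ACVF_{(0,0)}$ in $\RCVF$ on a definable subset of the plane and then combine Corollary~\ref{cor:interp} with Corollary~\ref{rcvf} — is the same as the paper's, and the bookkeeping at the end (yielding $\vc^{\ACVF_{(0,0)}}(m)\leq\vc^{\RCVF}(2m)\leq 2m$ and then invoking Lemma~\ref{lem:encoding finite sets of formulas}) is fine. The gap is at the very start: you begin with an arbitrary $\mathbf K\models\ACVF_{(0,0)}$, pick an \emph{arbitrary} maximal ordered subfield $R$ (so $R$ is real closed and $K=R(\sqrt{-1})$), and assert that $\mathcal O\cap R$ is automatically a \emph{convex} valuation ring of $R$, attributing this to the residue characteristic being $0$. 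That assertion is false. Let $K$ be the algebraic closure of $\bQ(t)$ with the $t$-adic valuation (residue field $\overline{\bQ}$, hence residue characteristic $0$), and let $R$ be a real closure inside $K$ of $\bQ(t)$ ordered so that $t$ is positive infinite; then $R$ is a maximal ordered subfield of $K$, but $0<1/t<1\in\mathcal O\cap R$ while $v(1/t)=-1<0$, so $1/t\notin\mathcal O$ and convexity fails. Your parenthetical justification of $v(a+b\sqrt{-1})=\min\{v(a),v(b)\}$ — that $\sqrt{-1}$ cannot have a residue in the residue field of $R$ — is correspondingly circular: it quietly presupposes that the residue field of $R$ is formally real, which is equivalent to the convexity you have not established.

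The gap is repairable: one can build a \emph{suitable} $R$ by first choosing a real closed subfield of the residue field of $K$ over which it has degree~$2$ and then lifting it, but this takes a genuine argument. The cleanest fix, and the one the paper uses, is to reverse the direction of the construction. Start from an arbitrary $K\models\RCVF$; its residue field is real closed, so the valuation extends uniquely and definably to $K^{\operatorname{alg}}=K(i)$ (e.g.\ via $v(a+bi)=\frac{1}{2}v(a^2+b^2)$), giving a model of $\ACVF_{(0,0)}$ interpreted on $K^2$. Since $\ACVF_{(0,0)}$ is complete, exhibiting this single interpretable pair suffices, and there is no convexity hypothesis left to verify.
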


\begin{proof}
The theory $\ACVF_{(0,0)}$, which is complete,  is interpretable in $\RCVF$: if $K$ is a model of $\RCVF$, then its algebraic closure $K^{\operatorname{alg}}$ is a degree $2$ extension of $K$: $K^{\operatorname{alg}}=K(i)$, where $i^2=-1$. So $K^{\operatorname{alg}}$ can be identified with $K^2$, and the valuation $v$ of $K$ can be definably extended to one of $K^{\operatorname{alg}}$ by setting $v(a+bi)=\frac{1}{2}v(a^2+b^2)$ for $a,b\in K$.
Thus, by Lemma~\ref{interp} and Corollary~\ref{rcvf}, $\Delta(x;y)$ has dual VC density at most $2\abs{x}$.
\end{proof}

We have no results on VC density for $\ACVF$ in characteristics other than $(0,0)$.

\subsection{Quasi-o-minimal theories}
We now turn to quasi-o-minimal theories: $T$ is said to be {\em quasi-o-minimal}\/ if for any $\mathbf M \models T$,  any definable 
subset of $M$ is a finite Boolean combination of singletons, intervals in $M$,  and $\emptyset$-definable sets. 
(See \cite{bpw}.)

\begin{theorem}\label{quasiomin}
Assume that $T$ is quasi-o-minimal. Then $T$ has the  $\VC{}1$ property, and hence $\vc^T(n)=n$ for each $n$. 
\end{theorem}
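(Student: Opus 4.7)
The plan is to adapt the proof of Theorem~\ref{weaklyomin}, adding one bookkeeping step to handle the $\emptyset$-definable sets that enter the quasi-o-minimal decomposition. Fix $\mathbf M\models T$ and a finite set $\Delta(x;y)$ of $\mathcal L$-formulas with $\abs{x}=1$.

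The first step is a uniform decomposition: I want to produce a finite set $\Psi_1(x;y)$ of partitioned $\mathcal L$-formulas, each of whose instances $\psi(M;b)$ is an initial segment of $M$, together with a finite set $\Theta$ of $\emptyset$-formulas $\theta(x)$, so that every $\varphi\in\Delta$ is equivalent in $T$ to a Boolean combination of formulas in $\Psi_1\cup\Theta$ (with each $\theta(x)\in\Theta$ viewed as a partitioned formula $\theta(x;y)$ in which $y$ is dummy). Quasi-o-minimality of $T$ supplies such a decomposition pointwise (for each $b$ in each model), using that every singleton and every interval in $M$ is itself a Boolean combination of two initial segments. A compactness argument applied to the increasing first-order schemes $\alpha_N(y)$ expressing ``$\varphi(M;y)$ is a Boolean combination of $N$ initial segments with endpoints in $M\cup\{\pm\infty\}$ together with the first $N$ $\emptyset$-definable sets from a fixed enumeration'' gives a uniform bound $N$, and the standard uniform-definability techniques for quasi-o-minimal theories (see \cite{bpw}) then extract the required finite sets $\Psi_1$ and $\Theta$.

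Next, setting $\Psi:=\Psi_1\cup\Theta$, I want to show $\Psi$ has UDTFS in $T$ with one parameter, so that Lemma~\ref{lem:qe and UDTFS} will give UDTFS for $\Delta$ with one parameter. Any two initial segments of $M$ are comparable under inclusion, so $\mathcal S_{\Psi_1}$ is a directed family and hence has breadth~$1$; Lemma~\ref{lem:breadth and UDTFS} then gives UDTFS for $\Psi_1$ with one parameter via some scheme $(\mathcal G_i)_{i\in I_1}$. On the other hand, for each $\theta\in\Theta$ and each $b\in M^{\abs{y}}$, $\theta(M;b)=\theta(M)$ does not depend on $b$, so a complete $\Theta(x;B)$-type is determined solely by the subset $S\subseteq\Theta$ of formulas realized on it; for each such $S$ the scheme $\mathcal H_S$ assigning to each $\theta$ the formula $y_1=y_1$ if $\theta\in S$ and $y_1\neq y_1$ otherwise defines the corresponding type, yielding UDTFS for $\Theta$ with zero parameters. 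Each $q\in S^\Psi(B)$ decomposes uniquely as $q_1\cup q_2$ with $q_i\in S^{\Psi_i}(B)$, so combining the schemes by taking, for each pair $(i,S)\in I_1\times 2^\Theta$, the family $\mathcal F_{i,S}$ that agrees with $\mathcal G_i$ on $\Psi_1$-formulas and with $\mathcal H_S$ on $\Theta$-formulas produces UDTFS for $\Psi$ with one parameter.

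Applying Lemma~\ref{lem:qe and UDTFS}, $\Delta$ has UDTFS in $T$ with one parameter; since $\Delta$ was arbitrary, $T$ has the $\VC{}1$~property, and Corollary~\ref{cor:VCm} gives $\vc^T(n)=n$ for every $n$. The main obstacle is the first step: the compactness argument cleanly produces a uniform bound on the sizes of the Boolean combinations and on the list of $\emptyset$-definable sets involved, but the endpoints of the resulting initial segments depend on $b$ and need not a priori be named by any single $\mathcal L$-formula. Producing the formulas $\psi\in\Psi_1$ that define these initial segments uniformly in $b$ is the delicate point, analogous to uniform definability of convex components in weakly o-minimal theories, and is what must be extracted from the quasi-o-minimal structure theory of \cite{bpw}.
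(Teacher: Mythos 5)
Your combining step — splitting $\Psi=\Psi_1\cup\Theta$, getting UDTFS for $\Psi_1$ via directedness of initial segments of $M$, getting UDTFS for $\Theta$ via $0$-parameter schemes because $\emptyset$-definable sets don't depend on the parameters, and then taking products $\mathcal F_{i,S}$ indexed by $I_1\times 2^\Theta$ — is correct as far as it goes. But you've correctly put your finger on where the real work is, and that step is not filled in: the paper's proof is not an instance of ``standard uniform-definability techniques'' that one can cite away.

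The issue is not the compactness argument (a bound $N$ and the uniform list $D_1,\dots,D_N$ really do come from \cite[Theorem~3]{bpw}), but extracting definable initial segments of $M$ from the pointwise decompositions. Each $b$ has \emph{some} presentation of $\varphi(M;b)$ as a Boolean combination of intervals and the $D_i$'s, but the endpoints witnessing this are existentially quantified, and quasi-o-minimal theories need not have definable Skolem functions, so you cannot simply ``select'' them uniformly. The paper sidesteps this by a different (and canonical) decomposition: form the atoms $D\in\mathcal D$ of the Boolean algebra generated by $D_1,\dots,D_N$; then each $D\cap\varphi(M;b)$, viewed as a subset of the \emph{ordered set $D$}, has at most $N$ convex components, and the $i$th such component is uniformly $\emptyset$-definable by a counting argument exactly as in the weakly o-minimal case. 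From these one forms $\varphi^{i,D}_{\leq}$ and $\varphi^{i,D}_{<}$, which define \emph{initial segments of $D$} (not of $M$). Because distinct atoms $D$ are disjoint, and initial segments of a fixed $D$ are nested, the whole family is already directed — so Lemma~\ref{lem:breadth and UDTFS} applies directly and there is no need to split off a separate $\Theta$ or to combine two schemes. If you wanted to stick with genuine initial segments of $M$, you could recover them as $\{x:\exists x_0(\varphi^{i,D}(x_0;b)\wedge x\leq x_0)\}$ and $\{x:\forall x_0(\varphi^{i,D}(x_0;b)\to x<x_0)\}$, which is exactly the content you need for your $\Psi_1$; but notice that obtaining these still requires first passing through the convex-component construction relative to each $D$. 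So the proposal is salvageable, but the step you flagged as ``what must be extracted from \cite{bpw}'' is precisely the heart of the proof, and the paper supplies it by relativizing to the atoms $D$ rather than by naming endpoints.
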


\begin{proof} 
Let $\mathbf M\models T$.
Fix a finite set $\Delta(x;y)$ of $\calL$-formulas with $\abs{x}=1$; we let $\ph$ range over $\Delta$ and $b$ over $M^{\abs{y}}$. 
There is some positive integer $N$ and  
$\emptyset$-definable subsets $D_1, \dots, D_N$ of $M$ so that for 
any  $\ph$ and any choice of parameters $b$,  the set $\ph(M;b)$ of realizations of $\ph(x;b)$ is a Boolean combination of the $D_i$ and at most $N$ singletons and intervals in $M$ \cite[Theorem~3]{bpw}. 

Let $\D$ be the collection of sets of the form $\widetilde{D}_1\cap\dots\cap \widetilde{D}_N$, where $\widetilde{D}_i$ is either $D_i$ or its  complement in $M$ (so $\mathcal D$ is a partition of $M$ into at most $2^N$ sets). We let $D$ range over $\mathcal D$.
For every $D$, $\varphi$, and $b$, the set $D\cap\varphi(M;b)$ is then a finite union of at most $N$ convex subsets of the ordered set $D$.
For every $i\in [N]$ and every $D$  let $\varphi^{i,D}(x;u)$ be an $\mathcal L$-formula such that for every $b$, if the set $D\cap\varphi(M;b)$ is non-empty, then the $i$th convex component of  $D\cap\varphi(M;b)$ (viewed as a subset of the ordered set $D$) is given by $\varphi^{i,D}(M;b)$, and if $D\cap\varphi(M;b)=\emptyset$ then $\varphi^{i,D}(M;b)=\emptyset$. 
Hence for each $\ph$ and $b$ we have
$$\varphi(M;b) = \bigcup_{D\in\mathcal D,\ i\in [N]} \varphi^{i,D}(M;b).$$
Now let (slightly abusing syntax)
\begin{align*}
\ph_{\leq}^{i,D}(x;y)	&:= x\in D \wedge \exists x_0 (x_0\in D \wedge \ph^i(x_0;y) \wedge x\leq x_0), \\
\ph_{<}^{i,D}(x;y)		&:= x\in D \wedge \forall x_0 (x_0\in D \wedge \ph^i(x_0;y) \rightarrow x< x_0).
\end{align*}
Then 
$$\ph^{i,D}(M;b) = \ph^{i,D}_{\leq}(M;b)\cap \big(M\setminus \ph^{i,D}_{<}(M;b)\big)\qquad\text{for all $\ph$, $b$, $D$ and $i\in [N]$.}$$
Each set $\varphi^{i,D}_\Box(x;b)$, where $\Box\in\{{\leq},{<}\}$, is an initial segment of $D$, and any two distinct elements of $\mathcal D$ are disjoint. Thus the set system $\mathcal S_\Psi$, where
$$\Psi(x;y) = \big\{ \ph_{\Box}^{i,D}(x;y):\ph\in\Delta,\ i\in [N],\ \Box\in\{{\leq},{<}\},\ D\in\mathcal D \big\},$$
is directed. As in the proof of Theorem~\ref{weaklyomin} it now follows that $\Delta$ has UDTFS with a single parameter.
\end{proof}

\begin{corollary}\label{cor:presburger}
The following structures all have the  $\VC{}1$ property:
\begin{enumerate} 
\item $( \bR, {<}, \bQ )$ \textup{(}i.e., the ordered set of reals with a predicate for the rationals\textup{)};
\item $( \bZ^n, {<}, {+} )$ where $<$ is the lexicographic ordering on $\bZ^n$;
\item $( \bZ^n \times \bQ, {<}, {+} )$ where $<$ is the lexicographic ordering on $\bZ^n \times \bQ$.
\end{enumerate}
\end{corollary}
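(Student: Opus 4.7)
The plan is to deduce this corollary directly from Theorem~\ref{quasiomin} by verifying that each of the three listed structures has a quasi-o-minimal complete theory. Once quasi-o-minimality is established, Theorem~\ref{quasiomin} immediately yields the $\VC{}1$ property. I would first recall the relevant known results from \cite{bpw}, which is the natural reference for quasi-o-minimality of such concrete structures: Belegradek, Peterzil and Wagner classify a number of examples of quasi-o-minimal theories, and in particular show that each of (1), (2), (3) (or at worst their expansions by finitely many $\emptyset$-definable unary predicates, which does not affect quasi-o-minimality) falls into this class.

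For (1), the argument is essentially that any definable subset of $\bR$ in the structure $(\bR,{<},\bQ)$ is a Boolean combination of intervals, singletons, and the $\emptyset$-definable predicate $\bQ$; this is standard and follows from a back-and-forth / quantifier-elimination analysis, since $\bQ$ is a dense-codense predicate with no further structure imposed. Hence the theory is quasi-o-minimal in the sense defined just before Theorem~\ref{quasiomin}.

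For (2) and (3), one uses the quasi-o-minimality results from \cite{bpw} for lexicographically ordered abelian groups of the stated form (these are among the main examples in that paper; both $(\bZ^n,{<},{+})$ and $(\bZ^n\times\bQ,{<},{+})$ are explicitly treated there). In each case the definable subsets of the home sort are finite Boolean combinations of $\emptyset$-definable sets together with points and intervals, which is exactly the definition of quasi-o-minimality.

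With quasi-o-minimality in hand for each of the three structures, Theorem~\ref{quasiomin} applies and gives the $\VC{}1$ property. The only potential obstacle is making sure the quasi-o-minimality statements I cite from \cite{bpw} are phrased for the precise languages listed here (and not some harmless expansion by $\emptyset$-definable predicates); but since quasi-o-minimality is preserved under such expansions by definitions, and since the $\VC{}1$ property of the expanded structure implies the $\VC{}1$ property of a reduct, no genuine difficulty arises.
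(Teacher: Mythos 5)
Your overall strategy is exactly the one the paper uses: reduce to quasi-o-minimality of the theory of each structure and then invoke Theorem~\ref{quasiomin}. There is, however, a citation error worth correcting. Quasi-o-minimality of $(\bR,{<},\bQ)$ is indeed noted in \cite[Section~1]{bpw}, but the lexicographically ordered groups in (2) and (3) are not ``among the main examples'' of \cite{bpw}; that paper only treats the case $n=1$ of (2), i.e.\ Presburger arithmetic. The quasi-o-minimality of $(\bZ^n,{<},{+})$ and of $(\bZ^n\times\bQ,{<},{+})$ for general $n$ is established in the later paper \cite[Theorem~15]{bvw} (Belegradek, Verbovskiy, Wagner, ``Coset-minimal groups''), and the argument there rests on Weispfenning's quantifier-elimination theorem \cite{w} for such ordered abelian groups. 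With that reference corrected your proof matches the paper's.
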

\begin{proof}
Each of the examples has quasi-o-minimal theory: For (1) this was noted in \cite[Section 1]{bpw}, and for (2) and (3) this is proved (based on a quantifier-elimination result from \cite{w}) in \cite[Theorem~15]{bvw}. 
The corollary now follows from Theorem~\ref{quasiomin}.
\end{proof}

\begin{remark*}
The ordered abelian groups in (2) and (3) of the previous corollary are typical for quasi-o-minimal groups.
Here and below, ``quasi-o-minimal group'' means ``quasi-o-minimal expansion of an ordered group.''
(A quasi-o-minimal group is necessarily abelian \cite[Theorem~11]{bpw}.)
An expansion $\mathbf G$ of an ordered group is called \emph{coset-minimal} if every  subset of $G$ definable in $\mathbf G$ is a finite union of cosets of definable subgroups, intersected with intervals. A theory expanding the theory of ordered groups is said to be coset-minimal if all its models are. (See \cite{bvw}.) Now by \cite[Theorem~5.3]{Point-Wagner}, the theory of $\mathbf G$ is coset-minimal iff the theory of the expansion of $\mathbf G$ by constant symbols for the elements of $G$  is quasi-o-minimal, and in this case $\mathbf G$ is an expansion of an ordered group elementarily equivalent to either $( \bZ^n, {<}, {+} )$ or $( \bZ^n \times \bQ, {<}, {+} )$, for some~$n$.
\end{remark*}

Part (2) of the previous corollary shows in particular that Presburger Arithmetic, i.e., the theory of the ordered group $(\mathbb Z,{<},{+})$ of integers, has the $\VC{}1$~property, and hence is dp-minimal.
By Theorem~10 in \cite{bpw}, $(\mathbb Z,{<},{+})$  has no proper quasi-o-minimal expansions. 
One can strengthen this statement:

\begin{proposition}\label{prop:no proper dp-min exp of Presburger}
No proper expansion of $(\mathbb Z,{<},{+})$ is dp-minimal.
\end{proposition}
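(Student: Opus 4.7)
The plan is to argue by contradiction: suppose $\mathbf{Z}^*$ is a proper dp-minimal expansion of $(\bZ,{<},{+})$, and derive a contradiction with [bpw, Theorem~10], which asserts that no proper quasi-o-minimal expansion of $(\bZ,{<},{+})$ exists. To invoke that result, I would establish that $\mathbf{Z}^*$ must itself be quasi-o-minimal. Since Presburger-definable subsets of $\bZ$ are exactly the finite Boolean combinations of singletons, intervals, and arithmetic progressions (with the latter $\emptyset$-definable in $(\bZ,{<},{+})$, hence in $\mathbf{Z}^*$), quasi-o-minimality of $\mathbf{Z}^*$ reduces to the claim: every $\mathbf{Z}^*$-definable subset $X\subseteq\bZ$ is already definable in the reduct $(\bZ,{<},{+})$.

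For the contrapositive of this claim, suppose $X\subseteq\bZ$ is $\mathbf{Z}^*$-definable (possibly with parameters) but not Presburger-definable, hence not eventually periodic on at least one side, say to the right. The goal is to manufacture an ICT pattern in $\mathbf{Z}^*$ and conclude via Proposition~\ref{prop:dp-min}. I would work with the two formulas $\alpha(x;y):=(x-y\in X)$ and $\beta(x;u,v):=(u\leq x<v)$, parameterizing respectively the translates of $X$ and the half-open intervals. By the argument in the proof of Proposition~\ref{prop:dp-min}, an ICT pattern for these two formulas combines into a single-formula ICT pattern; it therefore suffices to produce infinite sequences $(a_i)_{i\in\bN}$ in $\bZ$ and pairwise disjoint intervals $(I_j)_{j\in\bN}=([u_j,v_j))_{j\in\bN}$ such that for every pair $(i,j)$ some $x\in I_j$ lies in $X+a_i$ but in no $X+a_k$ with $k\neq i$.

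To produce these sequences I would proceed inductively, exploiting that non-eventual periodicity of $X$ guarantees, for every finite configuration of translates $\{a_1,\dots,a_N\}$ and every threshold $M$, integers $x>M$ at which the tuple of truth values $\bigl(1_X(x-a_k)\bigr)_{k=1}^N$ realizes each of the singleton patterns $e_1,\dots,e_N$. At each stage, one would extend the existing data by picking the new translate $a_{N+1}$ along a very sparse sequence (so as to space out the translates beyond any scale at which $X$ might exhibit approximate local periodicity and so as not to disturb the finitely many witnesses already placed in $I_1,\dots,I_M$), and then choose the new interval $I_{M+1}$ sufficiently far to the right and long enough to accommodate all $N{+}1$ required singleton patterns. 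By saturation, it is equivalent to exhibit arbitrarily large finite $N\times M$-rectangular patterns of this form in $\mathbf{Z}^*$ and then extract the infinite pattern in a sufficiently saturated elementary extension $\mathbf{Z}^{**}\succcurlyeq\mathbf{Z}^*$ (which inherits dp-minimality).

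The main obstacle is exactly this combinatorial inductive step: non-eventual periodicity of $X$ formally provides only scattered witnesses of disagreement $1_X(n)\neq 1_X(n+p)$, and it is delicate to leverage these into the rectangular independence of $N$ translates across $M$ disjoint intervals that the ICT pattern demands. The construction must choose the $a_i$ so thinly that the translates $X+a_i$ become, on suitably chosen far-out windows $I_j$, "independent" in the required strong sense. Once the ICT pattern is in place, Proposition~\ref{prop:dp-min} contradicts dp-minimality of $\mathbf{Z}^*$, the claim follows, $\mathbf{Z}^*$ is quasi-o-minimal, and [bpw, Theorem~10] then forces $\mathbf{Z}^*$ to be interdefinable with $(\bZ,{<},{+})$, contradicting properness.
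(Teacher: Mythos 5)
Your proposal correctly identifies the target (any proper expansion must define a non-Presburger subset of $\bZ$, which should clash with dp-minimality), but you have---as you yourself flag at the end---left the essential content of the argument open. The combinatorial step you need, going from ``$X$ is not eventually periodic'' to ``there is an ICT pattern,'' is not a routine induction; it is, in disguise, at least as hard as the paper's cited Lemma~\ref{lem:Simon} (Simon's lemma). Scattered failures $1_X(n)\neq 1_X(n+p)$ of a single period do not by themselves produce the rectangular independence of $N$ translates across $M$ disjoint windows that an ICT pattern demands, since translates of a non-periodic $X$ can still agree with one another on arbitrarily long stretches. The paper sidesteps this entirely by invoking Simon's lemma as a black box, which gives only the \emph{weaker} conclusion that the translate family $\{a+U:a\in\bZ\}$ has finitely many classes under eventual equality $\approx$; that is far short of eventual periodicity. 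The remaining work in the paper is then the elementary algebraic observation that the stabilizer $A=\{a:a+U\approx U\}$ is a nontrivial subgroup $a\bZ$, that the union $V$ of cosets of $A$ meeting $U$ cofinally is Presburger-definable, and that $U\approx V$---a short chain of computations with no combinatorial extraction in sight.

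A secondary issue: the detour through quasi-o-minimality and \cite[Theorem~10]{bpw} is both unnecessary and slightly mismatched. If you could show that every $\mathbf Z^*$-definable subset of $\bZ$ is Presburger-definable, the theorem of Michaux and Villemaire already forces $\mathbf Z^*$ not to be a proper expansion; there is no need to pass through quasi-o-minimality. Moreover, quasi-o-minimality is a property of the full theory (all models), whereas your reduction only addresses definable subsets of the standard model $\bZ$, so the claim that $\mathbf Z^*$ is quasi-o-minimal would require more than you actually establish.
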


The proof is the same as in \cite{bpw}, replacing the use of  \cite[Theorem~7]{bpw} by a result from \cite{Simon}; we state the latter employing some convenient terminology from \cite{bpw}: Let $(X,{<})$ be a linearly ordered set. We say that two subsets $S$, $T$ of $X$ are \emph{eventually equal} (in symbols: $S\approx T$) if there is some $a\in X$ such that $S\cap (a,+\infty)=T\cap (a,+\infty)$. Clearly $\approx$ is an equivalence relation on subsets of $X$. We say that a family of subsets of $X$ is \emph{eventually finite} if it is partitioned into finitely many classes by~$\approx$.

\begin{lemma}[Simon {\cite[Lemma~2.9]{Simon}}]\label{lem:Simon}
Suppose $T$ is dp-minimal. Let $\mathbf M\models T$ and let $\varphi(x;y)$ be a partitioned $\mathcal L(M)$-formula where $\abs{x}=1$. Then $\mathcal S_\varphi$ is eventually finite.
\end{lemma}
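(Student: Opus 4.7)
Prove the contrapositive by contradiction: assuming $\mathcal S_\varphi$ has infinitely many $\approx$-classes, produce an ICT pattern in a suitable elementary extension of $\mathbf M$, contradicting dp-minimality of $T$. The ICT pattern will use the linear order $<$ and the formula $\varphi$ as its two independent coordinates.

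\textbf{Setup and indiscernibilization.} The hypothesis yields a sequence $(b_n)_{n<\omega}$ in $M^{\abs y}$ with $\varphi(M;b_n)\not\approx\varphi(M;b_m)$ for all $n\neq m$. The relation $\not\approx$ is expressed by the $\Pi_2$ $\mathcal L(M)$-formula
$$\forall c\,\exists x\,\bigl(x>c\wedge\neg(\varphi(x;y_1)\leftrightarrow\varphi(x;y_2))\bigr),$$
and so is preserved in any elementary extension. Passing to a sufficiently saturated $\mathbf M^*\succeq\mathbf M$ and invoking the standard Ramsey-compactness extraction, I refine $(b_n)$ to a sequence indiscernible over $M$ whose members remain pairwise non-$\approx$-equivalent. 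By 2-indiscernibility, the ``direction'' of the cofinal symmetric difference is uniform for all pairs, so after possibly reversing the sequence one has
$$\varphi(M^*;b_n)\setminus\varphi(M^*;b_m)\quad\text{is cofinal in }M^*\text{ for every }n<m.$$

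\textbf{Production of the ICT pattern.} Set $\alpha(x;z_1,z_2):=(z_1<x<z_2)$ and $\beta(x;y):=\varphi(x;y)$. By further saturation and extraction, fix a sequence of pairwise disjoint intervals $I_i=(c_i^1,c_i^2)$ in $M^*$ whose endpoint-tuples together with $(b_j)$ form a mutually indiscernible system, placed so that the cofinal symmetric-difference witnesses persist ``effectively'' within each $I_i$. For every $(i,j)$, I would realize $\beta(x;b_j)\wedge\bigwedge_{l\ne j}\neg\beta(x;b_l)$ inside $I_i$: by mutual indiscernibility, the realizability of this $\omega$-indexed type reduces to finite consistency of its finite fragments, which in turn follows from the cofinal symmetric differences normalized above together with saturation of $\mathbf M^*$. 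The resulting $\omega\times\omega$ grid of consistent types is an ICT pattern for $(\alpha,\beta)$, contradicting dp-minimality of $T$.

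\textbf{Main obstacle.} The delicate point is the realization step in the construction of the ICT pattern: the hypothesis on $\varphi$ supplies witnesses only for \emph{pairwise} cofinal symmetric differences, whereas an ICT pattern demands a witness satisfying an infinite combinatorial pattern (one $b_j$ selected, all others rejected, inside a prescribed interval). Bridging this gap requires combining (i) mutual indiscernibility of the two sequences of parameters, to reduce the continuum of types to a single finite consistency statement, (ii) saturation of $\mathbf M^*$, to freely relocate witnesses into the prescribed intervals, and (iii) the linearly ordered setting, so that $\alpha$ furnishes a coordinate genuinely independent of $\beta=\varphi$. It is at this step that dp-minimality — rather than merely NIP — becomes decisive, since NIP alone is compatible with infinitely many $\approx$-classes, while dp-minimality's ban on independent one-dimensional patterns is exactly what the order-plus-$\varphi$ construction violates.
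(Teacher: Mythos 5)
First, a point of reference: the paper does not prove this lemma at all — it is imported verbatim from Simon's article (his Lemma~2.9) and used as a black box in the proof of Proposition~\ref{prop:no proper dp-min exp of Presburger}. So there is no in-paper argument to compare yours against; your proposal has to stand on its own.

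It does not, because the step you yourself flag as the ``main obstacle'' is a genuine gap, not a routine verification, and as set up it can actually fail. After extraction you know only that for $i<j$ the set $\varphi(M^*;b_i)\setminus\varphi(M^*;b_j)$ (say) is cofinal. To realize the ICT-pattern type at position $(i,j)$ you need a point of $\varphi(M^*;b_j)\setminus\bigcup_{l\in L}\varphi(M^*;b_l)$ for arbitrary finite $L$ (inside a prescribed interval, moreover), and this does \emph{not} follow from pairwise cofinal symmetric differences plus indiscernibility. Concretely, the sets $A_i=\varphi(M^*;b_i)$ could form an eventually strictly decreasing chain $A_0\supseteq A_1\supseteq\cdots$ with each $A_i\setminus A_{i+1}$ cofinal: this configuration is fully compatible with everything you have established, yet $\varphi(x;b_j)\wedge\neg\varphi(x;b_0)$ is already (eventually) inconsistent for every $j>0$, so the grid of types you propose is simply not finitely satisfiable. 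Invoking mutual indiscernibility and saturation cannot repair this: those tools convert finite consistency into full consistency, but here the finite fragments themselves may fail. A correct argument has to branch on the containment pattern among the $A_i$ (which indiscernibility does make uniform) — e.g.\ in the nested case one must switch to the sets $A_i\setminus A_{i+1}$, defined by $\varphi(x;y_1)\wedge\neg\varphi(x;y_2)$ evaluated at consecutive pairs, which are eventually pairwise disjoint and cofinal, and only then is the ``select one, reject the rest'' pattern realizable. That case analysis, together with the choice of witnesses interleaved with the intervals for $\alpha$, is precisely where the content of Simon's lemma lies, and it is the part your write-up gestures at but does not supply.
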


For the benefit of the reader we now indicate the details of the proof of Proposition~\ref{prop:no proper dp-min exp of Presburger}.
Let $\mathbf Z$ be a proper expansion of $(\mathbb Z,{<},{+})$. By a theorem of Michaux and Villemaire \cite{MV} (and an easy extra argument, given in the proof of \cite[Theorem~10]{bpw}), there is a subset $U$ of $\bZ$ which is definable in $\mathbf Z$ but not definable in $(\mathbb Z,{<},{+})$.
By Simon's lemma, the family $\{a+U:a\in\bZ\}$ is eventually finite; thus the subgroup $A$ of $\bZ$ consisting of all $a\in\bZ$ such that $a+U\approx U$ is non-zero, so $A=a\bZ$ for some positive integer $a$.
Let $V$ be the union of all cosets of $A$ which contain arbitrarily large elements of $U$; then $V$ is definable in $(\mathbb Z,{<},{+})$, hence it suffices to show that $U\approx V$.
As $a+U\approx U$, we can take $\alpha\in\bZ$ such that for every $u\in\bZ$ with $u\geq\alpha$ we have $a+u\in U \Longleftrightarrow u\in U$. One now proves easily that for  every $u\in\bZ$ with $u\geq\alpha$ we have $u\in U\Longleftrightarrow u\in V$. \qed

\medskip

So for example, the expansion of $(\mathbb Z,{<},{+})$ by the set $b^{\bN}=\{b^n:n\geq 0\}$ of powers of a natural number $b>1$
is not dp-minimal, as is the expansion of $(\mathbb Z,{<},{+})$ by the set of factorials or by the set of Fibonacci numbers.
In all these examples, the corresponding expansion of $(\mathbb Z,{<},{+})$ has quantifier elimination in a natural expansion of $\{<,{+},U\}$ (see \cite{Cherlin-Point, Point}) and is NIP (as will be shown elsewhere).

\subsection{Weakly quasi-o-minimal theories}
In \cite{Ku1}, $T$ is called \emph{weakly quasi-o-minimal} if for any $\mathbf M\models T$, any definable 
subset of $M$ is a finite Boolean combination of convex subsets of $M$ and $\emptyset$-definable sets. Every 
weakly quasi-o-minimal theory is NIP \cite[Theorem~2.3]{Ku1}. In fact, the proof of Theorem~\ref{quasiomin} (mutatis mutandis) also shows more generally:

\begin{theorem}\label{thm:weakly quasi-o-min}
All weakly quasi-o-minimal theories have the $\VC{}1$ property.
\end{theorem}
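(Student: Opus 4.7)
The plan is to adapt the proof of Theorem~\ref{quasiomin} with only cosmetic modifications, once a uniform form of weak quasi-o-minimality is established for the given family of formulas. Fix $\mathbf M\models T$ and a finite non-empty set $\Delta(x;y)$ of $\mathcal L$-formulas with $\abs{x}=1$; let $\varphi$ range over $\Delta$ and $b$ over $M^{\abs{y}}$.

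The first step — which will be the main obstacle — is to produce $N>0$ and $\emptyset$-definable subsets $D_1,\dots,D_N$ of $M$ such that for every $\varphi\in\Delta$ and every $b$, the set $\varphi(M;b)$ is a Boolean combination of $D_1,\dots,D_N$ together with at most $N$ convex subsets of $M$. This is the analogue for weakly quasi-o-minimal theories of the result from \cite{bpw} invoked in the proof of Theorem~\ref{quasiomin}. The uniform bound on the number of convex pieces follows from a standard compactness argument applied to the theory $T$ (which is weakly quasi-o-minimal in every model), using that the condition ``$\varphi(M;b)$ has more than $k$ convex components'' is first-order expressible in $b$. Extracting a uniform finite list of $\emptyset$-definable parameter sets $D_1,\dots,D_N$ valid for all $b$ is somewhat more delicate, but can be obtained by a further compactness argument or by direct inspection of the normal form for definable sets in weakly quasi-o-minimal theories from \cite{Ku1}.

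Granting this uniformity, the rest of the argument follows the proof of Theorem~\ref{quasiomin} line by line. Let $\mathcal D$ be the partition of $M$ into the (at most $2^N$) non-empty sets of the form $\widetilde D_1\cap\cdots\cap\widetilde D_N$, where each $\widetilde D_i$ is either $D_i$ or $M\setminus D_i$. For each $D\in\mathcal D$, each $\varphi$ and each $b$, the intersection $D\cap\varphi(M;b)$ is a union of boundedly many convex subsets of the linearly ordered set $(D,{<})$, with the bound depending only on $\Delta$. For each $D\in\mathcal D$, each $\varphi$, and each $i$ in the relevant range, choose an $\mathcal L$-formula $\varphi^{i,D}(x;y)$ such that $\varphi^{i,D}(M;b)$ is the $i$th convex component of $D\cap\varphi(M;b)$ relative to $(D,{<})$ whenever this component exists, and $\emptyset$ otherwise. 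Exactly as in the proof of Theorem~\ref{quasiomin}, set
\begin{align*}
\varphi^{i,D}_{\leq}(x;y) &:= x\in D \wedge \exists x_0\,(x_0\in D \wedge \varphi^{i,D}(x_0;y) \wedge x\leq x_0), \\
\varphi^{i,D}_{<}(x;y) &:= x\in D \wedge \forall x_0\,(x_0\in D \wedge \varphi^{i,D}(x_0;y) \rightarrow x< x_0),
\end{align*}
so that $\varphi^{i,D}(M;b)=\varphi^{i,D}_{\leq}(M;b)\setminus\varphi^{i,D}_{<}(M;b)$.

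Letting $\Psi(x;y)$ be the finite set of all these formulas $\varphi^{i,D}_{\Box}$ (with $\Box\in\{{\leq},{<}\}$), each $\varphi\in\Delta$ is equivalent in $\mathbf M$ to a Boolean combination of formulas from $\Psi$. On each fixed $D\in\mathcal D$ the instances $\varphi^{i,D}_{\Box}(M;b)$ define initial segments of $(D,{<})$, while distinct cells of $\mathcal D$ are disjoint; hence the set system $\mathcal S_\Psi$ is directed. By Lemma~\ref{lem:breadth and UDTFS}, $\Psi$ admits UDTFS with a single parameter, and by Lemma~\ref{lem:qe and UDTFS} the same holds for $\Delta$. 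Thus $\mathbf M$ has the $\VC{}1$ property, and since $\mathbf M\models T$ was arbitrary, so does $T$.
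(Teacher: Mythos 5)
Your proposal is correct and takes essentially the same approach as the paper. The paper's proof consists of the single remark that the proof of Theorem~\ref{quasiomin} carries over mutatis mutandis, and your adaptation (replacing singletons and intervals with convex sets, partitioning by the $\emptyset$-definable $D_i$, extracting initial segments of each cell $D$, and invoking the breadth and UDTFS lemmas) is exactly the intended argument; your observation that the uniform $N$ and fixed $D_1,\dots,D_N$ follow by compactness (since having at most $K$ convex components on each atom of the Boolean algebra generated by a given finite family of $\emptyset$-definable sets is a first-order condition on the parameter $b$, and these conditions form a directed family) correctly handles the one point the paper leaves implicit.
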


This observation can be used to strengthen \cite[Proposition~4.2]{Simon}, where it is shown that the complete theories of colored linearly ordered sets with monotone relations (shown to be NIP in \cite{Schmerl}) are dp-minimal. A binary relation $R$ on a set $X$ is said to be monotone with respect to a linear ordering $<$ of $X$ if 
$$x'\leq xRy\leq y'\Rightarrow x'Ry'\qquad\text{for all $x,x',y,y'\in X$.}$$
A \emph{colored linearly ordered set with monotone relations} is a
 structure of the form $\mathbf M=(M,{<},\{C_i\}_{i\in I},\{R_j\}_{j\in J})$ where $<$ is a linear ordering on $M$, the $C_i$ are unary predicates (``colors''), and the $R_j$ are binary relations which are monotone (with respect to $<$). It was shown by Simon \cite[Proposition~4.1]{Simon} that every such colored linearly ordered set with monotone relations has quantifier elimination provided that each $\emptyset$-definable subset of $M$ is given by one of the predicates $C_i$ and each monotone $\emptyset$-definable binary relation is given by one of the $R_j$.

\begin{proposition}\label{prop:simon}
Let $\mathbf M$ be a colored linearly ordered set with monotone relations as above. Then $T=\Th(\mathbf M)$ is weakly quasi-o-minimal, and hence has the $\VC{}1$ property.
\end{proposition}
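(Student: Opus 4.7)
The plan is to first establish weak quasi-o-minimality of $T$, after which Theorem~\ref{thm:weakly quasi-o-min} supplies the $\VC{}1$ property. The essential input is Simon's quantifier elimination result cited in the paragraph preceding the proposition: it is applicable provided every $\emptyset$-definable unary predicate and every $\emptyset$-definable monotone binary relation in $\mathbf M$ is named by a symbol of the language. So I would first pass to the expansion $\mathcal L^+$ of $\mathcal L$ obtained by adjoining a predicate symbol for each $\emptyset$-definable unary set and each monotone $\emptyset$-definable binary relation of $\mathbf M$, and let $T^+=\Th(\mathbf M^+)$ in $\mathcal L^+$. Since $\mathcal L^+$ is an expansion by definitions, definable sets (with parameters) in any $\mathbf N\models T$ coincide with those in the corresponding $\mathbf N^+\models T^+$; thus weak quasi-o-minimality of $T^+$ (which refers to definable sets) is equivalent to that of $T$. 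Moreover in every $\mathbf N^+\models T^+$, Simon's result applies and yields quantifier elimination.

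Next I would analyze one-variable definable sets in an arbitrary $\mathbf N^+\models T^+$. By QE, any $\mathcal L^+$-formula $\varphi(x;\bar y)$ with a single object variable $x$ is equivalent, modulo $T^+$, to a Boolean combination of atomic formulas. Fixing parameters $\bar b$ in $N$, each atomic instance defines a subset of $N$ of one of the following kinds: a singleton ($x=t$), an open initial or final segment ($x<t$ or $t<x$), a colour class ($C_i(x)$), or a set of the form $\{x:R_j(x,b)\}$ or $\{x:R_j(b,x)\}$ for a monotone relation $R_j$ and a parameter $b\in N$. For the last case, monotonicity is the key observation: if $xR_jb$ and $x'\leq x$ then taking $y=y'=b$ in the monotonicity axiom gives $x'R_jb$, so $\{x:R_j(x,b)\}$ is downward closed (an initial segment); symmetrically $\{x:R_j(b,x)\}$ is a final segment. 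Thus every atomic instance defines either a convex subset of $N$ or a $\emptyset$-definable subset (the colour classes, and also the few atomic formulas like $R_j(x,x)$ that involve no parameters), and therefore every definable subset of $N$ is a finite Boolean combination of convex and $\emptyset$-definable sets. This is weak quasi-o-minimality of $T^+$, hence of $T$, and invoking Theorem~\ref{thm:weakly quasi-o-min} concludes the proof.

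There is essentially no hard step: the content is a small verification that the two nontrivial classes of atomic formulas predicted by Simon's quantifier elimination -- those built from the monotone relations $R_j$ -- have convex instances, which is immediate from the definition of monotonicity. The only care needed is the bookkeeping to ensure that the conclusion is drawn uniformly in all models of $T$, which is handled by the passage to the definitional expansion $T^+$ so that Simon's QE is applicable in every model.
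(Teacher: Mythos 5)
Your argument is correct and essentially the same as the paper's: both pass to the definitional expansion so that Simon's quantifier-elimination result applies, then observe that for a monotone relation $R_j$ the slices $\{x:R_j(x,b)\}$ and $\{x:R_j(b,x)\}$ are initial and final segments respectively, so every parameter-definable subset of a model is a Boolean combination of convex sets and $\emptyset$-definable sets. You are somewhat more explicit about why the conclusion transfers to all models (quantifier elimination is a property of the theory, and monotonicity of the $R_j$ is first-order), and about the bookkeeping for atomic formulas such as $R_j(x,x)$ that involve no parameters and so are absorbed into the $\emptyset$-definable sets; the paper treats these points implicitly.
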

\begin{proof}
By the result of Simon just quoted, we may assume that $\mathbf M$ admits quantifier elimination. Now for each $b\in M$ and $j\in J$ the set
$$\{x\in M: \mathbf M\models xR_jb \}$$
is an initial segment of $M$, and 
$$\{x\in M: \mathbf M\models bR_jx \}$$
is a final segment of $M$ (i.e., its complement is an initial segment of $M$). Hence any definable subset of $M$ is a finite Boolean combination of initial segments of $M$ and $\emptyset$-definable sets, so $T$ is weakly quasi-o-minimal.
\end{proof}

As in \cite{Schmerl, Simon} this leads to a result for (partially) ordered sets of finite width. 
Let $\mathbf P=(P,{<})$ be an ordered set, i.e., a set $P$ equipped with an irreflexive, asymmetric and transitive binary relation $<$ on $P$. A subset $A$ of $P$ is an antichain if for all $a\neq a'$ in $A$, neither $a<a'$ nor $a'<a$ holds, and $C\subseteq P$ is a chain if for all $c\neq c'$ in $C$, either $c<c'$ or $c'<c$.
The \emph{width} of $\mathbf P$ is defined to be the supremum of the cardinalities of antichains in $P$, and denoted by $\width(\mathbf P)$. (Dually, the \emph{height} of $\mathbf P$ is defined to be the supremum of the cardinalities of a chain in $P$, denoted by $\height(\mathbf P)$.) A \emph{colored ordered set} is a structure  $\mathbf P=(P,{<},(C_i)_{i\in I})$ where $(P,{<})$ is an ordered set and each $C_i$ is a unary predicate.
 
\begin{corollary}
Let $\mathbf P=(P,{<},(C_i)_{i\in I})$ be an infinite colored ordered set of finite width. Then 
$\vc^{\Th(\mathbf P)}(m)=m$ for every $m$.
\end{corollary}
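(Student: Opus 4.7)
The plan is to present $\mathbf P$ as a (definitional) reduct of a colored linearly ordered set with monotone relations on the same universe, and then to apply Proposition~\ref{prop:simon} together with Corollary~\ref{cor:interp}. The lower bound $\vc^{\Th(\mathbf P)}(m)\geq m$ is already noted in the introduction (take $\varphi(x;y)=\bigvee_i x=y_i$), so it suffices to establish $\vc^{\Th(\mathbf P)}(m)\leq m$.

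First I would apply Dilworth's theorem to the underlying poset $(P,{<})$: since $\mathbf P$ has finite width $w$, one can partition $P=P_1\sqcup\cdots\sqcup P_w$ into chains. By Szpilrajn's theorem I would choose a linear extension $\prec$ of the partial order $<$ on $P$; since $<$ is already linear on each chain $P_i$, the orders $\prec$ and $<$ coincide on each $P_i$. For every pair $1\leq i,j\leq w$, define the binary relation
\[
\tilde R_{ij}(x,y) \ :\iff\ \exists x''\in P_i\,\exists y''\in P_j\,\bigl(x\preceq x''\,\wedge\, y''\preceq y\,\wedge\, x''<y''\bigr).
\]
A direct check shows that $\tilde R_{ij}$ is monotone with respect to $\prec$: given $x'\preceq x$ and $y\preceq y'$ together with witnesses $x'',y''$ for $\tilde R_{ij}(x,y)$, the same $x'',y''$ also witness $\tilde R_{ij}(x',y')$.

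Next, letting the $P_i$ serve as unary color predicates, set
\[
\mathbf M := \bigl(P,\prec,(C_i)_{i\in I},P_1,\dots,P_w,(\tilde R_{ij})_{1\leq i,j\leq w}\bigr).
\]
Then $\mathbf M$ is a colored linearly ordered set with monotone relations, so Proposition~\ref{prop:simon} gives that $\Th(\mathbf M)$ has the $\VC{}1$ property, whence $\vc^{\Th(\mathbf M)}(m)=m$ for every $m$ by Corollary~\ref{cor:VCm}. I would then verify that the partial order $<$ of $\mathbf P$ is $\mathcal L_{\mathbf M}$-definable, namely
\[
x<y \iff \bigvee_{i,j=1}^{w}\bigl(P_i(x)\wedge P_j(y)\wedge \tilde R_{ij}(x,y)\bigr);
\]
the forward direction is witnessed by $x''=x$ and $y''=y$, while the backward direction uses that $\prec$ and $<$ agree on each chain to conclude $x\leq x''<y''\leq y$.

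Consequently $\mathbf P$ is interpretable in $\mathbf M$ via the identity on $P^1$, and Corollary~\ref{cor:interp} yields $\vc^{\Th(\mathbf P)}(m)\leq \vc^{\Th(\mathbf M)}(m)=m$. The one step requiring care is the verification that the $\tilde R_{ij}$ are monotone with respect to $\prec$; this is precisely what forces the particular quantifier pattern in their definition, since the more naive choice $P_i(x)\wedge P_j(y)\wedge x<y$ already fails to be monotone on the whole of $(P,\prec)$ because of the color constraints.
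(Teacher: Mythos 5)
Your proof is correct and follows essentially the same route as the paper: both apply Dilworth's theorem, build a colored linearly ordered set with monotone relations that definably recovers $(P,{<})$, and then invoke Proposition~\ref{prop:simon} together with the interpretability/reduct transfer. The only cosmetic differences are that the paper concatenates the chains to form $\prec$ (which is \emph{not} a linear extension of $<$) and encodes chain membership via $a\,R_{ii}\,a$ rather than adding the $P_i$ as colors, whereas you take an arbitrary linear extension via Szpilrajn and add the $P_i$ explicitly; both variants work.
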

\begin{proof}
Let $n=\width(\mathbf P)$ and let $i$, $j$ range over $[n]$.
By Dilworth's Theorem there is a partition $P=P_1\cup\cdots\cup P_n$ of $P$ into disjoint chains $P_i$. 
Define a linear ordering~$\prec$ on~$P$ by setting $a\preceq b$ iff either $a,b\in P_i$ for some $i$ and $a\leq b$, or $a\in P_i$, $b\in P_j$ with $i<j$. For all $i$, $j$ the binary relation
$$R_{ij} := \big\{ (a,b)\in P: \exists a'\in P_i, b'\in P_j: a\preceq a'\leq b'\preceq b \big\}$$
is monotone with respect to $\prec$, and the original ordering $<$ is $\emptyset$-definable in the  linearly ordered set with monotone relations $(P,{\prec},(R_{ij})_{i,j})$, noting that $a\leq b$ iff $a R_{ii} a R_{ij} b R_{jj} b$ for some $i$ and $j$. The claim now follows immediately from Proposition~\ref{prop:simon}.
\end{proof}

\begin{question}
Is every ordered set of finite width $\VC{}1$?
\end{question}

By an interpretability argument, the previous corollary also leads to a (perhaps non-optimal) bound on the VC~density for those distributive lattices with NIP theory. By \cite[Theorem~6]{Schmerl} these are exactly the distributive lattices of finite breadth. From Section~\ref{sec:breadth} recall that a semilattice $(L,{\wedge})$ has breadth at most $d$ if for all $b_1,\dots,b_{d+1}\in L$ there is some $i\in [d+1]$ such that $b_1\wedge \cdots\wedge b_{d+1} = b_1\wedge\cdots\widehat{b_i}\cdots\wedge b_{d+1}$, and the smallest such $d$ (if it exists) is called the breadth of $(L,{\wedge})$.

\begin{corollary}
Let $\mathbf L=(L,{\wedge},{\vee})$ be an infinite distributive lattice of  breadth~$d$. Then 
$\vc^{\Th(\mathbf L)}(m)\leq dm$ for every $m$.
\end{corollary}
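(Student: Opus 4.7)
The plan is to interpret $\mathbf L$ in a colored ordered set of width $d$ on a definable subset of its $d$-fold Cartesian power, and then invoke the preceding corollary together with Corollary~\ref{cor:interp}.

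First, I would use a lattice-theoretic representation: in the finite case, Birkhoff's theorem gives $\mathbf L \cong \mathcal D(J(\mathbf L))$, where $J(\mathbf L)$ is the poset of join-irreducibles. Since the width of $J(\mathbf L)$ equals $\breadth(\mathbf L)\leq d$, Dilworth's theorem yields a decomposition $J(\mathbf L) = C_1\sqcup\cdots\sqcup C_d$ into $d$ chains. Each downset $D$ of $J(\mathbf L)$ is determined by its intersections $D\cap C_i$, each of which is a downset of the chain $C_i$, and so (using markers for empty or full) is encoded by a single element of $C_i$. This gives a lattice embedding $\iota\colon\mathbf L\hookrightarrow J(\mathbf L)^d$ whose image is the definable set of tuples $(c_1,\dots,c_d)$ (with $c_i\in C_i$) satisfying the cross-chain compatibility condition imposed by the full order on $J(\mathbf L)$. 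For infinite $\mathbf L$, the analog is obtained via the Priestley dual, whose underlying poset has width $\leq d$ (pairwise incomparable prime filters $F_1,\dots,F_{d+1}$ would yield elements of $L$ violating breadth), together with a compactness/directed-limit argument to secure the first-order version of the embedding.

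Let $\mathbf P = (P,{\leq},U_1,\dots,U_d)$ be the colored ordered set with underlying poset $P$ as above, the $U_i$ marking the chains, and $\leq$ the full poset order. Then $\mathbf P$ has width $d$, so the preceding corollary gives $\vc^{\Th(\mathbf P)}(k)=k$ for every $k$. Under $\iota$, the lattice operations on $\mathbf L$ correspond to componentwise $\min$ and $\max$ in each chain, both definable in $\mathbf P$, while the image $\iota(L)\subseteq P^d$ is a definable subset thanks to the cross-chain compatibility condition. Thus $\mathbf L$ is interpretable in $\mathbf P$ on a definable subset of $P^d$, and Corollary~\ref{cor:interp} yields
$$\vc^{\Th(\mathbf L)}(m)\;\leq\;\vc^{\Th(\mathbf P)}(dm)\;\leq\;dm,$$
as required.

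The main obstacle is the infinite-case representation step: one must realize the analog of $J(\mathbf L)$ as a first-order definable colored ordered set of width $d$ in which the image of $\iota$ is first-order definable, avoiding purely topological notions (such as ``clopen upset'') inherent to Priestley duality. A clean workaround is to realize $\iota(L)$ as an externally definable predicate in $\mathbf P$ and pass to the Shelah expansion $\mathbf P^{\Sh}$, which by Corollary~\ref{cor:Shelah expansion} preserves the VC density function of $\Th(\mathbf P)$; this detour is precisely what makes the bound $dm$ ``possibly non-optimal'' relative to the $m$ one would hope for from a pure colored-ordered-set interpretation.
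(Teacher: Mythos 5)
Your high-level strategy matches the paper's exactly: represent $\mathbf L$ via a poset of width $d$, interpret $\mathbf L$ in that poset on a definable subset of its $d$-fold power, and then apply Corollary~\ref{cor:interp} together with the preceding corollary on colored ordered sets. The paper, however, does this in one clean step by invoking a theorem of Schmerl (\cite[Theorem~3]{Schmerl-distributive-lattices}): every distributive lattice of breadth $d$ is isomorphic to the lattice $\mathcal A(\mathbf P)$ of antichains of some ordered set $\mathbf P$ of width $d$, with meet and join given by minimal and maximal elements of unions. Since an antichain in a poset of width $d$ has at most $d$ elements, $\mathcal A(\mathbf P)$ is directly interpretable in $\mathbf P$ on a definable subset of $P^d$, and there is nothing more to do.

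Your proposal has a genuine gap precisely where you flag it: the representation step for infinite $\mathbf L$. The Birkhoff picture $\mathbf L\cong\mathcal D(J(\mathbf L))$ is a finite-lattice theorem, and the infinite analogue (Priestley duality) recovers $L$ as the \emph{clopen} up-sets of a Priestley space, which is inherently topological and not first-order in the underlying poset. Moreover, your encoding of a downset of $J(\mathbf L)$ by a $d$-tuple of ``cutoffs'' in the chains $C_i$ silently assumes that each downset of a chain is determined by a single element (plus a top/bottom marker); in an infinite chain that can fail (downsets need not have a supremum in the chain), so the map $\iota$ need not be well-defined, let alone injective and definably described. The proposed repair via the Shelah expansion also does not close the gap: to invoke Corollary~\ref{cor:Shelah expansion} you would need to exhibit $\iota(L)$ as the trace of a definable set from an elementary extension of $\mathbf P$, and nothing in the argument produces such an external description. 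In effect you are attempting to reprove Schmerl's representation theorem from scratch, and the ``compactness/directed-limit argument to secure the first-order version of the embedding'' is exactly the missing content. The remedy is simply to replace the Birkhoff/Priestley route by the antichain lattice $\mathcal A(\mathbf P)$ and cite Schmerl.
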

\begin{proof}
Let $\mathbf P=(P,{<})$ be an ordered set of width $d$, and let $\mathcal A(\mathbf P)$ be the set of antichains of $\mathbf P$ (so each element of $\mathcal A(\mathbf P)$ is a subset of $P$ of size $\leq d$). For $A,A'\in \mathcal A(\mathbf P)$ let $A\wedge A'$ denote the set of minimal elements of $A\cup A'$ and $A\vee A'$ the set of maximal elements of $A\cup A'$; then $A\wedge A', A\vee A'\in \mathcal A(\mathbf P)$, and $(\mathcal A(\mathbf P),{\wedge},{\vee})$ is a distributive lattice of breadth~$d$. Moreover, one can choose $\mathbf P$ such that the given distributive lattice $\mathbf L$ is isomorphic to~$\mathcal A(\mathbf P)$ \cite[Theorem~3]{Schmerl-distributive-lattices}. Since $\mathcal A(\mathbf P)$ is interpretable in $\mathbf P$ on a definable subset of $P^d$, we have $\vc^{\Th(\mathbf L)}(m)\leq \vc^{\Th(\mathbf P)}(dm)$ by Corollary~\ref{cor:interp} and hence $\vc^{\Th(\mathbf L)}(m)\leq dm$ by the previous corollary.
\end{proof}

In \cite{Simon} it is shown that the complete theory of each infinite tree $\mathbf T$ (viewed as an ordered set) is dp-minimal. Here, a tree is an ordered set $\mathbf T=(T,{<})$ with the property that for each $t\in T$ the set $\{t'\in T:t'< t\}$ is linearly ordered (by the restriction of $<$).

\begin{problem*}
Determine the VC~density function of each (infinite) tree.
\end{problem*}

(It is known \cite{Parigot} that a tree $\mathbf T$ is stable iff $\mathbf T$ has finite height, and then $\mathbf T$ is superstable of $\URk$-rank $\leq \height(\mathbf T)$, so conceivably, the methods of \cite{ADHMS} could  be applied.)

\section{A Strengthening of $\VC{}d$, and $P$-adic Examples}\label{sec:strong VCm}

\noindent
In this section we first introduce a strengthening of the $\VC{}d$ property defined and studied in Section~\ref{sec:VCm property}, and we prove a more precise version of Theorem~\ref{VCdensity, 2} for strong $\VC{}d$ structures.
The extra precision afforded by this theorem is useful in situations where $\vc(1)=1$, yet we can only prove the $\VC{}d$ property for some $d>1$. This is the case for $P$-minimal theories, which are discussed in the last subsection, where we prove Theorem~\ref{thm:p-adic} from the introduction.

\subsection{The strong $\VC{}d$ property}
In the following $\mathbf M$ is a structure in a language~$\mathcal L$ and $\Delta=\Delta(x;y)$ is a finite non-empty set of partitioned $\mathcal L$-formulas.
Let $\mathcal F=(\mathcal F_i)_{i\in I}$ be a uniform definition of $\Delta(x;B)$-types over finite sets, where
$$\mathcal F_i=\big(\varphi_i(y;y_1,\ldots,y_d)\big)_{\ph\in\Delta}\qquad (i\in I).$$ 
Recall from  Definition~\ref{def:UDTFS} above that this means the following: for every finite set $B\subseteq M^{\abs{y}}$
and $q\in S^\Delta(B)$ there are $b_1,\ldots,b_d \in B$ and some $i\in I$ such that $\mathcal F_i(y;b_1,\dots,b_d)$
defines $q$. 
If in addition for {\it every}\/ choice of $b_1,\ldots,b_d \in M^{\abs{y}}$ and $i\in I$, the set 
$$p_i(x;b_1,\dots,b_d):=\big\{\varphi(x;b) : \ph\in\Delta,\ b\in M^{\abs{y}},\ \mathbf M\models \varphi_{i}(b;b_1,\dots,b_d)\big\}$$
of $\mathcal L(M)$-formulas is consistent \textup{(}with $\mathbf M$\textup{)}, then we say that  $\mathcal F$ is a 
\emph{coherent}  definition of $\Delta(x;B)$-types over finite sets. (In this case, every restriction of $p_i(x;b_1,\dots,b_d)$ to a finite parameter set $B\subseteq M^{\abs{y}}$ extends to a complete $\Delta(x;B)$-type, but $\mathcal F_i(y;b_1,\dots,b_d)$ does not in general define such an extension.)

\begin{remark*}
Often all our defining formulas $\varphi_i$ have the syntactic form
$$\varphi_i(y;y_1,\ldots,y_d) = \forall x \big(\chi_{i}(x;y_1,\dots,y_d)\rightarrow \varphi(x;y)\big)$$
where $\chi_{i}$ is an $\mathcal L$-formula. In this case, the coherency condition for $\mathcal F$ is automatically satisfied provided  $\chi_i^{\mathbf M}(x;\overline{b})\neq\emptyset$ for all $i\in I$ and $\overline{b}\in (M^{\abs{y}})^d$.
For example if $\mathcal S_\Delta$ has breadth $d$ and is $d$-consistent, then $\Delta$ has a coherent definition of $\Delta(x;B)$-types over finite sets with $d$ parameters. (See Lemma~\ref{lem:breadth and UDTFS}.)
\end{remark*}

We say that $\mathbf M$ has the {\em strong $\VC{}d$ property} if any $\Delta(x;y)$ with $\abs{x}=1$ has  a coherent definition of $\Delta(x;B)$-types over finite sets with $d$~parameters.
Clearly, the strong $\VC{}d$ property is a property of the elementary theory of $\mathbf M$. We say that a theory $T$ has the {\em strong $\VC{}d$ property} if every model of $T$ has the strong $\VC{}d$ property.

\begin{remark*}
Suppose $\abs{x}=1$ and $\mathcal F=(\mathcal F_i)_{i\in I}$ is a coherent definition of $\Delta(x;B)$-types over finite sets in $\mathbf M$, where
$\mathcal F_i=(\varphi_i)_{\varphi\in\Delta}$. Then for every  $\Delta'\subseteq\Delta$, the families $\mathcal F_i':=(\varphi_i)_{\varphi\in\Delta'}$ form a 
coherent definition of $\Delta'(x;B)$-types over finite sets in $\mathbf M$.
This shows in particular that in order to check that $\mathbf M$ has the strong~$\VC{}d$~property, one may restrict oneself to sets of $\mathcal L$-formulas $\Delta$ which are closed under negation.
\end{remark*}

In the rest of this subsection we assume that $M$ is infinite. We have the following result on counting types in structures with the strong $\VC{}d$ property:

\begin{theorem}\label{VCdensity}
Suppose that $\mathbf M$ has the strong $\VC{}d$ property, and let
$r\in\bR$ such that
$$\pi_\Delta^*(t)=O(t^r)\qquad\text{for every  $\Delta(x;y)$ with $\abs{x}=1$.}$$ 
Then we have
$$\pi_\Delta^*(t)=O(t^{d(\abs{x}-1)+r})\qquad\text{for  every $\Delta(x;y)$.}$$
\end{theorem}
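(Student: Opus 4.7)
The proof proceeds by induction on $m := \abs{x}$, closely paralleling Theorem~\ref{VCdensity, 2} but tracking the sharper bound $r$ in the base case instead of the cruder $d$ afforded by UDTFS. The base case $m = 1$ is immediate from the hypothesis $\pi^*_\Delta(t)=O(t^r)$.

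For $m \geq 2$, split $x = (x_0, x')$ with $\abs{x_0} = 1$ and consider
$$\Delta_0(x_0; x', y) := \{\varphi(x_0; x', y) : \varphi \in \Delta\}.$$
Apply the strong $\VC{}d$ property to $\Delta_0$: choose a coherent uniform definition $\mathcal F = (\mathcal F_i)_{i \in I}$ of $\Delta_0$-types over finite sets with $d$ parameters, where
$\mathcal F_i = \bigl(\varphi_i(x', y; x'_1, y_1, \dots, x'_d, y_d)\bigr)_{\varphi \in \Delta}$. For each $i \in I$ and $\varphi \in \Delta$, define
$$\psi_i^\varphi(x'; y, y_1, \dots, y_d) := \varphi_i(x', y; x', y_1, \dots, x', y_d),$$
obtained by identifying each defining-parameter $x'$-slot with the object variable $x'$, and set $\Delta_i := \{\psi_i^\varphi : \varphi \in \Delta\}$, a finite set of partitioned $\mathcal L$-formulas with $m-1$ object variables and parameter variables $(y, y_1, \dots, y_d)$ of total length $(d+1)\abs{y}$.

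Fix $B \subseteq M^{\abs{y}}$ with $\abs{B} = t$. For each $p \in S^\Delta(B)$ choose a realization $(a_0, a') \in M \times M^{m-1}$ of $p$. The scheme $\mathcal F$ applied to $\Delta_0$ and the parameter set $a'B$ yields some $i \in I$ and $\overline{b} = (b_1, \dots, b_d) \in B^d$ such that $\mathcal F_i$ at defining parameters $(a', b_1), \dots, (a', b_d)$ defines $\tp^{\Delta_0}(a_0/a'B)$. Setting $q := \tp^{\Delta_i}(a'/B\overline{b}) \in S^{\Delta_i}(B\overline{b})$, one has for every $\varphi \in \Delta$ and every $b \in B$,
$$\varphi(x; b) \in p \ \Longleftrightarrow\ \mathbf M \models \varphi_i(a', b; a', b_1, \dots, a', b_d) \ \Longleftrightarrow\ \psi_i^\varphi(x'; b, \overline{b}) \in q,$$
so $p$ is recoverable from $(i, \overline{b}, q)$ via the equivalence on the right, and hence the map $p \mapsto (i, \overline{b}, q)$ is injective.

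It follows that $\pi^*_\Delta(t) \leq \abs{I} \cdot t^d \cdot \max_{i\in I}\pi^*_{\Delta_i}(t)$. The induction hypothesis applied to each $\Delta_i$ (noting that strong $\VC{}d$ and the single-variable count bound are properties of $\mathbf M$ and not of any particular formula set) yields $\pi^*_{\Delta_i}(t) = O(t^{d(m-2) + r})$, and multiplying gives $\pi^*_\Delta(t) = O(t^{d(m-1) + r})$, as required. The only real technical maneuver is the substitution defining $\Delta_i$, which converts a ``scheme-object'' $x'$-slot into a genuine object variable for a new formula set with parameter structure independent of any realization, so the induction hypothesis applies cleanly; the coherency clause in strong $\VC{}d$, while natural and useful in applications (e.g.\ the $p$-adic case in the following subsection), is not strictly needed for this counting argument.
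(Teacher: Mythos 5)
Your proof is correct and the bound comes out right, but you take a genuinely different route from the paper, and it is worth spelling out the distinction.

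The paper's proof organizes the induction around \emph{sets of representatives}: for each type $p\in S^{\Delta}(B)$ one finds $(i,\ob b)$ and a representative $\alpha\in R_i(B\ob b)$ for the $\Delta_i$-type of the projection $a'$, and then one must \emph{realize} the transferred $1$-variable type $p_i(x_0;\alpha,\ob b)\restrict\alpha B$ by some element $\delta_{i,\alpha,\ob b}$ so that $(\delta_{i,\alpha,\ob b},\alpha)$ is a witness of the right $\Delta(x;B)$-type. Since $\alpha$ is generally \emph{not} the original $a'$, consistency of $p_i(x_0;\alpha,\ob b)$ is not automatic; this is exactly where the coherency clause of the strong $\VC d$ property enters, and why the paper also assumes $\Delta$ closed under negation (to force the defined $1$-type to be decided at every $(b,\ob b)$ with $b\in B$). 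Your argument sidesteps all of this: rather than producing witnesses, you set up the injection $p\mapsto(i,\ob b,\tp^{\Delta_i}(a'/B\ob b))$ directly on type spaces, verify that $p$ is recoverable from the target via the chain of equivalences, and then count. No element ever needs to be realized for a shifted center $\alpha$, so coherency never comes into play, and no closure of $\Delta$ under negation is needed either.

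Your closing observation is correct and worth recording: the counting argument as you have written it uses only the $\VC d$ property (UDTFS with $d$ parameters in one object variable), not the coherency that defines the \emph{strong} $\VC d$ property. The paper's proof technique, by routing through representative elements, genuinely requires coherency; your injection does not, so the theorem as stated has a slightly stronger hypothesis than your proof needs. Two small remarks on presentation: the identification $\psi_i^\varphi(x';y,y_1,\dots,y_d):=\varphi_i(x',y;x',y_1,\dots,x',y_d)$ you make explicit is exactly what the paper suppresses in its notation $\varphi_i(x',y;y_1,\dots,y_d)$ (where the $x'$-slots of the defining parameters have already been silently identified), so the two $\Delta_i$'s agree; and your injectivity argument should technically be phrased as ``if two types map to the same triple they are equal,'' which you do correctly in the recoverability remark. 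Overall this is a cleaner proof than the paper's.
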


The proof of Theorem~\ref{VCdensity} proceeds by counting types via sets of representatives (and with an induction supported by the strong~VC{}$d$~property):
given a finite set $B\subseteq M^{\abs{y}}$, we say that 
$R\subseteq M^{\abs{x}}$ is a {\em set of representatives for $S^\Delta(B)$} if for every 
$q \in S^\Delta(B)$ there is $\alpha \in R$ realizing~$q$.
Equivalently, for every $a\in M^{\abs{x}}$ there is $\alpha \in R$ such that for every $\ph\in \Delta$ and every $b\in B$,
$\mathbf M \models \ph(a;b)$ if and only if $\mathbf M\models \ph(\alpha;b)$. Thus $\abs{S^\Delta(B)}\leq K$ iff there is a set of representatives for $S^\Delta(B)$ of size at most $K$.  

\begin{proof}
The proof is similar to that of Theorem~\ref{VCdensity, 2}.
We again induct on $m=\abs{x}$, with the case $m=1$ holding by hypothesis. For the inductive step write $x=(x_0,x')$ where $x'=(x_1,\dots,x_m)$, and let $\Delta(x;y)$ be given. We may assume that $\Delta$ is closed under negation.
As in the proof of Theorem~\ref{VCdensity, 2} let
$$
   \Delta_0(x_0;x',y) = \{\ph(x_0;x',y) : \ph(x;y)\in \Delta \}.
$$
By the strong $\VC{}d$ property applied to $\Delta_0$, we can take finitely many families
$$\mathcal F_{i} = \big(\ph_i(x',y;y_1,\ldots,y_d)\big)_{\varphi\in\Delta}\qquad (i\in I)$$ of 
$\mathcal L$-formulas with the following two properties:
for any $a'\in M^m$, any finite $B\subseteq M^{\abs{y}}$ and
any $q \in S^{\Delta_0}(a'B)$, there are $b_1,\ldots,b_d \in B$ and $i\in I$ 
such that $\mathcal F_{i}(a',y;b_1,\dots,b_d)$ defines $q$, i.e., for all $\ph\in\Delta$, $b\in B$:
\begin{equation}\label{eq:VCm, 4}
\varphi(x_0;a',b)\in q	\qquad\Longleftrightarrow\qquad \mathbf M\models \varphi_{i}(a',b;b_1,\dots,b_d);
\end{equation}
and for all $i\in I$, $a'\in M^m$ and $\bb\in (M^{\abs{y}})^d$,
the set
$$p_i(x_0;a',\bb):=\big\{\varphi(x_0;a',b) : \ph\in\Delta,\ b\in M^{\abs{y}},\ \mathbf M\models \varphi_{i}(a',b;\bb)\big\}$$
of $\mathcal L(M)$-formulas is consistent \textup{(}with $\mathbf M$\textup{)}.
In the rest of this proof let $\varphi$ range over $\Delta$ and $i$ over $I$.
For each $i$, let 
$$
	\Delta_i(x';y,y_1,\ldots,y_d) =\big\{\ph_i(x';y,y_1,\ldots,y_d) : \ph(x;y) \in \Delta\big\}
$$
and apply the inductive hypothesis to each $\Delta_i$. Thus there are constants $K_i$ such that
for any finite $C\subseteq (M^{\abs{y}})^{(d+1)}$ there is a set of representatives for
$S^{\Delta_i}(C)$ of size at most $K_i \abs{C}^{d(m-1)+r}$.

Now let a finite  $B\subseteq M^{\abs{y}}$ be given. We let $b$ range over $B$ and $\bb=(b_1,\ldots,b_d)$ over $B^d$. For each $\bb$ and each $i$, let
$R_i(B\bb)$ be a set of representatives for $S^{\Delta_i}(B\bb)$. Thus for any $a'\in M^m$ and $i$ there is some $\alpha\in R_i(B\bb)$ such that for any $\ph$ and $b$,
\begin{equation}\label{eq:VCm, 5}
	\mathbf M\models \ph_i(a';b,\bb) \qquad\Longleftrightarrow\qquad \mathbf M\models \ph_i(\alpha;b,\bb).
\end{equation}
Notice that there are $\abs{B}^d$ sequences $\bb$, and $\abs{B\bb}=\abs{B}$ for each $\bb$. 
As above, we may suppose  $|R_i(B\bb)|\leq K_i|B\bb|^{d(m-1)+r}=K_i|B|^{d(m-1)+r}$.

For each $i$, given $\alpha\in M^m$ and $\bb$, let $\delta_{i, \alpha,\bb}\in M$ realize the restriction of the type $p_i(x_0;\alpha,\bb)$ to the (finite) parameter set $\alpha B$. Let 
$$
	R_\Delta = \big\{ (\delta_{i,\alpha,\bb},\alpha) :i\in I,\ 
	\bb\in B^d,\ \alpha\in R_i(B\bb)\big\}
$$
and observe that 
$$
	|R_\Delta | \le \sum_{i} |B|^d\, K_i|B|^{d(m-1)+r} =  \left( \sum_{i} K_i\right) |B|^{md+r}.
$$
Thus we are finished once we have shown that $R_{\Delta}$ is a set of representatives for $S^\Delta(B)$.
For this, let $a=(a_0,a')\in M^{1+m}$ be given; we need to show that there is an element $(\delta_{i,\alpha,\bb},\alpha)\in R_{\Delta}$ such that for every $\ph$ and every $b$,
\begin{equation}\label{eq:VCm, 6}
\mathbf M\models \ph(a;b)\qquad\Longleftrightarrow\qquad\mathbf M\models \ph(\delta_{i,\alpha,\bb},\alpha;b).
\end{equation}
Let  $q=\tp^{\Delta_0}(a_0/a'B)\in S^{\Delta_0}(a'B)$. Take  $i$ 
and $\bb$ such that \eqref{eq:VCm, 4} holds for all $\varphi$ and all $b$, and then take a representative $\alpha\in R_i(B\bb)$ for $\tp^{\Delta_i}(a'/B\bb)$. 
Note that by \eqref{eq:VCm, 4} and since $\Delta$ is assumed to be closed under negation, for each $\ph$ and $b$, either $\mathbf M\models \varphi_i(a',b;\bb)$ or 
$\mathbf M\models \psi_i(a',b;\bb)$, where $\psi\in\Delta$ is equivalent to $\neg\varphi$;
hence also either $\mathbf M\models \varphi_i(\alpha;b,\bb)$ or 
$\mathbf M\models \psi_i(\alpha;b,\bb)$, by \eqref{eq:VCm, 5}, and thus
\begin{equation}\label{eq:VCm, 7}
\mathbf M\models\ph_i(\alpha;b,\bb)  \qquad\Longleftrightarrow\qquad \mathbf M\models \varphi(\delta_{i,\alpha,\bb};\alpha,b),
\end{equation}
by choice of $\delta_{i, \alpha,\bb}$. Combining \eqref{eq:VCm, 4}, \eqref{eq:VCm, 5} and \eqref{eq:VCm, 7} now yields \eqref{eq:VCm, 6} as required.
\end{proof}

In each of the cases treated in Theorems~\ref{weaklyomin} and \ref{quasiomin} one can show that the theory in question has, indeed, the {\it strong}\/ $\VC{}1$ property. Since this is of limited interest for computing VC~density (the $\VC{}1$ property already gives the optimal result $\vc(m)=m$), we do not give the details, and instead now turn to an application of Theorem~\ref{VCdensity} to $p$-adic examples. 

\subsection{$P$-minimal theories}\label{sec:Pmin}
Let $\calL_{{\rings}}$ be the language of rings, let 
$\calL_p=\calL_{{\rings}} \cup \{P_n:n>1\}$ where the $P_n$ are unary predicates, and let
$\calL$ be a language containing $\calL_p$. Here and below, $p$ is a fixed prime number.
Let $\pCF$ denote the $\calL_p$-theory  of $\bQ_p$, where each $P_n$ is interpreted as the set of $n$th powers in $\bQ_p$:
$$\bQ_p\models \forall x\big(P_n(x) \leftrightarrow \exists y(y^n=x)\big).$$
By a theorem of Macintyre \cite{mac}, $\pCF$ has elimination of quantifiers.
Following \cite{hm}, an $\calL$-theory $T$ containing $\pCF$ is called {\em $P$-minimal} if, in every 
model of $T$, every definable subset in one variable is quantifier-free definable just using the language~$\calL_p$. (In fact, the setting of \cite{hm} also allowed for $p$-adically closed fields of arbitrary fixed $p$-rank, and our methods here could be adjusted to that.)

By \cite{dhm}, a motivating example of a $P$-minimal theory is the theory
$\pCF_{\an}$ first investigated by Denef and van den Dries \cite{dd}. This is the theory of the $p$-adic numbers equipped with, for every $n>0$ and power series
$\sum_\nu a_\nu X^\nu\in\bQ_p[[X]]$ such that $\abs{a_\nu}\to 0$ as $\abs{\nu}\to \infty$, a function symbol
 $f$ of arity $n$ taking value identically zero off ${\mathbb Z}_p^n$, and such that
$f(x)=\sum_\nu a_\nu x^\nu$ for all $x \in \bZ_p^n$. (Here,
$X=(X_1,\dots,X_n)$,
$\abs{a}$ denotes the $p$-adic norm of $a\in\bQ_p$, 
$\nu=(\nu_1,\ldots,\nu_n)\in {\mathbb N}^n$ is a multi-index,  $\abs{\nu}=\nu_1+\dots+\nu_n$, and $x^\nu=x_1^{\nu_1}\cdots x_n^{\nu_n}$.)

\medskip

Our main result about VC density in $P$-minimal theories is:

\begin{theorem}\label{Pmin}
Let $T$ be a $P$-minimal $\calL$-theory with definable Skolem functions. Then $T$ has the strong~$\VC{}2$~property, and  
any finite set $\Delta(x;y)$ of $\mathcal L$-formulas has dual VC density at most $2\abs{x}-1$.
\end{theorem}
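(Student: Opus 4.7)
The plan is to apply Theorem~\ref{VCdensity} with $d=2$ and $r=1$. This reduces Theorem~\ref{Pmin} to two claims about a model $\mathbf{M}\models T$ and formulas with a single object variable: (i) $T$ has the strong $\VC{}2$ property; and (ii) $\pi^*_\Delta(t)=O(t)$ for every finite $\Delta(x;y)$ with $\abs{x}=1$. Granted these, Theorem~\ref{VCdensity} yields $\pi^*_\Delta(t)=O(t^{2(\abs{x}-1)+1})=O(t^{2\abs{x}-1})$ for every finite $\Delta(x;y)$, giving the dual VC~density bound.

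The first preparatory step is to leverage Macintyre's quantifier elimination for $\pCF$, together with $P$-minimality and the hypothesized definable Skolem functions, to obtain a uniform $p$-adic cell decomposition of $1$-variable formulas. Concretely, for any $\mathcal{L}$-formula $\varphi(x;y)$ with $\abs{x}=1$, one writes $\varphi(M;b)$ uniformly in $b$ as a disjoint union of boundedly many cells of the shape
\[ \big\{x\in M : v(\alpha(b))\,\square_1\,v(x-c(b))\,\square_2\,v(\beta(b)) \ \wedge\ P_n\big(\lambda(b)(x-c(b))\big)\big\}, \]
where $\alpha,\beta,c,\lambda$ are $\emptyset$-definable Skolem-type functions coming from parameters via $T$, and $n$ together with $\square_i\in\{<,\leq\}$ are fixed by $\varphi$. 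This reduces the analysis to two primitives: valuation balls and $P_n$-coset conditions on affine functions of $x$.

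For the counting bound (ii), the ball part of a cell contributes only linearly to $\abs{S^\Delta(B)}$ by the breadth-$1$ property of the family of balls (Example~\ref{ex:balls}). The $P_n$-part looks combinatorially threatening, but ultrametric compatibility tames it: for $b,b'\in B$ and any realization $a$ of a prospective $\Delta(x;B)$-type, if $v(a-b)<v(a-b')$ then $v(b-b')=v(a-b)$, and the $P_n$-class of $\lambda(a-b')$ is forced by that of $\lambda(a-b)$ together with $b$ and $b'$. This collapses the naively exponential combinatorial count of compatible $P_n$-data into a linear one. Summing the two contributions and allowing for the fixed bounded number of cells per formula gives $\abs{S^\Delta(B)}=O(\abs{B})$.

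For the strong $\VC{}2$ property (i), the same cell decomposition, augmented by Skolem functions, lets us express the defining formulas in the form $\forall x(\chi(x;y_1,y_2)\to\varphi(x;y))$, where $y_1,y_2$ are two parameters from the finite set $B$ extracted by Skolem functions from a realization $a$: one witnesses the ball (via a closest element of $B$ to $a$, pinning down $c$, $\alpha$, $\beta$), and the other pins down the $P_n$-residue datum. Since $\chi$ asserts that $x$ lies in a specified non-empty cell, the coherence condition is automatic (as in the remark following the definition of coherent UDTFS). The main obstacle will be the precise bookkeeping for the $P_n$-component: verifying that, across all cells and all $n$ arising in~$\Delta$, two parameters from $B$ suffice to specify the required $P_n$-class data uniformly and coherently. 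This interaction between the ultrametric and $P_n$-residue structure is the technical crux of the proof, and explains why $d=2$ appears here instead of $d=1$ as in the weakly o-minimal case of Theorem~\ref{weaklyomin}, where no residue component is present.
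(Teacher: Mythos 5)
Your overall framework is the right one: the theorem does follow by applying Theorem~\ref{VCdensity} with $d=2$ and $r=1$, and reducing to one object variable via $p$-adic cell decomposition is exactly the starting point of the paper's argument. But both of your one-variable claims have genuine gaps. For the linear count, the assertion that the $P_n$-class of $\lambda(a-b')$ is ``forced by that of $\lambda(a-b)$ together with $b$ and $b'$'' when $v(a-b)<v(a-b')$ is false as stated: in that case $v(b-b')=v(a-b)$, so the two summands of $a-b'=(a-b)+(b-b')$ have \emph{equal} valuation, and the $P_n^\times$-coset of the sum genuinely depends on $a$. What actually makes the residue data rigid is the Hensel-type Lemma~\ref{2.3}, which determines the coset of $(x-a)(y-a)^{-1}$ only once there is a valuation gap of size $2v(n)$. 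The paper therefore does not propagate $P_n$-classes from a nearest point; instead it introduces \emph{special balls} (there are $O(\abs{B})$ of them, each defined over at most two parameters, via Lemma~\ref{lem:number of balls}) and \emph{near balls} at bounded graph distance $n+4v(n)+2$ from a special ball, and shows through a case analysis that every complete $\Delta$-type not pinned down by a center or a degenerate cell contains a near ball. The $2v(n)$ slack built into the distance is exactly what makes the residue condition constant on the near ball, and this is the step your sketch leaves unresolved.

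The parameter allocation you propose for the strong $\VC{}2$ property is also not the one that works. You suggest one parameter to pin down the ball and one to pin down the $P_n$-residue, but a single $b\in B$ cannot specify the radius of the relevant ball: the critical radii are of the form $v(c_i(b)-c_j(b'))$, which require \emph{two} tuples from $B$. In the paper's families $\mathcal F^{(1)}_i$, both parameters are spent locating a near ball; the residue information then comes for free from Lemma~\ref{2.3} once the ball has sufficiently large radius relative to the special balls, and separate one-parameter families $\mathcal F^{(2)}_i$, $\mathcal F^{(3)}_i$ handle the cases where the type is a center or lies in a degenerate cell. You correctly flag the $P_n$-bookkeeping as the technical crux, but the resolution is not to devote a parameter to it; doing so would still leave the radius unspecified, so the proposed two-parameter split cannot be made to work.
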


Before proving this theorem, we introduce some notation and establish some auxiliary facts.  We fix a model $K$ of $\pCF$, with valuation $v\colon K\to\Gamma_\infty$. We view $\mathbb Z$ as a convex subgroup of $\Gamma$, by identifying $1$ with $v(p)$. In the following, by a {\em ball}\/ in $K$ we always mean a closed ball, i.e.,
a set of the form 
$$B=B_\rho(a) = \big\{ x\in K : v(x-c)\ge \rho \big\}\qquad\text{where $c\in K$ and $\rho \in \Gamma$.}$$ 
Its {\em radius}, denoted $\rad(B)$, is $\rho$. By convention $\rad(K):=-\infty$.
Let $\B$ denote the set of all balls in $K$. There is a natural semilinear partial order on  ${\B}$, with $B\leq B'$ if and only
if $B \supseteq B'$. 
A ball $B=B_\rho(a)$ as above has a unique immediate predecessor, namely $B_{\rho-1}(a)$, and $p$ immediate successors, namely 
$B_{\rho+1}(a_i)$ where $a_i=a+ir$ for $i=0,\dots,p-1$; here $r$ is an arbitrary element of $K$ with $v(r)=\rho$.
Thus,
if we form a graph with vertex set ${\B}$, with 
vertices $B$, $B'$ adjacent if and only if one of $B$, $B'$ is an immediate successor of the other 
in the partial order, each of its connected components is an unrooted tree of valency $p+1$. For $B,B' \in {\B}$ we write $\dist(B,B')=d$ if $B$ and $B'$ are at distance $d$ in this graph; for each $B\in\B$, there are $(p+1)^d$ balls at distance $d$ to $B$, and $\beta_d:=\sum_{i=0}^d (p+1)^i=\frac{1}{p}((p+1)^{d+1}-1)$ balls at distance at most $d$ to $B$.
Note that $\dist$ is a metric on each connected component of $\B$.  

\begin{lemma}\label{lem:number of balls}
Let $A\subseteq K$ be finite, $A\neq\emptyset$. Then there at most $\abs{A}-1$ distinct balls of the form $B_{v(a-b)}(a)$ where $a,b\in A$, $a\neq b$.
\end{lemma}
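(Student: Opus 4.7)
\smallskip

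\noindent
\textbf{Proof plan.} I would argue by induction on $n:=\abs{A}$. The base case $n=1$ is vacuous since there are no pairs $a\neq b$ in $A$, and $\abs{A}-1=0$. For the inductive step let $n\geq 2$ and denote by $\mathcal C$ the set of balls in question, i.e., $\mathcal C=\{B_{v(a-b)}(a):a,b\in A,\ a\neq b\}$. Note that since $\rad B_{v(a-b)}(a)=v(a-b)$ and $a,b\in B_{v(a-b)}(a)$, this ball is in fact the \emph{smallest} ball containing both $a$ and $b$.

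The key move is to identify a unique minimal enclosing ball. Set $\rho:=\min\{v(a-b):a,b\in A,\ a\neq b\}$ and pick any $a_0,b_0\in A$ with $v(a_0-b_0)=\rho$. Then $B^*:=B_\rho(a_0)=B_{v(a_0-b_0)}(a_0)$ lies in $\mathcal C$, contains every element of $A$ (because $v(a-a_0)\geq\rho$ for all $a\in A$ by choice of $\rho$), and is the smallest ball with this property (any ball containing both $a_0$ and $b_0$ has radius at most $\rho$). The next step is to observe that every $B\in\mathcal C$ is contained in $B^*$: indeed, $B=B_{v(a-b)}(a)$ for some $a,b\in A$ is the smallest ball containing $a$ and $b$, hence $B\subseteq B^*$.

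Now partition $A$ by the immediate successors of $B^*$. Let $B_1,\dots,B_k$ be those immediate successors of $B^*$ that meet $A$, and set $A_i:=A\cap B_i$, so that $A=A_1\sqcup\cdots\sqcup A_k$. Since $A$ cannot lie in a single successor of $B^*$ (such a successor is a proper sub-ball of $B^*$, contradicting minimality of $B^*$), we have $k\geq 2$. If $B\in\mathcal C$ is distinct from $B^*$, then $B\subsetneq B^*$, so $\rad(B)\geq\rad(B^*)+1$ and hence $B$ is contained in some immediate successor of $B^*$; since $B$ contains at least two elements of $A$, that successor meets $A$ and so equals some $B_i$. Writing $\mathcal C_i$ for the set of balls of the form $B_{v(a-b)}(a)$ with $a,b\in A_i$ distinct, the same argument shows that $\mathcal C\setminus\{B^*\}=\mathcal C_1\sqcup\cdots\sqcup\mathcal C_k$ (the union is disjoint since distinct $B_i$'s are disjoint).

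The conclusion is then immediate by induction: $\abs{A_i}<n$ for each $i$, so $\abs{\mathcal C_i}\leq\abs{A_i}-1$, giving
\[
\abs{\mathcal C}=1+\sum_{i=1}^{k}\abs{\mathcal C_i}\leq 1+\sum_{i=1}^{k}(\abs{A_i}-1)=1+\abs{A}-k\leq\abs{A}-1,
\]
as desired. There is no real obstacle here; the only thing to get right is the unique existence of the minimal enclosing ball $B^*$, which rests on the ultrametric inequality via the fact that any two balls in $K$ are either disjoint or comparable (cf.\ Example~\ref{ex:balls}).
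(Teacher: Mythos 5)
Your proof is correct. It proceeds by induction on $\abs{A}$, isolating the minimal ball $B^*$ enclosing all of $A$ (which lies in $\mathcal C$), partitioning $A$ according to the immediate successors of $B^*$, and recursing into each block. The paper takes a different but structurally related route: it builds the tree $\mathcal B_A$ spanned by the pairwise disjoint ``leaf'' balls $B_{\mu_a}(a)$ (one per $a\in A$), observes that each ball $B_{v(a-b)}(a)$ is a vertex of this tree and that all but one of them have degree $\geq 3$, and then invokes the standard formula that a finite tree has exactly $2+\sum_{\deg(v)>2}(\deg(v)-2)$ leaves; with $\abs{A}$ leaves this caps the number of degree-$\geq 3$ vertices at $\abs{A}-2$, giving the bound $\abs{A}-1$. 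Both arguments exploit the same underlying tree of balls, but yours replaces the appeal to the leaf-degree formula with a self-contained induction, which is somewhat more elementary and arguably easier to check; the paper's version is shorter but relies on a black-box combinatorial identity. One small point worth making explicit if you were to polish this: the step ``$B\subsetneq B^*$ implies $\rad(B)\geq\rad(B^*)+1$'' uses that the value group of a model of $\pCF$ is a $\mathbb Z$-group, so $1=v(p)$ is the least positive element, which justifies passing from $\rad(B)>\rad(B^*)$ to $\rad(B)\geq\rad(B^*)+1$ and hence to containment in a unique immediate successor of $B^*$.
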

\begin{proof}
We may assume $\abs{A}>1$.
For $a\in A$ set $$\mu_a:=1+\max\big\{v(a-b):b\in A,\ a\neq b\big\}.$$
Let $\mathcal B_A$ be the smallest connected subgraph of $\mathcal B$ containing all $B_{\mu_a}(a)$, $a\in A$. Note that the balls $B_{\mu_a}(a)$, $a\in A$, are pairwise disjoint; in particular, they are the leaves of the tree $\mathcal B_A$. The balls  $B_{v(a-b)}(a)$, where $a,b\in A$, $a\neq b$ are vertices of $\mathcal B_A$, and all but  one of them has degree greater than $2$.
Now use the fact that any (undirected) tree with finite vertex set $V$ with $\abs{V}>1$ has $2+\sum_{v\in V, \deg(v)>2} (\deg(v)-2)$ leaves. (This follows immediately from the well-known formula $2(\abs{V}-1)=\sum_{v\in V} \deg(v)$.)
\end{proof}

We also recall the following basic fact  (a consequence of the Newton formulation of Hensel's Lemma, see \cite[Lemma~2.3]{hm}) about the  subgroups $P_n^\times=P_n\setminus\{0\}$ of the multiplicative group  $K^\times=K\setminus\{0\}$ of~$K$:

\begin{lemma}\label{2.3}
Suppose $n>1$, and let $x,y,a\in K$ with $v(y-x)>2v(n)+v(y-a)$. Then $(x-a)(y-a)^{-1} \in P_n^\times$.
\end{lemma}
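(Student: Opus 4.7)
The plan is to reduce the statement to a single application of Hensel's Lemma (in its Newton formulation) to the polynomial $f(T) = T^n - u$, where $u := (x-a)(y-a)^{-1}$. First I would note that the hypothesis $v(y-x) > 2v(n) + v(y-a)$ already forces $v(y-a) < \infty$, i.e.\ $y \neq a$, so $u$ is a well-defined element of $K$; it is enough then to produce some $t \in K^\times$ with $t^n = u$.

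The key computation is to rewrite
$$u - 1 \ = \ \frac{x-a}{y-a} - 1 \ = \ \frac{x - y}{y - a},$$
so that $v(u-1) = v(x-y) - v(y-a) > 2v(n)$ by hypothesis. In particular $v(u-1) > 0$, so $u$ is a $1$-unit and hence nonzero. I would then apply Hensel's Lemma to $f(T) = T^n - u$ with initial approximation $T_0 = 1$: one has $v(f(1)) = v(1-u) > 2v(n) = 2v(f'(1))$ since $f'(T) = nT^{n-1}$ and $f'(1) = n$. This is exactly the Newton condition, so $f$ has a root $t \in K$, giving $t^n = u \in P_n^\times$ as desired.

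The only real obstacle here is bookkeeping: one must make sure that the hypothesis is used in the exact form $v(f(T_0)) > 2 v(f'(T_0))$, and that no extra assumption on the residue characteristic is needed (the factor $2v(n)$ absorbs the wild case $p \mid n$). Since Macintyre's language and $\pCF$ provide a complete Hensel valued field, the Newton form of Hensel's Lemma applies uniformly in any model, so no further model-theoretic input is required; the statement and proof are purely valuation-theoretic, and the reference to \cite{hm} merely records the same calculation in a slightly different packaging.
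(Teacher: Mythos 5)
Your argument is correct and is exactly the route the paper points at: the paper states the lemma as "a consequence of the Newton formulation of Hensel's Lemma" and outsources the verification to \cite{hm}, while you carry out that verification. The reduction to $f(T)=T^n-u$ with $T_0=1$, the computation $u-1=(x-y)/(y-a)$ yielding $v(u-1)>2v(n)=2v(f'(1))$, and the observation that the hypothesis rules out $x=a$ and forces $v(u)=0$, are all what is needed. One small caveat: models of $\pCF$ are $p$-adically closed (hence henselian) but need not be complete as valued fields, so you should say "henselian" rather than "complete"; this is purely terminological and does not affect the argument, since the Newton form of Hensel's Lemma is valid in any henselian valued field.
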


Suppose now that $T$ is an $\calL$-theory satisfying the hypothesis of Theorem~\ref{Pmin}, and $K\models T$. Employing definability of Skolem functions and the explicit description of immediate predecessors and successors in the partial order of $\B$ given above, one easily shows, by induction on~$d$:

\begin{lemma}\label{lem:defining balls}
Let $(B_b)_{b\in K^m}$ be a $\emptyset$-definable family of subsets of $K$. 
Then there exist $\emptyset$-definable functions $c_i,r_i\colon K^m\to K$, $i\in\mathbb N$, with the following property: if $b\in K^m$ is such that $B=B_b$ is a ball in $K$, then the balls $B_{v(r_i(b))}(c_i(b))$, where $i=1,\dots,\beta_d$, are exactly the balls of distance at most $d$ to $B$.
\end{lemma}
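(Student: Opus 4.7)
The plan is to proceed by induction on $d$, as signaled by the authors, invoking definable Skolem functions together with the explicit $\emptyset$-definable description of immediate predecessors and successors in the graph $\B$ recalled in the paragraph preceding the lemma.

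For the base case $d=0$ I must produce a single pair $(c_1,r_1)$ with $B_{v(r_1(b))}(c_1(b))=B_b$ whenever $B_b$ is a ball. Writing $B_b=B_\rho(a)$, surjectivity of $v\colon K\twoheadrightarrow\Gamma$ in a $p$-adically closed field gives $\rho=\min\{v(x-y):x,y\in B_b,\ x\neq y\}$, since for any $x$ with $v(x-a)=\rho$ and $y=a$ one has $v(x-y)=\rho$, while the ultrametric inequality yields $v(x-y)\geq\rho$ throughout $B_b$. Hence the condition ``$c\in B_b$ and $r=x-y$ for some $x,y\in B_b$ realizing this minimum'' is $\emptyset$-definable with nonempty solution set whenever $B_b$ is a ball, and definable Skolem functions produce $c_1(b),r_1(b)$.

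For the inductive step, suppose $(c_i,r_i)_{i\leq\beta_d}$ have been built so that the balls $B_i^{(b)}:=B_{v(r_i(b))}(c_i(b))$ are precisely the $\beta_d$ balls of distance $\leq d$ to $B_b$. Setting $B_i^{(b)}=B_{\rho_i}(a_i)$ with $\rho_i=v(r_i(b))$ and $a_i=c_i(b)$, the recalled description of neighbors gives, for each $i$, a $\emptyset$-definable list of $p+1$ ball-presentations: the unique predecessor $B_{v(r_i(b)/p)}(a_i)$ together with the $p$ immediate successors $B_{v(p\,r_i(b))}\bigl(a_i+j\,r_i(b)\bigr)$ for $j=0,\dots,p-1$. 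Because $\B$ is a tree, a ball at distance exactly $d+1$ from $B_b$ is a neighbor of one and only one ball at distance $d$ from $B_b$, while every other neighbor of a $B_i^{(b)}$ is already in the list $\{B_1^{(b)},\dots,B_{\beta_d}^{(b)}\}$. Thus the collection obtained by taking the $(p+1)\beta_d$ neighbor-presentations and deleting those equal (as balls) to some $B_j^{(b)}$ is a $\emptyset$-definable finite family which, whenever $B_b$ is a ball, consists of exactly the $\beta_{d+1}-\beta_d$ balls at distance $d+1$ to $B_b$, each appearing once. Definable Skolem functions then enumerate it in a fixed $\emptyset$-definable order and yield the remaining $(c_i,r_i)$ for $\beta_d<i\leq\beta_{d+1}$.

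The only mild point to be careful about is bookkeeping: equality of two balls given by center/radius-witness presentations is a $\emptyset$-definable condition (it says each center lies in the other ball and their radius-witnesses have equal valuation), so duplicate-removal and the resulting enumeration are uniformly $\emptyset$-definable; and for those $b\in K^m$ for which $B_b$ is not a ball, the functions $c_i,r_i$ may be extended arbitrarily (say by Skolem defaults), since the statement only constrains their behavior on ball-parameters. I do not foresee any deeper obstacle: the ingredients — $P$-minimality, definable Skolem functions, and the explicit tree structure on $\B$ — together reduce the proof to a careful but routine induction.
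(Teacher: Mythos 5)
Your argument is correct and is essentially the induction the paper sketches: the base case is supplied by Skolem-choosing a center and a witness to the (realized) minimal valuation of differences in the ball, and the inductive step by listing the $p+1$ $\emptyset$-definable neighbor-presentations (one predecessor, $p$ successors) of each ball already produced. The one point you treat briskly — uniformly producing a $\emptyset$-definable enumeration of the new balls to fill the slots $\beta_d<i\le\beta_{d+1}$, possibly repeating entries since there may be fewer genuinely new balls than slots — is indeed routine, since ball-equality is a $\emptyset$-definable condition and Skolem functions let one select ``the $k$-th new ball'' in a fixed lexicographic order on candidate indices, defaulting to an earlier entry when fewer than $k$ new balls exist.
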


The assumption of definable Skolem functions also guarantees that any model of the theory $T$ has cell decomposition
\cite{m}. 
Let $\Delta(x;y)$ be a finite set of $\mathcal L$-formulas, where $\abs{x}=1$, closed under negation.
Then there are integers $N,n>0$, and for each $i=1,\dots,N$ there are
$\emptyset$-definable functions $f_i, g_i, c_i\colon K^{\abs{y}}\to K$ and
elements $\lambda_i$ of a fixed set of representatives of the cosets of the subgroup $P_n^\times$ of $K^\times$
with the following properties:
for any $\varphi\in\Delta$ and $b\in K^{\abs{y}}$, the set $\varphi(K;b)$ of realizations of $\varphi(x;b)$ is
a finite union of some of the \emph{cells} $U_1(b),\dots,U_N(b)$ defined by the data given above, i.e., sets of the form 
\begin{equation}\label{cells}
U_i(b)=	\big\{ x\in K : v(f_i(b)) \Box_{i1} v(x-c_i(b))\Box_{i2} v(g_i(b)) \,\&\, P_n(\lambda_i(x-c_i(b))) \big\}
\end{equation}
where each symbol $\Box_{ij}$ is $\leq$, $<$, or no condition. Note that this includes the cases where $U_i(b)=\{c_i(b)\}$ is a singleton, or where
\begin{equation}\label{eq:degenerate cells}
U_i(b)=\big\{ x\in K:  P_n(\lambda_i(x-c_i(b))) \big\}.
\end{equation}
The \emph{center}\/ of the cell $U_i(b)$ is given by $c_i(b)$. 
Since the value group of $K$ has smallest positive element $v(p)$, using the equivalences
\begin{align*}
v(a)<v(a')		&\quad\Longleftrightarrow\quad v(pa)\leq v(a')\\ 
v(a)\leq v(a')	&\quad\Longleftrightarrow\quad v(a)<v(pa'),
\end{align*}
valid for all $a,a'\in K$, not both zero, one sees that we may assume that in \eqref{cells}, $\Box_{i1}$ is $\leq$ or no condition, and $\Box_{i2}$ is $<$ or no condition. From now on, we assume for convenience that all our cells have this particular form.

\medskip

Based on the above data for the cell decomposition, we now describe a uniform definition of $\Delta(x;B)$-types over finite sets in $K$, in terms of the graph of balls $\B$. A \emph{special ball} is a ball having one of the following forms:
$$B_{v(c_i(b)-c_j(b'))}(c_i(b)),\quad B_{v(f_i(b))}(c_i(b)), \quad B_{v(g_i(b))}(c_i(b))\qquad (b,b'\in K^{\abs{y}}).$$
Note that each special ball can be defined by using at most two parameter tuples. 
We say that $B_{v(c_i(b)-c_j(b'))}(c_i(b))$,
$B_{v(f_i(b))}(c_i(b))$ and $B_{v(g_i(b))}(c_i(b))$ are special balls defined over $\{b,b'\}$.
We also say that a special ball is defined over a subset $B$ of $K$ if it is defined over $\{b,b'\}$ where $b,b'\in B$. By Lemma~\ref{lem:number of balls}, given a finite $B\subseteq K$, there are no more than $3N\cdot\abs{B}-1$ special balls defined over $B$.

Let us say that a ball $B'$ is \emph{near} a ball $B$ if $\dist(B,B')\leq n+4v(n)+2$; each ball has $\beta:=\beta_{n+4v(n)+2}$ balls near it. In particular, given $b_1,b_2\in K^{\abs{y}}$ there are at most $M:=(6N-1)\cdot\beta$ balls near special balls defined over $\{b_1,b_2\}$. Set $I^{(1)}:=[M]=\{1,\dots,M\}$ and $I^{(2)}=I^{(3)}:=[N]$.
By Lemma~\ref{lem:defining balls} there are
$\mathcal L$-formulas $\chi_i(x;y_1,y_2)$, $i\in I^{(1)}$, such that
for each $b_1,b_2\in K^{\abs{y}}$, the formulas $\chi_i(x;b_1,b_2)$ define exactly the balls near special balls defined over $\{b_1,b_2\}$. 
For each $i\in I^{(1)}$ and $\varphi\in\Delta$ define
$$\varphi^{(1)}_i(y;y_1,y_2) := \forall x \big(\chi_i(x;y_1,y_2) \rightarrow \varphi(x;y)\big).$$
So for $b,b_1,b_2\in K^{\abs{y}}$ we have:
$$K\models \varphi^{(1)}_i(b;b_1,b_2) \quad\Longleftrightarrow\quad \varphi(K;b)\supseteq \chi_i(K;b_1,b_2).$$
For each $i\in I^{(2)}=I^{(3)}$ and $\varphi\in\Delta$ set
$$\varphi_{i}^{(2)}(y;y_1) := \varphi(c_i(y_1);y)$$
and
$$\varphi_{i}^{(3)}(y;y_1) := \forall x\big( P_N(\lambda_i(x-c_i(y_1))) \to \varphi(x;y)\big).$$
Now set $\mathcal F^{(j)}_i:=(\varphi^{(j)}_i(y;y_1,y_2))_{\varphi\in\Delta}$ for each $i\in I^{(j)}$, $j=1,2,3$. The first part of Theorem~\ref{Pmin} will be proved once we show the following:

\begin{claim*}
$\mathcal F=(\mathcal F^{(j)}_i)$ is a coherent definition of $\Delta(x;B)$-types over finite sets.
\end{claim*}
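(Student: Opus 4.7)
The plan is to verify the two requirements of a coherent definition for $\mathcal{F} = (\mathcal{F}^{(j)}_i)$: first, that every instance of the scheme asserts a consistent set of formulas (coherency); and second, that every $q \in S^\Delta(B)$ for finite $B \subseteq K^{|y|}$ is defined by some instance.

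Coherency follows by exhibiting a realization of each asserted type. For $\mathcal{F}^{(2)}_i$ with parameter $b_1$, the element $c_i(b_1)$ itself realizes the asserted type, since $\varphi^{(2)}_i(b;b_1)$ is $\varphi(c_i(b_1);b)$. For $\mathcal{F}^{(1)}_i$ with parameters $b_1,b_2$, the ball $\chi_i(K;b_1,b_2)$ is non-empty, and any element of this ball realizes the asserted type because $\varphi^{(1)}_i$ has the syntactic form $\forall x(\chi_i(x;y_1,y_2) \to \varphi(x;y))$ (compare the remark following the definition of coherency). For $\mathcal{F}^{(3)}_i$ with parameter $b_1$, the set $\{x : P_n(\lambda_i(x-c_i(b_1)))\}$ is non-empty (it contains $c_i(b_1)$, since $0 \in P_n$), and the same remark applies.

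For uniform definability, fix a finite $B \subseteq K^{|y|}$ and $a \in K$ realizing $q \in S^\Delta(B)$. By cell decomposition, for each $b \in B$ there is a unique $j(b) \in [N]$ with $a \in U_{j(b)}(b)$, and $q$ is determined by these assignments; write $c_b := c_{j(b)}(b)$. If $a = c_i(b^*)$ for some $i \in [N]$ and $b^* \in B$, then $\mathcal{F}^{(2)}_i$ with parameter $b^*$ defines $q$. Otherwise, set $\sigma^* := \max_{b \in B} v(a-c_b)$, choose $b^* \in B$ achieving this maximum, and write $c^* := c_{b^*}$. Let $\rho := \sigma^* + 2v(n) + 1$ and $B^* := B_\rho(a)$. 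Then $B^* \subseteq U_{j(b)}(b)$ for every $b \in B$: since $\rho > v(a-c_b)$, the ball $B^*$ avoids $c_b$, and $v(x-c_b) = v(a-c_b)$ for $x \in B^*$, preserving the value-group constraint; and since $\rho > 2v(n) + v(a-c_b)$, Lemma~\ref{2.3} ensures that $P_n(\lambda_{j(b)}(x-c_b))$ is constant on $B^*$.

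It remains to identify $B^*$ as a ball near a special ball defined over $B$, or else to invoke scheme $\mathcal{F}^{(3)}$. If there exist $b_1,b_2 \in B$ and $i_1,i_2 \in [N]$ with $c' := c_{i_1}(b_1)$, $c'' := c_{i_2}(b_2)$ distinct, $v(a-c') = \sigma^*$, and $v(c'-c'') = \sigma^*$, then $B^{\mathrm{sp}} := B_{\sigma^*}(c') = B_{v(c'-c'')}(c')$ is a special ball defined over $\{b_1,b_2\}$, contains $a$, and $B^* \subsetneq B^{\mathrm{sp}}$ at graph-distance $\rho - \sigma^* = 2v(n)+1 \leq n + 4v(n) + 2$; thus $B^*$ is near $B^{\mathrm{sp}}$, and $\mathcal{F}^{(1)}_i$ (for the corresponding $i \in I^{(1)}$) with parameters $b_1,b_2$ defines $q$. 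Otherwise $c^*$ is ``isolated'' at scale $\sigma^*$ (every other center is either strictly closer to $c^*$ than $\sigma^*$ in valuation distance, or is at valuation distance strictly greater than $\sigma^*$ from $c^*$), and one shows that the entire coset $C^* := \{x : P_n(\lambda_{j(b^*)}(x-c^*))\}$ is contained in each $U_{j(b)}(b)$, so that $\mathcal{F}^{(3)}_{j(b^*)}$ with parameter $b^*$ defines $q$. The main obstacle is this last sub-case: one must carefully exploit the isolation hypothesis together with the descriptions of the individual cells (especially the degenerate cells of the form \eqref{eq:degenerate cells}) to promote the inclusion $B^* \subseteq U_{j(b)}(b)$ to the inclusion $C^* \subseteq U_{j(b)}(b)$.
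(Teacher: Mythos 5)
The first half of your proposal is sound: coherency is checked correctly, the centre case is correct, and the construction of $B^* = B_{\sigma^*+2v(n)+1}(a)$ together with the verification $B^*\subseteq q^K$ via Lemma~\ref{2.3} (your $B^*$ is a slight relaxation of the paper's $B_0$, but the same calculation applies) is fine. The gap is in the ``otherwise'' branch. You assume that if no pair of distinct centres $c',c''$ produces a special ball $B_{v(c'-c'')}(c')$ near $B^*$, then one can fall back on the scheme $\mathcal F^{(3)}$, i.e.\ the entire coset $C^*=\{x: P_n(\lambda_{j(b^*)}(x-c^*))\}$ is contained in $q^K$. This is false whenever any relevant cell carries a non-vacuous valuation constraint: if, say, $\Box_{j(b^*)1}$ is $\leq$, then $U_{j(b^*)}(b^*)$ is contained in the ball $\{x: v(x-c^*)\geq v(f_{j(b^*)}(b^*))\}$, whereas the coset $C^*$ always contains points $x$ with $v(x-c^*)$ arbitrarily negative; so $C^*\not\subseteq U_{j(b^*)}(b^*)$. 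The paper only invokes $\mathcal F^{(3)}$ in the much more restrictive situation that $c(B)$ is a singleton \emph{and} every cell $U_i(b)\supseteq q^K$ is degenerate (no valuation constraint), i.e.\ of the form \eqref{eq:degenerate cells}.

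You also overlook that the special balls include $B_{v(f_i(b))}(c_i(b))$ and $B_{v(g_i(b))}(c_i(b))$; these are essential, but even with them your case split does not close, because $B^*$ can simply be far (at unbounded graph distance) from \emph{every} special ball defined over $B$. This is the core difficulty the paper resolves and which your plan is missing: when $B_1$ lies at distance $>2v(n)+n+1$ from all special balls, the paper \emph{rescales} the witness, replacing $a$ by $a'=d(a-c)+c$ with $d\in P_n$ of a suitable valuation $\delta'-\delta$ (chosen mod $n$), then shows via Claims~2--3 and Lemma~\ref{2.3} that the new ball $D'=B_{\delta'+2v(n)+1}(a')$ is still contained in $q^K$ \emph{and} is near the smallest special ball containing $B_1$ (Case~1) or near a special ball of $f/g$-type or centre-pair type with extremal radius (Case~2). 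Without this multiplicative move---which exploits both that $P_n$ is a finite-index subgroup and that the value group is a $\bZ$-group---there is no way to bridge the gap between $B^*$ and the special balls, and your argument cannot be completed as stated.
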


Since the coherency condition is obviously satisfied, it is enough to show that $\mathcal F$ is a uniform definition of $\Delta(x;B)$-types over finite sets.
For this let $B\subseteq K^{\abs{y}}$ be finite and non-empty, and let $q\in S^\Delta(B)$. Let
$$c(B) := \big\{ c_i(b) : b\in B,\ i=1,\dots,N\big\}$$
be the set of centers of the cells $U_i(b)$. 
In the following we let $i$ range over $[N]=\{1,\dots,N\}$ and $b$ (possibly with decorations) over $B$. By a ``special ball'' we always mean a special ball defined over $B$, and similarly a ``near ball'' is a ball near a special ball (defined over $B$).

We first eliminate two special cases (which are taken care of by the families $(\mathcal F^{(2)}_i)$ and $(\mathcal F^{(3)}_i)$):
Suppose first that $q^K\cap c(B)\neq\emptyset$, say $c_i(b_1)\in q^K$ for some $i$ and $b_1\in B$. In this case, with such choice of $i$ and $b_1$, $\mathcal F^{(2)}_{i}(y;b_1)$ defines $q$. Similarly, if $\abs{c(B)}=1$, and for all  $i$ and $b$ with $q^K\subseteq U_i(b)$, the condition $\Box_{i1}$ is vacuous and $g_i(b)=0$, then all such cells $U_i(b)$ have the form as in \eqref{eq:degenerate cells}, and for suitable $i$ and $b_1$, 
$\mathcal F^{(3)}_i(y;b_1)$ defines $q$.

So from now on we may assume that:
\begin{itemize}
\item[(a)] $q^K$ is disjoint from $c(B)$; and 
\item[(b)] if $c(B)$ is a singleton, then for some $i$ and $b$ with $q^K\subseteq U_i(b)$ the condition $\Box_{i1}$ is ${\leq}$ or $g_i(b)\neq 0$.
\end{itemize}
Under these assumptions, it is enough to show: {\it there is a near ball $D$ such that $D\subseteq q^K$.}\/
We first note:

\begin{lemma}\label{eq:balls in cells}
Let $a\in K\setminus c(B)$, and let $B_1$ be a ball containing $a$ which is maximal subject to the condition $B_1\cap c(B)=\emptyset$; that is, $B_1=B_\delta(a)$ where
$$\delta=1+\max\big\{ v(a-c): c\in c(B)\big\}.$$
Let also $B_0:=B_{\delta+2v(n)}(a)$. Then for all $x\in K$ and $c\in c(B)$ we have:
\begin{enumerate}
\item $x\in B_1\Rightarrow v(x-c)=v(a-c)$;
\item $x\in B_0\Rightarrow (x-c)(a-c)^{-1}\in P_n$.
\end{enumerate}
In particular, if a cell $U_i(b)$ contains $a$ then it contains $B_0$.
\end{lemma}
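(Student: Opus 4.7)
The plan is to derive (1) from the ultrametric inequality, (2) from Lemma~\ref{2.3} (the Henselian power-residue criterion), and to deduce the final containment assertion by checking the defining conditions of the cell \eqref{cells} separately: those involving $v(x-c_i(b))$ will follow from (1), and the conjunct $P_n(\lambda_i(x-c_i(b)))$ will follow from (2) together with $P_n^\times$ being a subgroup of $K^\times$.

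For (1): If $x\in B_1=B_\delta(a)$ then $v(x-a)\geq \delta$, while by the definition of $\delta$ we have $v(a-c)\leq \delta-1<\delta$ for every $c\in c(B)$. Hence $v(x-a)>v(a-c)$, and the strong triangle inequality gives $v(x-c)=\min(v(x-a),v(a-c))=v(a-c)$.

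For (2): Note first that $v(n)\geq 0$ (since $n\in\mathbb Z\subseteq\mathcal O_K$), so $B_0\subseteq B_1$, and hence $B_0\cap c(B)=\emptyset$; in particular $x\neq c$ and $a\neq c$. Now take $x\in B_0$ and $c\in c(B)$. Then $v(x-a)\geq \delta+2v(n)$ and $v(a-c)\leq \delta-1$, so
\[
v(x-a)\geq \delta+2v(n) > (\delta-1)+2v(n) \geq v(a-c)+2v(n),
\]
which is exactly the hypothesis of Lemma~\ref{2.3} applied with $y=x$. We conclude $(x-c)(a-c)^{-1}\in P_n^\times\subseteq P_n$.

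For the \emph{In particular} clause: assume $a\in U_i(b)$ and set $c:=c_i(b)\in c(B)$; let $x\in B_0$. By (1), $v(x-c)=v(a-c)$, so the valuation inequalities $v(f_i(b))\Box_{i1} v(x-c)\Box_{i2} v(g_i(b))$ hold for $x$ precisely because they hold for $a$. For the remaining conjunct, $\lambda_i(a-c)\in P_n^\times$ (since $a\in U_i(b)$ and $a\neq c$), and by (2) also $(x-c)(a-c)^{-1}\in P_n^\times$; multiplying in the subgroup $P_n^\times$ gives $\lambda_i(x-c)=\lambda_i(a-c)\cdot (x-c)(a-c)^{-1}\in P_n^\times\subseteq P_n$. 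Hence $x\in U_i(b)$.

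The only place that requires any care is verifying the hypothesis of Lemma~\ref{2.3}, which is where the choice of the additive $2v(n)$-offset in the definition of $B_0$ comes in; everything else is bookkeeping with the ultrametric. There is no essential obstacle beyond this calibration, and the proof is thus short.
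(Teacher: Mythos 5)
Your proof is correct and fleshes out precisely the one-sentence argument the paper gives: (1) is the ultrametric inequality, (2) is the calibrated application of Lemma~\ref{2.3}, and the final containment is obtained by treating the valuation conjuncts and the power-residue conjunct of the cell condition separately, using (1) for the former and (2) together with the subgroup property of $P_n^\times$ for the latter. One small notational slip: the inequality you display, $v(x-a)>v(a-c)+2v(n)$, is the hypothesis of Lemma~\ref{2.3} under the substitution $(x,y,a)\mapsto(x,a,c)$, not ``$y=x$'' as written.
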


The proof of (1) is obvious, and to deduce (2) from (1) use Lemma~\ref{2.3}.

\medskip

Let $a\in K$ realize $q$, and define $\delta$, $B_0$ and $B_1$ as in the previous lemma. Also, take $c\in c(B)$ such that $\delta=1+v(a-c)$. 

\begin{lemma}\label{lem:B2}
Let $B_2$ be a ball. Then
\begin{enumerate}
\item $B_2$ properly contains $B_1$ iff it contains both $a$ and $c$;
\item if $B_2$ contains $c$ but not $a$, then $\dist(B_1,B_2) = \rad(B_2) - \delta +2 $.
\end{enumerate}
\end{lemma}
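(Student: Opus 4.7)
The plan is to exploit the ultrametric structure of the valued field together with the explicit description of $B_1 = B_\delta(a)$ with $\delta = 1 + v(a-c)$, so that $a \in B_1$ while $c \notin B_1$ (since $v(a-c) = \delta - 1 < \delta$). The overarching principle is that in the tree $\mathcal B$, whenever two balls $B, B'$ have a common smallest enclosing ball $B^*$, the graph distance between them equals $(\rad(B) - \rad(B^*)) + (\rad(B') - \rad(B^*))$, since the unique path must pass through $B^*$.

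For part (1), I would handle the reverse implication first: if $B_2$ contains both $a$ and $c$, then by the ultrametric inequality $\rad(B_2) \leq v(a-c) = \delta - 1$; writing $B_2 = B_{\rad(B_2)}(a)$ (possible since $a \in B_2$) and comparing radii gives $B_2 \supsetneq B_1$. For the forward implication, if $B_2 \supsetneq B_1 \ni a$, then $B_2 = B_\rho(a)$ for some $\rho < \delta$, hence $\rho \leq \delta - 1 = v(a - c)$, whence $c \in B_2$.

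For part (2), I would write $B_2 = B_\rho(c)$ with $\rho = \rad(B_2)$. The hypothesis $a \notin B_2$ gives $\rho > v(a-c) = \delta - 1$, so $\rho \geq \delta$. I would then identify $B^* := B_{\delta - 1}(a) = B_{\delta - 1}(c)$ as the smallest ball containing both $a$ and $c$; this is the least common ancestor of $B_1$ and $B_2$ in the tree, since $B_1 \subsetneq B^*$ (the immediate predecessor of $B_1$) and $B_2 \subseteq B^*$. Thus
\[
\dist(B_1, B_2) = \bigl(\rad(B_1) - \rad(B^*)\bigr) + \bigl(\rad(B_2) - \rad(B^*)\bigr) = 1 + (\rho - \delta + 1) = \rad(B_2) - \delta + 2.
\]

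I do not anticipate any serious obstacle; the entire argument is a routine ultrametric computation, and the only point requiring care is correctly identifying $B^*$ and verifying it is an ancestor of both $B_1$ and $B_2$ in the tree, which is immediate from the formula $\delta = 1 + v(a-c)$.
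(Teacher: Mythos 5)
Your proof is correct and follows the same approach the paper uses (the paper's own proof is just the one-line remark that $B_1, B_2 \subseteq B_{\delta-1}(a)$, which is precisely your $B^*$). Your identification of $B^* = B_{\delta-1}(a) = B_{\delta-1}(c)$ as the least common ancestor, together with the fact that consecutive predecessors in the tree $\mathcal B$ decrease radius by exactly $1$ (so $\dist(B,B^{**}) = \rad(B) - \rad(B^{**})$ for nested balls $B \subseteq B^{**}$), is exactly the intended argument.
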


The proof of (1) is clear, and for (2) note that if $c\in B_2$ and $a\notin B_2$, then $B_1,B_2\subseteq B_{\delta-1}(a)$.

\medskip

We first assume that there is a special ball $E$  such that $\dist(B_1,E)\leq 2v(n)+n+1$. 
Then by Lemma~\ref{eq:balls in cells}, $D := B_0 = B_{\delta+2v(n)}(a)$ is contained in those cells $U_i(b)$ which contain $a$; hence $D\subseteq q^K$. Also, $\dist(D,E)\leq n+4v(n)+2$, so $D$ is near $E$.
Hence the ball $D$ has the required properties.

\medskip

So from now on, we may suppose that for any special ball $E$ we have $\dist(B_1,E) > 2v(n)+n+1$.
We distinguish two cases: 

\subsubsection*{Case 1: there is a special ball which contains $B_1$.} We let $C$ be the smallest such special ball, with radius $\rho$.
We have $c\in C$ (since $C$ properly contains $B_1$) and hence $C=B_\rho(c)$. The idea now is to replace $a$ and the ball $D=B_0$ by another realization $a'\in C$ of $q$ and a ball $D'$ which is contained in and near the special ball $C$.
As $\Gamma$ is a $\bZ$-group, there is a unique $\delta'\in\Gamma$ such that
$$\rho+2v(n)+1<\delta'\leq \rho+2v(n)+1+n\qquad\text{and}\qquad \delta'\equiv\delta\bmod n.$$
By assumption we have 
$$2v(n)+n+1<\dist(B_1,C)=\delta-\rho$$ 
and hence $\delta>\delta'>\rho+1$. Now choose $d\in P_n$ with $v(d)=\delta'-\delta$. (From now on until the end of the proof of Theorem~\ref{Pmin} we temporarily suspend our promise of $d$ always denoting a natural number.) Put
$$D' := B_{\delta'+2v(n)+1}(a')\qquad\text{where $a':=d(a-c)+c$.}$$
Then $D'$ is contained in the special ball $C$, and $D'$ is near $C$.
Indeed, $D'\subseteq B_{\delta'-1}(a')$, and these two balls are at distance $2v(n)+2$. The latter ball also contains $c$, so 
$$\dist\big(B_{\delta'-1}(a'),C\big)=\dist\big(B_{\delta'-1}(c),B_\rho(c)\big) = \delta'-1-\rho \leq 2v(n)+n,$$
hence $\dist(D',C)\leq 4v(n)+2+n$.
Thus $D'$ has the right properties, provided we manage to show:

\claim[1]{Let $U_i(b)$ be a cell as in \eqref{cells} which contains $a$. Then $D'\subseteq U_i(b)$.}

\medskip

Towards the proof of this claim, we first show two auxiliary claims:

\claim[2]{Let $c'\in c(B)$. Then}
$$v(a'-c')=\begin{cases}
\delta'-1 & \text{if $v(c-c')\geq v(a-c)$,}\\
v(c-c')\leq\rho<\delta'-1 & \text{otherwise.}
\end{cases}$$
\begin{proof}
If $v(c-c')\geq v(a-c)$ then
$$v(c-c')\geq v(a-c)=\delta-1>\delta'-1$$
and hence
$$v(a'-c')=v(d(a-c)+(c-c'))=\delta'-1.$$
So suppose $v(c-c') < v(a-c)$. We have $v(a-c)>v(c-c')$, so
$v(a-c')=v(c-c')$ and hence $a$ is contained in the special ball $E:=B_{v(c-c')}(c)$. In fact, for every $x\in B_1$ we have
$$v(x-a)\geq\delta=v(a-c)+1>v(a-c)>v(c-c')$$
and hence $B_1\subseteq E$. By minimality of $C$ thus $C\subseteq E$. This yields
$\delta'-1>\rho\geq v(c-c')$
and thus $v(a'-c')=v(c-c')<\delta'-1$.
\end{proof}

\claim[3]{For every $c'\in c(B)$ we have $v(a-c')\geq v(a'-c')$.}

\begin{proof}
Certainly, $v(a-c')\geq\min\{v(a-a'),v(a'-c')\}$. But the minimum is always achieved by $v(a'-c')$, as 
$$v(a-a')=v((d-1)(a-c))=\delta'-1\geq v(a'-c')$$
by Claim~2.
\end{proof}


By Claim~2 and Lemma~\ref{eq:balls in cells} (applied to $a'$ in place of $a$), in order to show Claim~1, it is enough to prove that $a'\in U_i(b)$. We abbreviate $c'=c_i(b)$.
Suppose that $\Box_{i1}$ is ${\leq}$. Then $f_i(b)\neq 0$, and by Lemma~\ref{eq:balls in cells},~(1), all elements $x$ of $B_1$ satisfy the condition $v(f_i(b)) \leq v(x-c')$;  hence
$C$ is contained in the special ball $B_{v(f_i(b))}(c')$, by the minimality of $C$. Since $D'\subseteq C$, all elements $x$ of $D'$ also satisfy $v(f_i(b)) \leq v(x-c')$; in particular, of course, $v(f_i(b)) \leq v(a'-c')$. If $\Box_{i2}$ is ${<}$, then by Claim~3, $v(a'-c')\leq v(a-c')<v(g_i(b))$, as required.
It remains to check that $a-c'$ and $a'-c'$ lie in the same coset of $P_n^\times$. We distinguish two cases. 
If $v(c-c')\leq\rho$ then $v(a-c)=\delta-1>\rho\geq v(c-c')$, hence $v(a-c')=v(c-c')$
and thus
$$v(a-a')=\delta'-1>2v(n)+\rho\geq 2v(n)+v(a-c');$$
therefore $a-c'$ and $a'-c'$ are in the same $P_n^\times$-coset, by Lemma~\ref{2.3}.
Suppose $v(c-c')>\rho$. Then by Claim~2 we have $v(c-c')\geq v(a-c)=\delta-1$ and $v(a'-c')=\delta'-1$.
Now consider the special ball $E:=B_{v(c-c')}(c)$. Note that $a\notin E$: otherwise $v(c-c')=v(a-c)=\delta-1$ and hence $B_1=B_\delta(a)\subseteq E$ with $\dist(B_1,E)=1$, contrary to our initial assumption (made before Case~1). Thus, by Lemma~\ref{lem:B2},~(2) and said assumption, we obtain $2v(n)+n+1 < v(c-c')-\delta+2$.
Hence 
$$v(c-c')>2v(n)+\delta-1=2v(n)+v(a-c)  \geq 2v(n)+v(a'-c'),$$
with the last inequality by Claim~3.
So by Lemma~\ref{2.3}, $a-c'$ and $a-c$ are in the same $P_n^\times$-coset, as are $a'-c$ and $a'-c'$.
Certainly, as $a'-c=d(a-c)$ and $d\in P_n^\times$, the elements $a-c$ and $a'-c$ lie in the same coset of $P_n^\times$. 
Hence $a-c'$ and $a'-c'$ also lie in the same coset of $P_n^\times$.
This finishes the proof of Claim~1, and hence of Case~1. \qed

\subsubsection*{Case 2: no special ball contains~$B_1$.} In this case, for every $c'\in c(B)$, the special ball $C=B_{v(c-c')}(c)$ does not contain $a$, so $v(c'-c)>v(a-c)=\delta-1$ and hence
$$v(a-c')=\min\{v(a-c),v(c-c')\}=v(a-c)=\delta-1.$$ 
Since $C$ is of distance greater than $2v(n)+n+1$ from $B_1$,  by part~(2) of Lemma~\ref{lem:B2} we also obtain
\begin{equation}\label{eq:v(c-c')}
v(c-c') > \delta+2v(n)+n-1.
\end{equation}
Similarly, since each special ball $B_{v(g_i(b))}(c_i(b))$ does not contain $a$, we get
$$v(g_i(b)) > \delta+2v(n)+n-1,$$
and since $a\notin B_{v(f_i(b))}(c_i(b))$, the condition $\Box_{i1}$ is vacuous for each $i$ and $b$ with $a\in U_i(b)$.

Fix a special ball $E$ of the form $B_{v(g_i(b))}(c_i(b))$ with minimal radius $\gamma=v(g_i(b))$, if there is such a special ball; otherwise let $\gamma=\infty$.
Also, if $\abs{c(B)}>1$,  let $C$ be a special ball of the form
$B_{v(c-c')}(c)$, where $c'\in c(B)$, with minimal radius $\rho=v(c-c')$; we set $\rho=\infty$ if $\abs{c(B)}=1$. 
Note that by our general assumption (made before Lemma~\ref{eq:balls in cells}), not both of $\gamma$ and $\rho$ are $\infty$. We now distinguish two subcases:

\subsubsection*{Case 2a: $\rho-2v(n)\leq\gamma$}
Let $\delta'\in\Gamma$ such that 
$$\rho-2v(n)-n<\delta'\leq \rho-2v(n), \qquad \delta'\equiv\delta\mod n,$$
choose $d\in P_n$ with $v(d)=\delta'-\delta$, and set
$$D' := B_{\delta'+2v(n)}(a')\qquad\text{where $a':=d(a-c)+c$.}$$
By \eqref{eq:v(c-c')} we have $\delta'>\delta$.
Moreover, for each $c''\in c(B)$ we have $v(c-c'')\geq\rho > \delta'-1=v(d(a-c))$ and hence $v(a'-c'')=\delta'-1$.
Note that the ball $B_{\delta'-1}(a')$ contains $D'$ and is of distance $2v(n)+1$ to $D'$. The ball $B_{\delta'-1}(a')$ contains $c$, hence
$$\dist(B_{\delta'-1}(a'),C) \leq \rho-(\delta'-1) < 2v(n)+n+1$$
and thus $\dist(D',C)<4v(n)+n+2$, so $D'$ is near $C$.
Let $U_{i}(b)$ be a cell containing $a$; it remains to show that then $a'\in U_{i}(b)$. We already noted that condition $\Box_{i1}$ is vacuous. As to $\Box_{i2}$, suppose that condition is $<$. Writing $c'=c_i(b)$ we then have
$$v(a'-c')=\delta'-1<\rho-2v(n) \leq \gamma\leq v(g_i(b))$$
as required. Finally, by \eqref{eq:v(c-c')} and Lemma~\ref{2.3}, $a-c'$ and $a-c$ are in the same $P_n^\times$-coset, and since
$$v(c-c')\geq \rho > 2v(n)+\delta'-1=2v(n)+v(a'-c'),$$
the elements $a'-c$ and $a'-c'$ are also in the same $P_n^\times$-coset. As $a'-c=d(a-c)$ and $a-c$ are in the same $P_n^\times$-coset, finally $a'-c'$ and $a-c'$ are in the same $P_n^\times$-coset, as required. \qed

\subsubsection*{Case 2b: $\rho-2v(n) > \gamma$}
In this case we let $\delta'\in\Gamma$ be such that 
$$\gamma-2v(n)-n<\delta'\leq \gamma-2v(n), \qquad \delta'\equiv\delta\mod n,$$
and with this choice of $\delta'$ define $d$, $a'$ and $D'$ as in Case~2a. 
Note that $\rho > \gamma$, so for each $c''\in c(B)$ we have $v(c-c'') > \delta'-1=v(d(a-c))$ and hence $v(a'-c'')=\delta'-1$.
Since 
$$\dist(B_{\delta'-1}(a'),E) \leq \gamma-(\delta'-1) < 2v(n)+n+1$$
we see, similarly as in Case~2a, that $\dist(D',E)<4v(n)+n+2$, so $D'$ is near $E$.
Let $U_{i}(b)$ be a cell containing $a$, and suppose $\Box_{i2}$ is ${<}$.
Then $v(a'-c')=\delta'-1<\gamma\leq v(g_i(b))$, and as at the end of Case~2a one sees that $a'-c'$ and $a-c'$ are in the same coset of~$P_n^\times$. \qed

\medskip

To complete the proof of the theorem,  we apply Theorem~\ref{VCdensity} with $r=1$. 
By what we have shown above, for every finite non-empty $B\subseteq M^{\abs{y}}$, each type in $S^\Delta(B)$ is uniquely determined by either a center $c_i(b)$, where $b\in B$, or a near ball. However,
there are at most $N\abs{B}=O(\abs{B})$ centers, and at most $(3N\abs{B}-1)\cdot\beta=O(\abs{B})$ near balls; thus $\abs{S^\Delta(B)}=O(\abs{B})$ as required. \qed

\medskip

From Theorem~\ref{Pmin} and Corollary~\ref{cor:dp-min} we obtain:

\begin{corollary}\label{cor:Pmin}
Every $P$-minimal theory with definable Skolem functions is dp-minimal.
\end{corollary}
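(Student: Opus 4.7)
The plan is to simply chain together Theorem~\ref{Pmin} with the already-established Corollary~\ref{cor:dp-min}. Recall that ``vc-minimal'' (as defined tentatively in the paragraph preceding Corollary~\ref{cor:dp-min}) means that $\vc^*(\varphi)<2$ for every $\mathcal L$-formula $\varphi(x;y)$ with $\abs{x}=1$, and Corollary~\ref{cor:dp-min} asserts that every vc-minimal theory is dp-minimal.

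So the proof proceeds in two lines. First, let $T$ be a $P$-minimal $\mathcal L$-theory with definable Skolem functions and let $\varphi(x;y)$ be any $\mathcal L$-formula with $\abs{x}=1$. Applying Theorem~\ref{Pmin} to $\Delta=\{\varphi\}$, we obtain $\vc^*(\varphi)\leq 2\abs{x}-1=1<2$. Hence $T$ is vc-minimal, and therefore dp-minimal by Corollary~\ref{cor:dp-min}.

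There is no serious obstacle: all the work has been done in Theorem~\ref{Pmin} (where the strong $\VC{}2$ property and the accompanying linear type-counting estimate $\pi^*_\Delta(t)=O(t^{2\abs{x}-1})$ were established via cell decomposition, definable Skolem functions, and the counting-by-special-balls argument) and in Corollary~\ref{cor:dp-min} (whose proof shows that if $T$ is not dp-minimal then a witnessing formula $\varphi(x;y)$ with $\abs{x}=1$ satisfies $\vc^*(\varphi)\geq 2$). The only subtlety worth flagging is the factor $2\abs{x}-1$ in Theorem~\ref{Pmin} evaluates to exactly $1$ when $\abs{x}=1$, which is strictly below the threshold $2$ required for vc-minimality; this is what makes the application clean.
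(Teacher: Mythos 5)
Your proof is correct and takes exactly the approach the paper intends: the paper states Corollary~\ref{cor:Pmin} as an immediate consequence of Theorem~\ref{Pmin} and Corollary~\ref{cor:dp-min}, and your two-line chain (apply Theorem~\ref{Pmin} with $\Delta=\{\varphi\}$, $\abs{x}=1$, to get $\vc^*(\varphi)\leq 1<2$, hence vc-minimal, hence dp-minimal) is precisely that derivation.
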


\begin{remark} \label{analytic}
In \cite[Section~6]{dl}, Dolich, Goodrick and Lippel already showed that $\pCF=\Th(\bQ_p)$ is dp-minimal.
By 3.6 of \cite{dd}, the $P$-minimal theory $\pCF_{\an}$ has definable Skolem functions.
(Formally, \cite{dd} handles the corresponding subanalytic structure on ${\mathbb Z}_p$, but the translation is straightforward.) 
Note that the proof of \cite[3.6]{dd} takes place in the ground model ${\mathbb Z}_p$, where all elements are named by constants,
so `definable' means  `$\emptyset$-definable', and curve selection, as stated there, gives definable Skolem functions.
Hence the conclusion of Theorem~\ref{Pmin} and Corollary~\ref{cor:Pmin} apply to it and its reducts. (See also Lemma~\ref{lem:expansions by constants}.) Cell decomposition in $\pCF_{\an}$ is also proved in \cite{cluckers}.
\end{remark}

\end{document}